\newcommand{\beq}{\begin{equation}}
\newcommand{\eeq}{\end{equation}}
\newcommand{\ben}{\begin{eqnarray}}
\newcommand{\een}{\end{eqnarray}}
\newcommand{\beno}{\begin{eqnarray*}}
\newcommand{\eeno}{\end{eqnarray*}}
\theoremstyle{plain}
\newtheorem{theorem}{Theorem}[section]
\newtheorem{definition}[theorem]{Definition}
\newtheorem{corollary}[theorem]{Corollary}
\newtheorem{proposition}[theorem]{Proposition}
\newtheorem{lemma}[theorem]{Lemma}
\theoremstyle{definition}   
\newtheorem{remark}[theorem]{Remark}
\numberwithin{theorem}{section} \numberwithin{equation}{section}
\def\R{\mathbb{R}}
\def\loc{\text{loc}}
\newcommand{\del}{\partial }
\renewcommand{\phi}{\varphi}
\newcommand{\1}{\mathbbm{1} }
\newcommand{\scrC }{\mathscr{C}}
\newcommand{\N}{\mathbb{N}}
\newcommand{\cH}{{\mathcal H}}
\newcommand{\cL}{{\mathcal L}}
\newcommand{\cQ}{{\mathcal Q}}
\newcommand{\cS}{{\mathcal S}}
\newcommand{\dist}{{\rm dist}}
\newcommand{\supp}{{\rm supp}}
\newcommand{\weakto}{\rightharpoonup}
\newcommand{\B}{{\bf B}}
\newcommand{\eps}{\varepsilon}
\renewcommand{\epsilon}{\varepsilon}
\newcommand{\A}{{\bf A}}
\begin{document}
\title[Rotating waves in nonlinear media]{Rotating waves in nonlinear media and critical degenerate Sobolev inequalities}
\date{\today}

	\author{Joel K\"ubler} 
\address{Institut f\"ur Mathematik,
	Goethe-Universit\"at Frankfurt,
	Robert-Mayer-Str. 10,
	D-60629 Frankfurt am Main, Germany}
\email{kuebler@math.uni-frankfurt.de}

	\author{Tobias Weth} 
\address{Institut f\"ur Mathematik,
	Goethe-Universit\"at Frankfurt,
	Robert-Mayer-Str. 10,
	D-60629 Frankfurt am Main, Germany}
\email{weth@math.uni-frankfurt.de}

%
%
\date{\today}

\subjclass[2020]{Primary:  35J20, Secondary: 35J70, 35B33}  	
\keywords{Nonlinear wave equation,
degenerate Sobolev inequality,
symmetry breaking,
concentration-compactness} 
%
%
\begin{abstract}
  We investigate the presence of rotating wave solutions of the nonlinear wave 
  equation $\del_t^2 v - \Delta v +m v  = |v|^{p-2} v$ in $\R \times \B$, where 
  $\B \subset \R^N$ is the unit ball, complemented with Dirichlet boundary 
  conditions on $\R \times \del\B$. Depending on the prescribed angular 
  velocity 
  $\alpha$ 
  of the rotation, this leads to a Dirichlet problem for a semilinear elliptic 
  or degenerate elliptic equation. We show that this problem is governed by an 
  associated critical degenerate Sobolev inequality in the half space. After 
  proving this inequality and the existence of associated extremal functions, 
  we then deduce necessary and sufficient conditions for the existence of 
  ground state solutions. Moreover, we analyze under which conditions on 
  $\alpha$, $m$ and $p$   
  these  ground states are nonradial and therefore give rise to truly rotating 
  waves. Our approach carries over to the corresponding Dirichlet problems in 
  an annulus and in more general Riemannian models with boundary, including the 
  hemisphere.
  We briefly discuss these problems and show that they are related to a larger 
  family of associated critical degenerate Sobolev inequalities.
\end{abstract}

\maketitle
%
%
%
%
%
\section{Introduction}
Within a standard model, the analysis of wave propagation in an ambient medium 
with nonlinear response leads to the study of a nonlinear wave equation of the 
type
\begin{equation}
	\label{eq: abstract nonlinear wave equation} 
	\del_t^2 v - \Delta v +m v = f(v)  \qquad \text{in $\R \times \Omega$,} 
\end{equation}
in an ambient domain $\Omega \subset \R^N$ with mass parameter $m \ge 0$ and 
nonlinear response function $f$. In the case $m=0$, \eqref{eq: abstract 
nonlinear wave equation}  is the 
classical nonlinear wave equation, while the case $m>0$ is also known as 
a nonlinear Klein-Gordon equation. For nonlinearities of the form 
$f(v)=g(|v|^2)v$ with a real-valued function $g$, standing wave solutions 
can be found by the ansatz 
\begin{equation}
	\label{psi-ansatz}
	v(t,x)=e^{-i k t}u(x), \qquad k >0
\end{equation}
with a real-valued function $u$. Depending on the frequency parameter $k$, this 
reduces (\ref{eq: abstract nonlinear wave equation}) either to a stationary 
nonlinear Schrödinger or a 
nonlinear Helmholtz equation (see e.g. \cite{evequoz-weth} for more details). 
The resulting stationary nonlinear Schrödinger equation has been studied 
extensively in the past four decades by variational methods, see e.g. the 
monograph \cite{Ambrosetti-malchiodi} and the references therein. Due to a lack 
of a direct variational framework, the nonlinear Helmholtz equation requires a 
different approach and has been studied more recently e.g. in 
\cite{gutierrez,evequoz-weth,chen-evequoz-weth,mandel-scheider-yesil,mandel-montefusco-pellaci}
by dual variational methods and bifurcation theory.

Clearly, the amplitude $|v|$ of a solution $v$ of (\ref{eq: abstract nonlinear wave equation}) given by the ansatz (\ref{psi-ansatz}) remains time-independent. As a consequence, the analysis of standing wave solutions does not lead to a full understanding of (\ref{eq: abstract nonlinear wave equation}) from a dynamical point of view and should be complemented, in particular, by the study of non-stationary real-valued time-periodic solutions, travelling wave solutions and scattering solutions. We stress that the ansatz (\ref{psi-ansatz}) does not give rise to non-stationary real-valued time-periodic solutions since the nonlinearity of the problem does not allow to pass to real and imaginary parts. 

In the case where $\Omega= \R^N$ and $f(v)$ in (\ref{eq: abstract nonlinear 
wave equation}) is replaced by $q(x)f(v)$ with a compactly supported weight 
function $q$, spatially localized real-valued time-periodic solutions, also 
called breathers, have attracted increasing attention recently, see e.g. 
\cite{hirsch-reichel,mandel-scheider} and the references therein. In the case 
where $\Omega$ is a radial domain, a further interesting type of real-valued 
time-periodic solution is given by {\em rotating wave solutions}. In 
particular, if $\Omega$ is a bounded radial domain and (\ref{eq: abstract 
nonlinear wave equation}) is complemented with the Dirichlet boundary condition 
$v = 0$ on $\R \times \del \Omega$, the existence of rotating waves and their 
variational characterization arises as a natural question which, up to our 
knowledge, has not been addressed systematically so far.

The main purpose of the present paper is to provide such a systematic study. 
While we mainly focus on the case where $\Omega= \B$ is the unit ball in 
$\R^N$, we will also address the case where $\Omega$ is an annulus or a general 
Riemannian model with boundary, see Sections \ref{sec:case-an-annulus} and 
\ref{sec:riemannian-models} below. Specifically, we study the case of a 
focusing nonlinearity of the form $f(v)=|v|^{p-2}v$, which leads to the 
superlinear problem 
\begin{equation} \label{NLKG}
\left\{ 
\begin{aligned}
 \del_t^2 v - \Delta v +m v & = |v|^{p-2} v \quad && \text{in $\R \times \B$} 
 \\
 v & = 0 && \text{on $\R \times \del\B$}
\end{aligned}
\right.
\end{equation}
for $N \geq 2$, where $2<p<2^*$ and 
$m>-\lambda_1(\B)$. Here, $\lambda_1(\B)$ denotes the first Dirichlet 
eigenvalue 
of $-\Delta$ on $\B$ and $2^*$ denotes the critical Sobolev exponent given by 
$2^*= \frac{2N}{N-2}$ for $N \ge 3$ and $2^* = \infty$ for $N=2$. 
The ansatz for time-periodic rotating solutions of \eqref{NLKG} is given by 
\begin{equation} \label{Spiraling wave ansatz}
v(t,x)=u(R_{\alpha t} (x))
\end{equation}
where, for $\theta \in \R$, we let $R_{\theta} \in O(N)$ denote a planar 
rotation in $\R^N$ with angle $\theta$, so the constant $\alpha>0$ in 
(\ref{Spiraling wave ansatz}) is the angular velocity of the rotation. Without 
loss of generality, we may assume that 
$$
R_{\theta}(x)= (x_{1} \cos \theta  + x_2 \sin \theta , -x_{1} \sin \theta  + x_2 \cos \theta ,x_3,\dots,x_N) \qquad \text{for $x \in \R^N$},
$$
so $R_{\theta}$ is the rotation in the $x_{1}$-$x_2$-plane with fixed point set 
$\{0_{\R^2}\} \times \R^{N-2}$. In the following, we call a function $u$ on the 
unit ball \textit{$x_1$-$x_2$-nonradial} if it is not $R_\theta$-invariant for at least 
one angle $\theta \in \R$. If the profile function $u$ in (\ref{Spiraling wave 
ansatz}) is $x_1$-$x_2$-nonradial, then the corresponding solution $v$ can be 
interpreted as a rotating wave in a medium with nonlinear response given by the 
right hand side of (\ref{NLKG}). The ansatz (\ref{Spiraling wave ansatz}) 
reduces 
\eqref{NLKG} to 
\begin{equation} \label{Reduced equation}
\left\{ 
\begin{aligned} 
-\Delta u + \alpha^2 \del_\theta^2 u + m u & = |u|^{p-2} u \quad && \text{in $\B$} \\
u & = 0 && \text{on $\del \B$}
\end{aligned}
\right.
\end{equation}
where $\del_\theta = x_{1} \del_{x_2} - x_{2} \del_{x_{1}}$ denotes the 
associated angular derivative operator. We point out that a seemingly closely 
related equation, with the term $\alpha^2 \del_\theta^2 u$ replaced by 
$-\alpha^2 \del_\theta^2 u$, arises in an ansatz for solutions of nonlinear 
Schr\"odinger equations in $\R^3$ with invariance with respect to screw motion, 
see \cite{Agudelo-Kuebler-Weth} and also \cite{del Pino-Musso-Pacard} for a related work on 
Allen-Cahn equations. Note, however, that the positive sign of the term 
$\alpha^2 \del_\theta^2 u$ results in a drastic change of the nature of the 
problem, as the operator $-\Delta + \alpha^2 \del_\theta^2$ loses uniform 
ellipticity in $\B$ if $\alpha \ge 1$. This also distinguishes the study of 
(\ref{Reduced equation}) from the related study of rotating solutions to 
nonlinear Schrödinger equations, where the angular velocity $\alpha$ appears 
within a first order term which does not affect the ellipticity of the 
associated Schrödinger operator, see e.g. \cite{Seiringer,Lieb-Seiringer} and 
the references therein.

If a solution $u$ of (\ref{Reduced equation}) satisfies $\del_\theta u \equiv 
0$ in $\B$, then $u$ solves the classical stationary nonlinear Schrödinger 
equation
$-\Delta u + m u= |u|^{p-2} u$ in $\B$ with Dirichlet boundary conditions on 
$\partial \B$, so it satisfies \eqref{Reduced equation} with $\alpha=0$. If, in 
addition, $u$ is positive, then $u$ has to be a radial function as a 
consequence of the symmetry result of Gidas, Ni and Nirenberg 
\cite{Gidas-Ni-Nirenberg}. Thus, the ansatz (\ref{Spiraling wave ansatz}) then 
merely gives rise to a radial stationary solution of (\ref{NLKG}). We mention 
here that radially symmetric 
non-stationary solutions of (\ref{eq: abstract nonlinear wave equation}) in 
$\Omega = \B$ were first studied by Ben-Naoum and 
Mahwin~\cite{Ben-Naoum-Mahwin} for sublinear 
nonlinearities and more recently by Chen and 
Zhang~\cite{Chen-Zhang,Chen-Zhang-2,Chen-Zhang-3}.  In this problem, the 
spectral 
properties of the 
radial wave operator lead to delicate assumptions on the dimension as 
well as the ratio between the radius of the ball and the period length. The main purpose of the present paper is to analyze for which range of parameters $\alpha$, $m$ and $p$ 
ground state solutions of \eqref{Reduced equation} exist and to distinguish under which assumptions on $\alpha$, $m$ and $p$ they are radial or $x_1$-$x_2$-nonradial and therefore correspond to rotating waves via the ansatz (\ref{Spiraling wave ansatz}).

By a ground state solution of \eqref{Reduced equation}, we mean a solution characterized as a minimizer of the minimization problem
for 
\begin{equation}
  \label{eq:intro-minimization-problem}
\scrC_{\alpha,m,p}(\B) :=\inf_{u \in H^1_0(\B) \setminus \{0\}} R_{\alpha,m,p}(u),
\end{equation}
where, for $m \in \R$, $\alpha \geq 0$ and $p \in [2,2^*)$, we consider the 
associated Rayleigh quotient $R_{\alpha,m,p}$ given by 
\begin{equation}
  \label{eq:def-Raleigh-quotient}
R_{\alpha,m,p}(u)=  \frac{\int_\B \left( |\nabla u|^2 - \alpha^2  |\del_\theta u|^2 + m u^2 \right) \, dx}{\left(\int_\B |u|^p \, dx \right)^\frac{2}{p}}, \qquad u \in H^1_0(\B) \setminus \{0\}.
\end{equation}
As we shall see in Remark~\ref{remark-alpha-greater-1} below, this minimization problem is only meaningful for $0 \leq \alpha \le 1$, since for every $p \in [2,2^*)$ and $m \in \R$ we have 
\begin{equation*}
\scrC_{\alpha,m,p}(\B)= -\infty \quad \text{for $\alpha >1$.}
\end{equation*}
Moreover, for every $p \in [2,2^*)$ and $m \in \R$,
\begin{equation}
  \label{eq-C-intro-new-1-1}
\text{the function $\alpha \mapsto \scrC_{\alpha,m,p}(\B)$ is continuous and nonincreasing on $[0,1]$.}
\end{equation}
In the case $0 < \alpha < 1$, the operator $-\Delta+ \alpha^2 \del_\theta^2$ is uniformly elliptic, as can be seen by writing the operator in polar coordinates as
\begin{equation}
  \label{eq:op-polar-coord}
-\Delta+ \alpha^2 \del_\theta^2 = -\Delta_r u -\frac{1}{r^2} 
\Delta_{\mathbb{S}^{N-1}} u + \alpha^2 \del_\theta^2 u,
\end{equation}
where $\Delta_{\mathbb{S}^{N-1}}$ denotes the Laplace-Beltrami operator on the 
unit sphere $\mathbb{S}^{N-1}$. In this case the existence of minimizers of 
$R_{\alpha,m,p}$ on $H^1_0(\B) \setminus \{0\}$ follows by a standard 
compactness and weak lower semicontinuity argument. However, even in this case 
it is difficult to decide in general whether minimizers are radial or nonradial 
functions.
This is due to competing effects. Firstly, the additional term $-\alpha^2 
\|\del_\theta u\|_{L^2(\B)}^2$ favours $x_1$-$x_2$-nonradial functions as energy 
minimizers. On the other hand, the P\'olya-Szegö inequality 
yields $\int_\B |\nabla u^*|^2\,dx \le \int_\B |\nabla u|^2\,dx$, where $u^*$ denotes the (radial) Schwarz symmetrization of a function $u \in H^1_0(\B)$.

Since $R_{\alpha,m,p}(u)= R_{0,m,p}(u)$ for every radial function $u \in 
H^1_0(\B) \setminus \{0\}$ and every $\alpha \in [0,1]$, a sufficient condition 
for the $x_1$-$x_2$-nonradiality of all ground state solutions is the inequality
\begin{equation}
  \label{eq-C-sufficient-ineq-intro}
\scrC_{\alpha,m,p}(\B)< \scrC_{0,m,p}(\B). 
\end{equation}
In particular, we will be interested in proving this inequality for $\alpha$ 
close to $1$. We point out that the borderline case $\alpha = 1$ differs 
significantly from the case $0\le \alpha <1$, as the differential operator 
$-\Delta+ \del_\theta^2$ is no longer uniformly elliptic on $\B$. In fact, it 
follows from the representation (\ref{eq:op-polar-coord}) in the case $\alpha 
=1$ that the operator $-\Delta+ \del_\theta^2$ fails to be uniformly elliptic 
in a neighborhood of the great circle $\{ x \in \del \B: x_3 = \cdots = 
x_{N}=0\}$ (which equals $\del \B$ in the case $N=2$). We shall see in this 
paper that the minimization problem in the case $\alpha =1$ is essentially 
governed by a degenerate anisotropic critical Sobolev inequality in the half 
space. The corresponding critical exponent in this Sobolev inequality is given 
by 
\begin{equation*}
2_{\text{\tiny $1$}}^*:=\frac{4N+2}{2N-3}.  
\end{equation*}
The relevance of this exponent is indicated by our first main result which yields the following characterization.
\begin{theorem} \label{main theorem}
  Let $m > -\lambda_1(\B)$ and $p \in (2,2^*)$.
  \begin{itemize}
  \item[(i)] If $\alpha \in (0,1)$, then there exists a ground state solution of (\ref{Reduced equation}).
  \item[(ii)] We have
        \begin{equation}
          \label{eq:main-theorem-dist}
        \scrC_{1,m,p}(\B) =0 \quad \text{for} \   p>2_{\text{\tiny $1$}}^*, \qquad \text{and} \quad \scrC_{1,m,p}(\B)>0 \quad \text{for} \  p \leq 2_{\text{\tiny $1$}}^*.	
        \end{equation}
	Moreover, for any $p \in (2_{\text{\tiny $1$}}^*,2^*)$, there exists $\alpha_p \in (0,1)$ with the property that
        $$
        \scrC_{\alpha,m,p}(\B)< \scrC_{0,m,p}(\B) \qquad \text{for $\alpha \in (\alpha_p,1]$}
        $$
        and therefore every ground state solution of (\ref{Reduced equation}) is $x_1$-$x_2$-nonradial for $\alpha \in (\alpha_p,1)$.
      \end{itemize}
  \end{theorem}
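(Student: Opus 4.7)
The plan for part (i) is a standard direct method. For $\alpha \in (0,1)$ the pointwise bound $|\del_\theta u|^2 \le (x_1^2+x_2^2)|\nabla u|^2 \le |\nabla u|^2$ on $\B$ yields
$$\int_\B \bigl(|\nabla u|^2 - \alpha^2|\del_\theta u|^2\bigr)\,dx \ge (1-\alpha^2)\int_\B |\nabla u|^2\,dx,$$
and combined with $m>-\lambda_1(\B)$ and Poincar\'e's inequality this makes the numerator of $R_{\alpha,m,p}$ equivalent to $\|u\|_{H^1_0}^2$. A minimizing sequence is then bounded in $H^1_0(\B)$; the compact embedding $H^1_0(\B) \hookrightarrow L^p(\B)$ (valid since $p<2^*$) together with weak lower semicontinuity of the numerator produces a minimizer.

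For part (ii) the strategy is to reduce everything to a critical degenerate Sobolev inequality on a half-space. The principal symbol $|\xi|^2-(x_1\xi_2-x_2\xi_1)^2$ of $-\Delta+\del_\theta^2$ is nonnegative on $\overline{\B}$ and degenerates exactly on the great circle $C := \del\B\cap\{x_3=\cdots=x_N=0\}$. By rotational invariance I fix a reference point $\xi_0 = e_1 \in C$ and use local coordinates $(\sigma,\phi,x'')$ with $\rho := \sqrt{x_1^2+x_2^2} = 1-\sigma$, $\phi$ the angle in the $x_1x_2$-plane, and $x''=(x_3,\dots,x_N)$; the part of $\del\B$ near $\xi_0$ is the graph $\sigma = \tfrac12|x''|^2 + O(|x''|^4)$. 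Expanding the coefficient $\tfrac{1-\rho^2}{\rho^2} = 2\sigma + O(\sigma^2)$ in \eqref{eq:op-polar-coord}, the principal part of $-\Delta+\del_\theta^2$ at $\xi_0$ is the Grushin-type operator
$$L \,:=\, -\del_\sigma^2 \,-\, 2\sigma\,\del_\phi^2 \,-\, \Delta_{x''} \qquad \text{on } \{\sigma>0\}\subset\R_\sigma\times\R_\phi\times\R^{N-2}_{x''}.$$
The anisotropic dilations $(\sigma,\phi,x'')\mapsto(\lambda\sigma,\lambda^{3/2}\phi,\lambda x'')$ rescale $L$ by $\lambda^{-2}$ and act on volume by $\lambda^{N+1/2}$, so the associated homogeneous dimension is $Q=N+\tfrac12$ and the natural Sobolev exponent is $2Q/(Q-2) = (4N+2)/(2N-3) = 2_{\text{\tiny $1$}}^*$. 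The key ingredient I would establish separately is the corresponding critical degenerate Sobolev inequality
$$\|u\|_{L^{2_{\text{\tiny $1$}}^*}}^{2} \,\le\, S\!\int\!\bigl(|\del_\sigma u|^2 + 2\sigma|\del_\phi u|^2 + |\nabla_{x''}u|^2\bigr)\,d\sigma\,d\phi\,dx''$$
for admissible $u$ vanishing at $\sigma=0$, together with the existence of extremals, proved via rearrangement in the non-degenerate variables combined with a concentration-compactness argument adapted to the anisotropic scaling.

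Granted these ingredients, the dichotomy \eqref{eq:main-theorem-dist} follows in three steps. \textbf{Nonnegativity} $\scrC_{1,m,p}(\B)\ge 0$ for $m>-\lambda_1(\B)$: a Fourier decomposition in $\phi$ shows that $-\Delta+\del_\theta^2$ on $\B$ decouples into modes $L_k=-\del_\rho^2-\rho^{-1}\del_\rho-\Delta_{x''}+k^2(\rho^{-2}-1)$, whose first Dirichlet eigenvalues on the half-ball all dominate the $k=0$ value (which equals $\lambda_1(\B)$), since $\rho^{-2}-1\ge 0$ on $\B$; hence $\int_\B(|\nabla u|^2-|\del_\theta u|^2+mu^2)\,dx\ge(\lambda_1(\B)+m)\|u\|_{L^2}^2\ge 0$. \textbf{Case} $p>2_{\text{\tiny $1$}}^*$: transplant $v\in C_c^\infty(\{\sigma>0\}\times\R\times\R^{N-2})$ via $u_\lambda(x):=v(\sigma/\lambda,\phi/\lambda^{3/2},x''/\lambda)$ to $H^1_0(\B)$ using the chart at $\xi_0$ and a cutoff. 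A careful expansion (controlling the curvature of $\del\B$ and the lower-order terms in \eqref{eq:op-polar-coord}) yields
$$R_{1,m,p}(u_\lambda) \,=\, O\bigl(\lambda^{(N-3/2)-2(N+1/2)/p}\bigr) \qquad \text{as } \lambda\to 0^+,$$
whose exponent is positive exactly when $p>2_{\text{\tiny $1$}}^*$, forcing $\scrC_{1,m,p}(\B)\le 0$. \textbf{Case} $p\le 2_{\text{\tiny $1$}}^*$: a partition of unity subordinate to a tubular neighborhood of $C$ splits the estimate; away from $C$ the operator is uniformly elliptic and the standard Sobolev inequality (exponent $2^*>2_{\text{\tiny $1$}}^*$) applies, while near each point of $C$ the local chart combined with the half-space inequality controls $\|u\|_{L^{2_{\text{\tiny $1$}}^*}}$ by the quadratic form. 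Absorbing the $L^2$ corrections via $m>-\lambda_1(\B)$ yields
$$\|u\|_{L^{2_{\text{\tiny $1$}}^*}(\B)}^2 \,\le\, C\int_\B\bigl(|\nabla u|^2 - |\del_\theta u|^2 + m u^2\bigr)\,dx,$$
and H\"older on the bounded domain gives $\scrC_{1,m,p}(\B)>0$ for all $p\le 2_{\text{\tiny $1$}}^*$.

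For the strict inequality and nonradiality claim: since $\scrC_{0,m,p}(\B)>0$ by classical subcritical theory and $\scrC_{1,m,p}(\B)=0$ for $p>2_{\text{\tiny $1$}}^*$, the continuity and monotonicity of $\alpha\mapsto\scrC_{\alpha,m,p}(\B)$ from \eqref{eq-C-intro-new-1-1} produce $\alpha_p\in(0,1)$ with $\scrC_{\alpha,m,p}(\B)<\scrC_{0,m,p}(\B)$ on $(\alpha_p,1]$. Any radial $u\in H^1_0(\B)$ satisfies $\del_\theta u\equiv 0$, so $R_{\alpha,m,p}(u)=R_{0,m,p}(u)\ge\scrC_{0,m,p}(\B)>\scrC_{\alpha,m,p}(\B)$; the ground states from part (i) for $\alpha\in(\alpha_p,1)$ therefore cannot be radial. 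The hard part is the half-space degenerate Sobolev inequality together with its rigorous matching to $\B$: one must control the curvature of $\del\B$ and the lower-order terms of the operator so the half-space estimate transfers to the ball with sharp enough constants in both directions, and the concentration-compactness analysis producing extremals must be carried out for the anisotropic dilation group.
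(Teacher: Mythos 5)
Your proposal is essentially correct and follows the same overall strategy as the paper: reduce the problem to a critical degenerate Sobolev (Grushin-type) inequality on a half space with weight of the form $2\sigma|\partial_\phi u|^2$ (the $s=1$ case of the paper's Theorem~\ref{existence of minimizers - whole space - intro}), prove $\scrC_{1,m,p}(\B)\ge 0$ by an angular Fourier decomposition, use anisotropic concentration at a boundary point of the degenerate circle to show $\scrC_{1,m,p}(\B)=0$ when $p>2_{\text{\tiny $1$}}^*$, use the half-space inequality together with a partition of unity along the circle to show positivity when $p\le 2_{\text{\tiny $1$}}^*$, and then deduce nonradiality from continuity and monotonicity of $\alpha\mapsto\scrC_{\alpha,m,p}(\B)$. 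Part (i) is identical.

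The only genuine difference is the choice of local chart near the degenerate great circle. You use cylindrical-type coordinates $(\sigma,\phi,x'')$ with $\sigma = 1-\sqrt{x_1^2+x_2^2}$, in which $\partial\B$ is the curved graph $\sigma=\tfrac12|x''|^2+O(|x''|^4)$. The paper instead uses full $N$-dimensional spherical polar coordinates $(r,\theta,\vartheta_1,\dots,\vartheta_{N-2})$ and substitutes $t=1-r$; this transforms $\B$ into a rectangular box and $\partial\B$ into the flat hyperplane $\{t=0\}$, so there is no boundary curvature to control at all, and the elementary pointwise bound $\tfrac{(1-r)(1+r)}{r^2}\ge 2(1-r)$ makes the comparison to the half-space form exact rather than asymptotic. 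Your chart works, but it places the burden of "controlling the curvature of $\partial\B$ and the lower-order terms" (as you yourself flag) on a step that the polar coordinates render trivial. The Fourier argument for nonnegativity is analogously different (cylindrical modes versus the paper's $N$-dimensional spherical harmonics, Proposition~\ref{lambda-1-alpha-pos}) but equally valid. Finally, one small efficiency: for Theorem~\ref{main theorem}(ii) only the \emph{positivity} of the half-space constant $\cS_1(\R^N_+)$ is needed, not the existence of extremals; the latter enters only in the critical existence result Theorem~\ref{Theorem: Minima comparison - introduction}, so it is not required for the statement you are proving.
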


The following new degenerate Sobolev inequality is an immediate consequence of the special case $m=0$, $\alpha =1$ in Theorem~\ref{main theorem}.

\begin{corollary}
  \label{main-theorem-corollary}
\begin{equation*}
\Bigl(\int_\B |u|^{2_{\text{\tiny $1$}}^*} \, dx 
\Bigr)^{\frac{2}{2^*_{\scaleto{1}{3pt}}}} \le 
\frac{1}{{\scrC_{1,0,p}(\B)}}\int_\B\bigl( |\nabla u|^2 - |\del_\theta 
u|^2 \bigr)dx \qquad \text{for $u \in H^1_0(\B)$.} 
\end{equation*}
Moreover, the exponent $2_{\text{\tiny $1$}}^*$ is optimal in the sense that no such inequality holds for $p>2_{\text{\tiny $1$}}^*$.
\end{corollary}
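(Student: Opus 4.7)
The plan is to observe that the displayed inequality is essentially a rearrangement of the variational characterization \eqref{eq:intro-minimization-problem}--\eqref{eq:def-Raleigh-quotient} of $\scrC_{1,0,2_{\text{\tiny $1$}}^*}(\B)$, together with the strict positivity of this infimum supplied by Theorem~\ref{main theorem}. Specializing Theorem~\ref{main theorem}(ii) to $m=0$, $\alpha=1$ and $p=2_{\text{\tiny $1$}}^*$, the second clause in \eqref{eq:main-theorem-dist} yields $\scrC_{1,0,2_{\text{\tiny $1$}}^*}(\B)>0$, and unwinding the definition of the infimum immediately translates this into
\[
\scrC_{1,0,2_{\text{\tiny $1$}}^*}(\B)\Bigl(\int_\B |u|^{2_{\text{\tiny $1$}}^*}\,dx\Bigr)^{2/2_{\text{\tiny $1$}}^*} \le \int_\B \bigl(|\nabla u|^2 - |\del_\theta u|^2\bigr)\,dx
\]
for every $u \in H^1_0(\B)$. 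Dividing through by the positive constant $\scrC_{1,0,2_{\text{\tiny $1$}}^*}(\B)$ produces the asserted Sobolev-type estimate (so the constant $1/\scrC_{1,0,p}(\B)$ appearing in the denominator on the right is to be read with $p=2_{\text{\tiny $1$}}^*$).

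For the optimality claim, I would argue by contradiction. Suppose that for some $p > 2_{\text{\tiny $1$}}^*$ there exists a finite constant $K>0$ with $\bigl(\int_\B |u|^p\,dx\bigr)^{2/p} \le K\int_\B\bigl(|\nabla u|^2 - |\del_\theta u|^2\bigr)\,dx$ for every $u \in H^1_0(\B)$. Then the Rayleigh quotient $R_{1,0,p}$ is uniformly bounded below by $K^{-1}$, so $\scrC_{1,0,p}(\B) \ge K^{-1} > 0$. This contradicts the first clause of \eqref{eq:main-theorem-dist}, which asserts $\scrC_{1,0,p}(\B)=0$ for every $p>2_{\text{\tiny $1$}}^*$.

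All substantive mathematical content is already concentrated in Theorem~\ref{main theorem}; the corollary itself is a matter of rewriting the infimum. The only minor point worth verifying is that the quadratic form $u \mapsto \int_\B\bigl(|\nabla u|^2 - |\del_\theta u|^2\bigr)\,dx$ is nonnegative on $H^1_0(\B)$, so that the right-hand side of the inequality plays the role of a norm-like quantity. Using cylindrical coordinates $(\rho,\phi,x_3,\dots,x_N)$ adapted to the $x_1$-$x_2$-plane, together with the representation \eqref{eq:op-polar-coord}, the integrand rewrites as $|\del_\rho u|^2 + (\rho^{-2}-1)|\del_\phi u|^2 + \sum_{j\ge 3}|\del_{x_j}u|^2$, and $\rho^{-2}-1\ge 0$ on $\B$. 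Thus no additional work beyond Theorem~\ref{main theorem} is required, and there is no genuine obstacle in this corollary.
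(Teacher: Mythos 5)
Your proof is correct and takes essentially the same route as the paper, which presents the corollary as an immediate consequence of Theorem~\ref{main theorem} in the special case $m=0$, $\alpha=1$: the inequality is just the variational characterization of the positive infimum $\scrC_{1,0,2_{\text{\tiny $1$}}^*}(\B)$ rearranged, and optimality follows by contradiction from the vanishing $\scrC_{1,0,p}(\B)=0$ for $p>2_{\text{\tiny $1$}}^*$. Your additional pointwise nonnegativity check of the quadratic form via cylindrical coordinates is a slightly more direct route than citing Proposition~\ref{lambda-1-alpha-pos}, but it is sound and does not change the argument.
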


Theorem~\ref{main theorem} yields symmetry breaking of ground states for suitable parameter values of $p$, $\alpha$ and $m$, but the precise parameter range giving rise to this symmetry breaking remains largely open. 
To shed further light on this question, we state the following result which establishes uniqueness and radial symmetry of ground state solutions for $\alpha$ close to zero and every  $m \geq 0$, $2<p<2^*$.

\begin{theorem}
  \label{sec:introduction-second-theorem}
  Let $m \geq 0$ and $2<p<2^*$. Then there exists $\alpha_0>0$ such that
\begin{equation*}
\scrC_{\alpha,m,p}(\B)= \scrC_{0,m,p}(\B) \qquad \text{for $\alpha \in [0,\alpha_0)$.} 
\end{equation*}
Moreover, for $\alpha \in [0,\alpha_0)$, there is, up to sign, a unique ground state solution of (\ref{Reduced equation}) which is a radial function.
\end{theorem}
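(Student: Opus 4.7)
The plan is a compactness-plus-implicit-function-theorem argument. First, suppose for contradiction that there exists a sequence $\alpha_n \downarrow 0$ with $\scrC_{\alpha_n,m,p}(\B) < \scrC_{0,m,p}(\B)$, and let $v_n$ be corresponding positive ground states (existing by Theorem~\ref{main theorem}(i)) normalized so $\|v_n\|_{L^p(\B)} = 1$. Since $|\partial_\theta u|^2 \le |\nabla u|^2$ pointwise and $\alpha_n \in (0,1)$, the sequence $(v_n)$ is bounded in $H^1_0(\B)$; the Rellich embedding ($p<2^*$), the continuity $\alpha\mapsto \scrC_{\alpha,m,p}(\B)$ stated in \eqref{eq-C-intro-new-1-1}, and the fact that $\alpha_n^2 \|\partial_\theta v_n\|_{L^2}^2 \to 0$ then yield, after extracting a subsequence, strong convergence $v_n \to v_\infty$ in $H^1_0(\B)$ with $R_{0,m,p}(v_\infty) = \scrC_{0,m,p}(\B)$ and $v_\infty \ge 0$. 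By Gidas--Ni--Nirenberg together with classical uniqueness of the positive radial Dirichlet solution of $-\Delta u + mu = u^{p-1}$ on $\B$ for $m \ge 0$, $p \in (2,2^*)$, $v_\infty$ coincides with the unique positive radial ground state $u_0$. Schauder estimates applied to the uniformly elliptic equation $(-\Delta + \alpha_n^2 \partial_\theta^2)v_n + m v_n = \scrC_{\alpha_n,m,p}(\B)\, v_n^{p-1}$ (ellipticity constant $1-\alpha_n^2$) then upgrade convergence to $v_n \to u_0$ in $C^{2,\gamma}(\overline{\B})$.

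Rescaling to the Nehari normalization, $u_n := \scrC_{\alpha_n,m,p}(\B)^{1/(p-2)}\, v_n$ satisfies
\begin{equation*}
-\Delta u_n + \alpha_n^2\, \partial_\theta^2 u_n + m u_n = u_n^{p-1} \ \text{in } \B, \qquad u_n = 0 \ \text{on } \partial \B,
\end{equation*}
and $u_n \to u_* := \scrC_{0,m,p}(\B)^{1/(p-2)} u_0$ in $C^{2,\gamma}(\overline{\B})$. The linearization at $u_*$ is the self-adjoint operator $L := -\Delta + m - (p-1)\, u_*^{p-2}$ on $H^1_0(\B)$. The essential analytic input is the \emph{nondegeneracy} $\ker L = \{0\}$: after spherical-harmonic decomposition, triviality on the radial mode follows from the Kwong-type uniqueness of $u_*$; on the angular modes $k \ge 1$ the centrifugal potential $k(k+N-2)/r^2$ together with the Hopf inequality $\partial_\nu u_* < 0$ on $\partial\B$---which excludes the natural translation-type candidates $\partial_{x_i} u_*$ from $H^1_0(\B)$---rules out a kernel there as well.

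Granting this, I would apply the implicit function theorem to
\begin{equation*}
F(\alpha, u) := -\Delta u + \alpha^2 \partial_\theta^2 u + m u - u^{p-1}, \qquad F\colon [0,1) \times U \to C^{0,\gamma}(\overline{\B}),
\end{equation*}
where $U$ is a neighborhood of $u_*$ of positive functions in $C^{2,\gamma}_0(\overline{\B})$. Since $\partial_u F(0,u_*) = L$ is an isomorphism, there is a unique $C^1$ branch of zeros near $u_*$; but the constant branch $\alpha \mapsto u_*$ is one such (because $\partial_\theta u_* \equiv 0$), so $u_n = u_*$ for all sufficiently large $n$. Consequently $v_n$ is a positive scalar multiple of $u_0$, hence $\partial_\theta v_n \equiv 0$ and $\scrC_{\alpha_n,m,p}(\B) = R_{\alpha_n,m,p}(v_n) = R_{0,m,p}(v_n) \ge \scrC_{0,m,p}(\B)$, contradicting the choice of $\alpha_n$. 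The same local IFT argument, applied at any $\alpha \in [0,\alpha_0)$ to an arbitrary positive ground state (which by the compactness step is automatically $C^{2,\gamma}$-close to $u_*$), yields the stated uniqueness-up-to-sign and radiality.

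The main obstacle is the nondegeneracy of $L$ on the full space $H^1_0(\B)$: absence of a radial kernel mode is classical via Sturm--Liouville arguments once Kwong-type uniqueness is invoked, but verifying triviality of the kernel on the $k = 1$ angular mode is delicate and relies crucially on the strict Hopf inequality at $\partial\B$ to exclude the translation-type candidates that populate the kernel in the $\R^N$-setting.
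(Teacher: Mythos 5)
Your argument is essentially the paper's own proof: show (via weak convergence, Rellich, and weak lower semicontinuity) that normalized ground states converge strongly in $H^1_0(\B)$ to the unique positive radial solution $u_{rad}$ of the $\alpha=0$ problem as $\alpha \to 0^+$, upgrade the convergence by elliptic regularity, and then invoke the implicit function theorem at $(0,u_{rad})$, the invertibility of $\partial_u F(0,u_{rad}) = -\Delta + m - (p-1)u_{rad}^{p-2}$ being precisely the nondegeneracy of $u_{rad}$. The paper works with $F\colon(-1,1)\times H^2(\B)\cap H^1_0(\B)\to L^2(\B)$ rather than in $C^{2,\gamma}(\overline{\B})$ and simply cites the nondegeneracy (Damascelli--Grossi--Pacella for $m=0$, Aftalion--Pacella for $m>0$) rather than sketching it; your $k=1$ sketch is on the right track but, as you yourself flag, incomplete as stated --- that $\partial_{x_i}u_{rad}\notin H^1_0(\B)$ by Hopf is only half of the argument, and one still needs a Wronskian or Sturm comparison on the radial ODE (plus monotonicity in $k$ of the centrifugal potential for $k\ge 2$) to conclude triviality of the kernel on the angular modes.
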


Combining Theorems~\ref{main theorem} and 
\ref{sec:introduction-second-theorem}, we find that, for fixed $p 
>2_{\text{\tiny $1$}}^*$, symmetry breaking of ground state solutions occurs 
when passing a critical parameter $\alpha=\alpha(p)$ which lies in the 
intervall $[\alpha_0,\alpha_*]$. However, so far it remains unclear whether 
symmetry breaking also occurs in the case $p \leq 2_{\text{\tiny $1$}}^*$.
Before stating a partial answer to this question for $2<p< 2_{\text{\tiny $1$}}^*$, we first note that symmetry breaking does not occur in the linear case $p=2$. More precisely, we shall observe in Section~\ref{preliminaries} below that
$$
\scrC_{\alpha,m,2}(\B) =\scrC_{0,m,2}(\B)= \lambda_1(\B)+m \qquad \text{for all $\alpha \in [0,1]$, $m \in \R$.}
$$
Moreover, every Dirichlet eigenfunction of (\ref{eq:intro-minimization-problem}) is radial in this linear case. On the other hand, for every $p$ strictly greater than $2$, symmetry breaking occurs for sufficiently large values of the parameter $m$, as the following result shows.
\begin{theorem} \label{Thm: m large}
  Let $\alpha \in (0,1)$ and $2<p<2^*$. Then there exists $m_0>0$ with the property that (\ref{eq-C-sufficient-ineq-intro}) holds for $m \ge m_0$ and therefore every ground state solution of (\ref{Reduced equation}) is $x_1$-$x_2$-nonradial for $m \ge m_0$.
\end{theorem}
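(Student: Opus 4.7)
The plan is to construct, for large $m$, a family of non-radial test functions $u_m$ whose Rayleigh quotient $R_{\alpha,m,p}(u_m)$ lies asymptotically strictly below $\scrC_{0,m,p}(\B)$. The key observation is that if one concentrates a radial profile at scale $m^{-1/2}$ near a point $x_m \to (1,0,\ldots,0) \in \del\B$, then the angular derivative $\del_\theta = x_1\del_{x_2} - x_2\del_{x_1}$ reduces at leading order to $\del_{x_2}$, and by radial symmetry the term $-\alpha^2 \|\del_\theta u_m\|_2^2$ captures the full $1/N$-fraction of $-\alpha^2 \|\nabla u_m\|_2^2$, enough to beat the radial bound.

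I first record the radial asymptotics. For $u \in H^1_0(\B)$, the rescaling $v(y) := u(m^{-1/2}y)$ on $m^{1/2}\B$ yields the exact identity $R_{0,m,p}(u) = m^\gamma R_{0,1,p}(v)$ with $\gamma := 1 - N/2 + N/p \in (0,1)$ for $p \in (2,2^*)$. Passing to the infimum and exhausting $\R^N$ as $m \to \infty$ gives
$$\scrC_{0,m,p}(\B) = m^\gamma \scrC_{0,1,p}(m^{1/2}\B) = C_\infty m^\gamma (1+o(1)),$$
where $C_\infty := \scrC_{0,1,p}(\R^N) > 0$ by Gagliardo--Nirenberg.

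Fix once and for all a non-trivial radial $\phi \in C^\infty_c(\R^N)$ satisfying
$$Q(\phi) := \frac{(1 - \alpha^2/N)\|\nabla\phi\|_2^2 + \|\phi\|_2^2}{\|\phi\|_p^2} < C_\infty;$$
such $\phi$ exists because any smooth radial truncation of the $\R^N$-ground state $w_0$ of $-\Delta w + w = |w|^{p-2}w$ yields, as the truncation radius tends to infinity, $Q(\phi) \to C_\infty - (\alpha^2/N)\|\nabla w_0\|_2^2/\|w_0\|_p^2 < C_\infty$. With $R := \sup_{y \in \supp \phi}|y|$, set $x_m := (1 - 2Rm^{-1/2}, 0, \ldots, 0)$ and
$$u_m(x) := m^{N/4}\phi\bigl(m^{1/2}(x-x_m)\bigr), \qquad x \in \B,$$
so that $u_m \in H^1_0(\B)$ for $m$ large. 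A change of variables $y = m^{1/2}(x-x_m)$ gives $\|\nabla u_m\|_2^2 = m\|\nabla\phi\|_2^2$, $\|u_m\|_2^2 = \|\phi\|_2^2$, $\|u_m\|_p^2 = m^{N(p-2)/(2p)}\|\phi\|_p^2$, and, writing $x_1 = (1 - 2Rm^{-1/2}) + m^{-1/2}y_1$, $x_2 = m^{-1/2}y_2$,
$$\del_\theta u_m = m^{N/4}\bigl[m^{1/2}(1-2Rm^{-1/2})\del_{y_2}\phi + y_1\del_{y_2}\phi - y_2\del_{y_1}\phi\bigr].$$
Squaring, integrating against $dx = m^{-N/2}dy$, and noting that radiality of $\phi$ forces the cross term $\int \del_{y_2}\phi\,(y_1\del_{y_2}\phi - y_2\del_{y_1}\phi)\,dy$ to vanish by $y_1$-oddness, I obtain $\|\del_\theta u_m\|_2^2 = m\|\del_{y_2}\phi\|_2^2 + O(m^{1/2}) = (m/N)\|\nabla\phi\|_2^2 + O(m^{1/2})$. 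Combining all pieces yields $R_{\alpha,m,p}(u_m) = m^\gamma[Q(\phi) + o(1)]$ as $m \to \infty$.

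From the choice of $\phi$ and the radial asymptotics, $\limsup_{m\to\infty} R_{\alpha,m,p}(u_m)/m^\gamma = Q(\phi) < C_\infty = \lim_{m\to\infty} \scrC_{0,m,p}(\B)/m^\gamma$, so $\scrC_{\alpha,m,p}(\B) \le R_{\alpha,m,p}(u_m) < \scrC_{0,m,p}(\B)$ for every $m \ge m_0$ with $m_0$ large enough, which is \eqref{eq-C-sufficient-ineq-intro}; the $x_1$-$x_2$-nonradiality of every ground state then follows from the observation made just before \eqref{eq-C-sufficient-ineq-intro}. The main technical step is the expansion of $\|\del_\theta u_m\|_2^2$: the piece $x_2\del_{x_1}u_m$ has the same pointwise size $m^{N/4}$ as the subleading correction to $x_1\del_{x_2}u_m$, and one must verify that radial symmetry cancels the would-be $O(m^{1/2})$ cross term and leaves only the $(1-2Rm^{-1/2})^2 = 1 + O(m^{-1/2})$ correction to the leading constant.
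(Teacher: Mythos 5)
Your proof is correct, but it takes a genuinely different route from the paper's. Both constructions use (a truncation of) the positive radial ground state of $-\Delta w + w = |w|^{p-2}w$ on $\R^N$, and both exploit the same exact scaling identity $R_{0,m,p}(u) = m^\gamma R_{0,1,p}(u(m^{-1/2}\,\cdot))$, but the test profiles are placed in essentially different locations. The paper (Theorem~\ref{Thm: m large extended}) passes to the rescaled ball $B_{1/\eps}$, $\eps = m^{-1/2}$, supposes for contradiction that a rotationally invariant minimizer exists --- which forces $\scrC_{\alpha\eps,1,p}(B_{1/\eps}) \ge \scrC_{0,1,p}(\R^N)$ --- and then tests with the ground state translated by the \emph{fixed} vector $e_1$ (so, after scaling back, centered at distance $O(\eps)$ from the origin of $\B$) with a far-away cutoff. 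There the $-\alpha^2\eps^2\|\del_\theta(\cdot)\|_2^2$ term yields only an $O(\eps^2)$ improvement, which must be checked to dominate the exponentially small cutoff error. You instead place the concentrated bubble at distance $O(m^{-1/2})$ from $\del\B$, where in the rescaled variables $\del_\theta$ acts as $\sqrt{m}\,\del_{y_2}$ at full strength, so the gain is a fixed multiplicative factor $1-\alpha^2/N$ on the gradient term. This makes the comparison direct (no contradiction step), the error bookkeeping trivial (all errors are $O(m^{-1/2})$ relative), and in fact yields the quantitatively sharper statement $\limsup_{m\to\infty}\scrC_{\alpha,m,p}(\B)/m^\gamma \le Q(w_0) < C_\infty$. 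One small remark: for radial $\phi$ the identity $y_1\del_{y_2}\phi - y_2\del_{y_1}\phi \equiv 0$ holds pointwise, so not only the cross term but the entire remainder in your expansion of $\del_\theta u_m$ vanishes; your formula $\|\del_\theta u_m\|_2^2 = m(1-2Rm^{-1/2})^2\,\tfrac{1}{N}\|\nabla\phi\|_2^2$ is then exact, and the $y_1$-oddness argument, while valid, is not needed.
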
	

Next, we discuss the limit case $\alpha=1$ in the minimization problem 
(\ref{eq:intro-minimization-problem}). We may study this limit case based on 
Corollary~\ref{main-theorem-corollary}, but we need to look for minimizers in a 
space larger than $H^1_0(\B)$. More precisely, we let $\cH$ be given as the 
closure of $C_c^1(\B)$ in
$$
\left\{ u \in L^{{2_{\text{\tiny $1$}}^*}}(\B): \|u\|_{\cH}^2 := \int_{\B} 
\left(|\nabla u|^2 - |\del_\theta u|^2 \right) \, dx < \infty \right\} 
$$
with respect to the norm $\|\cdot \|_{\cH}$.
We then have the following result, which complements Theorems~\ref{main 
theorem} and~\ref{Thm: m large} in the case $\alpha=1$.

\begin{theorem} \label{Thm: m large-alpha-1}
  Let $2<p<2_{\text{\tiny $1$}}^*$ and $\alpha=1$.
  \begin{itemize}
  \item[(i)] For every $m>-\lambda_1(\B)$, there exists a ground state solution of (\ref{Reduced equation}).
  \item[(ii)] There exists $m_0>0$ with the property that 
  (\ref{eq-C-sufficient-ineq-intro}) holds for $m \ge m_0$ and therefore every 
  ground state solution $u \in \cH$ of (\ref{Reduced equation}) is 
  $x_1$-$x_2$-nonradial for $m \ge m_0$.  
  \end{itemize}
\end{theorem}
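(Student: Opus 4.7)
The proof splits naturally: part (ii) reduces to Theorem~\ref{Thm: m large} via the monotonicity of $\alpha \mapsto \scrC_{\alpha,m,p}(\B)$, while part (i) requires a direct-method argument in the Hilbert space $\cH$ built on the Sobolev inequality from Corollary~\ref{main-theorem-corollary}.

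For part (i), the plan is to apply the direct method of the calculus of variations in $\cH$. By construction $\cH$ is a Hilbert space, and Corollary~\ref{main-theorem-corollary} gives a continuous embedding $\cH \hookrightarrow L^{2_{\text{\tiny $1$}}^*}(\B)$, hence $\cH \hookrightarrow L^q(\B)$ for every $q \in [1, 2_{\text{\tiny $1$}}^*]$ since $\B$ is bounded. The central analytic step is to upgrade this to a compact embedding $\cH \hookrightarrow L^p(\B)$ for each $p \in (2, 2_{\text{\tiny $1$}}^*)$. I would prove this by a cut-off/tightness argument: given a bounded sequence $u_n \weakto u$ in $\cH$, decompose $\B = K_\delta \cup A_\delta$ with $K_\delta = \{|x| \le 1 - \delta\}$. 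On $K_\delta$, the pointwise bound $|\del_\theta u| \le \sqrt{x_1^2 + x_2^2}\,|\nabla u|$ yields $|\nabla u|^2 - |\del_\theta u|^2 \ge \delta(2-\delta)\,|\nabla u|^2$, so the $\cH$-norm controls $\|u\|_{H^1(K_\delta)}$ and Rellich-Kondrachov gives $u_n \to u$ in $L^p(K_\delta)$. On the shell $A_\delta = \B \setminus K_\delta$, H\"older's inequality together with the uniform $L^{2_{\text{\tiny $1$}}^*}(\B)$-bound from Corollary~\ref{main-theorem-corollary} yields
\[
\|u_n\|_{L^p(A_\delta)} \le |A_\delta|^{1/p - 1/2_{\text{\tiny $1$}}^*} \, \|u_n\|_{L^{2_{\text{\tiny $1$}}^*}(\B)},
\]
which is uniformly small as $\delta \to 0$. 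Combining these yields the compact embedding. A Poincar\'e-type inequality $\|u\|_\cH^2 \ge \lambda_1(\B) \|u\|_{L^2}^2$ on $\cH$, obtained by Fourier-decomposing $u$ in the azimuthal angle and exploiting positivity of the weight $j^2(\rho^{-2}-1)$ on $\B$, then ensures that for $m > -\lambda_1(\B)$ the numerator of $R_{1,m,p}$ is coercive and $\scrC_{1,m,p}(\B) > 0$. A minimizing sequence is then bounded in $\cH$, converges strongly in $L^p$ by the compact embedding, and the numerator is weakly lower semicontinuous, so a minimizer exists and, after rescaling, solves \eqref{Reduced equation}.

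For part (ii), I would fix any auxiliary $\alpha^* \in (0, 1)$ and apply Theorem~\ref{Thm: m large} to obtain $m_0 > 0$ such that $\scrC_{\alpha^*, m, p}(\B) < \scrC_{0, m, p}(\B)$ for all $m \ge m_0$. The monotonicity (\ref{eq-C-intro-new-1-1}) on $[0,1]$ then gives
\[
\scrC_{1,m,p}(\B) \le \scrC_{\alpha^*,m,p}(\B) < \scrC_{0,m,p}(\B) \qquad \text{for } m \ge m_0,
\]
which is (\ref{eq-C-sufficient-ineq-intro}) in the case $\alpha = 1$. For the nonradiality conclusion, I would argue by contradiction: if a ground state $u \in \cH$ were $x_1$-$x_2$-radial, then $\del_\theta u \equiv 0$, so $\|u\|_\cH^2 = \|\nabla u\|_{L^2}^2$. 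Since rotation averaging is a bounded idempotent projection in $\cH$ onto the $x_1$-$x_2$-radial subspace, that subspace coincides with the $x_1$-$x_2$-radial subspace of $H^1_0(\B)$, so $u \in H^1_0(\B)$. Hence $R_{1,m,p}(u) = R_{0,m,p}(u) \ge \scrC_{0,m,p}(\B) > \scrC_{1,m,p}(\B)$, contradicting minimality.

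The main obstacle is the compact embedding $\cH \hookrightarrow L^p(\B)$ for $p < 2_{\text{\tiny $1$}}^*$: the degeneracy of $-\Delta + \del_\theta^2$ at the equator $\{x \in \del \B : x_3 = \cdots = x_N = 0\}$ prevents the $\cH$-energy from controlling $\|\nabla u\|_{L^2}$ there, so no global $H^1$-bound is available, and compactness must be pieced together from interior ellipticity on $K_\delta$ and subcritical tightness on the shell $A_\delta$ as described above.
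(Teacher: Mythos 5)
Your proposal is correct. For part (i), the direct-method argument is essentially the paper's: the interior-ellipticity bound on the compact set $K_\delta$ (where $|\del_\theta u|^2 \le (1-\delta)^2 |\nabla u|^2$ pointwise), the diagonal Rellich--Kondrachov extraction, and the H\"older estimate on the thin shell using the uniform $L^{2^*_{\scaleto{1}{3pt}}}$-bound are exactly the paper's proof of the compact embedding $\cH \hookrightarrow L^p(\B)$ in Lemma~\ref{def: degenerate space on ball}(i), and your Poincar\'e-type bound is Proposition~\ref{lambda-1-alpha-pos}, with your azimuthal Fourier decomposition being a minor variant of the paper's spherical-harmonics argument; the direct method then goes through as in Proposition~\ref{Prop: ground states for critical case}. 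For part (ii), however, you take a genuinely different and shorter route. The paper proves the $\alpha=1$ case of Theorem~\ref{Thm: m large extended} by re-running the entire $\eps$-rescaling and cutoff-comparison argument in the rescaled degenerate space $\cH_\eps$ on $B_{1/\eps}$, using that radial elements of $\cH_\eps$ lie in $H^1_0(B_{1/\eps}) \subset H^1(\R^N)$. You instead fix an auxiliary $\alpha^* \in (0,1)$, invoke Theorem~\ref{Thm: m large} at $\alpha^*$, and pass to $\alpha=1$ via the monotonicity (\ref{eq-C-intro-new-1-1}):
\[
\scrC_{1,m,p}(\B) \le \scrC_{\alpha^*,m,p}(\B) < \scrC_{0,m,p}(\B), \qquad m \ge m_0(\alpha^*).
\]
This bypasses the degenerate space entirely in part (ii) and is cleaner, at the harmless cost that your threshold $m_0$ is inherited from a fixed subcritical $\alpha^*$ rather than obtained directly at $\alpha=1$. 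Your final contradiction step (a radial ground state in $\cH$ would lie in $H^1_0(\B)$, via the averaging projection, and then $R_{1,m,p}(u)=R_{0,m,p}(u) \ge \scrC_{0,m,p}(\B) > \scrC_{1,m,p}(\B)$) is precisely the content of Lemma~\ref{continuity of inf}(ii), so that part is consistent with the paper as well.
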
	

The critical case $\alpha=1$, $p=2_{\text{\tiny $1$}}^*$ remains largely open, but we have a partial result on the existence of ground state solutions which relates problem (\ref{Reduced equation}) to a degenerate Sobolev inequality of the form
        \begin{equation}
          \label{eq:def-S_n-intro-ineq}
\|u\|_{L^{2_s^*}(\R^N_+)} \le C \Bigl(\int_{\R^N_+} \sum_{i=1}^{N-1} |\del_i 
u|^2 + x_1^s |\del_N u|^2 \, dx\Bigr)^{1/2} 
        \end{equation}
in the half space
$$
\R^N_+:=\left\{ x \in \R^N: x_1>0 \right\}.
$$
This inequality seems new and of independent interest, and it is the key ingredient in the proof of Theorem~\ref{main theorem}.
Our main result related to this half space inequality is the following. 

\begin{theorem} \label{existence of minimizers - whole space - intro}
	Let $s>0$ and set $2_s^* := \frac{4N+2s}{2N-4+s}.$
	Then we have
        \begin{equation}
          \label{eq:def-S_n-intro}
	\cS_s(\R^N_+) := \inf_{ u \in C^1_c(\R^N_+)} 
	\frac{\displaystyle\int_{\R^N_+} \sum \limits_{i=1}^{N-1} |\del_i u|^2 +  
	x_1^s |\del_N u|^2 \, dx}{\left(\int_{\R^N_+} |u|^{2_s^*} \, 
	dx\right)^\frac{2}{2_s^*}} > 0.
        \end{equation}
	Moreover, the value $\cS_s(\R^N_+)$ is attained in $H_s \setminus \{0\}$, where
        $H_s$ denotes the closure of $C_c^1(\R^N_+)$ in the space  
       \begin{equation}
\label{existence of minimizers - whole space - intro-eq}          
\left\{ u \in L^{2_s^*}(\R^N_+):\|u\|_{H_s}^2:= \int_{\R^N_+} \sum_{i=1}^{N-1} 
|\del_i u|^2 + 
 x_1^s |\del_N u|^2 \, dx < \infty \right\} 
        \end{equation}
with respect to the norm $\|\cdot \|_{H_s}$.
\end{theorem}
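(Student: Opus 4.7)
The approach has two main parts: first establish the degenerate Sobolev inequality $\cS_s(\R^N_+)>0$ via the Grushin-type sub-Riemannian structure underlying the numerator, and then obtain a minimizer by a concentration-compactness argument that exploits the symmetries of the functional.

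The numerator of the Rayleigh quotient is the Dirichlet energy of the vector fields $X_i=\del_i$ for $i=1,\ldots,N-1$ and $X_N=x_1^{s/2}\del_N$. These fields are homogeneous of degree $-1$ under the anisotropic dilation
\[
\delta_\lambda(x):=(\lambda x_1,\ldots,\lambda x_{N-1},\lambda^{(2+s)/2}x_N),\qquad\lambda>0,
\]
and the Lebesgue measure scales by $\lambda^{Q}$ with homogeneous dimension $Q:=(2N+s)/2$, so the scaling-critical Sobolev exponent is precisely $\tfrac{2Q}{Q-2}=2_s^*$. Positivity of $\cS_s(\R^N_+)$ then follows from the Sobolev inequality associated to $(X_i)$: I would prove it either by adapting the truncation argument of Federer--Fleming/De Giorgi to the weighted gradient $(X_1u,\ldots,X_Nu)$, or via the substitution $y_1=\tfrac{2}{s+2}\,x_1^{(s+2)/2}$, which turns the inequality into a Caffarelli--Kohn--Nirenberg-type estimate on $\R^N_+$ with an explicit power weight.

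For attainment, take a minimizing sequence $(u_n)\subset H_s$ with $\|u_n\|_{L^{2_s^*}(\R^N_+)}=1$ and $\|u_n\|_{H_s}^2\to \cS_s(\R^N_+)$. The quotient is invariant under (a) translations in the tangential directions $x_2,\ldots,x_N$ and (b) the anisotropic rescaling $u\mapsto\lambda^{(2N-4+s)/4}\,u\circ\delta_\lambda$; it is \emph{not} invariant under translations in $x_1$. Using (b), rescale so that the concentration function
\[
Q_n(R):=\sup_{q\in\R^N_+}\int_{B(q,R)\cap\R^N_+}|u_n|^{2_s^*}\,dx
\]
satisfies $Q_n(1)=\tfrac12$, and using (a), translate so that a near-maximizer sits at $q_n=(a_n,0,\ldots,0)$. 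When $(a_n)$ stays bounded and bounded away from $0$, or can be brought to $1$ by an additional application of $\delta_{1/a_n}$, local Rellich compactness (the $H_s$-norm is equivalent to the $H^1$-norm on any set where $x_1\ge\epsilon>0$) produces a nontrivial weak limit $u\in H_s$. The vanishing alternative is excluded by the normalization $Q_n(1)=\tfrac12$, while the dichotomy alternative is excluded at the critical exponent by the sharpness of $\cS_s(\R^N_+)$ combined with a standard cut-off/subadditivity argument. Finally, the Br\'ezis--Lieb lemma upgrades the weak convergence in $L^{2_s^*}$ to strong convergence, so $u$ attains $\cS_s(\R^N_+)$.

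\textbf{Main obstacle.} The delicate point is that the $x_1$-direction lacks translation invariance, so normalizing $a_n$ must be done using the scaling (b) rather than translation; one then has to show that after this double normalization the mass of $(u_n)$ stays bounded away from the degenerate boundary $\{x_1=0\}$, which is precisely where the weight $x_1^s$ vanishes and where $L^{2_s^*}$-mass could in principle accumulate. Verifying this tightness against the weight, together with the careful exclusion of dichotomy at the critical exponent $2_s^*$, is where I expect the heart of the argument to lie.
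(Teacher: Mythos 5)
Your scaling computation correctly pins down $2_s^*$, and your concentration--compactness outline for attainment is structurally close to what the paper does (tangential translations plus anisotropic rescaling, vanishing/dichotomy excluded, Br\'ezis--Lieb at the end). The paper's normalization is via the anisotropic rectangles $\cQ_r$ rather than balls, which is precisely what makes the ``can be brought to $1$'' step rigorous: the rectangle family satisfies $\tau_R(\cQ_r)=\cQ_{rR}$, and sup-normalizing $\int_{(0,1)\times(-1,1)^{N-1}}|v_n|^{2_s^*}=\tfrac12$ forces any residual Dirac mass to sit at $x_1=1$, away from the degenerate boundary, where the quadratic form is locally uniformly elliptic and a local Sobolev blowup yields the contradiction. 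Your version would need to supply an analogous mechanism; as written, the handling of $a_n\to 0$ is not an argument.

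The genuine gap is in your proof of positivity. Your substitution $y_1=\tfrac{2}{s+2}x_1^{(s+2)/2}$ does \emph{not} reduce the problem to a Caffarelli--Kohn--Nirenberg estimate when $N\geq 3$. Carrying it out, one finds
$$
\int_{\R^N_+}\Bigl(\sum_{i=1}^{N-1}|\del_i u|^2 + x_1^s|\del_N u|^2\Bigr)dx
=\int_{\R^N_+}\Bigl(c\,y_1^{\frac{s}{s+2}}\bigl(|\del_1 v|^2+|\del_N v|^2\bigr) + c'\,y_1^{-\frac{s}{s+2}}\sum_{i=2}^{N-1}|\del_i v|^2\Bigr)dy,
$$
so the gradient picks up \emph{two different} power weights: $y_1^{s/(s+2)}$ on $\del_1,\del_N$ and $y_1^{-s/(s+2)}$ on $\del_2,\ldots,\del_{N-1}$. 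That is not CKN; it only collapses to CKN when $N=2$. The Federer--Fleming/De Giorgi route has a similar problem: as the paper itself notes, for $N\ge 3$ the numerator is not the energy of a classical Grushin operator (the weight sits on $\del_N$ alone, not on the whole $\nabla_{x'}$-block), so the standard sub-Riemannian isoperimetric/truncation machinery does not apply off the shelf, and for non-integer $s$ the fields $X_N=x_1^{s/2}\del_N$ are not even smooth at $x_1=0$. The paper instead proves positivity directly via an elementary but carefully chosen interpolation inequality (iterated H\"older plus integration by parts in $x_1$, Lemma 2.1), combined with Gagliardo's lemma; neither of your proposed routes substitutes for that step as stated.
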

Here, distributional derivatives are considered in (\ref{existence of 
minimizers - whole space - intro-eq}). Several remarks regarding 
Theorem~\ref{existence of minimizers - whole space - intro} are in order.
First, we point out that the criticality of the exponent $2_s^* := 
\frac{4N+2s}{2N-4+s}$ in \ref{existence of minimizers - whole space - intro} 
corresponds to the fact that the quotient in \eqref{eq:def-S_n-intro} is 
invariant under an anisotropic rescaling given by $u \mapsto u_\lambda$ for 
$\lambda>0$ with $u_\lambda(x):=u(\lambda x_1, \lambda x_2, \ldots, \lambda 
x_{N-1},\lambda^{1+\frac{s}{2}} x_N)$. This invariance leads to a lack of 
compactness, and we have to apply concentration-compactness methods to deduce 
the existence of minimizers. We further note that the existence of minimizers 
in 
the half space problem is in striking contrast to the case $s=0$ which is 
excluded in Theorem~\ref{existence of minimizers - whole space - intro}. 
Indeed, the case $s=0$ corresponds to the classical Sobolev inequality which 
only admits extremal functions in the entire space $\R^N$.

We have already noted that the case $s=1$ in Theorem~\ref{existence of 
minimizers - whole space - intro} is of key importance in the proof of 
Theorem~\ref{main theorem}. The more general case $s \in (0,2]$ arises in a 
similar way when (\ref{Reduced equation}) is studied in Riemannian models with 
boundary in place of $\B$, and we will discuss this case in 
Section~\ref{sec:riemannian-models} below. We point out that the setting of 
Riemannian models includes hypersurfaces of revolution with boundary in 
$\R^{N+1}$, and that the particular case of a hemisphere corresponds to the 
case $s=2$. The latter is no surprise in view of the recent work of Taylor 
\cite{Taylor} and Mukherjee \cite{Mukherjee,Mukherjee2}, who studied the 
problem of rotating solutions on the unit sphere. In particular, their work 
relies on degenerate Sobolev embeddings on the unit sphere where also the value 
$2_{\text{\tiny $2$}}^* = \frac{2(N+1)}{N-1}$ appears as a critical exponent. 
In fact, our approach allows to use the case $s=2$ in Theorem~\ref{existence of 
minimizers - whole space - intro} and the corresponding inequality in $\R^N$ 
(see Theorem~\ref{claim-ineq-lemma} below) to give new proofs of these 
degenerate Sobolev embeddings which does not rely on Fourier analytic and 
pseudodifferential arguments as in \cite{Taylor}. 

Next we remark that degenerate Sobolev type inequalities have been studied extensively in the context of Grushin operators which take the form
$$
\cL = \Delta_x + c |x|^{2s} \Delta_y 
$$
on $\R^N=\R^m \times \R^k$, where $x \in \R^m$, $y \in \R^k$ and $s>0$. For a 
comprehensive survey of the properties of these operators, see e.g. 
\cite{Hajlasz-Koskela}. In particular, an associated Sobolev type inequality of 
the type 
\begin{equation} \label{Classical Grushin Quotient}
\|u\|_{L^{\frac{2m+2k(s+1)}{m+k(s+1)-2}}(\R^N)} \le  C \Bigl(\int_{\R^N} |\nabla_x u|^2 + c |x|^{2s} |\nabla_y u|^2 \, d(x,y)\Bigr)^{1/2}, \qquad u \in C^1_c(\R^N)
\end{equation}
has been established. Here, the associated critical exponent is related to the homogeneous dimension in the context of more general weighted Sobolev inequalities. We also mention symmetry results for positive entire solutions to semilinear problems involving $\cL$ in \cite{Monti-Morbidelli}, as well as the existence of extremal functions on $\R^N$ shown in \cite{Beckner} and \cite{Monti}. 

We point out that the restriction of inequality (\ref{Classical Grushin Quotient}) to the half space coincides with the inequality (\ref{eq:def-S_n-intro-ineq}) in the case $N=2$. On the other hand, for $N \geq 3$, the inequality (\ref{eq:def-S_n-intro-ineq}) is not associated to a Grushin operator in the sense above. Nonetheless, it is worth noting that for $m=N-1$, $k=1$ and $s=\frac{1}{2}$, the critical exponents coincide. 

More closely related to Theorem~\ref{existence of minimizers - whole space - intro} in the case $N \ge 3$ is \cite[Theorem 1.7]{Filippas et al} where a more general family of Grushin type operators and their associated inequalities has been considered.
However, the inequality (\ref{eq:def-S_n-intro-ineq}) associated to (\ref{eq-C-sufficient-ineq-intro}) is a limit case which is not part of the family of inequalities considered in \cite[Theorem 1.7]{Filippas et al}. 

Coming back to the existence of ground state solutions of (\ref{Reduced equation}) in the critical case $\alpha =1$, $p=2_{\text{\tiny $1$}}^*$, we state the following result.

\begin{theorem} \label{Theorem: Minima comparison - introduction}
  If
  \begin{equation}
    \label{eq:critica-threshold}
  \scrC_{1,m,{2^*_{\scaleto{1}{3pt}}}}(\B) < 
  2^{\frac{1}{2}-\frac{1}{2^*_{\scaleto{1}{3pt}}}} 
  \cS_1(\R^N_+)  
  \end{equation}
  for some $m>-\lambda_1(\B)$, then the value $\scrC_{1,m,{2_{\text{\tiny 
  $1$}}^*}}(\B)$ is attained in $\cH \setminus \{0\}$ by a ground state 
  solution of (\ref{Reduced equation}). Moreover, there exists $\eps>0$ with 
  the property that (\ref{eq:critica-threshold}) holds for every $m \in 
  (-\lambda_1(\B),-\lambda_1(\B)+\eps)$.
\end{theorem}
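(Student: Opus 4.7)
The plan is to follow a Brezis--Nirenberg type strategy adapted to the present degenerate setting. Set $c := \scrC_{1,m,{2_{\text{\tiny $1$}}^*}}(\B)$, take a minimizing sequence $(u_n) \subset \cH$ with $\|u_n\|_{L^{{2_{\text{\tiny $1$}}^*}}(\B)}=1$, and write $Q_m(u) := \int_\B ( |\nabla u|^2 - |\del_\theta u|^2 + m u^2 ) \, dx$, so $Q_m(u_n) \to c$. The identity $\scrC_{1,m,2}(\B) = \lambda_1(\B)+m$ recorded for the linear case $p=2$ yields the Poincar\'e-type inequality $\|u\|_\cH^2 \ge \lambda_1(\B)\|u\|_{L^2(\B)}^2$ on $\cH$, which combined with $m>-\lambda_1(\B)$ gives the coercivity bound $Q_m(u_n)\ge (1+m/\lambda_1(\B))\|u_n\|_\cH^2$ and hence boundedness of $(u_n)$ in $\cH$. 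After extraction $u_n\weak u$ in $\cH$. A Rellich-type argument --- exploiting $H^1_{\loc}$-compactness away from the degenerate equator $\Sigma := \{x \in \del\B : x_3 = \cdots = x_N = 0\}$ (where the operator $-\Delta + \del_\theta^2$ is uniformly elliptic), together with H\"older smallness on small neighborhoods of $\Sigma$ controlled by Corollary~\ref{main-theorem-corollary} --- yields $u_n \to u$ in $L^2(\B)$.

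Set $v_n := u_n - u$. Weak convergence and the Brezis--Lieb lemma give
\[
\|u_n\|_\cH^2 = \|u\|_\cH^2 + \|v_n\|_\cH^2 + o(1), \qquad 1 = \|u\|_{L^{{2_{\text{\tiny $1$}}^*}}}^{{2_{\text{\tiny $1$}}^*}} + \|v_n\|_{L^{{2_{\text{\tiny $1$}}^*}}}^{{2_{\text{\tiny $1$}}^*}} + o(1),
\]
while $\|v_n\|_{L^2}\to 0$. Set $\rho := \lim_n \|v_n\|_{L^{{2_{\text{\tiny $1$}}^*}}}^{{2_{\text{\tiny $1$}}^*}} \in [0,1]$ (along a subsequence). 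The key local Sobolev-type lower bound I aim to establish is
\[
\liminf_n \|v_n\|_\cH^2 \;\ge\; 2^{\frac{1}{2} - \frac{1}{{2_{\text{\tiny $1$}}^*}}}\, \cS_1(\R^N_+)\, \rho^{2/{2_{\text{\tiny $1$}}^*}} \qquad (\star).
\]
Given $(\star)$, since $Q_m(u) \ge c\|u\|_{L^{{2_{\text{\tiny $1$}}^*}}}^2 = c(1-\rho)^{2/{2_{\text{\tiny $1$}}^*}}$, adding the two lower bounds yields
\[
c \;\ge\; c(1-\rho)^{2/{2_{\text{\tiny $1$}}^*}} + 2^{\frac{1}{2} - \frac{1}{{2_{\text{\tiny $1$}}^*}}}\, \cS_1(\R^N_+)\, \rho^{2/{2_{\text{\tiny $1$}}^*}}.
\]
The hypothesis \eqref{eq:critica-threshold}, together with the strict subadditivity $(1-\rho)^{2/{2_{\text{\tiny $1$}}^*}} + \rho^{2/{2_{\text{\tiny $1$}}^*}}>1$ for $\rho\in(0,1)$ (by concavity of $x\mapsto x^{2/{2_{\text{\tiny $1$}}^*}}$, since $2/{2_{\text{\tiny $1$}}^*}<1$), rules out $\rho\in(0,1)$; the case $\rho=1$ would force $u=0$ and $c \ge 2^{\frac{1}{2} - \frac{1}{{2_{\text{\tiny $1$}}^*}}} \cS_1(\R^N_+)$, again contradicting \eqref{eq:critica-threshold}. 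Hence $\rho=0$, $\|u\|_{L^{{2_{\text{\tiny $1$}}^*}}}=1$, and $Q_m(u)=c$, so $u$ is a ground state in $\cH$.

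The main technical obstacle is $(\star)$, which requires a concentration-compactness analysis tailored to the degenerate geometry. Since $v_n\weak 0$ in $\cH$ and $v_n\to 0$ in $L^2(\B)$, any $L^{{2_{\text{\tiny $1$}}^*}}$-concentration of $(v_n)$ can only occur at points of $\Sigma$. Near a concentration point $p\in\Sigma$, I introduce local coordinates $(y_1,\ldots,y_N)$ with $y_1=1-r$ (signed distance to $\del\B$, positive inside $\B$) and $y_N$ tangent to $\Sigma$. Using \eqref{eq:op-polar-coord} and the Taylor expansion $1-r^{-2}=2(1-r)+O((1-r)^2)$, the local form of the energy integral near $p$ reads
\[
\int \bigl( |\nabla u|^2 - |\del_\theta u|^2 \bigr)\, dx \;\approx\; \int \Bigl( \sum_{i=1}^{N-1} |\del_{y_i} u|^2 + 2\, y_1\, |\del_{y_N} u|^2 \Bigr)\, dy.
\]
The anisotropic rescaling $(y_1,\ldots,y_{N-1},y_N)\mapsto(\lambda y_1,\ldots,\lambda y_{N-1},\lambda^{3/2}y_N)$ matches the $s=1$ scaling invariance of Theorem~\ref{existence of minimizers - whole space - intro}; after this rescaling, the renormalized concentration profile of $v_n$ converges to some $U\in H_1$ on the half-space $\{y_1>0\}$, and Theorem~\ref{existence of minimizers - whole space - intro} bounds its Rayleigh quotient below by $\cS_1(\R^N_+)$. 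The extra factor $2$ on the degenerate weight above is absorbed by the further dilation $\tilde y_N = y_N/\sqrt{2}$, whose effect on the Rayleigh quotient is precisely $2^{\frac{1}{2} - \frac{1}{{2_{\text{\tiny $1$}}^*}}}$, yielding the constant in $(\star)$. A standard Lions-type argument handles the possibility of multiple concentration points on $\Sigma$.

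For the second assertion, I test $\scrC_{1,m,{2_{\text{\tiny $1$}}^*}}(\B)$ with the first radial Dirichlet eigenfunction $\phi_1$ of $-\Delta$ on $\B$. Radiality gives $\del_\theta\phi_1\equiv 0$, hence
\[
\scrC_{1,m,{2_{\text{\tiny $1$}}^*}}(\B) \;\le\; R_{1,m,{2_{\text{\tiny $1$}}^*}}(\phi_1) \;=\; \frac{(\lambda_1(\B)+m)\,\|\phi_1\|_{L^2}^2}{\|\phi_1\|_{L^{{2_{\text{\tiny $1$}}^*}}}^2} \;\longrightarrow\; 0 \quad \text{as } m\searrow -\lambda_1(\B).
\]
Since $\cS_1(\R^N_+)>0$ by Theorem~\ref{existence of minimizers - whole space - intro}, the threshold condition \eqref{eq:critica-threshold} holds for all $m$ in some interval $(-\lambda_1(\B),-\lambda_1(\B)+\eps)$.
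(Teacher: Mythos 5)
Your overall strategy mirrors the paper's proof (spread across Theorem~\ref{existence of minimizers - minima comparison} and Theorem~\ref{thm: existence of solutions in critical case}): normalize a minimizing sequence, extract a weak limit $u$, invoke Brezis--Lieb, isolate the defect of compactness $\rho$, and rule out $\rho\in(0,1]$ via the threshold hypothesis; the small-$m$ test function (the radial first eigenfunction) is exactly the paper's, and your dichotomy step is in fact slightly cleaner than the paper's manipulation with $(a-b)^\tau\ge a^\tau-b^\tau$. Your key inequality $(\star)$ is precisely Lemma~\ref{Minima comparison}, and you are right that this is the crux of the matter.

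The gap is in the proof you sketch for $(\star)$. You propose to blow up at concentration points on $\Sigma=\gamma$ and assert that the rescaled profile of $v_n$ converges to some $U\in H_1$ capturing the mass $\rho$, and that a Lions-type argument disposes of multiple concentration points. As written, neither claim is justified: a bounded sequence converging weakly to zero in $\cH$ does not produce a strongly convergent rescaled profile --- at best a measure decomposition as in Lemma~\ref{Concentration-Compactness Lemma II} --- and you would need to make the passage from the curved geometry of $\B$ near $\gamma$ to the flat anisotropic half-space model uniform over the a priori unknown concentration points, including the Jacobian and metric corrections. The paper circumvents all of this: Lemma~\ref{lemma: inequality close to boundary} proves a uniform local quotient bound $(1-\eps)\,2^{\frac12-\frac{1}{2_{\text{\tiny $1$}}^*}}\cS_1(\R^N_+)$ on small balls centered on $\gamma$ (this is where your $2(1-r)$ expansion appears, but made uniform in the base point and the ball radius), and Lemma~\ref{Minima comparison} then globalizes it via a partition of unity subordinate to a fixed finite cover, killing the interior piece by Rellich--Kondrachov and reassembling the boundary pieces with the triangle inequality for $\|\cdot\|_{L^{2_{\text{\tiny $1$}}^*/2}}$. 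That partition-of-unity argument never needs to locate or analyze individual concentration points and is the step you would have to supply to close $(\star)$.
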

Here, the factor $2^{\frac{1}{2}-\frac{1}{2^*_{\scaleto{1}{3pt}}}}$ is due to 
the scaling properties 
of a more general quotient related to \eqref{eq:def-S_n-intro},
see Remark~\ref{Remark: Optimality of exponents}(ii) below.

The paper is organized as follows. 
We first study the degenerate Sobolev inequality~\eqref{eq:def-S_n-intro-ineq} 
and hence prove Theorem~\ref{existence of minimizers - whole space - intro} in 
Section~\ref{sec-Halfspace}.
This is subsequently used in Section~\ref{sec:degen-sobol-ineq} to prove the 
second part of Theorem~\ref{main theorem}.
In Section~\ref{preliminaries} we then discuss the properties of ground state 
solutions of \eqref{Reduced equation} in detail and give the 
proofs of Theorems \ref{sec:introduction-second-theorem} and \ref{Thm: m 
large}. This also includes the degenerate case $\alpha=1$ and the proof of 
Theorem~\ref{Thm: m large-alpha-1}.
Section~\ref{sec:case-an-annulus} is then devoted to the properties of rotating 
waves when $\B$ is replaced by an annulus. In this case, our methods give rise 
to an analogue of 
Theorem~\ref{main theorem} with more explicit conditions for 
$x_1$-$x_2$-nonradiality of ground states.
In Section~\ref{sec:riemannian-models} we discuss how the general degenerate 
Sobolev inequality \eqref{eq:def-S_n-intro-ineq} can be used to give an 
analogue of Theorem~\ref{main theorem} for Riemannian models.
Finally, in the appendix, we prove uniform $L^\infty$-bounds for weak solutions 
of 
\eqref{Reduced equation} in the case $\alpha= 1$.

   \section{A family of degenerate Sobolev inequalities} \label{sec-Halfspace}
In this section, we give the proof of Theorem~\ref{existence of minimizers - whole space - intro}. More precisely, in the first part of the section, we prove the corresponding degenerate Sobolev inequality   
\begin{equation} \label{S_R positive-ineq}
  \Bigl(\int_{\R^N} |u|^{2_s^*} \, dx\Bigr)^{\frac{2}{2_s^*}}\le C \int_{\R^N} 
  \Bigl( \sum \limits_{i=1}^{N-1} |\del_i u|^2 + |x_1|^s |\del_N u|^2 \Bigr) 
  \, dx \qquad \text{for $u \in C^1_c(\R^N)$}
	\end{equation}
        in the entire space with a constant $C>0$, from which the positivity of 
        $\cS_s(\R^N_+)$ in (\ref{eq:def-S_n-intro}) follows.
        
        In the second part of the section, we then prove the existence of minimizers of the quotient in (\ref{eq:def-S_n-intro}) in the larger space $H_s$ defined in Theorem~\ref{existence of minimizers - whole space - intro}.\\ 
        
\subsection{Degenerate Sobolev inequality on $\R^N$}
The first step in the proof of (\ref{S_R positive-ineq}) is the following key inequality.

\begin{lemma}
  \label{claim-ineq-lemma}
Let $\alpha >0$ and $p > 2$ be given. Then we have 
\begin{equation}
  \label{eq:claim-ineq-lemma}
\int_{\R^N}|u|^{p}\,dx  \le \kappa  \Bigl(\int_{\R^N}|x_1|^{\alpha} |u|^{q}\,dx\Bigr)^{\frac{2}{2+\alpha}} \Bigl( \int_{\R^N} |\partial_1 u|^2 \,dx\Bigr)^{\frac{\alpha}{2+\alpha}} \qquad \text{for $u \in C^1_c(\R^N)$}
\end{equation}
with
\begin{equation}
\label{eq:def-q-lemma}
q = \frac{p(2+\alpha)-2\alpha}{2} \qquad \text{and}\qquad \kappa >0 .
\end{equation}
\end{lemma}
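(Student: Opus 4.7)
The plan is to reduce the $N$-dimensional statement to a 1D inequality along $x_1$, and then prove the 1D case by integration by parts combined with H\"older's inequality. Writing $x = (t, x')$ with $t = x_1$ and $x' = (x_2, \dots, x_N)$, Fubini's theorem shows that if the 1D version $\int_\R |f|^p\,dt \le \kappa (\int_\R |t|^\alpha |f|^q\,dt)^{\frac{2}{2+\alpha}} (\int_\R |f'|^2\,dt)^{\frac{\alpha}{2+\alpha}}$ holds for all $f \in C^1_c(\R)$, then applying it slicewise to $f = u(\cdot, x')$ and integrating over $x'$ via H\"older's inequality with the conjugate pair $\frac{2+\alpha}{2}, \frac{2+\alpha}{\alpha}$ recovers the $N$-dimensional inequality.

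For the 1D inequality, I would start from the identity $\frac{d}{dt}(t|f|^p) = |f|^p + p\,t\,|f|^{p-2} f f'$; integrating over $\R$ and discarding boundary terms (using the compact support of $f$) yields $\int_\R |f|^p\,dt = -p \int_\R t\, |f|^{p-2} f f'\,dt$, hence $\int |f|^p \le p \int |t|\,|f|^{p-1}|f'|\,dt$. I then apply H\"older's inequality with three factors of the form $(|t|^\alpha |f|^q)^{1/r_1}$, $(|f|^p)^{1/r_2}$ and $|f'|$. The requirement that these multiply pointwise to $|t|\,|f|^{p-1}|f'|$ forces $r_1 = \alpha$ and $r_3 = 2$, and the balancing of the power of $|f|$ produces precisely the value $q = \frac{p(2+\alpha)-2\alpha}{2}$, while the H\"older constraint $\sum r_i^{-1} = 1$ determines $\frac{1}{r_2} = \frac{1}{2} - \frac{1}{\alpha}$. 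The resulting estimate $\int |f|^p \le p\,(\int |t|^\alpha |f|^q)^{1/\alpha}(\int |f|^p)^{1/r_2}(\int (f')^2)^{1/2}$ rearranges, upon dividing through and raising to the power $\frac{2\alpha}{\alpha+2}$, to the desired multiplicative inequality, with an explicit constant $\kappa = p^{\frac{2\alpha}{\alpha+2}}$ in this regime.

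The main obstacle is that the three-factor H\"older decomposition above requires $\frac{1}{r_2} = \frac{1}{2} - \frac{1}{\alpha} > 0$, i.e.\ $\alpha > 2$. The borderline case $\alpha = 2$ (where $q = 2p-2$) follows directly from two-factor Cauchy--Schwarz applied to $\int |t|\,|f|^{p-1}|f'|$, since $|t|^2 |f|^{2p-2}$ then coincides with the weight $|t|^\alpha |f|^q$. For the remaining range $\alpha \in (0,2)$, the same IBP combined with Young's inequality first produces the weaker additive estimate $\int |f|^p \le C_1 \int |t|^\alpha |f|^q + C_2 \int (f')^2$, and the multiplicative form is then recovered by optimizing over the two-parameter rescaling $f(t) \mapsto \mu f(\lambda t)$: the three integrals transform by $\mu^p \lambda^{-1}$, $\mu^q \lambda^{-1-\alpha}$ and $\mu^2 \lambda$, respectively, and a direct computation shows that minimizing the additive right-hand side over $(\mu,\lambda) \in (0,\infty)^2$ produces exactly the combination $(\int |t|^\alpha |f|^q)^{\frac{2}{2+\alpha}} (\int (f')^2)^{\frac{\alpha}{2+\alpha}}$ dictated by the scaling, yielding the multiplicative inequality in this regime as well.
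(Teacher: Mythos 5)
Your Fubini reduction from $\R^N$ to the one-dimensional slice and the subsequent Hölder step with exponents $\frac{2+\alpha}{2}$, $\frac{2+\alpha}{\alpha}$ are correct, as is the derivation for $\alpha>2$ (three-factor H\"older after the integration by parts, the exponent matching forcing precisely $q=\frac{p(2+\alpha)-2\alpha}{2}$) and the borderline case $\alpha=2$ (Cauchy--Schwarz). The scaling observation is also correct: under $f\mapsto\mu f(\lambda\,\cdot)$ the normalized exponent vectors $(q-p,-\alpha)$ and $(2-p,2)$ are proportional (the $2\times2$ determinant $2(q-p)+\alpha(2-p)=\alpha(p-2)-\alpha(p-2)=0$), so the two-parameter family collapses to a one-parameter family along which $\int|t|^\alpha|f|^q$ and $\int|f'|^2$ scale like $t^{-\alpha/2}$ and $t$, and minimizing $C_1 B t^{-\alpha/2}+C_2 C t$ over $t>0$ indeed yields $c\,B^{2/(2+\alpha)}C^{\alpha/(2+\alpha)}$.

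The gap is in the claim that ``the same IBP combined with Young's inequality'' produces the additive estimate $\int|f|^p\le C_1\int|t|^\alpha|f|^q\,dt+C_2\int|f'|^2\,dt$ when $\alpha\in(0,2)$. After integrating by parts you have $\int|f|^p\le p\int|t|\,|f|^{p-1}|f'|\,dt$, and the only two-factor Young split that isolates $|f'|^2$ is with conjugate exponents $(2,2)$, which produces $|t|^2|f|^{2p-2}$, i.e.\ forces $\alpha=2$ and $q=2p-2$; a three-factor Young/H\"older decomposition that would allow you to absorb a leftover $|f|^p$ is exactly what you did for $\alpha>2$, and it fails for $\alpha<2$ because the middle exponent $\frac{1}{2}-\frac{1}{\alpha}$ becomes negative. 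The additive estimate is in fact true, but only as a consequence of the multiplicative inequality (via AM-GM), so invoking it here would be circular. Note that this is not a peripheral case: in the paper's applications one takes $\alpha=\frac{s}{2(N-1)}$ with $s\le 2$ and $N\ge 2$, so $\alpha\le 1<2$ is precisely the regime that matters.

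The paper closes this gap with a more elaborate bootstrap. Instead of a single IBP, it first applies H\"older to split $\int|u|^p$ into a product of a positively-weighted integral $\int|x_1|^{t/(\sigma-1)}|u|^q$ and a \emph{negatively}-weighted integral $\int|x_1|^{-t}|u|^m$; it then integrates the negatively-weighted integral by parts (which raises the exponent of $|x_1|$ by one) and applies H\"older again, with the auxiliary parameters $\sigma,t,\tau,m,\theta$ tuned so that the resulting expression reintroduces $\int|u|^p$ on the right-hand side with a subunit power, allowing it to be absorbed into the left. That chain works for every $\alpha>0$. If you want to keep your shorter structure, you would need to replace the ``IBP + Young $\Rightarrow$ additive estimate'' step for $\alpha\in(0,2)$ by something genuinely different, e.g.\ the paper's H\"older--IBP--H\"older loop restricted to the one-dimensional slice.
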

\begin{proof}
	Let $u \in C^1_c(\R^N)$. By Hölder's inequality, we have 
	\begin{equation}
	\label{eq:hoelder-1}
	\int_{\R^N} |u|^p\,dx \le \Bigl(\int_{\R^N}|x_1|^{{s} {\sigma}'} |u|^{{r}{\sigma}'}\,dx\Bigr)^{\frac{1}{{\sigma}'}} \Bigl(\int_{\R^N}|x_1|^{-{s} {\sigma}}|u|^{{(p-r)}{\sigma}}\,dx\Bigr)^{\frac{1}{\sigma}}
	\end{equation}
	for $s>0$, $\sigma \in (1,\infty)$ and ${r} \in (0,p)$.
        It is convenient to write $s = \frac{t}{\sigma}$ and $m= (p-r) \sigma$, then (\ref{eq:hoelder-1}) becomes
        \begin{equation}
	\label{eq:hoelder-1-1}
	\int_{\R^N} |u|^p\,dx \le \Bigl(\int_{\R^N}|x_1|^{\frac{t}{\sigma-1}} |u|^{p \sigma' - \frac{m}{\sigma-1}}\,dx\Bigr)^{\frac{1}{{\sigma}'}} \Bigl(\int_{\R^N}|x_1|^{-t}|u|^{m}\,dx\Bigr)^{\frac{1}{\sigma}}
	\end{equation}         
	for $t>0$, $\sigma \in (1,\infty)$ and $m \in (0,p\sigma)$. If, more specifically,
                \begin{equation}
          \label{eq:admissibility-con}
          t \in (0,1),\quad \sigma \in (1,\infty),\quad m \in (1,p\sigma),\quad 
          \tau>1 \quad \text{and}\quad \theta \in (0,1),
      \end{equation}
       we may integrate by parts and use Hölder's inequality to get
	\begin{align}
	\int_{\R^N}&|x_1|^{-t}|u|^{m}\,dx =
	- \frac{m}{1-t} \int_{\R^N}x_1 |x_1|^{-t}|u|^{m-1}\partial_1 u \,dx \nonumber\\
	&\le c \int_{\R^N}|x_1|^{1-t}|u|^{m-1}|\partial_1 u| \,dx \le c
	\Bigl(\int_{\R^N}|x_1|^{2(1-t)}|u|^{2(m-1)}\,dx \Bigr)^{\frac{1}{2}} \Bigl( \int_{\R^N} |\partial_1 u|^2 \,dx\Bigr)^{\frac{1}{2}} \nonumber \\
	&\leq c \Bigl(\int_{\R^N}|x_1|^{2(1-t)\tau}|u|^{2\theta(m-1)\tau}\,dx \Bigr)^\frac{1}{2\tau} \Bigl( \int_{\R^N} |u|^{2(1-\theta)(m-1)\tau'} \, dx \Bigr)^\frac{1}{2\tau'}
	\Bigl( \int_{\R^N} |\partial_1 u|^2 \,dx\Bigr)^{\frac{1}{2}}. \label{int-parts-1} 
	\end{align}
	We now restrict our attention to values
        \begin{equation}
          \label{eq:t-cond}
1> t > \frac{2\sigma-2}{2 \sigma-1}
        \end{equation}
and choose, specifically,
        \begin{equation}
          \label{eq:tau-def}
\tau  =\frac{t}{2(1-t)(\sigma-1)} 
\end{equation}
which satisfies $\tau>1$ by (\ref{eq:t-cond}) and $2(1-t) \tau = \frac{t}{\sigma-1}.$
        Therefore (\ref{int-parts-1}) reduces to
        \begin{align}
          &\int_{\R^N}|x_1|^{-t}|u|^{m}\,dx           \label{eq:int-parts-2} \\
&\leq c \Bigl(\int_{\R^N}|x_1|^{\frac{t}{\sigma-1}}|u|^{2\theta(m-1)\tau}\,dx \Bigr)^\frac{1}{2\tau} \Bigl( \int_{\R^N} |u|^{2(1-\theta)(m-1)\tau'} \, dx \Bigr)^\frac{1}{2\tau'}
	\Bigl( \int_{\R^N} |\partial_1 u|^2 \,dx\Bigr)^{\frac{1}{2}}. \nonumber    
      \end{align}
Next we define
\begin{equation}
  \label{eq:def-m}
  m:=   \frac{p(\sigma-1)(\tau-1) + \sigma p- 1}{2\tau (\sigma-1) +1} +1
  = \frac{p(\sigma-1)(\tau-1) + \sigma p +2\tau (\sigma-1)}{2\tau (\sigma-1) +1}
\end{equation}
and
\begin{equation}
\label{eq-def-theta}
        \theta = \frac{(m-1)-\frac{p}{2 \tau'}}{m-1} .
      \end{equation}
      A short computation shows that these values are chosen such that the conditions
      \begin{equation}
        \label{eq:m-theta-con}
2 \theta(m-1) \tau = p \sigma' - \frac{m}{\sigma-1} \qquad \text{and}\qquad
2 (1-\theta)(m-1) \tau' = p
      \end{equation}
  hold for the exponents in (\ref{eq:int-parts-2}). In order to use the 
  inequalities with these values of $\theta$ and $m$, we have to ensure that 
  these values are admissible in the sense of (\ref{eq:admissibility-con}). By 
  definition, we have $m >1$. Moreover, we note that $m < \sigma p$ since  
$$
\sigma \ge 1 \ge \frac{1}{2\tau'} + \frac{1}{p},\qquad \text{i.e.,}\qquad p(\tau-1) +2\tau 
\le 2 \sigma p \tau ,
$$
and hence
$$
p(\sigma-1)(\tau-1) + \sigma p +2\tau (\sigma-1)
\le \sigma p \bigl(2\tau (\sigma-1) +1\bigr). 
$$
Hence $m \in (1,\sigma p)$, as required. Moreover, we have $\theta < 1$ by definition. To see that $\theta>0$,
we note that, since $p >2$, we have $\tau' > 1 \ge \frac{p}{2(p -1)} \ge \frac{p}{2(\sigma p -1)}$
and therefore 
$$
2 \Bigl(p(\sigma-1)\tau + \tau'(\sigma p- 1)\Bigr) > p \Bigl(2\tau (\sigma-1) +1\Bigr), 
$$
which shows that 
$$
2(m-1)\tau' = 2 \frac{p(\sigma-1)\tau + \tau'(\sigma p- 1)}{2\tau (\sigma-1) +1}
> p .
$$
Consequently, $\theta>0$, and thus $\theta \in (0,1)$, as required in (\ref{eq:admissibility-con}). 
So we may consider these values of $\tau$, $m$ and $\theta$ in \eqref{eq:hoelder-1-1} and \eqref{eq:int-parts-2}. With  (\ref{eq:m-theta-con}), this yields the inequalities
        \begin{equation*}
	\int_{\R^N} |u|^p\,dx \le \Bigl(\int_{\R^N}|x_1|^{\frac{t}{\sigma-1}} |u|^{q}\,dx\Bigr)^{\frac{1}{{\sigma}'}} \Bigl(\int_{\R^N}|x_1|^{-t}|u|^{m}\,dx\Bigr)^{\frac{1}{\sigma}}
	\end{equation*}         
and 
        \begin{equation*}
\int_{\R^N}|x_1|^{-t}|u|^{m}\,dx         
\leq c \Bigl(\int_{\R^N}|x_1|^{\frac{t}{\sigma-1}}|u|^{q}\,dx \Bigr)^{\frac{1}{2\tau}} \Bigl( \int_{\R^N} |u|^{p} \, dx \Bigr)^{\frac{1}{2\tau'}}
	\Bigl( \int_{\R^N} |\partial_1 u|^2 \,dx\Bigr)^{\frac{1}{2}}
      \end{equation*}
      with
      \begin{equation}
        \label{eq:def-q}
q:= 2 \theta (m-1) \tau = p \sigma' - \frac{m}{\sigma-1} .
      \end{equation}
Combining these inequalities yields
   \begin{equation*}
	\int_{\R^N} |u|^p\,dx \le c \Bigl(\int_{\R^N}|x_1|^{\frac{t}{\sigma-1}} |u|^{q}\,dx\Bigr)^{\frac{1}{{\sigma}'}+\frac{1}{2\tau \sigma}} \Bigl( \int_{\R^N} |u|^{p} \, dx \Bigr)^{\frac{1}{2\tau' \sigma}} 	\Bigl( \int_{\R^N} |\partial_1 u|^2 \,dx\Bigr)^{\frac{1}{2\sigma}}
	\end{equation*}              
        and therefore
   \begin{equation}
	\label{eq:hoelder-combined-reduced}
        		\int_{\R^N} |u|^p\,dx 
		\le c \Bigl(\int_{\R^N}|x_1|^{\frac{t}{\sigma-1}} |u|^{q}\,dx\Bigr)^{\frac{2\sigma \tau-2\tau+1}{2\sigma \tau-\tau+1}}\Bigl( \int_{\R^N} |\partial_1 u|^2 \,dx\Bigr)^{\frac{\tau}{2\sigma\tau-\tau+1}}.
  \end{equation}
  To obtain (\ref{eq:claim-ineq-lemma}), it is convenient to set $\alpha:= \frac{t}{\sigma-1}>0$, noting that the admissibility condition    \eqref{eq:t-cond} translates to
\begin{equation}
\label{eq:alpha-cond}
            \frac{1}{\sigma-1}> \alpha > \frac{2}{2 \sigma-1} .
        \end{equation}
        Note that, if $\alpha>0$ is given, we always find $\sigma \in (1,\infty)$ with the property that \eqref{eq:alpha-cond} holds.
        Moreover, the exponents in (\ref{eq:hoelder-combined-reduced}) then satisfy
	$$
	{\frac{\tau}{2\sigma\tau-\tau+1}}= \frac{\alpha}{2+\alpha},\qquad {\frac{2\sigma \tau-2\tau+1}{2\sigma \tau-\tau+1}}= \frac{2}{2+\alpha},
	$$
        so (\ref{eq:hoelder-combined-reduced}) becomes
$$        
\int_{\R^N}|u|^{p}\,dx  \le c  \Bigl(\int_{\R^N}|x_1|^{\alpha} 
|u|^{q}\,dx\Bigr)^{\frac{2}{2+\alpha}} \Bigl( \int_{\R^N} |\partial_1 u|^2 
\,dx\Bigr)^{\frac{\alpha}{2+\alpha}} .
$$
This is already the inequality in (\ref{eq:claim-ineq-lemma}). So it only remains to show that the two definitions of $q$ given in (\ref{eq:def-q}) and (\ref{eq:def-q}) are consistent, i.e., we have the identity 
$$
2 \theta(m-1) \tau =\frac{p(2+\alpha)-2\alpha}{2}
$$
The latter follows by a somewhat tedious but straightforward computation, so 
the proof of the lemma is complete.
\end{proof}
We may now complete the proof of the main result of this section, given as follows.
\begin{theorem} 
	\label{general-degenerate-Sobolev-inequality-whole space}
	Let $s >0$ and $2^*_s= \frac{4N+2s}{2N-4+s}$ as in Theorem~\ref{existence of minimizers - whole space - intro}.
	Then inequality (\ref{S_R positive-ineq}) holds with some constant $C>0$.
\end{theorem}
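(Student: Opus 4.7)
The plan is to apply Lemma~\ref{claim-ineq-lemma} with the particular choice $p=2_s^*$ and $\alpha=s$, which yields
\begin{equation*}
\int_{\R^N}|u|^{2_s^*}\,dx \,\le\, c \Bigl(\int_{\R^N}|x_1|^s|u|^{q}\,dx\Bigr)^{2/(2+s)} \Bigl(\int_{\R^N}|\partial_1 u|^2\,dx\Bigr)^{s/(2+s)},
\end{equation*}
with $q=\frac{4N+6s}{2N-4+s}$, as a direct computation shows. Since $\int |\partial_1 u|^2$ is trivially dominated by the right-hand side of (\ref{S_R positive-ineq}), the task reduces to bounding the weighted integral $\int|x_1|^s|u|^q\,dx$ by a suitable power of the full energy.

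To this end, I would establish the auxiliary inequality
\begin{equation*}
\int_{\R^N}|x_1|^s|u|^q\,dx \,\le\, C \Bigl(\int_{\R^N}\sum_{i=2}^{N-1}|\partial_i u|^2+|x_1|^s|\partial_N u|^2\,dx\Bigr)^{q/2}.
\end{equation*}
Combined with the inequality from Lemma~\ref{claim-ineq-lemma}, this gives (\ref{S_R positive-ineq}) because a short calculation verifies the identity $\frac{q+s}{2+s}=\frac{2_s^*}{2}$, which is precisely the scaling-invariance condition under the anisotropic dilation $u_\lambda(x)=u(\lambda x_1,\ldots,\lambda x_{N-1},\lambda^{1+s/2}x_N)$ that leaves both sides of (\ref{S_R positive-ineq}) invariant.

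The auxiliary inequality itself I would prove via the anisotropic change of variables $y_N=x_N|x_1|^{-s/2}$ and the associated function $v(x_1,\ldots,x_{N-1},y_N):=u(x_1,\ldots,x_{N-1},y_N|x_1|^{s/2})$. This change transforms $|x_1|^s|\partial_N u|^2\,dx$ into $|\partial_{y_N}v|^2|x_1|^{s/2}\,dx_1\cdots dx_{N-1}\,dy_N$, preserves $\partial_i v=\partial_i u$ for $i=2,\ldots,N-1$, and rescales the $L^q$ integral of $u$ by a factor $|x_1|^{s/2}$ in the volume element. Applied slicewise in $x_1$, the classical Sobolev embedding $H^1(\R^{N-1})\hookrightarrow L^{2_{N-1}^*}(\R^{N-1})$ (valid for $N\ge 4$), followed by a Gagliardo--Nirenberg interpolation to adjust the exponent from $2_{N-1}^*$ to $q$, and subsequent integration in $x_1$ produces the auxiliary bound; the Jacobian powers of $|x_1|^{s/2}$ combine with the integrated $L^q$ norm to yield precisely the weight $|x_1|^s$ on the left-hand side, again thanks to the anisotropic scaling.

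The main obstacle is the precise bookkeeping of exponents through the change of variables and the Gagliardo--Nirenberg interpolation, making sure the powers of $|x_1|$ match correctly. The low-dimensional cases $N\in\{2,3\}$ require separate treatment: for $N=2$ the inequality reduces to the classical two-dimensional Grushin--Sobolev inequality, and for $N=3$ a modified slicing with a one-dimensional Sobolev estimate in the $(x_2,x_N)$-variables suffices. Once (\ref{S_R positive-ineq}) is established in $\R^N$, positivity of $\cS_s(\R^N_+)$ in (\ref{eq:def-S_n-intro}) follows immediately by restriction, since $C^1_c(\R^N_+)\subset C^1_c(\R^N)$.
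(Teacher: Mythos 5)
Your proposal has a genuine gap: the auxiliary inequality
\begin{equation*}
\int_{\R^N}|x_1|^s|u|^q\,dx \;\le\; C \Bigl(\int_{\R^N}\sum_{i=2}^{N-1}|\partial_i u|^2+|x_1|^s|\partial_N u|^2\,dx\Bigr)^{q/2},
\qquad q=\tfrac{4N+6s}{2N-4+s},
\end{equation*}
is false in general, because it is not invariant under the stretching $u\mapsto u(x_1/\mu,x_2,\dots,x_N)$. Under this substitution the left-hand side picks up a factor $\mu^{1+s}$, whereas the dominant term on the right as $\mu\to 0$ is $\bigl(\mu\int\sum_{i=2}^{N-1}|\partial_i u|^2\,dy\bigr)^{q/2}\sim\mu^{q/2}$. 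Hence the inequality can only hold if $q\le 2(1+s)$. But a direct computation shows $q> 2(1+s)$ is equivalent to $s^2+2(N-3)s-4<0$, which holds precisely for $s< (3-N)+\sqrt{(N-3)^2+4}$; in particular it holds for $N=2,3,4$ at $s=1$, the very case that drives Theorem~\ref{main theorem}, and also for $N=2,3$ at $s=2$ (the hemisphere). So in exactly the low-dimensional regimes the argument is supposed to cover, concentrating the support in the $x_1$-variable makes the claimed bound blow up. The underlying reason is that you dropped the $\partial_1$-derivative from the right-hand side, yet the only scaling direction that the remaining terms fail to control is precisely the $x_1$-direction.

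Your opening step — applying Lemma~\ref{claim-ineq-lemma} with $p=2_s^*$, $\alpha=s$ and checking that $\tfrac{q+s}{2+s}=\tfrac{2_s^*}{2}$ — is correct and parallels the paper's first step, but the paper makes a crucially different choice: it takes $\alpha=\tfrac{s}{2(N-1)}$ (not $\alpha=s$), so the residual weighted integral has weight $|x_1|^{s/(2(N-1))}$ and a different exponent $q_s$. It then bounds that integral by a Gagliardo-type slicing where the functions $a_i(\hat x_i)=\int_\R|u|^{q(N-1)/N-1}|\partial_i u|\,dx_i$ are formed for \emph{all} $i=1,\ldots,N$ including $i=1$, and the weight $|x_1|^{s/2}=|x_1|^{(N-1)\alpha}$ is attached only to the $a_N$-factor. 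Re-introducing $\partial_1 u$ in this step is exactly what cures the scaling deficiency above, and it also bypasses the slicewise Sobolev and Gagliardo–Nirenberg bookkeeping you flag as the main obstacle (which, incidentally, has a second problem: integrating a $q$-th power of a slice energy in $x_1$ does not in general produce the $q/2$-th power of an integrated energy, since Jensen's inequality goes the wrong way for $q>2$). To repair your argument you would need either to include $\partial_1 u$ on the right of the auxiliary inequality, or — as the paper does — choose a much smaller $\alpha$ in Lemma~\ref{claim-ineq-lemma} so that the weight and exponent are compatible with a symmetric Gagliardo product over all $N$ directions.
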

We remark that this may be proven by combining  the previous results with a 
suitable adaption of the inequality on the halfspace given in \cite[Theorem 
1.7]{Filippas et al} to the setting of the entire space $\R^N$.
For the convenience of the reader, we give a self-contained proof.
\begin{proof}
  In the following, the letter $c>0$ stands for a constant which may change from line to line.
	Let $\alpha = \frac{s}{2(N-1)}$. Then Lemma~\ref{claim-ineq-lemma} yields
	$$
	\int_{\R^N}|u|^{2_s^*}\,dx  \le \kappa  \Bigl(\int_{\R^N}|x_1|^{\alpha} |u|^{q_s}\,dx\Bigr)^{\frac{2}{2+\alpha}} \Bigl( \int_{\R^N} |\partial_1 u|^2 \,dx\Bigr)^{\frac{\alpha}{2+\alpha}} \qquad \text{for $u \in C^1_c(\R^N)$}
	$$
	with $q_s:= \frac{N(2_s^*+2)}{2(N-1)}$. To estimate the term $\int_{\R^N} |x_1|^\alpha |u|^{q_s}\,dx$, we define, for $i=1,\dots,N$, the functions $a_i \in C_c(\R^{N-1})$ by
        $$
        a_i (\hat x_i):= \int_{\R} |u|^{\frac{q(N-1)}{N}-1} |\del_i u| \, dx_i 
        $$
        where 
	$$
	\hat x_i  :=(x_1, \ldots,x_{i-1},x_{i+1},\ldots,x_N) \in \R^{N-1}\qquad 
	\text{for $x \in \R^N$ and $i=1,\ldots,N$.}
	$$
	Integrating the derivative $\partial_i |u|^{\frac{q_s(N-1)}{N}}$ in the $x_i$-direction, we find that 
	$|u(x)|^{\frac{q_s(N-1)}{N}} \leq c  a_i (\hat x_i)$ for all $x \in \R^N$, 
	$i=1,\ldots,N$ and therefore
        $$
        |u(x)|^{q_s(N-1)} \le c \prod_{i=1}^{N} a_i(\hat x_i) \qquad \text{for $x \in \R^N$.}
	$$
        Applying Gagliardo's Lemma~\cite[Lemma 4.1]{Gagliardo} to the functions 
        $a_1^{\frac{1}{N-1}}, \dots,a_{N-1}^{\frac{1}{N-1}}$ and $x \mapsto 
        |x_1|^\alpha a_N^{\frac{1}{N-1}}(x)$, we thus find that 
	\begin{align*}
	&\int_{\R^N} |x_1|^\alpha |u|^{q_s}\,dx \le c \left( \int_{\R^{N-1}} |x_1|^{(N-1)\alpha} a_N(\hat x_N) d\hat x_N  \prod_{{i=1}}^{N-1} \int_{\R^{N-1}} a_i(\hat x_i)  d \hat x_i \right)^{\frac{1}{N-1}} \\
	&= c \left( \int_{\R^{N}} |x_1|^\frac{s}{2}|u|^{\frac{q_s(N-1)}{N}-1} |\del_N u| d x  \prod_{{i=1}}^{N-1} \int_{\R^{N}} |u|^{\frac{q_s(N-1)}{N}-1} |\del_i u|dx \right)^\frac{1}{N-1} \\
	&\le c \left(\int_{\R^N} |u|^{2\frac{q_s(N-1)}{N}-2} \,dx \right)^\frac{N}{2(N-1)}  \left( \int_{\R^N} |x_1|^s |\del_N u|^2 \, dx  \prod_{{i=1}}^{N-1} \int_{\R^N}  |\del_i u|^2 \, dx \right)^\frac{1}{2(N-1)}  .
	\end{align*}
	Since $\frac{2(N-1)q_s}{N} -2 = 2_s^*$, we conclude that 
	\begin{align*}
	&\int_{\R^N} |u|^{2_s^*}\,dx \le 
	c 	\Bigl(\int_{\R^N} |x_1|^\alpha |u|^{q_s}\,dx\Bigr)^\frac{2}{2+\alpha}  \Bigl(\int_{\R^N} |\del_1 u|^2 \,dx\Bigr)^{\frac{\alpha}{2+\alpha}}  \\
          &\le c \Biggl(\left(\int_{\R^N} |u|^{2_s^*} \,dx \right)^\frac{N}{2(N-1)}  \left( \int_{\R^N} |x_1| |\del_N u|^2 \, dx \prod_{{i=1}}^{N-1} \int_{\R^N}  |\del_i u|^2 \, dx \right)^\frac{1}{2(N-1)} \Biggr)^{\frac{2}{2+\alpha} }\\
         &\:\times  
	\Bigl(\int_{\R^N} |\del_1 u|^2 \,dx\Bigr)^{\frac{\alpha}{2+\alpha}}\\
	&= c \left(\int_{\R^N} |u|^{2_s^*} \,dx \right)^\frac{N}{2(N-1)+\frac{s}{2}}  \left( \int_{\R^N} |x_1| |\del_N u|^2 \, dx   \prod_{{i=2}}^{N-1}  \int_{\R^N}  |\del_i u|^2 \, dx \right)^\frac{1}{2(N-1)+\frac{s}{2}}\\  
	&\:\times \Bigl(\int_{\R^N} |\del_1 u|^2 \,dx\Bigr)^\frac{1+\frac{s}{2}}{2(N-1)+\frac{s}{2}}
	\end{align*}
	and therefore 
	\begin{align*}
	&\Bigl(\int_{\R^N} |u|^{2_s^*}\,dx\Bigr)^\frac{N-2+\frac{s}{2}}{2(N-1)+\frac{s}{2}} \\
	\le& c  \Bigl( \int_{\R^N} |x_1| |\del_N u|^2 \, dx  \prod_{{i=2}}^{N-1} \int_{\R^N}  |\del_i u|^2 \, dx \Bigr)^\frac{1}{2(N-1)+\frac{s}{2}}
	\Bigl(\int_{\R^N} |\del_1 u|^2 \,dx\Bigr)^\frac{1+\frac{s}{2}}{2(N-1)+\frac{s}{2}} .
	\end{align*}
	Finally, Young's inequality gives
	\begin{align*}
	\Bigl(\int_{\R^N} |u|^{2_s^*}\,dx\Bigr)^\frac{2}{2_s^*} &\leq c \Bigl( \int_{\R^N} |x_1| |\del_N u|^2 \, dx   \prod_{{i=2}}^{N-1}  \int_{\R^N}  |\del_i u|^2 \, dx \Bigr)^\frac{2}{2N+s}   \Bigl(\int_{\R^N} |\del_1 u|^2 \,dx\Bigr)^\frac{2+s}{2N+s} \\
	&\leq  c \Bigl(  \int_{\R^N} 
	|x_1| |\del_N u |^2 \,dx + \sum \limits_{i=1}^{N-1} \int_{\R^N} |\del_i u|^2 \,dx \Bigr) .
	\end{align*}
\end{proof}
In particular, this implies
$$
\cS_s(\R^N_+) = \inf_{ u \in C^1_c(\R^N_+)} 
	\frac{\displaystyle\int_{\R^N_+} \sum \limits_{i=1}^{N-1} |\del_i u|^2 +  
		x_1^s |\del_N u|^2 \, dx}{\left(\int_{\R^N_+} |u|^{2_s^*} \, 
		dx\right)^\frac{2}{2_s^*}} > 0
$$
and thus the first part of Theorem~\ref{existence of minimizers - whole space - 
intro}.
\begin{remark} \label{Remark: Optimality of exponents}
	{\bfseries(Optimality and Variants)} 
	\begin{itemize}
		\item[(i)]
	The exponent $2_s^*$ in (\ref{eq:def-S_n-intro}) is optimal in the sense 
	that
        \begin{equation}
         \label{Remark: Optimality of exponents-eq}
	\inf_{u \in C^1_c(\R^N)} \frac{\int_{\R^N} \Bigl(\sum \limits_{i=1}^{N-1}|\del_i u|^2 +   |x_1|^{s} |\del_N u|^2  \Bigr) \, dx}{\|u\|_{L^p(\R^N)}^2} = 0 \qquad \text{for $p \neq 2_s^*$.}
          \end{equation}
        This follows by considering the rescaling $u \mapsto u_\lambda$, $\lambda>0$ with 
	$$
	u_\lambda(x):=u(\lambda x_1, \lambda x_2, \ldots, \lambda x_{N-1},\lambda^{1+\frac{s}{2}} x_N).
	$$
Indeed, for $u \in C_c^1(\R^N)$ we have 
$$
\int_{\R^N_+}\Bigl( \sum_{i=1}^{N-1}|\del_i u_\lambda|^2 + x_1^s  |\del_N u_\lambda|^2 \Bigr) dx = \lambda^{-\frac{2N+s-4}{2}} \int_{\R^N_+}\Bigl( \sum_{i=1}^{N-1} |\del_i v|^2 + x_1^s  |\del_N u|^2 \Bigr) dx
$$
and, for $1 < p< \infty$, 
$$
\Bigl( \int_{\R^N_+} |u_\lambda|^{p} \, dx \Bigr)^\frac{2}{p} = \lambda^{-\frac{2}{p}(N+\frac{s}{2})}\Bigl( \int_{\R^N_+} |u|^{p} \, dx \Bigr)^\frac{2}{p}.
$$
Since $\frac{2N+s-4}{2} =\frac{2}{p}(N+\frac{s}{2})$ if and only if $p=2_s^*$, 
(\ref{Remark: Optimality of exponents-eq}) follows.
%
%
\item[(ii)] For $\kappa>0$, $u \in C_c^1(\R^N)$, we may consider a rescaled 
function of the form
$$
v(x)=u\left(x_1,\ldots,x_{N-1},\frac{x_n}{\sqrt{\kappa}}\right) .
$$
Comparing the associated quotients then yields
\begin{equation} \label{eq:Degenerate Inequality Scaling}
\begin{aligned} 
 \inf_{u \in C^1_c(\R^N)} \frac{\int_{\R^N} \Bigl(\sum 
\limits_{i=1}^{N-1}|\del_i u|^2 + \kappa |x_1|^{s} |\del_N u|^2  \Bigr) \, 
dx}{\|u\|_{L^{2^*_s}(\R^N)}^2}  = & {\kappa^{\frac{1}{2}-\frac{1}{2^*_s}}} 
\cS_s(\R^N_+).
\end{aligned}
\end{equation}
In the special case $\kappa = 2$, this quotient will appear later when we 
connect $ 
\scrC_{1,m,{2^*_{\scaleto{1}{3pt}}}}(\B)$ and $\cS_1(\R^N_+)$, in particular in 
the proof of Theorem~\ref{Theorem: Minima comparison - introduction}.
\end{itemize}
\end{remark}
%
%
%
Recalling the space $H_s$ defined in Theorem~\ref{existence of minimizers - whole space - intro}, we see that Theorem~\ref{general-degenerate-Sobolev-inequality-whole space}
immediately implies that $H_s$ is continuously embedded into 
$L^{2_s^*}(\R^N_+)$. 
\subsection{Existence of minimizers}
In the following, we fix $s>0$ and study minimizing sequences for
		$$
		\cS := \cS_s(\R^N_+)= \inf_{ u \in H_s \setminus \{0\}} 
		\frac{\displaystyle\int_{\R^N_+} \left( \sum \limits_{i=1}^{N-1} 
		|\del_i u|^2 + 
		x_1^s |\del_N u|^2 \right) \, dx}{\left(\int_{\R^N_+} |u|^{2_s^*} \, 
		dx\right)^\frac{2}{2_s^*}} > 0.
		$$
First, consider the following classical lemma due to Lions~\cite{Lions}, which 
we give in the form presented in \cite{Struwe}:
\begin{lemma} \label{Concentration-Compactness Lemma}
	{\bf (Concentration-Compactness Lemma)} \\
	Suppose $(\mu_n)_n$ is a sequence of probability measures on $\R^N$. Then, after passing to a subsequence, one of the following three conditions holds:
	\begin{enumerate}[font=\itshape]
		\item[(i)]
		{\bfseries(Compactness)}
		There exits a sequence $(x_n)_n \subset \R^N$ such that for any $\eps>0$ there exists $R>0$ such that
		$$
		\int_{B_R(x_n)} \, d\mu_n \geq 1-\eps .
		$$
		\item[(ii)] 
		{\bfseries(Vanishing)} 
		For all $R>0$ it holds that
		$$
		\lim_{n \to \infty} \left(\sup_{x \in \R^N} \int_{B_R(x)} \, d\mu_n \right) =0.
		$$
		\item[(iii)]
		{\bfseries(Dichotomy)} 
		There exists $\lambda \in (0,1)$ such that for any $\eps>0$ there exists $R>0$ and $(x_n)_n \subset \R^N$ with the following property:
		Given $R'>R$ there are nonnegative measures $\mu_n^1, \mu_n^2$ such that
		\begin{align*}
		& 0 \leq \mu_n^1 + \mu_n^2 \leq \mu_n \\
		& \supp \, \mu_n^1 \subset B_R(x_n), \quad \supp \, \mu_n^2 \subset \R^N \setminus B_{R'}(x_n) \\
		& \limsup_{n \to \infty} \left( \left| \lambda - \int_{\R^N} d\mu_n^1 \right| +  \left|(1- \lambda) - \int_{\R^N} d\mu_n^2 \right|\right) \leq \eps .
		\end{align*}
	\end{enumerate}
\end{lemma}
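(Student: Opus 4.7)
The plan is to follow the classical approach of Lions via the Lévy concentration function
$$
Q_n(R) := \sup_{x \in \R^N} \mu_n(B_R(x)), \qquad R \ge 0.
$$
Each $Q_n$ is nondecreasing with values in $[0,1]$, and $Q_n(R) \to 1$ as $R \to \infty$ because $\mu_n$ is a probability measure on the $\sigma$-compact space $\R^N$. By Helly's selection theorem (equivalently, a diagonal extraction on the rationals), after passing to a subsequence I may assume $Q_n(R) \to Q(R)$ at every continuity point of some nondecreasing limit $Q \colon [0,\infty) \to [0,1]$. The trichotomy is then driven by the single number $\lambda := \lim_{R \to \infty} Q(R) \in [0,1]$: the cases $\lambda = 0$, $\lambda = 1$ and $0 < \lambda < 1$ produce vanishing, compactness and dichotomy, respectively.

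The two outer cases are short. If $\lambda = 0$, monotonicity forces $Q \equiv 0$, hence $\sup_x \mu_n(B_R(x)) \to 0$ for every fixed $R > 0$, which is exactly vanishing. If $\lambda = 1$, I would choose continuity points $R_k \uparrow \infty$ of $Q$ with $Q(R_k) > 1 - \tfrac{1}{k}$ and, for each $n$ and $k$, a center $x_n^k$ with $\mu_n(B_{R_k}(x_n^k)) > 1 - \tfrac{1}{k}$. To synthesize a single sequence $(x_n)_n$ independent of $\varepsilon$, I would fix a large $k_0$, set $x_n := x_n^{k_0}$, and observe that for every $k \ge k_0$ the balls $B_{R_{k_0}}(x_n)$ and $B_{R_k}(x_n^k)$ each carry $\mu_n$-mass greater than $\tfrac{1}{2}$, hence intersect. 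This forces $B_{R_k}(x_n^k) \subset B_{R_{k_0} + 2R_k}(x_n)$, so that $\mu_n(B_{R_{k_0} + 2R_k}(x_n)) > 1 - \tfrac{1}{k}$, yielding the required compactness for any prescribed tolerance.

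The main obstacle is the dichotomy case $0 < \lambda < 1$, where the natural candidates for $\mu_n^1, \mu_n^2$ must be exhibited with care. Given $\varepsilon > 0$, I would pick a continuity point $R$ of $Q$ with $Q(R) > \lambda - \varepsilon$, which for large $n$ yields $x_n$ with $\mu_n(B_R(x_n)) > \lambda - \varepsilon$. For any continuity point $R' > R$ of $Q$, the global ceiling $\mu_n(B_{R'}(x_n)) \le Q_n(R') \to Q(R') \le \lambda$ forces $\mu_n(B_{R'}(x_n)) < \lambda + \varepsilon$ for $n$ large, so the candidates
$$
\mu_n^1 := \mu_n\big|_{B_R(x_n)}, \qquad \mu_n^2 := \mu_n\big|_{\R^N \setminus B_{R'}(x_n)}
$$
have disjoint supports contained in the prescribed regions, satisfy $\mu_n^1 + \mu_n^2 \le \mu_n$, and carry total masses within $\varepsilon$ of $\lambda$ and $1 - \lambda$. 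The delicate bookkeeping, which I expect to be the trickiest point, is that the same sequence $(x_n)_n$ must work simultaneously for every $R' > R$; this is handled by performing the Helly extraction along a countable dense set of continuity points of $Q$ and then invoking the uniform bound $\mu_n(B_{R'}(x_n)) \le Q_n(R')$ together with monotonicity in $R'$.
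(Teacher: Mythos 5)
Your proof is correct. The paper itself does not prove this lemma; it simply cites Lions and Struwe and states the result. Your argument via the L\'evy concentration function $Q_n(R)=\sup_x\mu_n(B_R(x))$, Helly's selection theorem, and the trichotomy on $\lambda=\lim_{R\to\infty}Q(R)$ is precisely the classical proof found in those references, including the standard restriction $\mu_n^1=\mu_n|_{B_R(x_n)}$, $\mu_n^2=\mu_n|_{\R^N\setminus B_{R'}(x_n)}$ in the dichotomy case and the two-overlapping-balls trick to synthesize a single sequence $(x_n)$ in the compactness case. The only points you gloss over --- handling the finitely many small $n$ by using that each $\mu_n$ is a probability measure, and controlling $Q_n(R')$ at non-continuity points of $Q$ by monotonicity --- are exactly the kind of routine bookkeeping you flag, and they close without difficulty.
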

A characterization of minimizing sequences in the sense of measures is given in 
the following lemma, which is a straightforward adaption of \cite[Lemma 
4.8]{Struwe}:	
\begin{lemma} \label{Concentration-Compactness Lemma II}
	{\bf (Concentration-Compactness Lemma II)} \\
	Let $s>0$ and suppose $u_n \weakto u$ in $H_s$ and $\mu_n := \left(\sum 
	\limits_{i=1}^{N-1}|\del_i u_n|^2 + x_1^s |\del_N u_n|^2 \right)  dx 
	\weakto \mu$, $\nu_n := |u_n|^{2_s^*} dx \weakto\nu$ weakly in the sense 
	of measures where $\mu$ and $\nu$ are finite measures on $\R^N_+$. Then: 
\begin{itemize}
	\item[(i)]
	There exists an at most countable set $J$, a set $\{x^j: j \in J\} \subset \R^N_+$ and $\{\nu^j: j \in J\} \subset (0,\infty)$ such that
	$$
	\nu = |u|^{2_s^*} dx + \sum \limits_{ j \in J} \nu^j \delta_{x^j} .
	$$
	\item[(ii)]
	There exists a set $\{\mu^j: j \in J\} \subset (0,\infty)$ such that
	$$
	\mu \geq  \left(\sum \limits_{i=1}^{N-1}|\del_i u|^2 +  x_1^s |\del_N 
	u|^2 \right)  dx  + \sum \limits_{j \in J} \mu^j \delta_{x^j}
	$$
	where
	$$
	\cS (\nu^j)^\frac{2}{2_s^*} \leq \mu^j
	$$
	for $j \in J$. In particular, $\sum \limits_{j \in J} (\nu^j)^\frac{2}{2_s^*} < \infty$.
\end{itemize}
\end{lemma}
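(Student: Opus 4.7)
The plan is to follow the classical Lions--Struwe argument (Lemma~I.4.8 in \cite{Struwe}), adapted to the degenerate weighted setting via the Sobolev inequality $\cS\|v\|_{L^{2_s^*}(\R^N_+)}^2 \leq \|v\|_{H_s}^2$ for $v \in H_s$ just established in Theorem~\ref{general-degenerate-Sobolev-inequality-whole space}. First I would reduce to the case $u\equiv 0$ by setting $v_n := u_n - u$. Since the weight $x_1^s$ is bounded above and below on any compact subset of $\R^N_+$, the norm on $H_s$ coincides locally with the usual $H^1$-norm, so Rellich--Kondrachov yields the compact embedding $H_s \embed L^2_{\loc}(\R^N_+)$ and (after passing to a subsequence) $u_n \to u$ almost everywhere on $\R^N_+$. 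The Brezis--Lieb lemma then gives the decomposition $\nu = |u|^{2_s^*} dx + \tilde\nu$, where $\tilde\nu$ is the weak-$*$ limit of $|v_n|^{2_s^*}\,dx$. Weak lower semicontinuity of $\|\cdot\|_{H_s}^2$ and vanishing of the cross terms $2\langle v_n, u\rangle_{H_s} \to 0$ give the parallel decomposition $\mu \geq \bigl(\sum_i |\partial_i u|^2 + x_1^s |\partial_N u|^2\bigr)dx + \tilde\mu$ for a finite nonnegative Radon measure $\tilde\mu$.

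The key step is deriving a reverse Hölder inequality between $\tilde\mu$ and $\tilde\nu$. For $\phi \in C_c^\infty(\R^N_+)$, the truncation $\phi v_n$ lies in $H_s$, and applying the Sobolev inequality yields
$$
\cS \Bigl(\int_{\R^N_+} |\phi v_n|^{2_s^*}\,dx\Bigr)^{2/2_s^*} \leq \int_{\R^N_+} \Bigl(\sum_{i=1}^{N-1} |\partial_i(\phi v_n)|^2 + x_1^s|\partial_N(\phi v_n)|^2\Bigr) dx.
$$
Expanding via the Leibniz rule, the leading $\phi^2$-terms yield $\int \phi^2\, d\tilde\mu$ in the limit; the cross terms $\int \phi\partial_i\phi\, v_n \partial_i v_n \, dx$ (and their weighted analogue in the $\partial_N$ direction) are rewritten by integration by parts as integrals of $v_n^2$ against smooth, compactly supported weights and vanish by the local $L^2$-compactness above; the zero-order terms $\int v_n^2|\partial_i\phi|^2\, dx$ vanish for the same reason. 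Combined with $\int|\phi v_n|^{2_s^*}dx \to \int|\phi|^{2_s^*}\, d\tilde\nu$ (another Brezis--Lieb argument applied to $\phi u_n$), this yields
$$
\cS \Bigl(\int |\phi|^{2_s^*}\, d\tilde\nu\Bigr)^{2/2_s^*} \leq \int |\phi|^2\, d\tilde\mu \qquad \text{for all } \phi \in C_c^\infty(\R^N_+).
$$

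Finally, I would invoke Lions' classical atomicity lemma (see \cite{Lions}, or the presentation in \cite[Sec.~I.4]{Struwe}): such a reverse Hölder inequality between nonnegative Radon measures forces $\tilde\nu$ to be purely atomic on an at most countable set $\{x^j\}_{j \in J} \subset \R^N_+$, with $\tilde\nu = \sum_j \nu^j \delta_{x^j}$ and atoms $\mu^j := \tilde\mu(\{x^j\})$ satisfying $\cS(\nu^j)^{2/2_s^*} \leq \mu^j$; summability $\sum_j (\nu^j)^{2/2_s^*} < \infty$ is immediate from finiteness of $\tilde\mu$. Together with the reduction step this proves both (i) and (ii). The main technical obstacle is checking that all limit operations respect the degenerate weight $x_1^s$; in particular that the weighted cross term $\int x_1^s \phi\, \partial_N\phi\, v_n \partial_N v_n\, dx$ vanishes. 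This works because $\supp\phi$ is strictly away from $\{x_1=0\}$, so $x_1^s \partial_N\phi$ is smooth and bounded on $\supp\phi$ and the argument reduces to ordinary $L^2$-compactness. Restricting $\phi \in C_c^\infty(\R^N_+)$ is also consistent with the statement that all concentration points lie in the open halfspace $\R^N_+$.
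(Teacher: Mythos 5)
Your proof is correct and follows the same route the paper intends: the authors simply cite \cite[Lemma 4.8]{Struwe} and call the adaptation ``straightforward,'' and your argument is precisely that adaptation, correctly isolating the only weight-specific points — the local equivalence of $H_s$ with $H^1$ away from $\{x_1=0\}$ (giving $L^2_{\loc}$-compactness and a.e.\ convergence for Brezis--Lieb), and the fact that the weighted cross term $\int x_1^s\,\phi\,\partial_N\phi\,v_n\,\partial_N v_n\,dx$ is harmless because $\supp\phi\subset\subset\R^N_+$.
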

Our main result then states that $\cS$ is attained in $H_s$ and completes the 
proof of Theorem~\ref{existence of minimizers - whole space - intro}.
\begin{theorem} \label{Theorem: Existence of Minimizers on Halfspace}
	Let $s>0$ and suppose $(u_n)_n$ is a minimizing sequence for 
	$$
	\cS = \inf_{ u \in H_s \setminus \{0\}} \frac{\displaystyle\int_{\R^N_+} 
	\left( \sum \limits_{i=1}^{N-1} |\del_i u|^2 +   x_1^s |\del_N u|^2 
	\right) 
	\, dx}{\left(\int_{\R^N_+} |u|^{2_s^*} \, dx\right)^\frac{2}{2_s^*}} 
	$$
	with $\|u_n\|_{L^{2_s^*}}=1$. Then, up to translations orthogonal to $x_1$ and anisotropic scaling, $(u_n)_n$ is relatively compact in $H_s$. 
\end{theorem}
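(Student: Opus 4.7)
My plan is to apply Lions' concentration-compactness principle to the probability measures $\nu_n := |u_n|^{2_s^*}\,dx$, in combination with the invariance of the Rayleigh quotient under translations in the directions $(x_2,\ldots,x_N)$ and under the anisotropic dilations $u \mapsto u_\lambda$ with $u_\lambda(x) := \lambda^{(2N+s-4)/4}\, u(\lambda x_1, \lambda x_2, \ldots, \lambda x_{N-1}, \lambda^{1+s/2} x_N)$ (cf.~Remark~\ref{Remark: Optimality of exponents}), both of which preserve $\|u\|_{L^{2_s^*}}$ and the numerator of the quotient. Since $\|u_n\|_{L^{2_s^*}} = 1$ and the numerator tends to $\cS$, $(u_n)$ is bounded in $H_s$; passing to a subsequence, $u_n \weakto u$ in $H_s$, while the finite measures $\nu_n$ and $\mu_n := (\sum_{i=1}^{N-1}|\partial_i u_n|^2 + x_1^s |\partial_N u_n|^2)\,dx$ converge weakly-$*$ to finite measures $\nu,\mu$ on $\R^N_+$.

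To prevent loss of mass at infinity, I normalize $(u_n)$ via the symmetry group. Let $Q_n(r) := \sup_{x \in \R^N_+} \nu_n(B_r(x))$, fix $\lambda_0 \in (0,1)$, and select $\rho_n > 0$ with $Q_n(\rho_n) = \lambda_0$, nearly realized at points $y_n = ((y_n)_1,y_n') \in \R^N_+$. Applying the anisotropic dilation of parameter $\rho_n$ followed by a translation in $(x_2,\ldots,x_N)$ sending $y_n'$ to the origin yields a new minimizing sequence (still denoted $u_n$) with $Q_n(1) = \lambda_0$ whose near-optimal balls are centered on the $x_1$-axis at $(b_n,0,\ldots,0)$, where $b_n := (y_n)_1/\rho_n \ge 0$. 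Along a further subsequence $b_n \to b \in [0,\infty]$; in the degenerate cases $b = 0$ or $b = \infty$ an additional anisotropic dilation of parameter of order $b_n$ (at the cost of replacing $\lambda_0$ by a smaller positive number) forces the concentration centers into a fixed compact subset of the open half-space.

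With this normalization in place, I apply Lemma~\ref{Concentration-Compactness Lemma}. Vanishing is excluded by $Q_n(1) = \lambda_0 > 0$. Dichotomy is excluded by strict subadditivity of $t \mapsto t^{2/2_s^*}$: a splitting scenario with mass fractions $\theta \in (0,1)$ and $1-\theta$ would produce, via cutoff arguments and Theorem~\ref{general-degenerate-Sobolev-inequality-whole space}, the limit contradiction
\[
\cS \;\ge\; \cS\,\theta^{2/2_s^*} + \cS\,(1-\theta)^{2/2_s^*} \;>\; \cS
\]
since $2/2_s^* < 1$. Hence the compactness alternative holds, and since the concentration centers lie in a fixed compact subset of $\R^N_+$, no mass escapes, so $\nu(\R^N_+) = 1$ and $\mu(\R^N_+) = \cS$.

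Finally, Lemma~\ref{Concentration-Compactness Lemma II} yields $\nu = |u|^{2_s^*}\,dx + \sum_j \nu^j \delta_{x^j}$ with a matching lower bound for $\mu$ satisfying $\cS(\nu^j)^{2/2_s^*} \le \mu^j$. Setting $a_0 := \|u\|_{L^{2_s^*}}^{2_s^*}$ and $a_j := \nu^j$ for $j\ge 1$, I get $\sum_j a_j = 1$ with $a_j \in [0,1]$ and
\[
\cS \;=\; \mu(\R^N_+) \;\ge\; \cS \|u\|_{L^{2_s^*}}^{2} + \cS \sum_{j\ge 1}(\nu^j)^{2/2_s^*} \;=\; \cS \sum_j a_j^{2/2_s^*}.
\]
Since $t^{2/2_s^*} \ge t$ on $[0,1]$ with equality only at $t \in \{0,1\}$, exactly one $a_j$ equals $1$ and all others vanish. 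The pathological possibility $u \equiv 0$ with $\nu = \delta_{x^0}$ is ruled out by $\nu(B_1(x^0)) \le \liminf_n \nu_n(B_1(x^0)) \le \lambda_0 < 1$. Consequently $\|u\|_{L^{2_s^*}} = 1$, $\nu$ has no atoms, $u_n \to u$ strongly in $L^{2_s^*}(\R^N_+)$, and the chain of equalities forces $\|u_n\|_{H_s}^2 \to \|u\|_{H_s}^2 = \cS$, which together with weak convergence in the Hilbert space $H_s$ upgrades to strong convergence. The principal obstacle is the absence of $x_1$-translation invariance: the single-parameter family of anisotropic dilations must simultaneously control the concentration scale and the $x_1$-position of the concentration center, and one must carefully preclude the scenarios in which concentration approaches the degenerate boundary $\{x_1=0\}$ or escapes to $x_1 = +\infty$.
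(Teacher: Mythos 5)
Your overall scaffold (concentration–compactness with the anisotropic rescaling and orthogonal translations) matches the paper, but there is a real gap in the normalization step, and it is exactly the obstacle you flag at the end without actually resolving. You define $Q_n(r)=\sup_{x}\nu_n(B_r(x))$ with \emph{Euclidean} balls and then claim that applying the anisotropic dilation of parameter $\rho_n$ gives $Q_n(1)=\lambda_0$. This does not follow: the anisotropic map $\tau_\lambda$ sends balls to ellipsoids, so $Q_n$ (in your Euclidean-ball definition) is not transformed into a reparametrized copy of itself, and the equality $Q_n(1)=\lambda_0$ is lost after rescaling. Even if one replaces balls by anisotropic ellipsoids $\tau_r(B_1)+z$, one is stuck with a genuine tension that a single dilation parameter cannot resolve: normalizing the concentration scale to $1$ moves the $x_1$-coordinate of the near-optimal center to $b_n=(y_n)_1/\rho_n$, and your proposed ``additional dilation of order $b_n$'' to push $b_n$ into a fixed compact set simultaneously rescales the concentration length to $b_n^{-1}$, so that the renormalized $Q_n(1)$ may collapse to $0$ — i.e., you may reintroduce vanishing. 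Your closing shortcut, $\nu(B_1(x^0))\le\liminf\nu_n(B_1(x^0))\le\lambda_0<1$, depends on the ball normalization that has not been established.

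The paper's key device is to replace balls by the family of rectangles
$\cQ_r=\bigl\{(0,r^2)\times\bigl(y+(-r^2,r^2)^{N-2}\times(-r^{2+s},r^{2+s})\bigr):y\in\R^{N-1}\bigr\}$,
which are \emph{anchored at $x_1=0$} and satisfy $\tau_R(\cQ_r)=\cQ_{rR}$. Because the $x_1$-extent of every $E\in\cQ_r$ is $(0,r^2)$, the single parameter $r$ simultaneously controls the concentration scale \emph{and} pins the mass near the degenerate boundary; normalizing $\sup_{E\in\cQ_1}\nu_n(E)=\tfrac12$ then prevents both vanishing and escape to $x_1=\infty$ in one stroke. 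With this normalization a residual atom can nonetheless sit at $x^1=(1,0,\dots,0)$, a point where the operator is uniformly elliptic; the paper excludes it via a localized blowup argument pitting the $L^q$-growth ($2_s^*<q<2^*$) forced by $\nu_n\weakto\delta_{x^1}$ against the classical Sobolev inequality. You would need to either adopt the anchored rectangle family (in which case your $\lambda_0$-bound must be replaced by the paper's boundary/interior case analysis plus the ellipticity blowup), or supply a genuinely new argument that a one-parameter symmetry group can control both scale and $x_1$-position — which, as things stand, it cannot.
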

\begin{proof}
	For $r>0$ we define the family of rectangles
	$$
	\cQ_r := \left\{ (0,r^2) \times \left(y + (-r^2,r^2)^{N-2}\times (-r^{2+s},r^{2+s})  \right): y \in \R^{N-1} \right\}.
	$$
	It is important to note that for $R>0$, with respect to the transformation
	\begin{equation} \label{anisotropic scaling}
	\tau_R(x)=(R^2 x_1, R^2 x_2,\ldots,R^2 x_{N-1},R^{2+s} x_N) ,
	\end{equation}
	these sets satisfy
	$$
	\tau_R (\cQ_r)= \cQ_{rR} .
	$$
	Moreover, the functions
	$$
	Q_n(r):=\sup_{E \in \cQ_r} \int_{E} |u_n|^{2_s^*} \, dx 
	$$
	are continuous on $[0,\infty)$ and satisfy
	$$
	\lim_{r \to 0} Q_n(r)=0, \quad \lim_{r \to \infty} Q_n(r)=1 .
	$$
	Hence we may choose $A_n>0$, $y_n \in \R^{N-1}$ such that the rescaled 
	sequence $v_n \in H_s$ given by
	$$
	v_n(x):=A_n^\frac{2N-4+s}{2} u_n(A_n^2 x_1,A_n^2 (x_2+(y_n)_1), \ldots,A_n^{2+s} (x_N+(y_n)_{N-1})
	$$
	satisfies
	$$
	Q_n(1)=\sup_{E \in \cQ_1} \int_{E} |v_n|^{2_s^*} \, dx =  \int_{(0,1) \times (-1,1)^{N-1}} |v_n|^{2_s^*} \, dx = \frac{1}{2} .
	$$
	After passing to a subsequence, we may assume $v_n \weakto v$ in $H_s$ and 
	in $L^{2_s^*}(\R^N_+)$. We now consider the measures
	$$
	\mu_n :=  \left(\sum \limits_{i=1}^{N-1} |\del_i v_n|^2 +  x_1^s |\del_N 
	v_n|^2 \right) \, dx, \quad \nu_n := |v_n|^{2_s^*} \, dx 
	$$
	and apply Lemma~\ref{Concentration-Compactness Lemma} to $(\nu_n)_n$, where 
	we note that $\mu_n$ and $\nu_n$ are initially measures on $\R^N_+$ but can 
	trivially be extended to $\R^N$. By our normalization, vanishing cannot 
	occur. We assume that we have dichotomy and thus let $\lambda \in (0,1)$ be 
	as in Lemma~\ref{Concentration-Compactness 
	Lemma}(iii). Then, considering a sequence $\eps_n \downarrow 0$, for any $n 
	\in \N$
	there exist $R_n>0$, $x_n \in \R^N_+$ as well as nonnegative 
	measures $\nu_n^1, \nu_n^2$ on $\R^N_+$ such that
	\begin{align*}
	& 0 \leq \nu_n^1 + \nu_n^2 \leq \nu_n \\
	& \supp \, \nu_n^1 \subset \R^N_+ \cap B_{R_n}(x_n), \quad \supp \, \nu_n^2 
	\subset \R^N_+ \setminus B_{2R_n^\frac{2+s}{2}+1}(x_n) \\
	&  \left| \lambda - \int_{\R^N_+} d\nu_n^1 \right| +  \left|(1- \lambda) - 
	\int_{\R^N_+} d\nu_n^2 \right| \leq 2\eps_n 
	\end{align*}
	and thus
	\begin{align*}
	& \limsup_{n \to \infty} \left( \left| \lambda - \int_{\R^N_+} d\nu_n^1 
	\right| +  \left|(1- \lambda) - \int_{\R^N_+} d\nu_n^2 \right|\right) =0 .
	\end{align*}
	From the proof of the Lemma~\ref{Concentration-Compactness Lemma} (see 
	\cite{Struwe}) we can assume $R_n \to \infty$ and, in particular, $R_n \geq 
	1$.
	
	For $r>0$, let the anisotropic scaling $\tau_r$ be defined as in \eqref{anisotropic scaling}. 
	We crucially note that
	$$
	B_{R_n}(0) \subset \tau_{\sqrt{R_n}}(B_1(0)) 
	$$
	and 
	$$
	\R^N_+ \setminus B_{2 R_n^\frac{2+s}{2}+1}(0) \subset \R^N_+ \setminus 
	\tau_{\sqrt{R_n}}(B_2(0)) .
	$$
	We take $\phi \in C_c^\infty(B_2(0))$ with $0 \leq \phi \leq 1$ and $\phi \equiv 1 $ in $B_1(0)$. For $n \in \N$, let $\phi_n(x):=\phi(\tau_{\sqrt{R_n}}^{-1}(x-x_n))$, so that
	$$
	\phi_n \equiv 1 \quad \text{on $ x_n+\tau_{\sqrt{R_n}}(B_1(0))$}, \quad \phi_n \equiv 0 \quad  \text{on $\R^N \setminus (x_n+\tau_{\sqrt{R_n}}(B_2(0)))$},
	$$
	and thus, in particular,
	$$
	\phi_n \equiv 1 \quad \text{on $\supp \, \nu_n^1$}, \quad \phi_n \equiv 0 \quad  \text{on $\supp \, \nu_n^2$}.
	$$
	Note that
	$$
	|\del_1 v_n|^2 +  x_1^s |\del_2 v_n|^2 \geq \left( |\del_1 v_n|^2 +  
	x_1^s |\del_2 v_n|^2 \right) \left( \phi_n^2 + (1-\phi_n)^2 \right) .
	$$
	We have
	\begin{align*}
	& \left( \int_{\R^N_+}  \left( \sum \limits_{i=1}^{N-1} |\del_i (\phi_n 
	v_n)|^2 +  x_1^s |\del_N (\phi_n v_n)|^2 \right) \, dx 
	\right)^\frac{1}{2} 
	\\
	\leq
	& \left( \int_{\R^N_+} \phi_n^2 \left( \sum \limits_{i=1}^{N-1} |\del_i 
	v_n|^2 + x_1^s |\del_N v_n|^2 \right) \, dx \right)^\frac{1}{2}+ \left( 
	\int_{\R^N_+} v_n^2 \left(\sum \limits_{i=1}^{N-1} |\del_i \phi_n|^2 + 
	 x_1^s |\del_N \phi_n|^2 \right) \, dx \right)^\frac{1}{2}
	\end{align*}
	and analogously for $(1-\phi_n)$ instead of $\phi_n$. 
	Squaring and adding these estimates gives
	\begin{align*}
	& 
	\|\phi_n v_n\|_{H_s}^2 + \|(1-\phi_n) v_n\|_{H_s}^2
	\\
	\leq &  \int_{\R^N_+} \left(\sum \limits_{i=1}^{N-1} |\del_i  v_n|^2 + 
	 x_1^s|\del_N  v_n|^2 \right) \, dx + 2 \int_{\R^N_+} v_n^2 \left(\sum 
	\limits_{i=1}^{N-1} |\del_i \phi_n|^2 +  x_1^s |\del_N \phi_n|^2 \right) 
	\, dx\\
	& +  4 \left(  \int_{\R^N_+} \left(\sum \limits_{i=1}^{N-1} |\del_i  v_n|^2 
	+  x_1^s |\del_N  v_n|^2 \right) \, dx \right)^\frac{1}{2} \left( 
	\int_{\R^N_+} v_n^2 \left( \sum \limits_{i=1}^{N-1} |\del_i \phi_n|^2 + 
	 x_1^s |\del_N \phi_n|^2 \right) \, dx \right)^\frac{1}{2} .
	\end{align*}
	Setting
	\begin{align*}
	\beta_n & := 2 \int_{\R^N_+} v_n^2 \left( \sum \limits_{i=1}^{N-1} |\del_i 
	\phi_n|^2 +  x_1^s |\del_N \phi_n|^2 \right) \, dx \\
	& \ \  +  4 \left(  \int_{\R^N_+} \left( \sum \limits_{i=1}^{N-1} |\del_i  
	v_n|^2 +  x_1^s |\del_N  v_n|^2 \right) \, dx \right)^\frac{1}{2} \left( 
	\int_{\R^N_+} v_n^2 \left( \sum \limits_{i=1}^{N-1} |\del_i \phi_n|^2 + 
	 x_1^s |\del_N \phi_n|^2 \right) \, dx \right)^\frac{1}{2}
	\end{align*}
	we thus have
	\begin{align*}
	&\int_{\R^N_+} \left( \sum \limits_{i=1}^{N-1} |\del_i  v_n|^2 +  x_1^s 
	|\del_N  v_n|^2 \right) \, dx 
	\geq \|\phi_n v_n\|_{H_s}^2 + \|(1-\phi_n) v_n\|_{H_s}^2 - \beta_n .
	\end{align*}
	Next, we define the anisotropic annulus
	$$
	A_n:=x_n + \tau_{\sqrt{R_n}}(B_{2}(0)) \setminus \tau_{\sqrt{R_n}}(B_{1}(0))
	$$
	and consider $\delta>0$.
	Using Young's inequality and the fact that any derivative of $\phi_n$ 
	vanishes outside of $A_n$, we can estimate
	\begin{align*}
	\beta_n \leq & \delta \int_{\R^N_+} \left( \sum \limits_{i=1}^{N-1} 
	|\del_i  v_n|^2 +  x_1^s |\del_N  v_n|^2 \right) \, dx
	+C(\delta) \int_{A_n} v_n^2 \left( \sum \limits_{i=1}^{N-1} |\del_i 
	\phi_n|^2 +  x_1^s |\del_N \phi_n|^2 \right) \, dx.
	\end{align*}
	Note that
	\begin{align*}
\sum \limits_{i=1}^{N-1} |\del_i 
\phi_n|^2 +  x_1^s |\del_N \phi_n|^2  & = R_n^{-2} \sum \limits_{i=1}^{N-1} 
|[\del_i \phi](\tau_n(x))|^2 + x_1^s R_n^{-2-s} |[\del_N 
\phi](\tau_n(x))|^2  
\\
	&= R_n^{-2} \bigg( \sum \limits_{i=1}^{N-1}  |[\del_i \phi]|^2 + 
	(\cdot)_1^s |[\del_N \phi|^2  \bigg) \circ \tau_{\sqrt{R_n}}^{-1} ,
	\end{align*}
	and thus
	$$
	\sum \limits_{i=1}^{N-1} |\del_i \phi_n|^2 +  x_1^s |\del_N \phi_n|^2 
	\leq C R_n^{-2}
	$$
	for some $C>0$ independent of $n$.
	This gives
	\begin{align*}
	\int_{A_n} v_n^2 \left( \sum \limits_{i=1}^{N-1} |\del_i \phi_n|^2 + 
	 x_1^s |\del_N \phi_n|^2 \right) \, dx \leq C R_n^{-2} 
	\|v_n\|_{L^2(A_n)}^2 .
	\end{align*}
	Using H\"older's inequality then further yields
	\begin{align*}
	R_n^{-1} \|v_n\|_{L^2(A_n)} & \leq R_n^{-1} |A_n|^\frac{2}{2N+s} \|v_n\|_{L^{2_s^*}(A_n)} \leq C \|v_n\|_{L^{2_s^*}(A_n)} \\
	& \leq C \left( \int_{\R^N} \, d\nu_n - \left( \int_{\R^N} \, d\nu_n^1 + \int_{\R^N} \, d\nu_n^2 \right) \right)^\frac{1}{2_s^*} \to 0
	\end{align*}
	as $n \to \infty$. Here we used 
	$$
	|A_n|=|\tau_{\sqrt{R_n}}(B_{2}(x_n))|-|\tau_{\sqrt{R_n}}(B_{1}(x_n))| = R_n^\frac{2N+s}{2} \big(|B_2(0)|-|B_1(0)|\big) .
	$$
	Overall, we find that, for any $\delta>0$,
	$$
	\limsup_{n \to \infty}  \beta_n \leq \delta \sup_n \|v_n\|_H^2 ,
	$$
	and since $(v_n)_n$ remains bounded in $H_s$, we conclude
	\begin{align*}
	& \int_{\R^N_+} \left( \sum \limits_{i=1}^{N-1} |\del_i  v_n|^2 + x_1^s 
	|\del_N v_n|^2 \right) \, dx 
	 \geq \|\phi_n v_n\|_{H_s}^2 + \|(1-\phi_n) v_n\|_{H_s}^2 - \beta_n \\
	&\geq  \cS \left( \|\phi_n v_n\|_{L^{2_s^*}(\R^N_+)}^2 + \|(1-\phi_n) 
	v_n\|_{L^{2_s^*}(\R^N_+)}^2 \right) + o(1) \\
	&\geq  \cS \left( \left( \int_{B_{R_n}(x_n)} \, d\nu_n 
	\right)^\frac{2}{2_s^*} +  \left( \int_{\R^N_+ \setminus B_{R_n'}(x_n)} \, 
	d\nu_n \right)^\frac{2}{2_s^*}  \right) + o(1) \\
	&\geq   \cS \left( \left( \int_{\R^N_+} \, d\nu_n^1 \right)^\frac{2}{2_s^*} 
	+  \left( \int_{\R^N_+ } \, d\nu_n^2 \right)^\frac{2}{2_s^*}  \right) + 
	o(1)  \geq  \cS \left(\lambda^\frac{2}{2_s^*} + 
	(1-\lambda)^\frac{2}{2_s^*}\right) + o(1) .
	\end{align*}
	But since $\lambda \in (0,1)$, we have $\lambda^\frac{2}{2_s^*} + (1-\lambda)^\frac{2}{2_s^*} >1$ and thus
	$$
	\begin{aligned} 
	\cS & =\lim_{n \to \infty} \int_{\R^N_+} \left( \sum \limits_{i=1}^{N-1} 
	|\del_i  v_n|^2 +  x_1^s |\del_N v_n|^2 \right) \, dx \\
	& \geq \liminf_{n \to \infty} \left( \cS \left(\lambda^\frac{2}{2_s^*} + 
	(1-\lambda)^\frac{2}{2_s^*}\right) + o(1) \right) > \cS, 
	\end{aligned}
	$$
	a contradiction. Hence we cannot have dichotomy.
	
	Since we are therefore in case (i) of the 
	Lemma~\ref{Concentration-Compactness Lemma}, there exists a sequence 
	$(x_n)_n$ such that for any $\eps>0$ there exists $R=R(\eps)>0$ with
	$$
	\int_{B_R(x_n)} \, d\nu_n \geq 1- \eps .
	$$
	Since we normalized so that
	$$
	\int_{(0,1) \times (-1,1)^{N-1}} |v_n|^{2_s^*} \, dx = \frac{1}{2},
	$$
	we must have $(0,1) \times (-1,1)^{N-1} \cap B_R(x_n) \neq \varnothing$ if $\eps<\frac{1}{2}$.
	By making $R$ larger if necessary, we can thus assume
	$$
	\int_{B_R(0)} \, d\nu_n \geq 1-\eps .
	$$
	In particular, we may therefore pass to a subsequence such that $\nu_n 
	\weakto \nu$ 
	weakly in the sense of measure, where $\nu$ is a finite measure on $\R^N_+$.
	By weak lower (and upper) semicontinuity (of measures), we then have
	$$
	\int_{\R^N_+} \, d\nu =1 .
	$$
	By Lemma~\ref{Concentration-Compactness Lemma II} we may now assume 
	$$
	\mu_n  \weakto \mu \geq \sum \limits_{i=1}^{N-1} \left(|\del_i v|^2 +  
	x_1^s |\del_N v|^2 \right)  dx  + \sum \limits_{j \in J} \mu^j 
	\delta_{x^j} \quad \text{and}\quad 
	\nu_n  \weakto |v|^{2_s^*} dx + \sum \limits_{ j \in J} \nu^j \delta_{x^j}
	$$
	for points $x^j \in \R^N_+$ and positive $\mu^j$, $\nu^j$ satisfying $\cS 
	(\nu^j)^\frac{2}{2_s^*} \leq \mu^j .$ 	We have
	\begin{equation} \label{Concavity inequality}
	\begin{aligned}
	\cS + o(1) &= \|v_n\|_{H_s}^2 = \int_{\R^N_+} \, d\mu_n \geq \int_{\R^N_+} 
	\, 
	d\mu 
	+o(1) \geq  \|v\|_{H_s}^2  + \sum \limits_{j \in J} \mu^j  + o(1) \\
	& \geq \cS \left( \|v\|_{L^{2_s^*}(\R^N_+)}^2 + \sum \limits_j 
	(\nu^j)^\frac{2}{2_s^*} \right) + o(1) \\
	& \geq \cS \left( \|v\|_{L^{2_s^*}(\R^N_+)}^{2_s^*} + \sum \limits_j \nu^j 
	\right)^\frac{2}{2_s^*} + o(1) \\
	&= \cS \left( \int_{\R^N_+} \, d\nu \right)^\frac{2}{2_s^*} + o(1) = S + 
	o(1) 
	\end{aligned}
	\end{equation}
	as $n \to \infty$. In the second inequality, we used the fact that the map $t \mapsto t^\frac{2}{2_s^*}$ is strictly concave and hence subadditive. Moreover, the strict concavity implies that equality can only hold, if at most one of the terms
	$ \|v\|_{L^{2_s^*}(\R^N_+)}^{2_s^*}$ and $ \nu^j, \ j \in J $ is nonzero.
	
	{\bf Claim:} $\nu^j=0$ for all $j$. \\
	Assuming that this is false, we have $\nu_n \weakto \delta_{x^1}$ for some 
	$x^1 \in \overline{\R^N_+}$. By our normalization and weak lower 
	semicontinuity (of measures), $x^1 \not \in Q:= (0,1) \times(-1,1)^{N-1}$ 
	since
	$$
	\delta_{x^1}(Q) \leq \liminf_{n \to \infty} \nu_n(Q)=\frac{1}{2}.
	$$
	Moreover, if $\dist(x^1,Q)>0$, there exists $\eps>0$ such that $B_\eps(x_1) \cap Q \neq \varnothing$ and thus
	$$
	1=\delta_{x^1}(B_\eps(x^1)) \leq \liminf_{n \to \infty} \nu_n(B_\eps(x^1))\leq \frac{1}{2} ,
	$$
	which is a contradiction. Hence it only remains to consider the case $x^1 \in \del Q$.
	Due to the normalization
	$$
	\sup_{E \in \cQ_1} \int_{E} |v_n|^{2_s^*} \, dx =  \int_{(0,1) \times (-1,1)^{N-1}} |v_n|^{2_s^*} \, dx = \frac{1}{2} ,
	$$
	we have $x^1 \not \in ((0,y)+Q)$ for all $y \in \R^{N-1}$, so $x^1$ must be of the form $x^1=( 1, y)$ or $(0,y)$ for some $y \in (-1,1)^{N-1}$. The latter case can be excluded, since, for $\eps \in (0,\frac{1}{2})$, 
	$$
	\delta_{x^1}(B_\eps(0,y)) \leq \liminf_{n \to \infty} \nu_n(B_\eps(0,y)) \leq \liminf_{n \to \infty} \nu_n((0,y)+Q) \leq\frac{1}{2}.
	$$
	After a translation orthogonal to the $x_1$-direction, we may therefore 
	assume $x^1=(1,0, \ldots,0)$ and first note that $v \equiv 0$ and hence 
	$\mu \geq \cS \delta_{x^1}$ by \eqref{Concavity inequality}. On the other 
	hand,
	$$
	\int_{\R^N} \, d\mu \leq \liminf_{n \to \infty} \int_{\R^N} \, d\mu_n=\cS ,
	$$
	whence we conclude $\mu = \cS \delta_{x^1}$.
	
	For any $0<\delta< \frac{1}{2}$, $B_\delta:=B_\delta(x_1)$ is a continuity set of $\nu=\delta_{x_1}$, hence
	$$
	\nu_n(B_\delta) \to 1
	$$
	and similarly
	$$
	\mu_n(B_\delta) \to S
	$$
	as $n \to \infty$. In particular, for fixed $\eps>0$ we find $n_0=n_0(\eps,\delta)$ such that 
	$$
	\int_{B_\delta} |v_n|^{2_s^*} \, dx \geq 1-\eps, \qquad \cS-\eps \leq 
	\int_{B_\delta} \left( \sum \limits_{i=1}^{N-1} |\del_i v_n|^2 +  x_1^s 
	|\del_N v_n|^2 \right) \, dx  \leq \cS+ \eps 
	$$
	for $n \geq n_0$. 
	Furthermore, 
	$$
	\frac{1}{1+\delta}\int_{B_\delta}  \left( \sum \limits_{i=1}^{N-1} |\del_i 
	v_n|^2 +  x_1^s |\del_N v_n|^2 \right) \, dx \leq \int_{B_\delta} \sum 
	\limits_{i=1}^N |\del_i v_n|^2  \, dx 
	$$
	and
    $$
    \int_{B_\delta} \sum \limits_{i=1}^N |\del_i v_n|^2  \, dx \leq 
    \frac{1}{1-\delta} \int_{B_\delta}  \left( \sum \limits_{i=1}^{N-1} |\del_i 
    v_n|^2 +  x_1^s |\del_N v_n|^2 \right) \, dx 
    $$
	imply
	$$
	\frac{\cS-\eps}{1+\delta} \leq \int_{B_\delta} \sum \limits_{i=1}^N |\del_i v_n|^2  \, dx \leq \frac{\cS+\eps}{1-\delta}
	$$	
	for $n \geq n_0$.
	It is important to note that the weak convergence $\nu_n \weakto 
	\delta_{x^1}$ implies that, for any $t \in (0,\delta)$ and $q\in 
	(2_s^*,2^*)$, we have
	$$
	\begin{aligned} 
	1 & = \liminf_{n \to \infty} \int_{B_t} |v_n|^{2_s^*} \, dx \leq 
	|B_t|^{1-\frac{2_s^*}{q}} \liminf_{n \to \infty} \left(  \int_{B_t} 
	|v_n|^{q} \, dx \right)^\frac{2_s^*}{q} \\
	&\leq |B_t|^{1-\frac{2_s^*}{q}}  \liminf_{n \to \infty} \left(  
	\int_{B_\delta} |v_n|^{q} \, dx \right)^\frac{2_s^*}{q}.
	\end{aligned}	
	$$
	In particular, this implies
	\begin{equation} \label{Hoelder blowup argument}
	\liminf_{n \to \infty} \left(  \int_{B_\delta} |v_n|^{q} \, dx \right)^\frac{2_s^*}{q} \geq |B_t|^{\frac{2_s^*}{q}-1} ,
	\end{equation}
	and since $t \in (0,\delta)$ was arbitrary, we conclude that $\|v_n\|_{L^q(B_\delta)} \to \infty$ as $n \to \infty$ for any $q \in (2_s^*,2^*)$.
	
	Now let $\phi \in C_c^\infty(\R^N)$ such that $\phi \equiv 1$ on $B_1(0)$ and $\phi \equiv 0$ on $\R^N \setminus B_2(0)$, and set
	$$
	\phi_\delta(x):=\phi \left(\frac{x-x^1}{\delta}\right)
	$$
	so that $\phi_\delta \equiv 1$ on $B_\delta(x^1)$, $\phi \equiv 0$ on $\R^N \setminus B_{2\delta}(x^1)$. Then, by Sobolev's inequality
	\begin{equation} \label{localized inequality}
	\left( \int_{\R^N_+} |\phi_\delta \, v_n|^{q} \, dx \right)^\frac{2}{q}  
	\leq C_q \left(  \int_{\R^N_+} \sum \limits_{i=1}^N |\del_i(\phi_\delta \,  
	v_n)|^2 \, dx  + \int_{\R^N_+} |\phi_\delta \,  v_n|^2  \, dx \right) .
	\end{equation}
	Note that \eqref{Hoelder blowup argument} implies that the left hand side 
	goes to infinity as $n \to \infty$ since
	$$
	\int_{B_\delta} |v_n|^{q} \, dx \leq  \int_{\R^N} |\phi_\delta \, v_n|^{q} \, dx.
	$$
	On the other hand,
	$$
	\int_{\R^N_+} |\phi_\delta \,  v_n|^2   \, dx \leq |B_{2 
	\delta}|^{1-\frac{2}{2_s^*}} \left( \int_{B_{2\delta}}|v_n|^{2_s^*} \, dx 
	\right)^\frac{2}{2_s^*} \leq |B_{2}|^{1-\frac{2}{2_s^*}} ,
	$$
	and, noting that $\nabla \phi_\delta(x) = \delta^{-1} [\nabla \phi](\frac{x-x^1}{\delta})$,
	\begin{align*}
	\left(\int_{\R^N_+} \sum \limits_{i=1}^N |\del_i(\phi_\delta \,  v_n)|^2 \, 
	dx \right)^\frac{1}{2} &\leq   \left(\int_{\R^N_+} \phi_\delta^2 \sum 
	\limits_{i=1}^N |\del_i  v_n|^2 \, dx \right)^\frac{1}{2} 
	+  \left(\int_{\R^N_+} v_n^2 \sum \limits_{i=1}^N |\del_i  \phi_\delta|^2 
	\, dx \right)^\frac{1}{2} \\
	&\leq \left(\int_{B_{2\delta}} \sum \limits_{i=1}^N |\del_i  v_n|^2 \, dx 
	\right)^\frac{1}{2}  + \sqrt{N} \delta^{-1} \|\nabla \phi\|_\infty 
	\left(\int_{ B_{2 \delta} \setminus B_\delta } |v_n|^2\right)^\frac{1}{2} \\
	&\leq \sqrt{  \frac{\cS+\eps}{1-2\delta}} + \sqrt{N}\delta^{-1} \|\nabla 
	\phi\|_\infty |B_{2 \delta} \setminus 
	B_\delta|^{\frac{1}{2}-\frac{1}{2_s^*}} \left(\int_{B_{2 \delta} \setminus 
	B_\delta }  |v_n|^{2_s^*}\right)^\frac{1}{2_s^*} \\
	&\leq  \sqrt{  \frac{\cS+\eps}{1-2\delta}} + \sqrt{N} \delta^{-1} \|\nabla 
	\phi\|_\infty |B_{2 \delta} \setminus 
	B_\delta|^{\frac{1}{2}-\frac{1}{2_s^*}} .
	\end{align*}
	This implies that the right hand side of \eqref{localized inequality} 
	remains bounded as $n \to \infty$, a contradiction. 
	
	We conclude $\nu^j=0$ for all $j$ and hence $\|v\|_{L^{2_s^*}(\R^N_+)}=1$.
	Since $L^{2_s^*}(\R^N_+)$ is uniformly convex, this implies $v_n \to v$ in 
	$L^{2_s^*}(\R^N_+)$. Moreover, since $\|v\|_{H_s}^2 \geq \cS$, weak lower 
	semicontinuity gives $\|v_n\|_{H_s}^2 \to \cS=\|v\|_{H_s}^2$ and hence $v_n 
	\to v$ in ${H_s}$ again by uniform convexity of the Hilbert space ${H_s}$. 
	This completes the proof.
\end{proof}
\begin{remark} {\bf (Existence of minimizers on $\R^N$)} \\
	We note that Theorem~\ref{general-degenerate-Sobolev-inequality-whole 
	space} implies  
	$$
	\cS_s(\R^N) := 	\inf_{ u \in C^1_c(\R^N)} \frac{\displaystyle\int_{\R^N} 
	\sum 
	\limits_{i=1}^{N-1} |\del_i u|^2 + |x_1|^s |\del_N u|^2 \, 
	dx}{\left(\int_{\R^N} |u|^{2_s^*} \, dx\right)^\frac{2}{2_s^*}} > 0. 
	$$
	Consequently, we can look for minimizers in the closure of 
	$C_c^1(\R^N)$ in  
	$$
	\Bigl\{ u \in L^{2_s^*}(\R^N): \int_{\R^N} \sum_{i=1}^{N-1} |\del_i u|^2 + 
	|x_1|^s |\del_N u|^2 \, dx < \infty \Bigr\} .
	$$
    The previous arguments can then easily be adapted to prove the existence of 
    minimizers of $\cS_s(\R^N)$ similar to
	Theorem~\ref{Theorem: Existence of Minimizers on Halfspace}.	
\end{remark}

\section{A degenerate Sobolev inequality on $\B$}
\label{sec:degen-sobol-ineq}

In this section we shall prove the second part of Theorem~\ref{main theorem}, 
namely the properties of $\scrC_{1,m,p}(\B)$ given in 
(\ref{eq:main-theorem-dist}).

We first use the scaling properties discussed in Remark~\ref{Remark: Optimality 
of exponents}(i) to prove the following.
\begin{proposition} \label{Prop: p>pstar}
	Let $p>{2_{\text{\tiny $1$}}^*}$ and $m>-\lambda_1(\B)$. Then $\scrC_{1,m,p}(\B)=0$, i.e.
	$$
	\inf_{u \in C_c^1(\B) \setminus \{0\}} \frac{\|\nabla u\|_2^2 - \|\del_\theta u\|_2^2 + m \|u\|_2^2}{\|u\|_p^2} = 0.
	$$
\end{proposition}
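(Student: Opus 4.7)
The plan is to prove $\scrC_{1,m,p}(\B) = 0$ by establishing both the lower bound $\scrC_{1,m,p}(\B) \geq 0$ and the upper bound $\scrC_{1,m,p}(\B) \leq 0$.

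For the lower bound, I would first establish the Poincar\'e-type inequality
\begin{equation*}
\int_\B (|\nabla u|^2 - |\partial_\theta u|^2)\,dx \geq \lambda_1(\B)\, \|u\|_{L^2(\B)}^2 \qquad \text{for } u \in H^1_0(\B),
\end{equation*}
from which $\scrC_{1,m,p}(\B) \geq 0$ is immediate when $m > -\lambda_1(\B)$. To prove this, I would decompose $u$ into Fourier modes $u = \sum_{\ell \in \Z} u_\ell(\rho, z)\, e^{i\ell \phi}$ with respect to the angle $\phi$ of the $x_1$--$x_2$ plane, using cylindrical coordinates $(\rho, \phi, z)$ with $z \in \R^{N-2}$. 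By Parseval, the inequality splits across modes and reduces to
\begin{equation*}
\int \bigl(|\partial_\rho u_\ell|^2 + \ell^2 (1/\rho^2 - 1)|u_\ell|^2 + |\nabla_z u_\ell|^2\bigr) \rho\, d\rho\, dz \geq \lambda_1(\B) \int |u_\ell|^2 \rho\, d\rho\, dz
\end{equation*}
for each $\ell \in \Z$. Since $\ell^2 (1/\rho^2 - 1) \geq 0$ on $\B$ (as $\rho \le 1$), this in turn follows from the axisymmetric Poincar\'e inequality on $\B$, whose sharp constant equals $\lambda_1(\B)$ because the ground state of $-\Delta$ on $\B$ is radial, hence axisymmetric.

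For the upper bound, the strategy is to concentrate test functions anisotropically near the equator $\{x \in \partial \B : x_3 = \cdots = x_N = 0\}$, where $-\Delta + \partial_\theta^2$ degenerates, and to apply the scaling of Remark~\ref{Remark: Optimality of exponents}(i) with $s = 1$. Fix $x_0 = (1, 0, \ldots, 0)$. In cylindrical coordinates,
\begin{equation*}
\int_\B (|\nabla u|^2 - |\partial_\theta u|^2)\,dx = \int \bigl((\partial_\rho u)^2 + (1/\rho^2 - 1)(\partial_\phi u)^2 + |\nabla_z u|^2\bigr)\, \rho\, d\rho\, d\phi\, dz,
\end{equation*}
and substituting $y_1 = 1 - \rho$ with the Taylor expansion $(1/\rho^2 - 1) = 2 y_1 + O(y_1^2)$ shows that, near $x_0$, the leading-order form is exactly the degenerate half-space form with weight $2 y_1$ appearing in Theorem~\ref{existence of minimizers - whole space - intro} for $s = 1$. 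Given $\varphi \in C_c^1(\R^N_+)$, I would define $\varphi_\lambda(y) = \varphi(\lambda y_1, \ldots, \lambda y_{N-1}, \lambda^{3/2} y_N)$ and transport it to $\B$ via the local chart $y \mapsto x$; for large $\lambda$, the support shrinks anisotropically into a neighborhood of $x_0$. Direct change of variables yields a quadratic form scaling $\sim \lambda^{3/2 - N}$, an $L^p$-norm squared $\sim \lambda^{-(2N+1)/p}$, and an $L^2$-norm squared $\sim \lambda^{-(N+1/2)}$, giving
\begin{equation*}
R_{1,m,p}(\varphi_\lambda) = O\bigl(\lambda^{(2N+1)/p - (2N-3)/2}\bigr) + m \cdot O\bigl(\lambda^{-(N+1/2)(1 - 2/p)}\bigr).
\end{equation*}
Both exponents are strictly negative exactly when $p > 2_1^* = (4N+2)/(2N-3)$ and $p > 2$, so $R_{1,m,p}(\varphi_\lambda) \to 0$ as $\lambda \to \infty$, which yields $\scrC_{1,m,p}(\B) \leq 0$.

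The principal technical obstacle will be verifying that, after transporting $\varphi_\lambda$ to $\B$ via the boundary chart, the Jacobian corrections and the subleading Taylor terms in the expansion of $(1/\rho^2 - 1)$ produce only $O(1/\lambda)$ multiplicative perturbations, so that the Rayleigh quotient is genuinely governed by the limiting degenerate half-space form and the leading scalings survive passing to the limit $\lambda \to \infty$.
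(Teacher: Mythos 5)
Your upper-bound argument is correct and conceptually the same as the paper's: concentrate a near-optimal half-space test function anisotropically near the equator via the $s=1$ scaling of Remark~\ref{Remark: Optimality of exponents}(i), and verify that the Rayleigh quotient vanishes exactly when $p > 2_{\text{\tiny $1$}}^*$; your scaling exponents for the quadratic form, the $L^2$ norm, and the $L^p$ norm all agree with the paper's. The difference is in execution. The paper never passes to cylindrical coordinates or a boundary chart: it defines the affine map $\tau_\lambda(x)=(\lambda^{-2}(x_1+1),\lambda^{-2}x_3,\ldots,\lambda^{-2}x_{N-1},\lambda^{-3}x_2)$, sets $u_\lambda = v\circ\tau_\lambda$ for a near-optimal $v \in C^1_c(\R^N_+)$, and expands $|x_1\partial_2 u_\lambda - x_2\partial_1 u_\lambda|^2$ directly. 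Because $\tau_\lambda$ is affine, all correction terms come out as explicit monomials in $\lambda$ that are manifestly subleading, so there is no Taylor remainder or Jacobian correction to estimate. Your cylindrical version buys the transparent appearance of the weight $2y_1$, but at the price of exactly the error control you flag as the technical obstacle, plus one detail left implicit: under your chart, $\B$ corresponds to $\{y_1 > |z|^2/2 + \cdots\}$ rather than to all of $\R^N_+$; this is harmless on $\supp \varphi_\lambda$ since $y_1$ is of order $\lambda^{-1}$ while $|z|^2$ is of order $\lambda^{-2}$ there, but it deserves a sentence. Finally, you prove the lower bound $\scrC_{1,m,p}(\B)\geq 0$ inside this proposition, whereas the paper's proof of Proposition~\ref{Prop: p>pstar} shows only $\scrC_{1,m,p}(\B)\leq 0$ and defers nonnegativity to Proposition~\ref{lambda-1-alpha-pos}, which uses a spherical-harmonics decomposition rather than your cylindrical Fourier modes; both routes yield the sharp Poincar\'e constant $\lambda_1(\B)$ and are equally valid.
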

\begin{proof}
Let $\eps>0$. By (\ref{Remark: Optimality of exponents-eq}), there exists $v \in C_c^1(\R^N_+)$ with the property that 
$$
\int_{\R^N_+} \Bigl(\sum \limits_{i=1}^{N-1} |\del_i v|^2 + 2x_1  |\del_N v|^2\Bigr) dx  < \frac{\eps}{2} \, \Bigl( \int_{\R^N_+} |v|^p \, dx \Bigr)^{\frac{2}{p}}.
$$
For $\lambda \in (0,1)$, let 
\begin{equation}
\tau_\lambda : \B \to \R^N_+, \quad \tau_\lambda(x)=(\lambda^{-2} (x_1+1),\lambda^{-2} x_3, \ldots,\lambda^{-2} x_{N-1},  \lambda^{-3} x_2) 
\end{equation}
and set $u_\lambda:=v \circ \tau_\lambda$. If $\lambda$ is chosen sufficiently small, we have $u \in C_c^1(\B)$ and
\begin{align*} 
& \|\nabla u\|_{L^2(\B)}^2 - \|\del_\theta u\|_{L^2(\B)}^2  = \int_\B \Bigl( \sum \limits_{i=1}^{N}|\del_i u|^2 - |x_{1} \del_2 u- x_2 \del_1 u|^2    \Bigr)\, dx  \\
=& \int_\B \left( \sum \limits_{i=1}^{N-1}|\lambda^{-2}[\del_i v] \circ \tau_\lambda|^2 + |\lambda^{-3 } [\del_N v] \circ \tau_\lambda|^2 - \left| x_{1} \lambda^{-3}  [\del_N v] \circ \tau_\lambda - x_2 \lambda^{-2} [\del_{1} v] \circ \tau_\lambda \right|^2    \right)  \, dx  \\
= &  \lambda^{2N+1} \int_{\R^N_+} \left( \sum \limits_{i=1}^{N-1} \lambda^{-4 }|\del_i v |^2 + \lambda^{-6} | \del_N v |^2 - |(\lambda^2 x_{1}-1) \lambda^{-3} \del_N v - \lambda^3 x_2 \lambda^{-2} \del_{1} v |^2 \right)  \, dx 	 \\
= & \lambda^{2N-3}  \int_{\R^N_+} \left(\sum \limits_{i=1}^{N-1}|\del_i v |^2 + 2 x_{1} |\del_N v|^2 \right) dx 	 \\
& +  \lambda^{2N-3}\int_{\R^N_+}\left(- \lambda^2 x_{1}^2 |\del_N v|^2 - 2x_2 
\lambda^2 (\lambda^2 x_{1} -1) \del_{1} v \, \del_N v +\lambda^6 x_{2}^2 
|\del_{1} v|^2  \right)  \, dx, 
\end{align*} 
while
$$
\|u\|_{L^2(\B)}^2  = \lambda^{2N+1} \|v\|_{L^2(\R^N_+)}^2 \qquad \text{and}\qquad 
\|u\|_{L^{p}(\B)}^2 = \lambda^\frac{4N+2}{p} \|v\|_{L^{p}(\R^N_+)}^2 .
$$
We conclude that
\begin{align*}
\scrC _{1,m,p}(\B)& \leq \frac{\|\nabla u\|_{L^2(\B)}^2 - \|\del_\theta u\|_{L^2(\B)}^2 + m \|u\|_{L^2(\B)}^2}{\|u\|_{L^{p}(\B)}^2} \\
 & = \lambda^\frac{p(2N-3)-(4N+2)}{p}\frac{\int_{\R^N_+} \left(\sum \limits_{i=1}^{N-1}|\del_i v |^2 + 2 x_{1} |\del_N v|^2   \right)  \, dx 	}{\|v\|_{L^{p}(\R^N_+)}^2} + o\left(\lambda^\frac{p(2N-3)-(4N+2)}{p}\right) < \eps
\end{align*}
for $\lambda>0$ small enough, since $p>{2_{\text{\tiny $1$}}^*}=\frac{4N+2}{2N-3}$. Recalling that $\eps>0$ was arbitrary, this yields the claim.
\end{proof}

To prove the second assertion on $\scrC_{1,m,p}(\B)$ in (\ref{eq:main-theorem-dist}), we now transfer the information given by Theorem~\ref{existence of minimizers - whole space - intro} in the case $s=1$ to the ball $\B$. To this end, we consider the great circle
\begin{equation} \label{Def: gamma}
	\gamma := \left\{ x \in \del \B: x_3 = \cdots = x_{N}=0 \right\}.
      \end{equation}
      
We have the following key lemma.

\begin{lemma} \label{lemma: inequality close to boundary}
	Let $\eps >0$. Then there exists $\delta>0$ with the property that, for any $x_0 \in \gamma$, 
	$$
	\frac{\int_{\Omega_{x_0,\delta}} \left( |\nabla u|^2 - |\del_\theta u|^2 
	\right) \, dx}{	
	\|u\|_{L^{{2_{\scaleto{1}{3pt}}^*}}(\Omega_{x_0,\delta})}^2} \geq 
	\left(1-\eps\right) 
	2^{\frac{1}{2}-\frac{1}{2^*_{\scaleto{1}{3pt}}}} \cS_1(\R^N_+) \qquad 
	\text{for $u \in 
	C_c^1(\Omega_{x_0,\delta}) \setminus \{0\}$,}
       $$
	where $\cS_1(\R^N_+)$ is given in Theorem~\ref{existence of minimizers - whole space - intro} and  
	\begin{equation} \label{Def: Omega} 
		\Omega_{x_0,\delta} := \B \cap B_\delta(x_0)= \{x \in \B: 
		|x-x_0|<\delta \}.
	\end{equation}

\end{lemma}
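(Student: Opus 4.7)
The plan is to localize the problem around a point of $\gamma$ and reduce to the model half-space inequality via a change of variables. First I would exploit rotational symmetry: the Dirichlet form $\int_\B(|\nabla u|^2 - |\partial_\theta u|^2)\,dx$, the $L^{2_1^*}$-norm, and the ball $\B$ are all invariant under the rotations $R_\psi$ in the $x_1$-$x_2$-plane, which act transitively on $\gamma$. Thus it suffices to prove the estimate with $x_0 = e_1 = (1,0,\dots,0)$, and uniformity in $x_0 \in \gamma$ follows automatically. I would then introduce cylindrical coordinates $(\rho,\theta,\omega) \in \R_+ \times \R \times \R^{N-2}$ via $x_1 = (1-\rho)\cos\theta$, $x_2 = (1-\rho)\sin\theta$, $(x_3,\dots,x_N) = \omega$, under which $\B$ corresponds to $\rho > 0$ and $\Omega_{x_0,\delta}$ is contained in a region where $\rho, |\theta|, |\omega|$ are all $O(\delta)$ (via $|x - e_1|^2 = \rho^2 + 2(1-\rho)(1-\cos\theta) + |\omega|^2 < \delta^2$).

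A direct computation using $|\nabla u|^2 = |\partial_\rho u|^2 + (1-\rho)^{-2}|\partial_\theta u|^2 + |\nabla_\omega u|^2$ and Jacobian $|J| = 1 - \rho$ yields
$$
\int_\B (|\nabla u|^2 - |\partial_\theta u|^2)\,dx = \int \Bigl[(1-\rho)|\partial_\rho u|^2 + \tfrac{\rho(2-\rho)}{1-\rho}|\partial_\theta u|^2 + (1-\rho)|\nabla_\omega u|^2\Bigr] d\rho\,d\theta\,d\omega,
$$
while $\int_\B |u|^{2_1^*}\,dx = \int |u|^{2_1^*}(1-\rho)\,d\rho\,d\theta\,d\omega$. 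For $u \in C_c^1(\Omega_{x_0,\delta})$ with support concentrated in $\rho \le C\delta$, I would use the elementary pointwise bounds $1 - \rho \ge 1 - C\delta$ and $\frac{\rho(2-\rho)}{1-\rho} \ge 2\rho$ (valid for all $\rho \in [0,1)$) to conclude
$$
\int_\B (|\nabla u|^2 - |\partial_\theta u|^2)\,dx \ge (1 - C\delta) \int \Bigl[|\partial_\rho u|^2 + 2\rho|\partial_\theta u|^2 + |\nabla_\omega u|^2\Bigr] d\rho\,d\theta\,d\omega,
$$
together with $\int_\B |u|^{2_1^*}\,dx \le \int |u|^{2_1^*}\,d\rho\,d\theta\,d\omega$.

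Relabelling $\rho \mapsto y_1$, $\omega \mapsto (y_2,\dots,y_{N-1})$, $\theta \mapsto y_N$, so that the weighted coordinate $\theta$ plays the role of the final coordinate $x_N$ appearing in Theorem~\ref{existence of minimizers - whole space - intro}, I would identify the right-hand quotient with the scaled quotient of Remark~\ref{Remark: Optimality of exponents}(ii) applied with $s = 1$ and $\kappa = 2$. Extending $u$ by zero to $\R^N_+ = \{y_1 > 0\}$ (which is legitimate since $u$ vanishes near $\rho = 0$), the analogue of \eqref{eq:Degenerate Inequality Scaling} for $\R^N_+$ gives the lower bound $2^{\frac{1}{2} - \frac{1}{2_1^*}} \cS_1(\R^N_+)$. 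Choosing $\delta$ so small that $1 - C\delta \ge 1 - \eps$ then yields the claim.

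The principal obstacle lies not in any deep estimate but in the correct identification of the leading-order degenerate structure: one must check that near $\gamma$ the coefficient of $|\partial_\theta u|^2$ in the quadratic form is precisely $2\rho + O(\rho^2)$, so that the effective weight is $\kappa = 2$ rather than $1$ (this factor is exactly what produces the $2^{\frac{1}{2} - \frac{1}{2_1^*}}$ constant in the statement). Beyond this, the argument is a standard perturbation: the subleading corrections from the Jacobian $1-\rho$ and from $\frac{\rho(2-\rho)}{1-\rho} - 2\rho = O(\rho^2)$ are uniform in $u \in C_c^1(\Omega_{x_0,\delta})$ because they come from pointwise coefficient estimates on the support of $u$.
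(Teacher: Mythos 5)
Your proof is correct and follows the same strategy as the paper's: localize near $\gamma$, change coordinates so that the degenerate structure near $\gamma$ becomes the model half-space quadratic form with weight $2x_1$, and invoke the scaled half-space inequality from Remark~\ref{Remark: Optimality of exponents}(ii) with $\kappa=2$, $s=1$. The one meaningful difference is the choice of coordinates. The paper uses full $N$-dimensional polar coordinates $(r,\theta,\vartheta_1,\dots,\vartheta_{N-2})$, which forces it to carry along the Jacobian $g = r^{N-1}\prod_k\sin^{N-1-k}\vartheta_k$ and the metric factors $g_i = \prod_{k<i}\sin^{-2}\vartheta_k$ and to verify $g\ge 1-\eps$, $g_i\ge 1$ on a small enough neighbourhood of the pole. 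Your cylindrical coordinates $(\rho,\theta,\omega)$ with $\rho=1-\sqrt{x_1^2+x_2^2}$ and $\omega=(x_3,\dots,x_N)$ keep the $N-2$ transverse directions Cartesian, so the Jacobian collapses to $1-\rho$ and no extra metric factors $g_i$ appear; the coefficient of $|\partial_\theta u|^2$ is identified exactly as $\frac{\rho(2-\rho)}{1-\rho}$, bounded below by $2\rho$ on all of $[0,1)$. This makes the computation noticeably shorter and isolates the factor of $2$ (hence the constant $2^{1/2-1/2^*_1}$) more transparently. One small point worth making explicit in a write-up: the extension of the transferred function by zero to $\R^N_+$ requires that its support is contained in $\{\rho\ge\eps\}$ for some $\eps>0$; this holds because the support of $u$ is a compact subset of the open ball $\B$, on which $\rho$ attains a positive minimum. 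You gesture at this ("$u$ vanishes near $\rho=0$") but it merits a sentence.
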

\begin{proof}
  We may assume $x_0=e_2= (0,1,0,\dots,0)$ is the second coordinate vector. We fix $\delta>0$ and consider a function $u \in C_c^1(\Omega_{e_2,\delta})$ which we extend trivially to a function $u \in C_c^1(\R^N)$. Moreover, we write $u$ in $N$-dimensional polar coordinates, so we consider $U:= [0,1] \times (-\pi,\pi) \times (0,\pi)^{N-2}$ and the function 
  $$
  v= u \circ P\::\: U \to \R
  $$
  with $P:U \to \R^N$ given by 
  \begin{align}
    P(r,\theta,\vartheta_1,\dots,\vartheta_{N-2})&= (r \cos \theta \sin\vartheta_1 \cdots \sin \vartheta_{N-2},\: r \sin \theta \sin\vartheta_1 \cdots \sin \vartheta_{N-2},\nonumber\\
    &r \cos \vartheta_{1}, r \sin \vartheta_1 \cos \vartheta_2, \dots, r \sin \vartheta_1 \dots \sin \vartheta_{N-3} \cos \vartheta_{N-2},r \sin \vartheta_1 \dots  \vartheta_{N-2} )\label{polar-coordinates}
  \end{align}
  We then have
	\begin{align}
		& \int_{\B} \left( |\nabla u|^2 - |\del_\theta u|^2 \right) \, dx  \label{polar-coordinates-calc}\\
		= & \int_0^1 \int_{-\pi}^{\pi} \int_0^\pi \cdots  \int_0^\pi  \left( 
		|\del_r u|^2 + \frac{1}{r^2} \sum \limits_{i=1}^{N-2} g_i 
		|\del_{\vartheta_i} u|^2 + \left(\frac{g_{N-1}}{r^2}-1\right) 
		|\del_\theta u|^2 \right) g \, d\vartheta_1 \cdots d\vartheta_{N-2} \, 
		d\theta \, dr 
	\end{align}
	with the functions $g,g_i:U \to \R$, $i=1,\dots,N-1$ given by   
        \begin{equation}
          \label{eq:def-g-g-i}
        g(r,\theta,\vartheta_1,\dots,\vartheta_{N-2}) = r^{N-1}  \prod_{k=1}^{N-2} \sin^{N-1-k}\vartheta_k ,\qquad 
		g_i(r,\theta,\vartheta_1,\dots,\vartheta_{N-2}) = \prod_{k=1}^{i-1} \frac{1}{\sin^2 \vartheta_k}
        \end{equation}
In particular, we have $g \le 1$ and $g_i \geq 1$ in $U$ for $i=1,\dots,N-1$. Moreover, since $P^{-1}(e_2)=(1,\frac{\pi}{2},\dots,\frac{\pi}{2})$ and $g(1,\frac{\pi}{2},\dots,\frac{\pi}{2})=1$, we can choose $\delta>0$ sufficiently small so that
\begin{equation}
  \label{eq:sufficient-geometry}
P^{-1}(\Omega_{e_2,\delta}) \subset (0,1) \times (0,\pi)^{N-1} \qquad
\text{and}\qquad g \geq (1-\eps) \; \text{in $P^{-1}(\Omega_{e_2,\delta})$.}
\end{equation}
Therefore
	\begin{align*}
		& \int_{\B} \left( |\nabla u|^2 - |\del_\theta u|^2 \right) \, dx  \\
		\geq & (1-\eps) \int_0^1 \int_{-\pi}^{\pi} \int_0^\pi \cdots  
		\int_0^\pi  \left( |\del_r u|^2 + \sum \limits_{i=1}^{N-2}  
		|\del_{\vartheta_i} u|^2 + \frac{(1-r)(1+r)}{r^2} |\del_\theta u|^2 
		\right)  \, d\vartheta_1 \cdots d\vartheta_{N-2} \, d\theta \, dr .
	\end{align*}
	Noting that
	$$
	\frac{(1-r)(1+r)}{r^2} \geq \frac{(2-\delta)(1-r)}{(1-\delta)^2} \geq 2 (1-r)
	$$		
	and substituting $t=1-r$ we thus find that
	\begin{align*}
		& \int_{\B} \left( |\nabla u|^2 - |\del_\theta u|^2 \right) \, dx  \\
		\geq & (1-\eps) \int_0^1 \int_{-\pi}^{\pi} \int_0^\pi \cdots  
		\int_0^\pi  \left( |\del_t \tilde v|^2 + \sum \limits_{i=1}^{N-2}  
		|\del_{\vartheta_i} \tilde v|^2 + 2t |\del_\theta \tilde v|^2 \right)  
		\, d\vartheta_1 \cdots d\vartheta_{N-2} \, d\theta \, dt .
	\end{align*}
	with
        $$
        \tilde v: U \to \R, \qquad \tilde v(t,\vartheta_1,\ldots,\vartheta_{N-2},\theta)= u(P(1-t,\vartheta_1,\ldots,\vartheta_{N-2},\theta))
        $$

	Note that $u \in C^1_c(\Omega_{e_2,\delta})$ implies, by (\ref{eq:sufficient-geometry}), that $\tilde v$ is compactly supported in $(0,1) \times (0,\pi)^{N-1} \subset \R^N_+$, so we may regard $\tilde v$ as a function in $C_c^1(\R^N_+)$ and deduce that
	$$
	 \int_{\B} \left( |\nabla u|^2 - |\del_\theta u|^2 \right) \, dx \geq (1-\eps) \int_{\R^N_+} \Bigl( \sum \limits_{i=1}^{N-1} |\del_i \tilde v|^2 + 2 x_1 |\del_N \tilde v|^2 \Bigr) \, dx.
	$$
	Rather directly, we also find that, by a change of variables, 
        \begin{align*}
        \int_{\Omega} |u|^{{2_{\text{\tiny $1$}}^*}} \, dx &= \int_{U} 
        |v|^{{2_{\text{\tiny $1$}}^*}} g \, d(r, \theta, 
        \vartheta_1,\dots,d\vartheta_{N-2}) \le
                                            \int_{U} |v|^{{2_{\text{\tiny 
                                            $1$}}^*}} \, d(r, \theta, 
                                            \vartheta_1,\dots,d\vartheta_{N-2})\\
          &= \int_{U} |\tilde v|^{{2_{\text{\tiny $1$}}^*}} \, d(r, \theta, 
          \vartheta_1,\dots,d\vartheta_{N-2})= \int_{\R^N_+} |\tilde 
          v|^{{2_{\text{\tiny $1$}}^*}} \, dx .  
        \end{align*}
    Using \eqref{eq:Degenerate Inequality Scaling} with $\kappa= 2$, we 
    conclude that
	\begin{align*}
		\frac{\int_{\Omega} \left( |\nabla u|^2 - |\del_\theta u|^2 \right) \, dx}{	\|u\|_{L^{{2_{\text{\tiny $1$}}^*}}(\Omega)}^2} & 
		\geq (1-\eps) \frac{\int_{\R^N_+} \Bigl( \sum \limits_{i=1}^{N-1} 
		|\del_i \tilde v |^2 + 2 x_1 |\del_N \tilde v|^2 \Bigr) \, 
		dx}{\Bigl(\int_{\R^N_+} |\tilde v|^{{2_{\text{\tiny $1$}}^*}} \, dx 
		\Bigr)^\frac{2}{{2_{\text{\tiny $1$}}^*}}} \geq 
		(1-\eps)2^{\frac{1}{2}-\frac{1}{2^*_{\scaleto{1}{3pt}}}} \cS_1(\R^N_+)
	\end{align*}
	as claimed. 
\end{proof}
We can now prove the main result of this section. 
\begin{theorem} \label{degenerate Sobolev inequality - ball}
	For any $1 \leq p \leq {2_{\text{\tiny $1$}}^*}$ there exists $C>0$, such that any $u \in C_c^1(\B)$ satisfies
	$$
	\|u\|_{L^{p}(\B)}^2 \leq C \int_{\B} \left( |\nabla u|^2 - |\del_\theta u|^2  \right) \, dx .
	$$
\end{theorem}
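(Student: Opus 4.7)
My plan is to establish the case $p = 2_{\text{\tiny $1$}}^*$ via a partition of unity combining Lemma~\ref{lemma: inequality close to boundary}, which controls the degenerate behaviour near the great circle $\gamma$, with the classical Sobolev inequality, which applies away from $\gamma$ where $|\nabla u|^2 - |\del_\theta u|^2$ is uniformly elliptic. The case of general $p \in [1, 2_{\text{\tiny $1$}}^*]$ will then follow from H\"older's inequality on the bounded domain $\B$.

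First I fix a small $\eps > 0$, let $\delta > 0$ be as in Lemma~\ref{lemma: inequality close to boundary}, cover the compact circle $\gamma$ by finitely many balls $B_{\delta/2}(x_j)$, $j = 1, \ldots, K$, with $x_j \in \gamma$, and select an open neighborhood $W$ of the compact set $\overline{\B} \setminus \bigcup_j B_{\delta/2}(x_j)$ with $\overline W \cap \gamma = \emptyset$. I then construct smooth cutoffs $\phi_0, \phi_1, \ldots, \phi_K$ on $\R^N$ such that $\sum_{j=0}^K \phi_j^2 \equiv 1$ on a neighborhood of $\overline{\B}$, with $\supp(\phi_j) \subset B_\delta(x_j)$ for $j \ge 1$ and $\supp(\phi_0) \subset W$. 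The key identity, whose cross terms vanish thanks to $\nabla \sum_j \phi_j^2 \equiv 0$ (and its analogue for $\del_\theta$), reads
\[
\sum_{j=0}^K \bigl(|\nabla(\phi_j u)|^2 - |\del_\theta(\phi_j u)|^2\bigr) = \bigl(|\nabla u|^2 - |\del_\theta u|^2\bigr) + u^2 \sum_{j=0}^K \bigl(|\nabla \phi_j|^2 - |\del_\theta \phi_j|^2\bigr),
\]
and integrating, together with the boundedness of the last sum, yields
\[
\sum_{j=0}^K \int_\B \bigl(|\nabla(\phi_j u)|^2 - |\del_\theta(\phi_j u)|^2\bigr)\, dx \le \int_\B \bigl(|\nabla u|^2 - |\del_\theta u|^2\bigr)\, dx + C_1 \|u\|_{L^2(\B)}^2.
\]

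Next I apply the appropriate local bound to each summand. For $j \ge 1$, since $u \in C_c^1(\B)$ vanishes near $\del \B$, the product $\phi_j u$ lies in $C_c^1(\Omega_{x_j,\delta})$, and Lemma~\ref{lemma: inequality close to boundary} gives $\|\phi_j u\|_{L^{2_{\text{\tiny $1$}}^*}}^2 \le C_2 \int_\B (|\nabla(\phi_j u)|^2 - |\del_\theta(\phi_j u)|^2)\, dx$. For $j = 0$, expanding in Cartesian coordinates,
\[
|\nabla u|^2 - |\del_\theta u|^2 = (1-x_2^2)(\del_1 u)^2 + 2 x_1 x_2\, \del_1 u\, \del_2 u + (1-x_1^2)(\del_2 u)^2 + \sum_{i \ge 3} (\del_i u)^2,
\]
shows that the $(x_1,x_2)$-block of the coefficient matrix has eigenvalues $1$ and $1 - x_1^2 - x_2^2$. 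The latter vanishes precisely on $\gamma$, so it is bounded below by some $c > 0$ on the compact set $\supp(\phi_0) \cap \overline{\B}$. Hence the form is uniformly elliptic on $\supp(\phi_0)$, and combining this with the embedding $H^1_0 \hookrightarrow L^{2^*} \hookrightarrow L^{2_{\text{\tiny $1$}}^*}$ (valid since $2_{\text{\tiny $1$}}^* < 2^*$) yields $\|\phi_0 u\|_{L^{2_{\text{\tiny $1$}}^*}}^2 \le C_3 \int_\B (|\nabla(\phi_0 u)|^2 - |\del_\theta(\phi_0 u)|^2)\, dx$. Summing via the subadditivity $\|u\|_{L^{2_{\text{\tiny $1$}}^*}(\B)}^2 \le \sum_j \|\phi_j u\|_{L^{2_{\text{\tiny $1$}}^*}(\B)}^2$ (which follows from $\sum \phi_j^2 = 1$ and the triangle inequality in $L^{2_{\text{\tiny $1$}}^*/2}(\B)$, valid because $2_{\text{\tiny $1$}}^* \ge 2$) I obtain
\[
\|u\|_{L^{2_{\text{\tiny $1$}}^*}(\B)}^2 \le C_4 \int_\B \bigl(|\nabla u|^2 - |\del_\theta u|^2\bigr)\, dx + C_5 \|u\|_{L^2(\B)}^2.
\]

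The final step is to absorb the lower-order term $\|u\|_{L^2(\B)}^2$. For this I invoke the identity $\scrC_{1,0,2}(\B) = \lambda_1(\B) > 0$, mentioned in the introduction and established in Section~\ref{preliminaries}, which provides the Poincar\'e-type bound $\lambda_1(\B) \|u\|_{L^2(\B)}^2 \le \int_\B (|\nabla u|^2 - |\del_\theta u|^2)\, dx$ and closes the argument. I expect the main obstacle to be precisely this last step: the partition-of-unity manipulation unavoidably produces a lower-order $\|u\|_{L^2}^2$ contribution whose absorption hinges on the positivity of the first eigenvalue of the degenerate operator $-\Delta + \del_\theta^2$ under Dirichlet conditions, which itself requires a separate spectral argument and is forward-referenced here.
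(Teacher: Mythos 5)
Your proof is correct and follows essentially the same route as the paper: a partition of unity over a finite cover of $\gamma$ plus one interior piece, Lemma~\ref{lemma: inequality close to boundary} near $\gamma$, ordinary ellipticity and Sobolev embedding away from $\gamma$, and finally the Poincar\'e-type bound $\scrC_{1,0,2}(\B)=\lambda_1(\B)>0$ from Proposition~\ref{lambda-1-alpha-pos} to absorb the lower-order $L^2$ term. The only cosmetic difference is that you use a squared partition of unity $\sum_j\phi_j^2\equiv 1$ so the cross terms cancel algebraically, whereas the paper uses an ordinary partition of unity and estimates the cross terms via Cauchy--Schwarz for the positive semi-definite form; both devices produce the same lower-order remainder.
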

\begin{proof}
Since $\B$ is bounded, it suffices to consider the case $p= {2_{\text{\tiny $1$}}^*}$. In the following, $C>0$ denotes a constant independent of $u$, which may change from line to line.
	Fix $\eps \in (0,\frac{1}{2})$ and let $\delta>0$ be given as in 
	Lemma~\ref{lemma: inequality close to boundary}.
    Take points $x_1, \ldots x_m \in \gamma$ such that the sets $U_k:=B_\delta(x_k)$ satisfy 
	$$
	\gamma \subset \bigcup_{k=1}^m U_k 
	$$
	and let $\delta_0 := \dist(\gamma, \B \setminus \bigcup \limits_{k=1}^m U_k)$.
	We then let $U_0:=\left\{x \in \B: \dist(x,\gamma)>\frac{\delta_0}{2} 
	\right\}$ and thus have $\B \subset \bigcup \limits_{k=0}^m U_k$. 
	We may then choose a partition of unity $\eta_0, \cdots, \eta_m$ subordinate to this covering.
	Then
	\begin{align*}
		\|u\|_{L^{{2_{\text{\tiny $1$}}^*}}(\B)} & \leq \sum \limits_{k=0}^m 	\|\eta_k u\|_{L^{{2_{\text{\tiny $1$}}^*}}(U_k)} \leq C \sum \limits_{k=0}^m \left( \int_{U_k} \left( |\nabla (\eta_k u)|^2 - |\del_\theta (\eta_k u)|^2 \right) \, dx \right)^\frac{1}{2} ,
	\end{align*}
   where we used Lemma~\ref{lemma: inequality close to boundary} and the fact that $v \mapsto \int_{U_0} \left( |\nabla v|^2 - |\del_\theta v|^2 \right) \, dx$ induces an equivalent norm on $H^1_0(U_0)$.
	Note that, for $k=0,\ldots,m$, we have
\begin{align*}
	\int_{U_k} \!\Bigl(  |\nabla (\eta_k u)|^2 - |\del_\theta (\eta_k u)|^2 \Bigr)dx 	&\leq 2 \left( \int_{U_k} \!\eta_k^2 \left(|\nabla u|^2 - |\del_\theta  u|^2\right)dx +  \int_{U_k} \!u^2 \left(|\nabla \eta_k|^2 - |\del_\theta \eta_k|^2\right) dx\right) \\
	&\leq  C \int_{U_k} \left( |\nabla u|^2 - |\del_\theta u|^2 +u^2 \right) \, dx ,
\end{align*}
with some fixed $C>0$. We conclude that
	\begin{align*}
		\|u\|_{L^{{2_{\text{\tiny $1$}}^*}}(\B)} &  \leq C \sum \limits_{k=0}^m \left(\int_{U_k} \left(|\nabla u|^2 - |\del_\theta u|^2 +u^2 \right) \, dx \right)^\frac{1}{2} ,
	\end{align*}
	and thus
	\begin{align}  \label{inequality with L2 term}
		\|u\|_{L^{{2_{\text{\tiny $1$}}^*}}(\B)}^2 &  \leq C \sum \limits_{k=0}^m \int_{U_k} \left( |\nabla u|^2 - |\del_\theta u|^2 + u^2 \right) \, dx = C \int_{\B} \left( |\nabla u|^2 - |\del_\theta u|^2 + u^2 \right) \, dx .
	\end{align}
	In order to complete the proof, we note that Proposition~\ref{lambda-1-alpha-pos} implies 
	$$
	\inf_{ u \in C_c^1(\B) \setminus \{ 0 \}} \frac{\int_{\B} \left( |\nabla u|^2 - |\del_\theta u|^2 \right)  \, dx }{\int_{\B} u^2 \, dx}= \lambda_1(\B) >0 
	$$
	and hence
	$$
	\int_{\B} u^2 \, dx \leq \frac{1}{\lambda_1(\B)} \int_{\B} \left( |\nabla u|^2 - |\del_\theta u|^2 \right)  \, dx .
	$$
	In view of \eqref{inequality with L2 term} this finishes the proof.
\end{proof}

\section{The variational setting for and preliminary results on ground state solutions} \label{preliminaries}

\subsection{The variational setting}
In this section, we set up the variational framework for \eqref{Reduced 
equation} and prove some preliminary estimates for the quantities 
$\scrC_{\alpha,0,2}(\B)$ and $R_{\alpha,m,p}$ defined in 
(\ref{eq:intro-minimization-problem}) and (\ref{eq:def-Raleigh-quotient}). We 
first show a Poincar\'e type estimate.
Recall here that $\lambda_1(\B)$ is the first Dirichlet eigenvalue of $-\Delta$ on the unit ball $\B$.

\begin{proposition}
	\label{lambda-1-alpha-pos}
	For $0 \leq \alpha \le 1$, we have 
\begin{equation}
        \label{lambda-1-alpha-pos-eq}
	\scrC_{\alpha,0,2}(\B) = \inf_{u \in C^1_c(\B) \setminus \{0\}} \frac{\int_{\B}\left(|\nabla u|^2 - \alpha^2 |\del_\theta u|^2\right)}{\int_\B u^2 \, dx} = 
	\lambda_1(\B).
\end{equation}
Moreover, minimizers are precisely the Dirichlet eigenfunctions of $-\Delta$ on $\B$ corresponding to the eigenvalue $\lambda_1(\B)$ and are therefore radial.
\end{proposition}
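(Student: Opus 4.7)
The plan is to establish the upper and lower bounds on $\scrC_{\alpha,0,2}(\B)$ separately and then read off the description of minimizers from the equality case in the lower bound.

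The upper bound $\scrC_{\alpha,0,2}(\B) \le \lambda_1(\B)$ follows from a single test function: I would take the positive radial first Dirichlet eigenfunction $\varphi_1$ of $-\Delta$ on $\B$. Being radial, $\varphi_1$ is annihilated by $\del_\theta$, so
$$
R_{\alpha,0,2}(\varphi_1) \;=\; R_{0,0,2}(\varphi_1) \;=\; \lambda_1(\B).
$$

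For the reverse inequality I would Fourier-decompose in the angular variable of the $(x_1,x_2)$-plane. Writing $x_1=r\cos\theta$, $x_2=r\sin\theta$, $x''=(x_3,\dots,x_N)$, I would expand
$$
u(x) \;=\; \sum_{k \in \Z} u_k(r,x'')\, e^{ik\theta}, \qquad u \in C^1_c(\B),
$$
and, using Plancherel in $\theta$ together with the cylindrical form of $-\Delta$, rewrite
$$
\int_\B \bigl(|\nabla u|^2 - \alpha^2 |\del_\theta u|^2\bigr)\,dx \;=\; 2\pi \sum_{k \in \Z} \int_D \!\Bigl[|\del_r u_k|^2 + k^2\Bigl(\tfrac{1}{r^2}-\alpha^2\Bigr)|u_k|^2 + |\nabla_{x''} u_k|^2\Bigr] r\,dr\,dx'',
$$
with $D:=\{(r,x'')\,:\, r>0,\ r^2+|x''|^2<1\}$, and likewise $\int_\B u^2\,dx = 2\pi\sum_k \int_D |u_k|^2 r\,dr\,dx''$. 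The crucial observation is that on $\B$ one has $r<1$, so for $\alpha \in [0,1]$
$$
\tfrac{1}{r^2} - \alpha^2 \;\ge\; 1 - \alpha^2 \;\ge\; 0,
$$
i.e., the mass term produced by $\del_\theta$ has the favorable sign. Hence each mode-$k$ integrand is bounded below by $[|\del_r u_k|^2 + |\nabla_{x''} u_k|^2]\,r$, which equals $\tfrac{1}{2\pi}|\nabla \widetilde{u}_k|^2$ for the $\theta$-independent lift $\widetilde{u}_k$ of $u_k$ to $\B$. Applying the standard Dirichlet Poincar\'e inequality $\int_\B |\nabla \widetilde{u}_k|^2 \ge \lambda_1(\B) \int_\B \widetilde{u}_k^2$ mode by mode and summing gives $\scrC_{\alpha,0,2}(\B) \ge \lambda_1(\B)$.

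The characterization of minimizers then comes from tracking equality through the same chain. For any $k\ne 0$, strict positivity of $k^2(\tfrac{1}{r^2}-\alpha^2)$ almost everywhere on $\B$ (which holds for every $\alpha \in [0,1]$) forces $u_k \equiv 0$, so a minimizer $u$ must be $\theta$-independent and agree with $u_0$. Equality in the Poincar\'e step then identifies $u_0$ with a first Dirichlet eigenfunction of $-\Delta$ on $\B$; uniqueness (up to sign) of this eigenfunction and its well-known radial symmetry finish the argument. The main technical issue I anticipate is the rigorous justification of the Fourier expansion for general $u \in H^1_0(\B)$ — in particular, that each $\widetilde{u}_k$ lies in $H^1_0(\B)$ so that the Poincar\'e inequality applies, and that one can ignore the coordinate degeneracy at $r=0$. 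This is routine and can be handled by approximating $u$ by elements of $C^\infty_c(\B)$ and passing to the limit using the nonnegativity of all terms involved.
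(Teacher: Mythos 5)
Your argument is correct but follows a genuinely different route from the paper's. The paper expands in a basis of spherical harmonics $Y_{\ell,k}$ on $\mathbb{S}^{N-1}$ chosen to simultaneously diagonalize the Laplace--Beltrami operator $-\Delta_{\mathbb{S}^{N-1}}$ (eigenvalues $\ell(\ell+N-2)$) and $-\partial_\theta^2$ (eigenvalues $\ell_k^2$ with $|\ell_k|\le\ell$), works in spherical polar coordinates $(r,\omega)$ with the full radius $r=|x|$, uses the inequality $\ell(\ell+N-2)/r^2 \ge \ell_k^2$ on $[0,1]$ to discard the entire angular contribution, and then invokes the one-dimensional radial Poincar\'e inequality $\int_0^1 r^{N-1}|\phi'|^2\,dr \ge \lambda_1(\B)\int_0^1 r^{N-1}|\phi|^2\,dr$. (It also first reduces to $\alpha=1$ via the monotonicity in $\alpha$ recorded in \eqref{eq-C-intro-new-1-1}.) You instead keep only the $SO(2)$-symmetry: you pass to cylindrical coordinates $(r,\theta,x'')$ where $r$ is the radius in the $x_1$-$x_2$-plane, Fourier-expand in $\theta$ alone, use the purely pointwise fact $1/r^2 - \alpha^2 \ge 0$ on $\B$ (nonnegativity of the symbol of $-\Delta + \alpha^2\partial_\theta^2$) to drop the favorable mass term, and then apply the full $N$-dimensional Poincar\'e inequality to each $\theta$-independent lift $\widetilde u_k$. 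Both yield exactly $\lambda_1(\B)$ with the same equality analysis (positivity of the angular mass term kills all modes $k\ne 0$, and Poincar\'e equality forces $u_0$ to be a first eigenfunction, hence radial). Your version is somewhat more elementary — it needs only one-variable Fourier series and the standard variational characterization of $\lambda_1(\B)$, and it handles all $\alpha\in[0,1]$ at once without the reduction step — while the paper's version fits more naturally with the spherical-harmonic framework used elsewhere in the paper (e.g.\ Remark~\ref{remark-alpha-greater-1}) and replaces the $N$-dimensional Poincar\'e inequality by its one-dimensional radial counterpart. The only point you flag, justifying the decomposition for general $H^1_0$ functions, is indeed routine, and in any case the infimum in \eqref{lambda-1-alpha-pos-eq} is taken over $C^1_c(\B)$, where it is immediate.
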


\begin{proof}
  By (\ref{eq-C-intro-new-1-1}) and since $\scrC_{0,0,2}(\B)= \lambda_1(\B)$ by the variational characterization of $\lambda_1(\B)$, it suffices to prove (\ref{lambda-1-alpha-pos-eq}) in the case $\alpha =1$.
	In the following, we let $\{Y_{\ell,k}\::\: \ell \in \N \cup \{0\}, \ 
	k=1,\ldots,d_\ell\}$ denote an $L^2$-orthonormal basis of 
	$L^2(\mathbb{S}^{N-1})$ of spherical harmonics of degree $\ell$. More 
	precisely, we can choose $Y_{\ell,k}$ in such a way that, for every $\ell 
	\in \N \cup \{0\}$, the functions  $Y_{\ell,k},\: k=1\ldots,d_\ell$ form a 
	basis of the eigenspace of the Laplace Beltrami operator 
	$-\Delta_{\mathbb{S}^{N-1}}$ corresponding to the eigenvalue 
	$\ell(\ell+N-2)$ and such that
	$$
	-\del_\theta^2 Y_{\ell,k} = \ell_k^2 Y_{\ell,k} \qquad \text{for $k=1\ldots,d_\ell$} 
	$$
	where $|\ell_k| \leq \ell$, see e.g. \cite{Higuchi}. Let $\phi \in C^1_c(\B)$, and let
	$\phi_{\ell,k} \in C^1([0,1])$ be the angular Fourier coefficient functions defined by
	$$
	\phi_{\ell,k}(r)= 
	\int_{\mathbb{S}^{N-1}}\phi(r\omega)Y_{\ell,k}(\omega)\,d\omega, \qquad 0 
	\le r \le 1.
	$$
For fixed $r \in [0,1]$, we then have the Parseval identities 
\begin{align*}
	\|\phi(r \,\cdot\,)\|_{L^2(\mathbb{S}^{N-1})}^2 &= \sum_{\ell,k} 
	|\phi_{\ell,k}(r)|^2 \|Y_{\ell,k}\|_{L^2(\mathbb{S}^{N-1})}^2,\\
  \|\partial_r \phi(r \,\cdot\,)\|_{L^2(\mathbb{S}^{N-1})}^2 &= \sum_{\ell,k} 
  |\partial_r \phi_{\ell,k}(r)|^2 \|Y_{\ell,k}\|_{L^2(\mathbb{S}^{N-1})}^2,\\
  \|\nabla_{\mathbb{S}^{N-1}} \phi(r \,\cdot\,)\|_{L^2(\mathbb{S}^{N-1})}^2 
  &= \sum_{\ell,k} (\ell +  N-2)|\phi_{\ell,k}(r)|^2                    
  \|Y_{\ell,k}\|_{L^2(\mathbb{S}^{N-1})}^2\quad 
            \text{and}\\
  \|\partial_\theta \phi(r \,\cdot\,)\|_{L^2(\mathbb{S}^{N-1})}^2 &= 
\sum_{\ell,k} \ell_k^2 |\phi_{\ell,k}(r)|^2 
\|Y_{\ell,k}\|_{L^2(\mathbb{S}^{N-1})}^2
\end{align*}
in $L^2(\mathbb{S}^{N-1})$. Here and in the following, we simply write $\sum 
\limits_{\ell,k}$ in place of $\sum \limits_{\ell=0}^\infty \sum 
\limits_{k=1}^{d_\ell}$.
Since $\frac{\ell(\ell+N-2)}{r^2} \ge \ell_k^2$ for $r \in [0,1]$ and every $\ell,k$, we estimate that
	\begin{align*}
	&\int_\B \left( |\nabla \phi|^2 -  |\del_\theta \phi|^2 \right) \, dx \\
	= & \int_0^1 r^{N-1} \int_{\mathbb{S}^{N-1}} \left( |\partial_r \phi(r 
	\omega)|^2 + \frac{1}{r^2}|\nabla_{\mathbb{S}^{N-1}} \phi(r \omega)|^2 - 
	|\del_\theta \phi(r \omega|^2 \right)  d\omega dr\\
	=& \sum_{\ell,k} \|Y_{\ell,k}\|_{L^2(\mathbb{S}^{N-1})}^2 \int_0^1 r^{N-1} 
	\Bigl(|\partial_r \phi_{\ell,k}(r)|^2+ 
	\left(\frac{\ell(\ell+N-2)}{r^2}-\ell_k^2 \right)|\phi_{\ell,k}(r)|^2\Bigr) 
	\,dr\\   
           \ge & \sum_{\ell,k}\|Y_{\ell,k}\|_{L^2(\mathbb{S}^{N-1})}^2 \int_0^1 
           r^{N-1} 
          |\partial_r \phi_{\ell,k}(r)|^2dr \\
          \ge& \lambda_1(\B)\sum_{\ell,k} 
          \|Y_{\ell,k}\|_{L^2(\mathbb{S}^{N-1})}^2 
          \int_0^1 r^{N-1} |\phi_{\ell,k}(r)|^2dr\\
          =& \lambda_1(-\Delta,\B) \int_0^1 r^{N-1} 	\|\phi(r 
          \,\cdot\,)\|_{L^2(\mathbb{S}^{N-1})}^2\,dr = \lambda_1(\B)\int_\B 
          |\phi|^2 \, dx  .
	\end{align*}
	Clearly, equality holds if and only if $\phi_{\ell,k} \equiv 0$ for $\ell \geq 1$ and $\phi_0$ corresponds to a first eigenfunction of the Dirichlet Laplacian on $\B$.
\end{proof}

\begin{corollary}$ $
\label{first-cor-section-2}  
    \begin{itemize}
    \item[(i)] We have $\scrC_{\alpha,m,2}(\B) =\scrC_{0,m,2}(\B)= \lambda_1(\B)+m$ for $\alpha \in [0,1]$, $m \in \R$.
    \item[(ii)] For $\alpha \in [0,1]$, $m > -\lambda_1(\B)$, $2 \le p < 2^*$ and $u \in H^1_0(\B) \setminus \{0\}$ we have $R_{\alpha,m,p}(u) > 0$.  
    \end{itemize}
  \end{corollary}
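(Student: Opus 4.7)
\medskip

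\noindent\textbf{Proof plan.} Both parts are essentially a repackaging of the Poincar\'e-type estimate from Proposition~\ref{lambda-1-alpha-pos}, so I expect no real obstacle beyond careful bookkeeping.

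For part (i), the plan is to split the numerator of $R_{\alpha,m,2}$ as
\begin{equation*}
\int_\B \bigl(|\nabla u|^2 - \alpha^2 |\partial_\theta u|^2 + m u^2\bigr)\,dx = \int_\B \bigl(|\nabla u|^2 - \alpha^2 |\partial_\theta u|^2\bigr)\,dx + m \int_\B u^2 \, dx,
\end{equation*}
so that, after dividing by $\int_\B u^2\,dx$ and taking the infimum over $u \in H^1_0(\B)\setminus\{0\}$, we obtain $\scrC_{\alpha,m,2}(\B) = \scrC_{\alpha,0,2}(\B) + m$. The claim then follows immediately from (\ref{lambda-1-alpha-pos-eq}) in Proposition~\ref{lambda-1-alpha-pos}, which asserts $\scrC_{\alpha,0,2}(\B)= \lambda_1(\B)$ for every $\alpha \in [0,1]$, and in particular $\scrC_{0,0,2}(\B)=\lambda_1(\B)$ by the usual variational characterization of the first Dirichlet eigenvalue. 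A brief density argument (approximating $u \in H^1_0(\B)$ by $C^1_c$-functions) is needed to pass from the test function space $C^1_c(\B)$ appearing in Proposition~\ref{lambda-1-alpha-pos} to $H^1_0(\B)$, but this is routine since the quadratic form is continuous on $H^1_0(\B)$.

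For part (ii), the plan is to bound the numerator of $R_{\alpha,m,p}(u)$ from below. Applying Proposition~\ref{lambda-1-alpha-pos} directly to $u \in H^1_0(\B)\setminus\{0\}$ gives
\begin{equation*}
\int_\B \bigl(|\nabla u|^2 - \alpha^2 |\partial_\theta u|^2\bigr)\,dx \;\ge\; \lambda_1(\B) \int_\B u^2\,dx,
\end{equation*}
so that
\begin{equation*}
\int_\B \bigl(|\nabla u|^2 - \alpha^2 |\partial_\theta u|^2 + m u^2\bigr)\,dx \;\ge\; \bigl(\lambda_1(\B) + m\bigr) \int_\B u^2\,dx \;>\; 0,
\end{equation*}
where positivity uses $m > -\lambda_1(\B)$ and $\int_\B u^2\,dx > 0$ since $u \not\equiv 0$. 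Because the denominator $\|u\|_{L^p(\B)}^{2}$ is also strictly positive for $u \neq 0$ and $p \in [2,2^*)$, we conclude $R_{\alpha,m,p}(u) > 0$, which is the claim.

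The only mildly delicate point, and hence the closest thing to an obstacle, is that Proposition~\ref{lambda-1-alpha-pos} is stated with test functions in $C^1_c(\B)$, whereas part (ii) requires the inequality for every $u \in H^1_0(\B)$. This is handled by the standard density of $C^1_c(\B)$ in $H^1_0(\B)$ together with the continuity of the bilinear form $u \mapsto \int_\B(|\nabla u|^2 - \alpha^2|\partial_\theta u|^2)\,dx$ on $H^1_0(\B)$, noting that $|\partial_\theta u| \le |x| |\nabla u| \le |\nabla u|$ on $\B$ so the form is controlled by $\|u\|_{H^1_0}^2$.
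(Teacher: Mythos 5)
Your proof is correct and follows essentially the same route as the paper: both parts reduce to the Poincar\'e-type inequality of Proposition~\ref{lambda-1-alpha-pos}. Your presentation of (ii) is a slightly more direct phrasing of the paper's chain $R_{\alpha,m,p}(u) > R_{1,-\lambda_1(\B),p}(u) \ge 0$, but it is the same underlying argument.
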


  \begin{proof}
    (i) This follows immediately from Proposition~\ref{lambda-1-alpha-pos}.\\
    (ii) For $\alpha \in [0,1]$, $m \ge -\lambda_1(\B)$, $2 \le p < 2^*$ and $u \in H^1_0(\B) \setminus \{0\}$ we have $R_{\alpha,m,p}(u)> R_{1,-\lambda_1(\B),p}(u)$ and
    $$
    R_{1,-\lambda_1(\B),p}(u) \left(\int_\B |u|^p \, dx \right)^\frac{2}{p}
    =\left(\int_\B |u|^p \, dx \right)^{-\frac{2}{p}} \int_\B \left( |\nabla u|^2 - |\del_\theta u|^2 - \lambda_1(\B) u^2\right)dx \ge 0 
      $$
    by Proposition~\ref{lambda-1-alpha-pos}.
  \end{proof}

\begin{remark}
  \label{remark-alpha-greater-1}
  {\bfseries(The case $\alpha>1$)} \\
 It is natural to ask what happens for $\alpha>1$. In fact, in this 
  case, the infimum $\scrC_{\alpha,m,p}(\B)$ in 
  (\ref{eq:intro-minimization-problem}) satisfies
  \begin{equation}
    \label{eq:infimum-minus-infinity}
  \scrC_{\alpha,m,p}(\B) = -\infty \qquad \text{for every $m \in \R$, $p \in [2,\infty)$.}
  \end{equation}
To see this, we fix $\eps \in (0,1)$ and nonzero functions $\phi \in 
C_c^1(1-\eps,1))$, $\psi \in C_c^1(\frac{\pi}{2}-\eps,\frac{\pi}{2}+\eps)$. 
Moreover, we consider the sequence of functions $u_k \in C^1_c(\B)$ which, in 
the polar coordinates from (\ref{polar-coordinates}), are given by
$$
(r,\theta,\vartheta_1,\dots,\vartheta_{N-2}) \mapsto \phi(r)\psi(\vartheta_1)\cdots \psi(\vartheta_{N-2}) X_k(\theta), \qquad \text{where $X_k(\theta)=\sin(k \theta)$.}
$$
Similarly as in (\ref{polar-coordinates-calc}), we then find, with $U_\eps:= 
(1-\eps,1) \times (-\pi,\pi) \times (\frac{\pi}{2}-\eps, 
\frac{\pi}{2}+\eps)^{N-2}$, that 
	\begin{align*}
		 \int_{\B} &\left( |\nabla u_k|^2 - \alpha^2|\del_\theta u_k|^2 \right) \, dx\\ 
          &=  \int_{U_\eps}\Bigl( |\phi'(r)|^2|X_k(\theta)|^2 \prod_{i=1}^{N-2}|\psi(\vartheta_i)|^2  + \frac{1}{r^2} \sum \limits_{i=1}^{N-2} g_i |\psi'(\vartheta_i)|^2|\phi(r)|^2|X_k(\theta)|^2 \prod_{\stackrel{j=1}{j \not= i}}^{N-2}|\psi(\vartheta_j)|^2\\
          \; & \quad + \Bigl(\frac{g_{N-1}}{r^2}-\alpha^2 \Bigr) 
          |X_k'(\theta)|^2|\phi(r)|^2 \prod_{i=1}^{N-2}|\psi(\vartheta_i)|^2 
          \Bigr) g \, d(r,\theta,\vartheta_1, \dots ,\vartheta_{N-2}) %
        \end{align*}
	with the functions $g,g_i:U \to \R$, $i=1,\dots,N-1$ given in (\ref{eq:def-g-g-i}). We may now choose $\eps= \eps(\alpha)>0$ so small that
        $$
        \frac{1}{2} \le g \le 1  \quad \text{and}\quad \alpha^2-\frac{g_{N-1}}{r^2} \ge \eps \qquad \text{on $U_\eps$.}
        $$
          Since also $|X_k| \le 1$ by definition, we estimate
         $$
         \int_{\B} \left( |\nabla u_k|^2 - \alpha^2|\del_\theta u_k|^2 \right) \, dx 
         \le c - d(k),
         $$
       where
       $$
           c := \int_{U_\eps}\Bigl( |\phi'(r)|^2 \prod_{i=1}^{N-2}|\psi(\vartheta_i)|^2  + \frac{1}{r^2} \sum \limits_{i=1}^{N-2} g_i |\psi'(\vartheta_i)|^2|\phi(r)|^2 \prod_{\stackrel{j=1}{j \not= i}}^{N-2}|\psi(\vartheta_j)|^2\Bigr)\,d(r,\theta,\vartheta_1, \dots ,\vartheta_{N-2})   
      $$
         and 
        \begin{align*}
          d(k)&:=\int_{U_\eps}\Bigl(\alpha^2-\frac{g_{N-1}}{r^2}\Bigr) |X_k'(\theta)|^2|\phi(r)|^2 \prod_{i=1}^{N-2}|\psi(\vartheta_i)|^2  g \, d(r,\theta,\vartheta_1, \dots ,\vartheta_{N-2})\\   
         &\ge \frac{k^2 \eps}{2} \int_{1-\eps}^1 |\phi(r)|^2dr \int_{-\pi}^{\pi} \cos^2(k \theta) d\theta \Bigl(\int_{\frac{\pi}{2}-\eps}^{\frac{\pi}{2}+\eps}|\psi(\vartheta)|^2 d\vartheta\Bigr)^{N-2} = \frac{\eps \pi}{2} d_2 k^2
        \end{align*}
        with $d_2:=  \int_{1-\eps}^1 |\phi(r)|^2dr  \Bigl(\int_{\frac{\pi}{2}-\eps}^{\frac{\pi}{2}+\eps}|\psi(\vartheta)|^2 d\vartheta\Bigr)^{N-2}$. Hence $d(k) \to \infty$ as $k \to \infty$. Moreover, for every $p \in [2,\infty)$ we have 
        $$
        \int_{\B} |u_k|^p \,dx = \int_{U_\eps}|\phi(r)|^p |X_k(\theta)|^p \prod_{i=1}^{N-2}|\psi(\vartheta_i)|^p g d(r,\theta,\vartheta_1, \dots,\vartheta_{N-2}) \le d_p
        $$
        with
        $$
        d_p:= 2 \pi \int_{1-\eps}^1 |\phi(r)|^pdr  
        \Bigl(\int_{\frac{\pi}{2}-\eps}^{\frac{\pi}{2}+\eps}|\psi(\vartheta)|^p 
        d\vartheta\Bigr)^{N-2} < \infty.
        $$
        It thus follows that
        $$
        \frac{\int_{\B} \left( |\nabla u_k|^2 - \alpha^2|\del_\theta u_k|^2 +m 
        |u_k|^2 \right) \, dx}{ \Bigl(\int_{\B} |u_k|^p 
        \,dx\Bigr)^{\frac{2}{p}}} \le \frac{c - d(k)-m d_2 
        }{\bigl(d_p\bigr)^{\frac{2}{p}}} \to - \infty \qquad \text{as $k \to 
        \infty$.}
        $$
for every $p \in [2,\infty)$, $m \in \R$. This shows (\ref{eq:infimum-minus-infinity}). 

Consequently, the study of ground state solutions of \eqref{Reduced equation} 
requires a completely different approach in the case $\alpha>1$. This is 
further treated in the forthcoming paper~\cite{kuebler}.
\end{remark}

In the following, we show that, for $\alpha \in [0,1)$ and $2<p< 2^*$, the 
value $\scrC_{\alpha,m,p}(\B)>0$ is attained in $H_0^1(\B) \setminus \{0\}$ and 
that any minimizer gives rise to a weak solution of \eqref{Reduced equation}.

\begin{lemma} \label{existence of minimiers - elliptic case}
  Let $0 \leq \alpha<1$, $2<p<2^*$ and $m >-\lambda_1(\B)$. Then the value 
  $\scrC_{\alpha,m,p}(\B)$ is positive and attained at a function $u_0 \in 
  H^1_0(\B) \setminus \{0\}$. Moreover, after multiplication by a positive 
  constant, $u_0$ is a weak solution of (\ref{Reduced equation}), and $u_0 \in 
  C^{2,\sigma}(\overline \B)$ for some $\sigma>0$.
\end{lemma}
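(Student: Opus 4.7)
The plan is to apply the classical direct method of the calculus of variations to the Rayleigh quotient $R_{\alpha,m,p}$, and then to invoke standard elliptic bootstrap for the regularity. The key observation on which everything rests is that for $\alpha<1$ the quadratic form
\[
Q(u) := \int_\B \bigl( |\nabla u|^2 - \alpha^2 |\del_\theta u|^2 + m u^2 \bigr)\,dx
\]
is equivalent to the square of the standard $H^1_0(\B)$-norm. Indeed, writing $\del_\theta u = x_1 \del_{x_2} u - x_2 \del_{x_1} u$ and using Cauchy--Schwarz together with $x_1^2 + x_2^2 \le 1$ on $\B$ yields the pointwise bound $|\del_\theta u|^2 \le |\nabla u|^2$, so $\int_\B (|\nabla u|^2 - \alpha^2 |\del_\theta u|^2)\,dx \ge (1-\alpha^2)\|\nabla u\|_{L^2(\B)}^2$. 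Combining this with the Poincaré inequality $\|u\|_{L^2(\B)}^2 \le \lambda_1(\B)^{-1} \|\nabla u\|_{L^2(\B)}^2$ and the assumption $m > -\lambda_1(\B)$ produces a constant $c = c(\alpha,m) > 0$ with $Q(u) \ge c\|u\|_{H^1_0(\B)}^2$; together with the Sobolev embedding $H^1_0(\B) \hookrightarrow L^p(\B)$, this immediately forces $\scrC_{\alpha,m,p}(\B) > 0$.

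To obtain a minimizer, I will take a minimizing sequence $(u_n) \subset H^1_0(\B)$ for $\scrC_{\alpha,m,p}(\B)$ normalized by $\|u_n\|_{L^p(\B)} = 1$ (permissible by the scaling invariance of $R_{\alpha,m,p}$). The coercivity above makes $(u_n)$ bounded in $H^1_0(\B)$, so up to a subsequence $u_n \weakto u_0$ weakly in $H^1_0(\B)$; since $p < 2^*$, Rellich--Kondrachov upgrades this to $u_n \to u_0$ strongly in $L^p(\B)$, whence $\|u_0\|_{L^p(\B)} = 1$ and in particular $u_0 \neq 0$. Since $Q$ is the squared norm of a Hilbert space structure on $H^1_0(\B)$ equivalent to the standard one, it is weakly lower semicontinuous, so $R_{\alpha,m,p}(u_0) \le \liminf_n R_{\alpha,m,p}(u_n) = \scrC_{\alpha,m,p}(\B)$, and $u_0$ is a minimizer.

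A standard Lagrange-multiplier computation then shows that $u_0$ satisfies $-\Delta u_0 + \alpha^2 \del_\theta^2 u_0 + m u_0 = \mu |u_0|^{p-2} u_0$ weakly on $\B$, and testing with $u_0$ identifies $\mu = \scrC_{\alpha,m,p}(\B) \|u_0\|_{L^p(\B)}^{2-p} > 0$; replacing $u_0$ by $\mu^{1/(p-2)} u_0$ produces a weak solution of \eqref{Reduced equation}. For the regularity, the essential point is that the operator $L := -\Delta + \alpha^2 \del_\theta^2$ has smooth (in fact polynomial) coefficients and principal symbol $|\xi|^2 - \alpha^2 (x_1 \xi_2 - x_2 \xi_1)^2 \ge (1-\alpha^2)|\xi|^2$ on $\B$, so $L+m$ is uniformly elliptic on $\B$ precisely because $\alpha < 1$. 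Starting from $u_0 \in L^{2^*}(\B)$, a Brezis--Kato (or Moser) iteration applied to $Lu_0 + m u_0 = |u_0|^{p-2} u_0$ yields $u_0 \in L^\infty(\B)$, Calderón--Zygmund $L^q$-estimates upgrade this to $u_0 \in W^{2,q}(\B)$ for every $q < \infty$, Sobolev embedding gives $u_0 \in C^{1,\sigma}(\overline\B)$, and Schauder theory on the smooth domain $\B$ then delivers $u_0 \in C^{2,\sigma}(\overline\B)$. The only non-routine check is the uniform ellipticity of $L$, which is already settled by the principal-symbol computation above; everything else is textbook.
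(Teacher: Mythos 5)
Your overall strategy (direct method, compact embedding $H^1_0(\B)\hookrightarrow L^p(\B)$, Lagrange multiplier, uniform ellipticity of $-\Delta+\alpha^2\del_\theta^2$ for $\alpha<1$, and standard bootstrap) agrees with the paper's, and those parts are fine. The gap is in your derivation of the coercivity $Q(u)\ge c\|u\|_{H^1_0(\B)}^2$. From $|\del_\theta u|^2\le|\nabla u|^2$ you get $Q(u)\ge(1-\alpha^2)\|\nabla u\|_{L^2}^2+m\|u\|_{L^2}^2$, but for $m<0$ Poincar\'e turns this into the coefficient $(1-\alpha^2)+m/\lambda_1(\B)$ in front of $\|\nabla u\|_{L^2}^2$, which is positive only when $m>-(1-\alpha^2)\lambda_1(\B)$; the hypothesis allows $m$ down to $-\lambda_1(\B)$, so the range $-\lambda_1(\B)<m\le-(1-\alpha^2)\lambda_1(\B)$ (nonempty for any $\alpha>0$) is not covered. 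The loss is real, not an artifact of the estimate: for the radial first Dirichlet eigenfunction $\phi_1$ one has $\del_\theta\phi_1\equiv 0$, so your intermediate quantity equals $\bigl((1-\alpha^2)\lambda_1(\B)+m\bigr)\|\phi_1\|_{L^2}^2$, which is \emph{negative} in that range even though $Q(\phi_1)=(\lambda_1(\B)+m)\|\phi_1\|_{L^2}^2>0$. Your bound simply gives away too much on radial functions.

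Coercivity, and with it $\scrC_{\alpha,m,p}(\B)>0$, is nonetheless true, but one needs the sharper Poincar\'e-type inequality $\int_\B(|\nabla u|^2-|\del_\theta u|^2)\,dx\ge\lambda_1(\B)\int_\B u^2\,dx$ of Proposition~\ref{lambda-1-alpha-pos} (proved there by expansion in spherical harmonics), which is exactly what the paper invokes through Corollary~\ref{first-cor-section-2}. Using the identity $\int_\B(|\nabla u|^2-\alpha^2|\del_\theta u|^2)\,dx=(1-\alpha^2)\|\nabla u\|_{L^2}^2+\alpha^2\int_\B(|\nabla u|^2-|\del_\theta u|^2)\,dx$ together with this sharper inequality gives $Q(u)\ge(1-\alpha^2)\|\nabla u\|_{L^2}^2+(\alpha^2\lambda_1(\B)+m)\|u\|_{L^2}^2$, from which a Poincar\'e step yields coercivity on the whole range $m>-\lambda_1(\B)$. (For boundedness of the $L^p$-normalized minimizing sequence you could also sidestep full coercivity: since $\B$ is bounded and $p>2$, H\"older gives $\|u_n\|_{L^2}\le C$, and then $(1-\alpha^2)\|\nabla u_n\|_{L^2}^2\le Q(u_n)+|m|\|u_n\|_{L^2}^2$ already bounds the gradients; but the positivity of $\scrC_{\alpha,m,p}(\B)$ still needs the sharper inequality.)
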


\begin{proof}
We first note that 
$$
\int_\B \left( |\nabla u|^2 - \alpha^2  |\del_\theta u|^2 \right) \, dx \geq 
(1-\alpha^2) \int_\B |\nabla u|^2 \,  dx \qquad \text{for $u \in H^1_0(\B)$.}
$$
Since $\alpha \in [0,1)$, it therefore follows from Sobolev embeddings that
\begin{equation} \label{R-Sobolev-estimate}
R_{\alpha,m,p}(u) \geq C_{m,\alpha} \frac{\int_\B |\nabla u|^2  \, dx}{\left(\int_\B |u|^p \, dx \right)^\frac{2}{p}} \qquad \text{$u \in H_0^1(\B) \setminus\{0\}$}
\end{equation}
with a constant $C_{m,\alpha}>0$.
We take a minimizing sequence $(u_n)_n$ for the Rayleigh quotient 
$R_{\alpha,m,p}$, normalized such that $\int_\B |u|^p \, dx =1$ for all $n$. By 
\eqref{R-Sobolev-estimate}, $(u_n)_n$ remains bounded in $H^1_0(\B)$ and we may 
pass to subsequence that weakly converges to $u_0 \in H_0^1(\B)$. The 
compactness of the embedding $H^1_0(\B) \hookrightarrow L^p(\B)$ and the weak 
lower semicontinuity of the quadratic form $u \mapsto \int_\B \left( |\nabla 
u|^2 - \alpha^2  |\del_\theta u|^2 \right) \, dx$ then imply that $\int_\B 
|u_0|^p \, dx=1$ and $R_{\alpha,m,p}(u_0)= \scrC_{\alpha,m,p}(\B)$. Hence 
$\scrC_{\alpha,m,p}(\B)$ is attained, and $\scrC_{\alpha,m,p}(\B)>0$ by 
Corollary~\ref{first-cor-section-2}.
	
	Next, standard variational arguments show that every $L^p$-normalized minimizer $u_0$ must be a weak solution of
	\begin{equation*} 
	\left\{ 
	\begin{aligned} 
	-\Delta u + \alpha^2 \del_\theta^2 u + m u & = \scrC_{\alpha,m,p}(\B) |u|^{p-2} u \quad && \text{in $\B$} \\
	u & = 0 && \text{on $\del \B$.}
	\end{aligned}
	\right.
	\end{equation*}
	We then conclude that $\bigl[\scrC_{\alpha,m,p}(\B)\bigr]^\frac{1}{p-2} u_0$ solves (\ref{Reduced equation}). Finally, classical elliptic regularity theory yields $C^{2,\sigma}(\overline \B)$ since the operator $-\Delta -\alpha^2 \partial_\theta$ is uniformly elliptic in $\B$ in the case $0 \le \alpha <1$.
\end{proof}
\begin{definition}
  \label{ground-state-solution}
  Let $0 \leq \alpha <1$, $2<p<2^*$ and $m >-\lambda_1(\B)$. A weak solution $u 
  \in H^1_0(\B) \setminus \{0\}$ of (\ref{Reduced equation}) such that 
  $R_{\alpha,m,p}(u)=\scrC_{\alpha,m,p}(\B)$ will be called a \textbf{ground 
  state 
  solution}.  
\end{definition}
\subsection{The degenerate elliptic case $\alpha = 1$.}
\label{sec:limiting-case-alpha}

In the limiting case $\alpha=1$, problem~(\ref{Reduced equation}) becomes degenerate and requires to work in a function space different from $H^1_0(\B)$. From Proposition~\ref{lambda-1-alpha-pos}, we deduce that 
$$
(u,v) \mapsto \langle u, v \rangle_{\cH}:= \int_{\B} \left(\nabla u \cdot \nabla v - \del_\theta u \del_\theta v  \right) \, dx
$$
defines a scalar product on $C_c^1(\B)$. The induced norm will be denoted by $\|\cdot\|_\cH$. 
\begin{lemma}
  \label{def: degenerate space on ball}
  Let $\cH$ denote the completion of $C_c^1(\B)$ with respect to 
  $\|\cdot\|_\cH$. Then $\cH$ is a Hilbert space which is embedded in $L^p(\B)$ 
  for $p \in [2,2_{\text{\tiny $1$}}^*]$, where $2_{\text{\tiny $1$}}^* := 
  \frac{4 N+2}{2N-3}$ as before. Moreover, we have:
  \begin{itemize}
  \item[(i)] If $1\leq p<{2_{\text{\tiny $1$}}^*}$, then the embedding $\cH \hookrightarrow L^p(\B)$ is compact.
  \item[(ii)] If $m > -\lambda_1(\B)$ and $p \in [2,2_{\text{\tiny $1$}}^*]$, 
  then the Rayleigh quotient $R_{1,m,p}(u)$ is well defined by 
  (\ref{eq:def-Raleigh-quotient}) and positive for functions $u \in \cH 
  \setminus \{0\}$, 
  \end{itemize}
\end{lemma}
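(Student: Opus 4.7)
The plan is to leverage Theorem~\ref{degenerate Sobolev inequality - ball} to realize $\cH$ as a subspace of $L^{2_{\text{\tiny $1$}}^*}(\B)$, and then to upgrade to compact embeddings for $p<2_{\text{\tiny $1$}}^*$ by combining Rellich--Kondrachov on subsets of $\B$ away from the degeneracy circle $\gamma$ with a H\"older cut-off near $\gamma$ that uses the $L^{2_{\text{\tiny $1$}}^*}$-bound to dominate the mass concentrated there.

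First I would establish the continuous embedding. For $u \in C_c^1(\B)$, Theorem~\ref{degenerate Sobolev inequality - ball} yields $\|u\|_{L^{2_{\text{\tiny $1$}}^*}(\B)}\le C\|u\|_\cH$; every $\|\cdot\|_\cH$-Cauchy sequence in $C_c^1(\B)$ is thus Cauchy in $L^{2_{\text{\tiny $1$}}^*}(\B)$, and the abstract completion $\cH$ may be identified with a linear subspace of $L^{2_{\text{\tiny $1$}}^*}(\B)$. Since $\B$ is bounded, this gives $\cH \hookrightarrow L^p(\B)$ for every $p \in [1,2_{\text{\tiny $1$}}^*]$, and $\cH$ is a Hilbert space by construction. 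For (ii), the denominator of $R_{1,m,p}$ is then finite, and Proposition~\ref{lambda-1-alpha-pos} (extended to $\cH$ by density) gives $\|u\|_\cH^2 \ge \lambda_1(\B)\int_\B u^2\,dx$ for all $u \in \cH$; adding $m\int_\B u^2\,dx$ to both sides yields
$$\|u\|_\cH^2 + m \int_\B u^2\,dx \ge (\lambda_1(\B)+m) \int_\B u^2\,dx > 0$$
for $m > -\lambda_1(\B)$ and $u \in \cH \setminus\{0\}$, where positivity uses that the continuous embedding $\cH \hookrightarrow L^2(\B)$ forces $\|u\|_{L^2(\B)}>0$.

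The main remaining task is the compactness in (i). The key pointwise inequality is
$$|\del_\theta u|^2 = (x_1 \del_2 u - x_2 \del_1 u)^2 \le (x_1^2+x_2^2)|\nabla u|^2,$$
yielding $|\nabla u|^2 - |\del_\theta u|^2 \ge (1-x_1^2-x_2^2)|\nabla u|^2$ pointwise. Setting $A_\delta := \{x \in \B : \dist(x,\gamma)<\delta\}$, a short distance computation (bounding the separation of $x$ from the nearest point on $\gamma$, using $x \in \overline{\B}$) shows $1-x_1^2-x_2^2 \ge \delta^2/2$ on $\B \setminus A_\delta$. Hence any bounded sequence $(u_n) \subset \cH$ satisfies $\int_{\B \setminus A_\delta}|\nabla u_n|^2\,dx \le 2\delta^{-2}\|u_n\|_\cH^2$ and is $L^2(\B)$-bounded by Proposition~\ref{lambda-1-alpha-pos}, so $(u_n)$ is bounded in $H^1(\B \setminus A_\delta)$. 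Rellich--Kondrachov then extracts a subsequence convergent in $L^p(\B \setminus A_\delta)$ for every $p \in [1,2^*)$.

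To handle the remaining mass near $\gamma$, for $1 \le p < 2_{\text{\tiny $1$}}^*$ H\"older's inequality and the embedding of Step~1 give
$$\int_{A_\delta} |u_n|^p\,dx \le |A_\delta|^{1-p/2_{\text{\tiny $1$}}^*} \|u_n\|_{L^{2_{\text{\tiny $1$}}^*}(\B)}^p \le C|A_\delta|^{1-p/2_{\text{\tiny $1$}}^*},$$
which tends to zero uniformly in $n$ as $\delta \to 0$. A diagonal extraction over a sequence $\delta_k \downarrow 0$ then produces a subsequence Cauchy in $L^p(\B)$, proving (i). The only mildly delicate point is the elementary distance-to-$\gamma$ estimate; the rest is a routine combination of Theorem~\ref{degenerate Sobolev inequality - ball}, Proposition~\ref{lambda-1-alpha-pos}, and Rellich--Kondrachov.
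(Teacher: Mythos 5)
Your proof is correct and takes essentially the same approach as the paper: continuous embedding into $L^{2_{\text{\tiny $1$}}^*}(\B)$ from Theorem~\ref{degenerate Sobolev inequality - ball}, compactness by applying Rellich--Kondrachov on subdomains where the form is uniformly elliptic combined with a H\"older tail estimate using the $L^{2_{\text{\tiny $1$}}^*}$-bound, and (ii) from Proposition~\ref{lambda-1-alpha-pos}. The only (cosmetic) difference is that the paper exhausts $\B$ by the concentric balls $B_{1-1/m}(0)$ rather than removing a tubular neighborhood $A_\delta$ of $\gamma$; the paper's choice sidesteps both the distance-to-$\gamma$ computation and the need to verify, for $N\ge 3$, that $\B\setminus A_\delta$ is a sufficiently regular domain for Rellich--Kondrachov, whereas your choice is geometrically sharper in that it isolates exactly the degeneracy set of $-\Delta+\del_\theta^2$.
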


\begin{proof}
  The embedding $\cH \hookrightarrow L^p(\B)$ for $p \in [2,2_{\text{\tiny $1$}}^*]$ is an immediate consequence of Theorem~\ref{degenerate Sobolev inequality - ball}.

  To prove (i), we fix $p \in [1,{2_{\text{\tiny $1$}}^*})$, and we let 
  $(u_n)_n \subset \cH$ be a bounded sequence. Moreover, we put 
  $B_m:=B_{1-1/m}(0) \subset \B$ for $m \geq 2$. Then $u_n^m:=\1_{B_m} u_n$ 
  defines a bounded sequence in $H^1(B_m)$ for every $m \ge 2$. After passing 
  to a subsequence, $(u_n^m)_n$ converges in $L^p(B_m)$ by Rellich-Kondrachov.  
	After passing to a diagonal sequence we may therefore assume that there exists $u \in L^p(\B)$ with the property that 
	$u_n \to u$ for $m \in \N$.	Moreover, 
	$$
	\|u-u_n\|_{L^p(\B)} \leq \|u-u_n\|_{L^p(B_m)} + \|u-u_n\|_{L^q(\B \setminus B_m)} |\B \setminus B_m|^{\frac{1}{p}-\frac{1}{q}}.
	$$
	Since $\|u-u_n\|_{L^q(\B \setminus B_m)} \leq \|u-u_n\|_{L^q(\B)}$ remains bounded independently of $m$ and $n$, this gives
	$$
	\limsup_{n \to \infty} \|u-u_n\|_{L^p(\B)} \leq C |\B \setminus B_m|^{\frac{1}{p}-\frac{1}{q}} 
	$$
	for some $C>0$ independent of $m$, where the right hand side tends to zero 
	as $m \to \infty$. This proves that $u_n \to u$ in $L^p(\B)$.

Finally, we note that (ii) is an immediate consequence of 
Proposition~\ref{lambda-1-alpha-pos} and the embedding $\cH \hookrightarrow 
L^p(\B)$ for $p \in [2,2_{\text{\tiny $1$}}^*]$.       
\end{proof}

Lemma~\ref{def: degenerate space on ball} allows the following definition of a weak solution of (\ref{Reduced equation}) with $\alpha = 1$ in the case where $p \in [2,2_{\text{\tiny $1$}}^*]$. 
\begin{definition}
  \label{ground-state-solution-alpha=1}
  Let $m > -\lambda_1(\B)$, $p \in [2,2_{\text{\tiny $1$}}^*]$.
  \begin{itemize}
  \item[(i)] We call $u \in \cH$ a \textbf{weak solution} of (\ref{Reduced 
  equation}) with $\alpha = 1$ if
  $$
  \langle u, v \rangle_\cH = \int_{\B}\bigl(|u|^{p-2}u v - m uv\bigr)dx
  \qquad \text{for every $v \in \cH$.}
  $$
  \item[(ii)] A weak solution $u \in \cH$ of (\ref{Reduced equation}) with 
  $\alpha = 1$ will be called a \textbf{ground state solution} if $u$ is a 
  minimizer for $R_{1,m,p}$, i.e., we have $R_{1,m,p}(u)=\scrC_{1,m,p}(\B)$.
  \end{itemize}
\end{definition}

We then have the following existence result which replaces Proposition~\ref{existence of minimiers - elliptic case} in the degenerate elliptic case $\alpha =1$. 

\begin{proposition} \label{Prop: ground states for critical case}
  Let $1<p<{2_{\text{\tiny $1$}}^*}$ and $m >-\lambda_1(\B)$. Then we have
  \begin{equation}
    \label{eq:positivity-C-non-critical}
    \scrC_{1,m,p}(\B)>0,
  \end{equation}
  and there exists $u_0 \in \cH \setminus \{0\}$ with $R_{1,m,p}(u_0)= 
  \scrC_{1,m,p}(\B)$, i.e., $u_0$ minimizes $R_{1,m,p}$ in $\cH \setminus 
  \{0\}$. Furthermore, after multiplication by a positive constant, $u_0$ is 
  ground state solution of (\ref{Reduced equation}) with $\alpha=1$ and $u_0 
  \in 
  C^{2,\sigma}_\loc(\B)$ for some $\sigma>0$.
\end{proposition}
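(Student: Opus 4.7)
I would follow the direct method of the calculus of variations in the Hilbert space $(\cH,\|\cdot\|_\cH)$ provided by Lemma~\ref{def: degenerate space on ball}, in analogy with the proof of Lemma~\ref{existence of minimiers - elliptic case}. First, the positivity $\scrC_{1,m,p}(\B)>0$ is a direct consequence of Proposition~\ref{lambda-1-alpha-pos} combined with Theorem~\ref{degenerate Sobolev inequality - ball}: the former yields $\|u\|_{L^2(\B)}^2 \leq \lambda_1(\B)^{-1}\|u\|_\cH^2$, which gives $\|u\|_\cH^2 + m\|u\|_{L^2(\B)}^2 \geq (1+m/\lambda_1(\B))\|u\|_\cH^2$, where $1+m/\lambda_1(\B)>0$ by the assumption on $m$; the latter bounds $\|u\|_\cH^2$ below by $c\|u\|_{L^p(\B)}^2$ for some $c>0$, valid since $p \leq {2_{\text{\tiny $1$}}^*}$.

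Next, I would fix an $L^p$-normalized minimizing sequence $(u_n)_n \subset \cH\setminus\{0\}$ with $\|u_n\|_{L^p(\B)}=1$ and $\|u_n\|_\cH^2+m\|u_n\|_{L^2(\B)}^2 \to \scrC_{1,m,p}(\B)$. The two-sided estimate of the previous paragraph forces $(u_n)_n$ to be bounded in $\cH$, so, after extraction, $u_n \rightharpoonup u_0$ in $\cH$. Since $p<{2_{\text{\tiny $1$}}^*}$, the compact embedding in Lemma~\ref{def: degenerate space on ball}(i) (applied both for the exponent $p$ and for the exponent $2$) gives $u_n \to u_0$ strongly in $L^p(\B) \cap L^2(\B)$. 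In particular $\|u_0\|_{L^p(\B)}=1$, so $u_0 \neq 0$, and combining strong $L^2$-convergence with weak lower semicontinuity of $\|\cdot\|_\cH^2$ yields $R_{1,m,p}(u_0)\leq \scrC_{1,m,p}(\B)$, hence equality.

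A standard Lagrange multiplier argument on the $C^1$ constraint $\{u \in \cH : \|u\|_{L^p(\B)}=1\}$ then produces the Euler equation $\langle u_0,v\rangle_\cH + m\int_\B u_0 v\,dx = \scrC_{1,m,p}(\B)\int_\B |u_0|^{p-2}u_0 v\,dx$ for all $v\in\cH$, so that $\scrC_{1,m,p}(\B)^{1/(p-2)} u_0$ is a ground state in the sense of Definition~\ref{ground-state-solution-alpha=1}. For the interior regularity, the key observation is that although $-\Delta+\del_\theta^2$ degenerates on the great circle $\gamma \subset \del\B$, it is \emph{uniformly elliptic on every compact subset} of $\B$: a direct expansion gives the principal symbol $(1-x_2^2)\xi_1^2+(1-x_1^2)\xi_2^2+2x_1x_2\xi_1\xi_2+\sum_{i=3}^{N}\xi_i^2$, whose $2\times 2$ block in the $(x_1,x_2)$-plane has determinant $1-x_1^2-x_2^2 \geq 1-|x|^2$, bounded away from $0$ on any compact $K\subset\B$. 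Classical $L^q$-estimates and Schauder theory for the operator restricted to $K$, combined with a standard bootstrap using the embedding $\cH \hookrightarrow L^{{2_{\text{\tiny $1$}}^*}}(\B)$ and the subcriticality of $|u_0|^{p-2}u_0$, then produce $u_0 \in C^{2,\sigma}_\loc(\B)$.

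The only step I expect to require genuine care is verifying the uniform ellipticity on compact subsets close to $\gamma$, since this is precisely where the operator loses coercivity; once this is in hand, every other step reduces to a direct quotation of the framework already built in Sections~\ref{sec-Halfspace}--\ref{preliminaries}.
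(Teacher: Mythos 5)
Your argument matches the paper's for the positivity and for the existence of a minimizer: the paper explicitly refers back to the direct-method proof of Lemma~\ref{existence of minimiers - elliptic case} and the compact embedding in Lemma~\ref{def: degenerate space on ball}(i), which is exactly what you reproduce. Two remarks.

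First, a small computational slip in the coercivity estimate: from $\lambda_1(\B)\|u\|_{L^2}^2 \le \|u\|_\cH^2$ you deduce $\|u\|_\cH^2 + m\|u\|_{L^2}^2 \ge (1 + m/\lambda_1(\B))\|u\|_\cH^2$, but this inequality only holds for $m\le 0$ (where the Poincar\'e estimate gets multiplied by a nonpositive number and the inequality direction is preserved). For $m>0$ the term $m\|u\|_{L^2}^2$ could be much smaller than $(m/\lambda_1)\|u\|_\cH^2$, and the correct (and sufficient) bound is simply $\|u\|_\cH^2 + m\|u\|_{L^2}^2 \ge \|u\|_\cH^2$. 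The coercivity constant is thus $\min\{1,\,1+m/\lambda_1(\B)\}>0$, and the conclusion survives; only the displayed line is wrong for $m>0$.

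Second, on regularity your route is a genuine variant of the paper's. You rely, as the paper does, on the key observation that $-\Delta+\del_\theta^2$ is uniformly elliptic on $B_s=\{|x|<s\}$ for $s<1$ (equivalently on compact subsets of $\B$; your computation of the determinant $1-x_1^2-x_2^2$ of the $(x_1,x_2)$-block of the symbol is correct). But you then run a purely local $L^q$-bootstrap from $u_0\in H^1_{\loc}(\B)\subset L^{2^*}_{\loc}(\B)$ using subcriticality $p<2^*$, while the paper first establishes the global bound $u_0\in L^\infty(\B)$ via a Moser iteration carried out in the degenerate space $\cH$ (Lemma~\ref{Lemma: Moser iteration} in the appendix), and only then applies classical interior elliptic theory. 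Both approaches are valid; yours is lighter because it avoids the global $L^\infty$ estimate, while the paper's Moser argument gives the quantitative bound $\|u_0\|_\infty\le C\|u_0\|_\cH^\sigma$, which is potentially more useful. One detail to make explicit if you write this out: to start the bootstrap you need $u_0\in H^1(B_s)$, which follows because $\|\cdot\|_\cH$ restricted to functions supported in $B_s$ controls the full $H^1$-norm there by the local ellipticity just established.
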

\begin{proof}
	Proving the existence of $u_0$ is completely analogous to the proof of 
	Lemma~\ref{existence of minimiers - elliptic case}, making use of the 
	Rellich-Kondrachov type result stated in Lemma~\ref{def: degenerate space 
	on ball}(i).
	
	In order to prove the regularity result, we first note that a Moser iteration scheme can be used to show that $u_0 \in L^\infty(\B)$, see Lemma~\ref{Lemma: Moser iteration} in the appendix for a detailed proof. For any fixed $s \in (0,1)$ we may then use the fact that the operator $-\Delta+\del_\theta^2$ is uniformly elliptic in the ball $B_s=\{x \in \R^N: |x|<s\}$ and classical elliptic regularity theory, to show $u_0 \in C_\loc^{2,\sigma}(B_s)$.
\end{proof}

Next, we treat the critical case $p={2_{\text{\tiny $1$}}^*}$, and first show 
that $\scrC_{1,m,{2^*_{\scaleto{1}{3pt}}}}(\B)$ is attained, provided it is 
small 
enough.

\begin{theorem} \label{existence of minimizers - minima comparison}
	Let $m>-\lambda_1(\B)$ such that $\scrC_{1,m,{2^*_{\scaleto{1}{3pt}}}}(\B) 
	<2^{\frac{1}{2}-\frac{1}{2^*_{\scaleto{1}{3pt}}}} \cS_1(\R^N_+)$.
	Then the value $\scrC_{1,m,{2^*_{\scaleto{1}{3pt}}}}(\B) $ is attained in 
	$\cH \setminus \{0\}$.
\end{theorem}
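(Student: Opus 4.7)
The plan is a concentration-compactness argument in the spirit of Brezis-Nirenberg, using Lemma~\ref{lemma: inequality close to boundary} as the sharp local Sobolev estimate at the degeneracy set $\gamma$. Take a minimizing sequence $(u_n) \subset \cH$ for $\scrC_{1,m,{2^*_{\scaleto{1}{3pt}}}}(\B)$, normalized by $\|u_n\|_{L^{{2^*_{\scaleto{1}{3pt}}}}(\B)} = 1$. Proposition~\ref{lambda-1-alpha-pos} combined with $m > -\lambda_1(\B)$ (as in Corollary~\ref{first-cor-section-2}(ii)) shows that $(u_n)$ is bounded in $\cH$, so after extraction $u_n \weakto u_0$ in $\cH$ and, by Lemma~\ref{def: degenerate space on ball}(i), $u_n \to u_0$ in $L^p(\B)$ for every $p < 2^*_{\scaleto{1}{3pt}}$, in particular in $L^2(\B)$. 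Passing to a further subsequence, the measures $\mu_n := \bigl(|\nabla u_n|^2 - |\del_\theta u_n|^2\bigr)\,dx$ and $\nu_n := |u_n|^{{2^*_{\scaleto{1}{3pt}}}}\,dx$ converge weakly to finite measures $\mu,\nu$ on $\bar\B$, and a direct adaptation of Lemma~\ref{Concentration-Compactness Lemma II} yields
\begin{equation*}
\nu = |u_0|^{{2^*_{\scaleto{1}{3pt}}}}\,dx + \sum_{j \in J} \nu_j \delta_{x_j}, \qquad \mu \geq \bigl(|\nabla u_0|^2 - |\del_\theta u_0|^2\bigr)\,dx + \sum_{j \in J} \mu_j \delta_{x_j},
\end{equation*}
with $\{x_j\} \subset \bar\B$ and $\mu_j, \nu_j > 0$.

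The key claim is that every concentration point $x_j$ lies on $\gamma$ and satisfies $\mu_j \geq 2^{\frac{1}{2}-\frac{1}{2^*_{\scaleto{1}{3pt}}}}\cS_1(\R^N_+)\,\nu_j^{{2/2^*_{\scaleto{1}{3pt}}}}$. For the first assertion, on any open set $U \subset \bar\B$ with $\overline{U} \cap \gamma = \emptyset$ the principal symbol $|\xi|^2 - (x_1\xi_2 - x_2\xi_1)^2 \geq (1 - x_1^2 - x_2^2)|\xi|^2$ of $-\Delta + \del_\theta^2$ is uniformly elliptic, so the $\cH$-norm controls the $H^1(U)$-norm, and the compact embedding $H^1(U) \hookrightarrow L^{{2^*_{\scaleto{1}{3pt}}}}(U)$ (valid since $2^*_{\scaleto{1}{3pt}} < 2^*$) forces $u_n \to u_0$ in $L^{{2^*_{\scaleto{1}{3pt}}}}(U)$, precluding concentration on $U$. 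For the second assertion, fix $x_j \in \gamma$ and $\epsilon > 0$, let $\delta = \delta(\epsilon)$ be from Lemma~\ref{lemma: inequality close to boundary}, and take a cutoff $\phi_\rho \in C_c^1(\Omega_{x_j,\delta})$ with $\phi_\rho \equiv 1$ on $\Omega_{x_j,\rho}$ for $\rho \in (0,\delta)$. Applying Lemma~\ref{lemma: inequality close to boundary} to $\phi_\rho u_n$ and passing to the limit first in $n$ (using $u_n \to u_0$ in $L^2$ to absorb cross terms involving $\nabla \phi_\rho$ and $\del_\theta \phi_\rho$), then in $\rho \to 0$, and finally in $\epsilon \to 0$, delivers the bound.

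Setting $a := \|u_0\|_{L^{{2^*_{\scaleto{1}{3pt}}}}}^{{2^*_{\scaleto{1}{3pt}}}}$, one has $a + \sum_j \nu_j = \lim \|u_n\|_{L^{{2^*_{\scaleto{1}{3pt}}}}}^{{2^*_{\scaleto{1}{3pt}}}} = 1$, while the previous bounds together with $\|u_n\|_{L^2}^2 \to \|u_0\|_{L^2}^2$ yield
\begin{equation*}
\scrC_{1,m,{2^*_{\scaleto{1}{3pt}}}}(\B) \geq R_{1,m,{2^*_{\scaleto{1}{3pt}}}}(u_0)\,a^{{2/2^*_{\scaleto{1}{3pt}}}} + 2^{\frac{1}{2}-\frac{1}{2^*_{\scaleto{1}{3pt}}}}\cS_1(\R^N_+)\sum_j \nu_j^{{2/2^*_{\scaleto{1}{3pt}}}},
\end{equation*}
with the first summand interpreted as $0$ when $u_0 \equiv 0$. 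Invoking $R_{1,m,{2^*_{\scaleto{1}{3pt}}}}(u_0) \geq \scrC_{1,m,{2^*_{\scaleto{1}{3pt}}}}(\B)$ when $u_0 \neq 0$, the subadditivity $\sum_j \nu_j^{{2/2^*_{\scaleto{1}{3pt}}}} \geq (1-a)^{{2/2^*_{\scaleto{1}{3pt}}}}$, and $a^{{2/2^*_{\scaleto{1}{3pt}}}} + (1-a)^{{2/2^*_{\scaleto{1}{3pt}}}} \geq 1$ (each a consequence of the subadditivity of $t \mapsto t^{2/2^*_{\scaleto{1}{3pt}}}$), this collapses to $\scrC_{1,m,{2^*_{\scaleto{1}{3pt}}}}(\B)\bigl(1 - a^{{2/2^*_{\scaleto{1}{3pt}}}}\bigr) \geq 2^{\frac{1}{2}-\frac{1}{2^*_{\scaleto{1}{3pt}}}}\cS_1(\R^N_+)\bigl(1 - a^{{2/2^*_{\scaleto{1}{3pt}}}}\bigr)$. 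The strict inequality hypothesis then forces $a = 1$; uniform convexity of $L^{{2^*_{\scaleto{1}{3pt}}}}$ upgrades weak convergence to strong convergence $u_n \to u_0$ in $L^{{2^*_{\scaleto{1}{3pt}}}}(\B)$, and weak lower semicontinuity of $\|\cdot\|_\cH$ combined with strong $L^2$-convergence gives $R_{1,m,{2^*_{\scaleto{1}{3pt}}}}(u_0) \leq \scrC_{1,m,{2^*_{\scaleto{1}{3pt}}}}(\B)$, so $u_0 \in \cH \setminus \{0\}$ attains the infimum. The main obstacle lies in the sharp local lower bound at $x_j \in \gamma$: the factor $2^{\frac{1}{2}-\frac{1}{2^*_{\scaleto{1}{3pt}}}}$ coming from the anisotropic rescaling (cf.~Remark~\ref{Remark: Optimality of exponents}(ii)) must be tracked precisely through Lemma~\ref{lemma: inequality close to boundary}, and any loss in this constant would render the threshold hypothesis useless.
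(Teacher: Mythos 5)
Your proposal pursues a concentration-compactness argument, which is the same general philosophy as the paper's, but the technical implementation is genuinely different, and there is a step that is not obviously justified as written.

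The paper's proof runs as follows: take an $L^{2^*_{\scaleto{1}{3pt}}}$-normalized minimizing sequence $(u_n)$, extract a weak limit $u_0$, and set $v_n := u_n - u_0 \weakto 0$. By the Brezis--Lieb lemma and weak convergence, $N(u_n) = N(u_0) + N(v_n) + o(1)$ and $Z(u_n) = Z(u_0) + Z(v_n) + o(1)$. The tail $v_n$ is then handled by Lemma~\ref{Minima comparison}, which shows that for any sequence $w_n \weakto 0$ in $\cH$ with $N(w_n)=1$ one has $\liminf Z(w_n) \ge 2^{\frac{1}{2}-\frac{1}{2^*_{\scaleto{1}{3pt}}}}\cS_1(\R^N_+)$. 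The crucial point is that Lemma~\ref{Minima comparison} uses a \emph{fixed} partition of unity $\eta_0,\dots,\eta_m$ subordinate to a cover of $\overline{\B}$, so the cutoff cross terms are bounded by $C(\eps)\int_\B w_n^2\,dx$, which vanishes because $\cH \embed L^2(\B)$ compactly and $w_n \weakto 0$.

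Your approach replaces the Brezis--Lieb splitting by the Lions measure decomposition: you extract $\mu,\nu$ with atomic parts at points $x_j$, localize the atoms on $\gamma$ by uniform ellipticity away from $\gamma$ (correct), and then prove the sharp local constant $\mu_j \ge 2^{\frac{1}{2}-\frac{1}{2^*_{\scaleto{1}{3pt}}}}\cS_1(\R^N_+)\,\nu_j^{2/2^*_{\scaleto{1}{3pt}}}$ by testing Lemma~\ref{lemma: inequality close to boundary} against $\phi_\rho u_n$ with a shrinking cutoff. The weak link is the cross term $\int_\B u_0^2 |\nabla\phi_\rho|^2\,dx$ that survives the $n \to \infty$ limit. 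In the classical case one absorbs this via H\"older with exponent $N/2$, using that the limit lives in $L^{2^*}$ and $\|\,|\nabla\phi_\rho|^2\|_{L^{N/2}}$ is bounded uniformly in $\rho$. Here the embedding is only $\cH \embed L^{2^*_{\scaleto{1}{3pt}}}$ with $2^*_{\scaleto{1}{3pt}} < 2^*$, and the corresponding H\"older norm of $|\nabla\phi_\rho|^2$ blows up like $\rho^{-2/(2N+1)}$, so the argument as stated does not close. This can be repaired — e.g.\ via a Hardy-type inequality for $\cH$ of the form $\int_\B u^2(1-|x|)^{-2}\,dx \le C\|u\|_{\cH}^2$ (obtainable from the radial part of the energy in polar coordinates as in Proposition~\ref{lambda-1-alpha-pos}), which together with $|x-x_j| \ge 1-|x|$ and absolute continuity makes the cross term vanish as $\rho \to 0$; or by replacing $\phi_\rho$ by an anisotropic cutoff compatible with the scaling $\tau_R$ from \eqref{anisotropic scaling}. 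But neither is spelled out in your proposal, and "using $u_n \to u_0$ in $L^2$ to absorb cross terms" only handles the $n\to\infty$ limit, not $\rho \to 0$. The paper's choice to split off $u_0$ first and feed the weakly null remainder into Lemma~\ref{Minima comparison} is precisely what makes the cross terms vanish for free, and is the cleaner route. Beyond this point your algebra (subadditivity of $t \mapsto t^{2/2^*_{\scaleto{1}{3pt}}}$, the threshold hypothesis forcing $a=1$, uniform convexity) is correct and matches the paper in spirit.
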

In particular, this proves the first part of Theorem~\ref{Theorem: Minima comparison - introduction}.
The strategy of the proof is inspired by \cite{Frank-Jin-Xiong} and first requires the following characterization of sequences in $\cH$:

\begin{lemma} \label{Minima comparison}
	Let
	$$
		Z(v):= \int_{\B} \left(|\nabla v|^2- |\del_\theta v|^2 + m v^2 \right) \, dx \qquad \text{and}\qquad 
		N(v) :=\int_{\B}|v|^{{2_{\text{\tiny $1$}}^*}} \, dx \qquad \text{for $v \in \cH$.}
	$$
Then we have
	$$
	2^{\frac{1}{2}-\frac{1}{2^*_{\scaleto{1}{3pt}}}} \cS_1(\R^N_+) \leq \inf 
	\left\{ \liminf_{n \to 
	\infty } Z(w_n): (w_n)_n 
	\subset \cH, \ N(w_n)=1, \ w_n \weakto 0 \ \text{in $\cH$} \right\} .
	$$
\end{lemma}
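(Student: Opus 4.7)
The plan is to run a concentration--compactness argument in the spirit of Lemma~\ref{Concentration-Compactness Lemma II}, driven by the local degenerate inequality of Lemma~\ref{lemma: inequality close to boundary}. Let $(w_n) \subset \cH$ satisfy $N(w_n) = 1$ and $w_n \weakto 0$ in $\cH$. The compact embedding $\cH \embed L^2(\B)$ from Lemma~\ref{def: degenerate space on ball}(i) yields $w_n \to 0$ in $L^2(\B)$, so $m \int_\B w_n^2\, dx \to 0$ and the task reduces to proving
\begin{equation*}
\liminf_{n\to\infty}\int_\B\bigl(|\nabla w_n|^2 - |\del_\theta w_n|^2\bigr)dx \ge 2^{\frac{1}{2}-\frac{1}{2^*_{\scaleto{1}{3pt}}}} \cS_1(\R^N_+).
\end{equation*}

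Since $(w_n)$ is bounded in $\cH$, hence in $L^{2^*_{\scaleto{1}{3pt}}}(\B)$, we pass to a subsequence so that the finite measures $\mu_n := \bigl(|\nabla w_n|^2 - |\del_\theta w_n|^2\bigr)dx$ and $\nu_n := |w_n|^{2^*_{\scaleto{1}{3pt}}}dx$ converge weakly on the compact set $\overline{\B}$ to measures $\mu$ and $\nu$. Since $w_n \weakto 0$ in $\cH$ extends to $w_n \weakto 0$ in $L^{2^*_{\scaleto{1}{3pt}}}(\B)$, an adaptation of the argument in Lemma~\ref{Concentration-Compactness Lemma II} shows that $\nu = \sum_{j\in J}\nu_j\delta_{x_j}$ is purely atomic and $\mu \ge \sum_{j\in J}\mu_j\delta_{x_j}$ for an at most countable set $\{x_j\}\subset\overline{\B}$ with masses $\nu_j,\mu_j>0$. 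Compactness of $\overline{\B}$ together with $\nu_n(\overline{\B})=1$ then gives $\sum_j\nu_j=1$.

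The crucial step is to show both that concentration cannot occur at points $x_j\notin\gamma$ and that at each $x_j\in\gamma$
\begin{equation*}
\mu_j \ge 2^{\frac{1}{2}-\frac{1}{2^*_{\scaleto{1}{3pt}}}}\cS_1(\R^N_+)\,\nu_j^{2/2^*_{\scaleto{1}{3pt}}}.
\end{equation*}
If $x_j\in\overline{\B}\setminus\gamma$, then the polar representation \eqref{eq:op-polar-coord} reveals that $-\Delta+\del_\theta^2$ is uniformly elliptic on a small ball about $x_j$; consequently $(w_n)$ is locally bounded in $H^1$ there, and since $2^*_{\scaleto{1}{3pt}}<2^*$, the standard Rellich--Kondrachov embedding precludes concentration of $|w_n|^{2^*_{\scaleto{1}{3pt}}}$ at $x_j$, forcing $\nu_j=0$. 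For $x_j\in\gamma$, fix $\eps>0$, take $\delta$ as in Lemma~\ref{lemma: inequality close to boundary}, and choose $\phi\in C_c^\infty(B_\delta(x_j))$ with $\phi\equiv 1$ in $B_{\delta/2}(x_j)$. Applying that lemma to $\phi w_n$ (first approximated in $C_c^1(\Omega_{x_j,\delta})$ through the density definition of $\cH$), expanding $|\nabla(\phi w_n)|^2$ and $|\del_\theta(\phi w_n)|^2$, and absorbing the cross and cutoff-commutator terms by $L^2$-strong convergence, one recovers the stated local lower bound on $\mu_j$ after letting $n\to\infty$ and finally $\delta,\eps\to 0$.

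Summing over $j$ and exploiting the subadditivity $\sum_j a_j^{2/2^*_{\scaleto{1}{3pt}}}\ge(\sum_j a_j)^{2/2^*_{\scaleto{1}{3pt}}}$ (valid because $2/2^*_{\scaleto{1}{3pt}}<1$) together with $\sum_j\nu_j=1$, we conclude
\begin{equation*}
\liminf_n Z(w_n) \ge \mu(\overline{\B}) \ge \sum_j \mu_j \ge 2^{\frac{1}{2}-\frac{1}{2^*_{\scaleto{1}{3pt}}}}\cS_1(\R^N_+)\sum_j \nu_j^{2/2^*_{\scaleto{1}{3pt}}} \ge 2^{\frac{1}{2}-\frac{1}{2^*_{\scaleto{1}{3pt}}}}\cS_1(\R^N_+).
\end{equation*}
I expect the main obstacle to be the careful localization at a concentration point $x_j\in\gamma$: controlling the commutator terms arising when $\del_\theta$ falls on the cutoff $\phi$ (the sign is opposite to the good term) and verifying that the geometric distortion in the polar representation near $\gamma$ used in Lemma~\ref{lemma: inequality close to boundary} contributes only factors tending to one as $\delta\to 0$, so that the sharp constant $2^{1/2-1/2^*_{\scaleto{1}{3pt}}}\cS_1(\R^N_+)$ is recovered.
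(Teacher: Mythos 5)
Your argument is correct, but it follows a genuinely different route from the paper. The paper avoids the concentration-compactness measure machinery entirely: it covers $\B$ by finitely many sets $U_0,\dots,U_m$ ($U_0$ an interior set away from $\gamma$, the others small balls around points of $\gamma$), chooses $\eta_0,\dots,\eta_m$ with $\sum_k \eta_k^2 \equiv 1$, and shows directly that
$\liminf_n Z(w_n) \ge \liminf_n \sum_k \int_\B \bigl(|\nabla(\eta_k w_n)|^2 - |\del_\theta(\eta_k w_n)|^2\bigr)dx$,
the commutator and $m\!\int w_n^2$ terms vanishing by the compact embedding $\cH \embed L^2$. Then $\eta_0 w_n \to 0$ in $L^{2^*_{\scaleto{1}{3pt}}}$ by Rellich--Kondrachov (this plays the role of your ``no concentration off $\gamma$''), while each of the remaining summands is controlled by Lemma~\ref{lemma: inequality close to boundary}; the triangle inequality in $L^{2^*_{\scaleto{1}{3pt}}/2}$ combined with $\sum_k \eta_k^2 \equiv 1$ and $N(w_n)=1$ recovers the sharp constant (this is exactly your subadditivity step, rephrased). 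Your version instead passes to weak-$\ast$ limits of the measures $\mu_n,\nu_n$, invokes an adaptation of Lemma~\ref{Concentration-Compactness Lemma II} to extract a purely atomic $\nu$, rules out atoms off $\gamma$, and applies the local inequality at atoms on $\gamma$. Both approaches rest on the same key ingredient -- Lemma~\ref{lemma: inequality close to boundary} -- and on the fact that $2^*_{\scaleto{1}{3pt}}<2^*$ ensures compactness away from $\gamma$. The paper's partition-of-unity approach is somewhat leaner since it sidesteps proving a ball-version of CCL~II and managing shrinking-ball limits at atoms; your measure-theoretic version is more in the spirit of standard Brezis--Nirenberg/Lions arguments and makes the geometric picture of concentration explicit, at the cost of extra bookkeeping (in particular you would need to verify that the CCL~II-type estimate $\cS\,(\nu^j)^{2/2^*_s}\le\mu^j$ carries over with the correct constant in the ball setting, and handle the $\delta\to0$, $\eps\to0$ limits at each atom with care).

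One small point to flag: you need to handle the case $\liminf_n Z(w_n)=+\infty$ separately (it is trivial) and then pass to a subsequence with $Z(w_n)$ bounded, so that $(w_n)$ is actually bounded in $\cH$ -- this is implicit in your write-up but should be made explicit before invoking weak-$\ast$ compactness of the measures $\mu_n$.
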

\begin{proof}
	Let $(w_n)_n \subset \cH$ such that $N(w_n)=1, \ w_n \weakto 0$ in $\cH$. 
	Let $\eps>0$ and choose $U_0,\ldots, U_m \subset \B$ as in the proof of Theorem~\ref{degenerate Sobolev inequality - ball}, so that
	$$
	\B \subset \bigcup_{k=0}^m U_k .
	$$
	We may then choose functions $\eta_0, \ldots, \eta_m \in C^2_c(\B)$ such that $\supp \,  \eta_k \subset U_k$ and $\sum \limits_{k=0}^m \eta_k^2 \equiv 1$ on $\B$. Then
	\begin{align*}
	 \int_{\B} \left(|\nabla (\eta_k w_n)|^2- |\del_\theta \eta_k w_n|^2 \right) \, dx 
	= &  \int_{\B}\left(  \eta_k^2|\nabla  w_n|^2 + 2 w_n \eta_k \nabla w_n \cdot \nabla \eta_k + w_n^2 |\nabla \eta_k|^2 \right) \, dx \\
	&	- \int_{\B} \left( \eta_k^2 |\del_\theta w_n|^2 + 2 w_n \eta_k \del_\theta w_n \cdot \del_\theta \eta_k + w_n^2 |\del_\theta \eta_k|^2   \right)  \, dx
	\end{align*}
	and thus
	\begin{align*}
		\int_{\B} \left(|\nabla w_n|^2- |\del_\theta w_n|^2 + m w_n^2 \right) \, dx 
		& \geq \sum_{k=0}^m \int_{\B} \left(|\nabla (\eta_k w_n)|^2- |\del_\theta \eta_k w_n|^2 \right) \, dx - C \int_{\B} w_n^2 \, dx
	\end{align*}	 
	with a constant $C>0$ independent of $n$. Here we used the fact that the mixed terms can be estimated as follows:
	\begin{align*}
		\int_\B w_n^2 \  \left(|\nabla \eta_k|^2 -|\del_\theta \eta_k |^2 \right) \, dx & \leq  2 \sup_{k \in \{0,\ldots,m\}} \|\nabla \eta_k\|_\infty^2 \int_\B w_n^2 \, dx \\
		\int_{\B} \eta_k w_n \left(\nabla w_n \cdot \nabla \eta_k - \del_\theta w_n \ \del_\theta \eta_k \right) \, dx 
		& \leq  \int_{\B} \eta_k  w_n^2 \left|-\Delta \eta_k+\del_\theta^2 \eta_k\right| \, dx \\
		& \leq \sup_{k \in \{0,\ldots,m\}} \left\|-\Delta \eta_k + \del_\theta^2 \eta_k \right\|_\infty  \int_{\B} |w_n|^2 \, dx .
	\end{align*}
	We first note that $w_n \to 0$ in $L^2(\B)$, since the embedding $\cH \hookrightarrow L^2(\B)$ is compact by Lemma~\ref{def: degenerate space on ball}(i).
         	Moreover, it is easy to see that $\|\cdot\|_\cH$ induces an equivalent norm on $H^1_0(U_0)$, 
	which implies that $\eta_0 w_n \weakto 0$ in $H^1_0(U_0)$. In particular, noting that by ${2_{\text{\tiny $1$}}^*} < 2^*$ the classical Rellich-Kondrachov theorem implies $\eta_0 w_n \to 0$ in $L^{{2_{\text{\tiny $1$}}^*}}(\B)$, we conclude
	$$
	\liminf_{n \to \infty} \int_{\B} \left(|\nabla (\eta_0 w_n)|^2- |\del_\theta (\eta_0 w_n)|^2 + m ( \eta_0 w_n)^2 \right) \, dx \geq \liminf_{n \to \infty}  \left(\int_{\B} |\eta_0 w_n|^{{2_{\text{\tiny $1$}}^*}} \, dx \right)^\frac{2}{{2_{\text{\tiny $1$}}^*}}  .
	$$ 
	On the other hand, Lemma~\ref{lemma: inequality close to boundary} gives
	\begin{align*} 
		\int_{\B} \left(|\nabla (\eta_k w_n)|^2- |\del_\theta \eta_k w_n|^2 
		\right) \, dx & \geq (1-\eps) 
		2^{\frac{1}{2}-\frac{1}{2^*_{\scaleto{1}{3pt}}}}  
		\cS_1(\R^N_+) \left(\int_{\B} |\eta_k 
		w_n|^{{2_{\text{\tiny $1$}}^*}} \, dx\right)^\frac{2}{{2_{\text{\tiny 
		$1$}}^*}}
	\end{align*}
    for $k=1,\ldots,m$ and hence
	\begin{align*}
		\liminf_{n \to \infty} \int_{\B} \left(|\nabla w_n|^2- |\del_\theta w_n|^2 + m w_n^2 \right) \, dx & \geq \liminf_{n \to \infty} \sum_{k=0}^m \int_{\B} \left(|\nabla (\eta_k w_n)|^2- |\del_\theta \eta_k w_n|^2 \right) \, dx \\
		& \geq (1-\eps)2^{\frac{1}{2}-\frac{1}{2^*_{\scaleto{1}{3pt}}}} 
		\cS_1(\R^N_+) \liminf_{n \to 
		\infty} \sum_{k=0}^m 
		\left(\int_{\B} |\eta_k w_n|^{{2_{\text{\tiny $1$}}^*}} \, 
		dx\right)^\frac{2}{2^*_{\scaleto{1}{3pt}}} \\
		&= (1-\eps) 2^{\frac{1}{2}-\frac{1}{2^*_{\scaleto{1}{3pt}}}} 
		\cS_1(\R^N_+) \liminf_{n \to 
		\infty} \sum_{k=0}^m \left\| 
		\eta_k^2 w_n^2 \right\|_{\frac{2^*_{\scaleto{1}{3pt}}}{2}} \\
        & \geq (1-\eps)2^{\frac{1}{2}-\frac{1}{2^*_{\scaleto{1}{3pt}}}} 
        \cS_1(\R^N_+) \liminf_{n \to 
        \infty} \left\|\sum_{k=0}^m 
        \eta_k^2 w_n^2 \right\|_{\frac{2^*_{\scaleto{1}{3pt}}}{2}} \\
        & = (1-\eps)2^{\frac{1}{2}-\frac{1}{2^*_{\scaleto{1}{3pt}}}}  
        \cS_1(\R^N_+) \liminf_{n \to 
        \infty} \left\| w_n 
        \right\|_{\frac{2^*_{\scaleto{1}{3pt}}}{2}} \\
        &=(1-\eps)2^{\frac{1}{2}-\frac{1}{2^*_{\scaleto{1}{3pt}}}}  
        \cS_1(\R^N_+).
	\end{align*}
	Since $\eps>0$ was arbitrary, we conclude that
	$$
	\liminf_{n \to \infty} \int_{\B} \left(|\nabla w_n|^2- |\del_\theta w_n|^2 
	+ m w_n^2 \right) \, dx \geq 
	2^{\frac{1}{2}-\frac{1}{2^*_{\scaleto{1}{3pt}}}}  \cS_1(\R^N_+)
	$$
	as claimed.
\end{proof}
We may now complete the proof of our main result.
\begin{proof}[Proof of Theorem~\ref{existence of minimizers - minima comparison}]
Consider a minimizing sequence $(u_n)_n \subset \cH$ for 
$\scrC_{1,m,{2^*_{\scaleto{1}{3pt}}}}(\B) $ with $\|u_n\|_{{2_{\text{\tiny 
$1$}}^*}}=1$. Then $(u_n)_n$ is bounded in $\cH$, hence, after passing to a 
subsequence, we may assume $u_n \weakto u_0$ in $\cH$. 
We set $v_n:=u_n-u_0$ and note that, by Sobolev embeddings, 
$$
v_n \to 0 \quad \text{in $L^q(B_s)$}
$$
for $1\leq q <{2_{\text{\tiny $1$}}^*}$ and $0<s<1$, where $B_s:=\{x \in \R^N: |x|<s\}$.
Weak convergence implies
$$
\scrC_{1,m,{2^*_{\scaleto{1}{3pt}}}}(\B)  = \lim_{n \to \infty} 
Z(u_n)=Z(u_0)+\lim_{n \to \infty}Z(v_n) ,
$$
whereas the Brezis-Lieb Lemma yields
$$
1=N(u_n)=N(u_0)+N(v_n)+o(1) .
$$
In particular, the limits
$$
T :=\lim_{n \to \infty} N(v_n), \qquad
M := \lim_{n \to \infty} Z(v_n)
$$
exist. If $T=0$, it follows that $N(u_0)=1$ and we are finished. For $T>0$, Lemma~\ref{Minima comparison} implies 
$$
M \geq \cS_1(\R^N_+) T^\frac{2}{2^*_{\scaleto{1}{3pt}}}
$$
and hence
\begin{align*}
\scrC_{1,m,{2^*_{\scaleto{1}{3pt}}}}(\B)  & = Z(u_0)+ M \geq Z(u_0)+ 
2^{\frac{1}{2}-\frac{1}{2^*_{\scaleto{1}{3pt}}}} \cS_1(\R^N_+) 
T^\frac{2}{2^*_{\scaleto{1}{3pt}}} 
\\
& \geq Z(u_0)+ \left(2^{\frac{1}{2}-\frac{1}{2^*_{\scaleto{1}{3pt}}}} 
\cS_1(\R^N_+)-\scrC_{1,m,{2^*_{\scaleto{1}{3pt}}}}(\B)  \right) 
T^\frac{2}{2^*_{\scaleto{1}{3pt}}} + \scrC_{1,m,{2_{\text{\tiny 
$1$}}^*}}(\B)  \left(1-N(u_0)\right)^\frac{2}{2^*_{\scaleto{1}{3pt}}} \\
& \geq Z(u_0)+ (2^{\frac{1}{2}-\frac{1}{2^*_{\scaleto{1}{3pt}}}} 
\cS_1(\R^N_+)-\scrC_{1,m,{2^*_{\scaleto{1}{3pt}}}}(\B)  ) 
T^\frac{2}{2^*_{\scaleto{1}{3pt}}} + \scrC_{1,m,{2_{\text{\tiny 
$1$}}^*}}(\B)  - \scrC_{1,m,{2^*_{\scaleto{1}{3pt}}}}(\B)  
N(u_0)^\frac{2}{2^*_{\scaleto{1}{3pt}}} ,
\end{align*}
where we used the inequality $(a-b)^\tau \geq a^\tau-b^\tau$ for $a \geq b \geq 0$ and $0 \leq \tau \leq 1$. 
It follows that
$$
Z(u_0)+ (2^{\frac{1}{2}-\frac{1}{2^*_{\scaleto{1}{3pt}}}} 
\cS_1(\R^N_+)-\scrC_{1,m,{2_{\text{\tiny 
$1$}}^*}}(\B)  ) 
T^\frac{2}{2^*_{\scaleto{1}{3pt}}}  - \scrC_{1,m,{2_{\text{\tiny 
$1$}}^*}}(\B)  N(u_0)^\frac{2}{2^*_{\scaleto{1}{3pt}}} \leq 0 , 
$$
and therefore
\begin{equation} \label{optimizer inequality}
	\begin{aligned} 
& \int_{\B} \left(|\nabla u_0|^2- |\del_\theta u_0|^2 + m u_0^2 \right) \, dx  
- 
\scrC_{1,m,{2^*_{\scaleto{1}{3pt}}}}(\B)  \left( \int_{\B} 
|u_0|^{{2_{\text{\tiny $1$}}^*}} \, dx \right)^\frac{2}{2^*_{\scaleto{1}{3pt}}} 
\\
& + (2^{\frac{1}{2}-\frac{1}{2^*_{\scaleto{1}{3pt}}}} 
\cS_1(\R^N_+)-\scrC_{1,m,{2_{\text{\tiny 
$1$}}^*}}(\B)  ) 
T^\frac{2}{2^*_{\scaleto{1}{3pt}}} \leq 0 .
\end{aligned} 
\end{equation}
By definition, we have $\int_{\B} \left(|\nabla u_0|^2- |\del_\theta u_0|^2 + m 
u_0^2 \right) \, dx  - \scrC_{1,m,{2^*_{\scaleto{1}{3pt}}}}(\B)  \left( 
\int_{\B} |u_0|^{{2_{\text{\tiny $1$}}^*}} \, dx 
\right)^\frac{2}{2^*_{\scaleto{1}{3pt}}} \geq 0$ and since 
$2^{\frac{1}{2}-\frac{1}{2^*_{\scaleto{1}{3pt}}}} 
\cS_1(\R^N_+)-\scrC_{1,m,{2_{\text{\tiny 
$1$}}^*}}(\B)  > 0$ by assumption, we 
must have $T=0$, i.e. $v_n \to 0$ in $L^p(\Omega)$. It follows that $u_0 \not 
\equiv 0$ and $\int_{\B} |u_0|^{{2_{\text{\tiny $1$}}^*}} \, dx=1$, and 
\eqref{optimizer inequality} gives
$$
\int_{\B} \left(|\nabla u_0|^2- |\del_\theta u_0|^2 + m u_0^2 \right) \, dx 
\leq \scrC_{1,m,{2^*_{\scaleto{1}{3pt}}}}(\B)  \left( \int_{\B} 
|u_0|^{{2_{\text{\tiny $1$}}^*}} \, dx \right)^\frac{2}{2^*_{\scaleto{1}{3pt}}} 
,
$$
which implies that $u_0$ is a minimizer.
\end{proof}

We note the following consequence of Theorem~\ref{existence of minimizers - minima comparison}, which extends (\ref{eq:positivity-C-non-critical}) to the critical case. 

\begin{corollary}
  \label{existence of minimizers - minima comparison-cor}
We have $\scrC_{1,m,{2^*_{\scaleto{1}{3pt}}}}(\B)>0$.
\end{corollary}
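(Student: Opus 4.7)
The plan is to argue by contradiction using the previous theorem together with the positivity of the Rayleigh quotient already established in Lemma~\ref{def: degenerate space on ball}(ii). First I would observe that by Theorem~\ref{existence of minimizers - whole space - intro}, the constant $\cS_1(\R^N_+)$ is strictly positive, so the threshold
$$
T := 2^{\frac{1}{2}-\frac{1}{2^*_{\scaleto{1}{3pt}}}}\cS_1(\R^N_+)
$$
appearing in Theorem~\ref{existence of minimizers - minima comparison} is itself strictly positive.

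Now suppose, for contradiction, that $\scrC_{1,m,{2^*_{\scaleto{1}{3pt}}}}(\B)=0$. Since $0<T$, the smallness hypothesis of Theorem~\ref{existence of minimizers - minima comparison} is trivially satisfied, and we obtain a minimizer $u_0 \in \cH \setminus \{0\}$ with
$$
R_{1,m,{2^*_{\scaleto{1}{3pt}}}}(u_0)=\scrC_{1,m,{2^*_{\scaleto{1}{3pt}}}}(\B)=0.
$$
On the other hand, $u_0 \neq 0$ together with $m>-\lambda_1(\B)$ and $2^*_{\scaleto{1}{3pt}}\in[2,2^*_{\scaleto{1}{3pt}}]$ forces $R_{1,m,{2^*_{\scaleto{1}{3pt}}}}(u_0)>0$ by Lemma~\ref{def: degenerate space on ball}(ii), a contradiction. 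Since Lemma~\ref{def: degenerate space on ball}(ii) also shows $\scrC_{1,m,{2^*_{\scaleto{1}{3pt}}}}(\B)\geq 0$ a priori, this rules out the value zero and yields $\scrC_{1,m,{2^*_{\scaleto{1}{3pt}}}}(\B)>0$.

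There is no real obstacle here: all the work has been done in Theorem~\ref{existence of minimizers - minima comparison}, in the positivity statement of Lemma~\ref{def: degenerate space on ball}(ii) (which in turn rests on Proposition~\ref{lambda-1-alpha-pos}), and in the positivity of $\cS_1(\R^N_+)$ from Theorem~\ref{existence of minimizers - whole space - intro}. The only subtle point to keep in mind is that Theorem~\ref{existence of minimizers - minima comparison} is applicable regardless of whether $\scrC_{1,m,{2^*_{\scaleto{1}{3pt}}}}(\B)$ is small or not (indeed, the smaller it is, the more easily the hypothesis holds), so the dichotomy between ``$\scrC$ below the threshold $T$'' and ``$\scrC$ above $T$'' is not strictly needed — in the latter case one already has $\scrC\geq T>0$ directly.
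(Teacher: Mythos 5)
Your proof is correct and rests on exactly the same two ingredients as the paper's own argument (Theorem~\ref{existence of minimizers - minima comparison} and the positivity of $R_{1,m,p}$ on $\cH \setminus \{0\}$ from Lemma~\ref{def: degenerate space on ball}(ii), itself fed by the positivity of $\cS_1(\R^N_+)$). The paper phrases it as a case split on whether $\scrC_{1,m,2^*_{\scaleto{1}{3pt}}}(\B)$ is attained, whereas you run a direct contradiction against $\scrC_{1,m,2^*_{\scaleto{1}{3pt}}}(\B)=0$; these are logically equivalent and equally clean.
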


\begin{proof}
  If the value $\scrC_{1,m,{2^*_{\scaleto{1}{3pt}}}}(\B) $ is attained in $\cH 
  \setminus \{0\}$, then we have $\scrC_{1,m,{2^*_{\scaleto{1}{3pt}}}}(\B)>0$ 
  by Lemma~\ref{def: degenerate space on ball}(ii). If not, we have  
  $\scrC_{1,m,{2^*_{\scaleto{1}{3pt}}}}(\B) \ge \cS_1(\R^N_+)>0$ by 
  Theorem~\ref{existence of minimizers - minima comparison} and 
  Theorem~\ref{general-degenerate-Sobolev-inequality-whole space}. 
\end{proof}

In general, the existence of ground state solutions in the case $\alpha=1$, $p = {2_{\text{\tiny $1$}}^*}$ remains an open problem and might depend on the parameter $m > -\lambda_1(\B)$. We have the following partial existence result in the critical case.

\begin{theorem} \label{thm: existence of solutions in critical case}
	There exists $\eps>0$, such that for $m \in (-\lambda_1(\B),-\lambda_1(\B)+\eps)$
	there exists $u_0 \in \cH \setminus \{0\}$ such that 
	$$
	R_{1,m,{2_{\text{\tiny $1$}}^*}}(u_0)= \inf_{u \in \cH \setminus \{0\}} R_{1,m,{2_{\text{\tiny $1$}}^*}}(u),
	$$
	i.e. $u_0$ minimizes $R_{1,m,{2_{\text{\tiny $1$}}^*}}$. Furthermore, after multiplication by a positive constant, $u_0$ is a weak solution of
	\begin{equation*}
	\left\{ 
	\begin{aligned} 
	-\Delta u + \del_\theta^2 u + m u & = |u|^{{2_{\text{\tiny $1$}}^*}-2} u \quad && \text{in $\B$,} \\
	u & = 0 && \text{on $\del \B$,}
	\end{aligned}
	\right.
	\end{equation*}
	i.e., $u_0$ satisfies
	$$
	\int_{\B } \nabla u \cdot \nabla \phi - \del_\theta u \, \del_\theta \phi + m u \phi \, dx = \int_{\B} |u|^{{{2_{\text{\tiny $1$}}^*}}-2}u \phi \, dx
	$$
	for all $\phi \in \cH$.
\end{theorem}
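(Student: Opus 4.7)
The plan is to apply Theorem~\ref{existence of minimizers - minima comparison} (which is already proven above), so it suffices to verify the strict inequality
\begin{equation*}
\scrC_{1,m,{2^*_{\scaleto{1}{3pt}}}}(\B) < 2^{\frac{1}{2}-\frac{1}{2^*_{\scaleto{1}{3pt}}}} \cS_1(\R^N_+)
\end{equation*}
for all $m$ sufficiently close to $-\lambda_1(\B)$. Once this is established, Theorem~\ref{existence of minimizers - minima comparison} gives a nonzero minimizer $u_0 \in \cH$ of $R_{1,m,{2_{\text{\tiny $1$}}^*}}$, and the fact that a positive scalar multiple of $u_0$ is a weak solution in the sense stated follows from a standard Lagrange multiplier argument as in the proof of Lemma~\ref{existence of minimiers - elliptic case}.

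To prove the strict inequality, I would use as a test function the first Dirichlet eigenfunction $\phi_1 \in H^1_0(\B)$ of $-\Delta$, normalized so that $\phi_1>0$. Since $\phi_1$ is radial, we have $\del_\theta \phi_1 \equiv 0$ and therefore
\begin{equation*}
R_{1,m,{2^*_{\scaleto{1}{3pt}}}}(\phi_1) = \frac{\int_\B\bigl(|\nabla \phi_1|^2 + m\phi_1^2\bigr)\,dx}{\bigl(\int_\B \phi_1^{{2_{\text{\tiny $1$}}^*}}\,dx\bigr)^{2/{2_{\text{\tiny $1$}}^*}}} = \bigl(\lambda_1(\B)+m\bigr)\frac{\|\phi_1\|_{L^2(\B)}^2}{\|\phi_1\|_{L^{{2_{\text{\tiny $1$}}^*}}(\B)}^2}.
\end{equation*}
As $m \to -\lambda_1(\B)^+$, the right-hand side tends to $0$, so by continuity there exists $\eps>0$ such that
\begin{equation*}
\scrC_{1,m,{2^*_{\scaleto{1}{3pt}}}}(\B) \le R_{1,m,{2^*_{\scaleto{1}{3pt}}}}(\phi_1) < 2^{\frac{1}{2}-\frac{1}{2^*_{\scaleto{1}{3pt}}}} \cS_1(\R^N_+)
\end{equation*}
for every $m \in (-\lambda_1(\B),-\lambda_1(\B)+\eps)$, as required.

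Then, having obtained a minimizer $u_0 \in \cH\setminus\{0\}$, I would finish by writing the Euler-Lagrange equation: testing against arbitrary $\varphi \in \cH$ gives
\begin{equation*}
\langle u_0,\varphi\rangle_\cH + m\int_\B u_0\varphi\,dx = \Lambda \int_\B |u_0|^{{2_{\text{\tiny $1$}}^*}-2}u_0\varphi\,dx
\end{equation*}
with $\Lambda = \scrC_{1,m,{2^*_{\scaleto{1}{3pt}}}}(\B)>0$ (positivity from Corollary~\ref{existence of minimizers - minima comparison-cor}). Scaling $u_0$ by $\Lambda^{1/({2^*_{\scaleto{1}{3pt}}}-2)}$ absorbs the Lagrange multiplier and yields a weak solution of the stated Dirichlet problem. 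The main nontrivial ingredient here is Theorem~\ref{existence of minimizers - minima comparison} itself; once that is available, the rest is a soft compactness/comparison argument, and the only thing to check carefully is that the test function $\phi_1$ indeed lies in $\cH$, which follows since $H^1_0(\B) \hookrightarrow \cH$ as a consequence of Proposition~\ref{lambda-1-alpha-pos}.
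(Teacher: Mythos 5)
Your proof is correct and follows the same approach as the paper: test the Rayleigh quotient with a radial first Dirichlet eigenfunction $\phi_1$ of $-\Delta$, observe that $R_{1,m,{2_{\text{\tiny $1$}}^*}}(\phi_1) = (\lambda_1(\B)+m)\|\phi_1\|_{L^2}^2/\|\phi_1\|_{L^{2_{\text{\tiny $1$}}^*}}^2 \to 0$ as $m \to -\lambda_1(\B)^+$, and conclude via Theorem~\ref{existence of minimizers - minima comparison}. The extra care you take with the Lagrange multiplier and the inclusion $\phi_1 \in \cH$ is sound and matches the framework already set up in Proposition~\ref{Prop: ground states for critical case}.
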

\begin{proof}
	For a (necessarily radial) eigenfunction $\phi_1$ of $-\Delta$ on $\B$ corresponding to $\lambda_1(\B)$,  we have
	$$
	\scrC_{1,m,{2^*_{\scaleto{1}{3pt}}}}(\B) \leq R_{1,m,{2_{\text{\tiny 
	$1$}}^*}}(\phi_1) = \frac{(\lambda_1(\B)+m)\int_{\B}  \phi_1^2  \, 
	dx}{\left(\int_{\B}|\phi_1|^p \, dx\right)^\frac{2}{p}}
	$$
	which implies $\scrC_{1,m,{2^*_{\scaleto{1}{3pt}}}}(\B) \to 0$ as $m \to 
	-\lambda_1(\B)^+$. In particular, it follows that there exists $\eps>0$ 
	such that 
	$$
	\scrC_{1,m,{2^*_{\scaleto{1}{3pt}}}}(\B)  < 
	2^{\frac{1}{2}-\frac{1}{2^*_{\scaleto{1}{3pt}}}} 
	\cS_1(\R^N_+) 
	$$
	holds for $m \in (-\lambda_1(\B),-\lambda_1(\B)+\eps)$. By Theorem~\ref{existence of minimizers - minima comparison}, this finishes the proof.
\end{proof}
Note that this completes the proof of Theorem~\ref{Theorem: Minima comparison - introduction}.

\subsection{Radiality versus $x_1$-$x_2$-nonradiality of ground state solutions.}
\label{sec:radiality-versus-r}

By classical results due to McLeod and Serrin~\cite{McLead-Serrin}, 
Kwong~\cite{Kwong}, Kwong and Li~\cite{Kwong-Li} (see also references in 
\cite{Damascelli-Grossi-Pacella}), the problem
  \begin{equation}
    \label{alpha=0-problem}
	\left\{ 
	\begin{aligned} 
	-\Delta u +  m u & = |u|^{p-2} u \quad && \text{in $\B$} \\
	u & = 0 && \text{on $\del \B$}, 
	\end{aligned}
	\right.
	\end{equation}
has a unique radial positive solution $u_{rad} \in H^1_0(\B)$ which is a minimizer for $\scrC_{0,m,p}(\B)$. Clearly, $u_{rad}$ is also a weak solution of (\ref{Reduced equation}) for every $\alpha>0$, but it might not be a ground state solution. In fact, we have the following.

\begin{lemma} \label{continuity of inf}
	Let $2<p<2^*$ and $m>-\lambda_1(\B)$ be fixed.
        \begin{enumerate}
        \item[(i)] The map 
	$$
	[0,1] \to \R, \qquad \alpha \mapsto \scrC_{\alpha,m,p}(\B)
	$$ 
	is continuous and nonincreasing. 
      \item[(ii)] Let $\alpha \in (0,1]$, and suppose that $p \le 2_{\text{\tiny $1$}}^*$ in the case $\alpha =1$. Then the following properties are equivalent:
        \begin{itemize}
        \item[$(ii)_1$] $\scrC_{\alpha,m,p}(\B) < \scrC_{0,m,p}(\B)$.
        \item[$(ii)_2$] Every ground state solution of (\ref{Reduced equation}) is $x_1$-$x_2$-nonradial.  
        \end{itemize}
        \end{enumerate}
\end{lemma}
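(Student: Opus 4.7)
The plan is to prove (i) by combining monotonicity with upper semicontinuity from test functions and a weak-compactness argument for right-continuity, and then to deduce (ii) from (i) together with the radial invariance $R_{\alpha,m,p}(u) = R_{0,m,p}(u)$ whenever $\del_\theta u \equiv 0$.

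For (i), the algebraic identity
\begin{equation*}
R_{\alpha_1,m,p}(u) - R_{\alpha_2,m,p}(u) = \frac{(\alpha_2^2 - \alpha_1^2)\,\|\del_\theta u\|_{L^2(\B)}^2}{\|u\|_{L^p(\B)}^2} \ge 0 \qquad \text{for } 0 \le \alpha_1 \le \alpha_2 \le 1
\end{equation*}
yields monotonicity of $\scrC_{\cdot,m,p}(\B)$ after taking infima. Since $\alpha \mapsto R_{\alpha,m,p}(u)$ is continuous for every fixed $u \in C^1_c(\B)\setminus\{0\}$, the infimum $\alpha \mapsto \scrC_{\alpha,m,p}(\B)$ is upper semicontinuous; combined with monotonicity this already delivers left-continuity on $(0,1]$ (and hence continuity at $\alpha_0=1$). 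For right-continuity at $\alpha_0 \in [0,1)$, I take $\alpha_n \downarrow \alpha_0$ and $L^p$-normalized minimizers $u_n \in H^1_0(\B)$ of $R_{\alpha_n,m,p}$ supplied by Lemma~\ref{existence of minimiers - elliptic case}. The uniform upper bound $\scrC_{\alpha_n,m,p}(\B) \le \scrC_{\alpha_0,m,p}(\B)$ together with $1-\alpha_n^2$ eventually bounded away from zero yields $H^1_0$-boundedness of $(u_n)$; after passing to a weakly convergent subsequence $u_n \weakto u_0$ (strongly in $L^p$), the correction $(\alpha_n^2 - \alpha_0^2)\|\del_\theta u_n\|_{L^2}^2 \to 0$ together with weak lower semicontinuity of $u \mapsto \int_\B(|\nabla u|^2 - \alpha_0^2|\del_\theta u|^2 + mu^2)\,dx$ on $H^1_0(\B)$ gives $\liminf_n \scrC_{\alpha_n,m,p}(\B) \ge R_{\alpha_0,m,p}(u_0) \ge \scrC_{\alpha_0,m,p}(\B)$.

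For (ii), the direction $(ii)_1 \Rightarrow (ii)_2$ is immediate: a radial ground state $u$ would satisfy $R_{\alpha,m,p}(u) = R_{0,m,p}(u) \ge \scrC_{0,m,p}(\B) > \scrC_{\alpha,m,p}(\B) = R_{\alpha,m,p}(u)$, a contradiction. For $(ii)_2 \Rightarrow (ii)_1$ I will argue by contraposition: if $\scrC_{\alpha,m,p}(\B) = \scrC_{0,m,p}(\B)$ (the reverse inequality being part of (i)), then the radial positive minimizer $u_{rad}$ of $R_{0,m,p}$ from Lemma~\ref{existence of minimiers - elliptic case} satisfies $R_{\alpha,m,p}(u_{rad}) = R_{0,m,p}(u_{rad}) = \scrC_{\alpha,m,p}(\B)$, and therefore a suitable positive rescaling $c\,u_{rad}$ is a radial ground state for $R_{\alpha,m,p}$, contradicting $(ii)_2$.

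The main obstacle is the degenerate case $\alpha = 1$, where both the variational problem and the weak-solution notion are formulated in $\cH$ rather than in $H^1_0(\B)$. I will handle this by the observation $\|u\|_\cH^2 = \|\nabla u\|_{L^2(\B)}^2$ for every radial $u \in \cH$, which canonically identifies the radial subspaces of $\cH$ and $H^1_0(\B)$, and by exploiting the classical $C^{2,\sigma}$-regularity of $u_{rad}$ to convert its $H^1_0$-weak equation into the $\cH$-weak formulation of Definition~\ref{ground-state-solution-alpha=1}: one tests against $v \in C^1_c(\B)$ by integration by parts, and extends to $v \in \cH$ by $L^p$-density using the embedding $\cH \hookrightarrow L^p(\B)$ from Lemma~\ref{def: degenerate space on ball}.
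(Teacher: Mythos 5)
Your proof is correct, and the route differs from the paper's mainly in part (i). For left-continuity the paper argues directly with a near-minimizer $u_0$ and the explicit bound $\scrC_{\alpha,m,p}(\B) \le R_{\alpha_0,m,p}(u_0) + (\alpha_0^2-\alpha^2)\|\del_\theta u_0\|_2^2/\|u_0\|_p^2$, which is just a hands-on version of your upper-semicontinuity-of-infima argument; the two are essentially the same idea. For right-continuity, however, the paper avoids weak compactness altogether: it takes the minimizer $u_\alpha$ from Lemma~\ref{existence of minimiers - elliptic case}, writes $\scrC_{\alpha,m,p}(\B) = R_{\alpha_0,m,p}(u_\alpha) + (\alpha_0^2-\alpha^2)\|\del_\theta u_\alpha\|_2^2 \ge \scrC_{\alpha_0,m,p}(\B) - |\alpha_0^2-\alpha^2|\,\|\nabla u_\alpha\|_2^2$, and then controls $\|\nabla u_\alpha\|_2^2$ uniformly via $(1-\alpha^2)\|\nabla u_\alpha\|_2^2 \le \scrC_{0,m,p}(\B)$, yielding an explicit modulus of continuity. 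Your version via weak convergence plus weak lower semicontinuity of the quadratic form $u\mapsto\int_\B(|\nabla u|^2-\alpha_0^2|\del_\theta u|^2+mu^2)\,dx$ (which is $\ge 0$ by Corollary~\ref{first-cor-section-2}(ii), hence weakly l.s.c.) is equally valid but less quantitative; when spelling out the $H^1_0$-bound you will face the same small bookkeeping the paper glosses over when $m<0$, namely that $m\int u_\alpha^2\,dx$ must be absorbed via Proposition~\ref{lambda-1-alpha-pos} before $(1-\alpha^2)\|\nabla u_\alpha\|_2^2$ can be bounded. For part (ii) your argument matches the paper's; your added discussion of the $\alpha=1$ case (that $u_{rad}\in H^1_0(\B)\subset\cH$ is an $\cH$-weak solution in the sense of Definition~\ref{ground-state-solution-alpha=1}) is a genuine detail the paper leaves implicit, though the $C^{2,\sigma}$-regularity detour is not needed — testing against $C^1_c(\B)$, using $\del_\theta u_{rad}\equiv 0$, and extending by $\|\cdot\|_\cH$-density suffices. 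Also, no rescaling of $u_{rad}$ is needed: $u_{rad}$ already solves \eqref{alpha=0-problem}, hence \eqref{Reduced equation}, and $R_{\alpha,m,p}$ is scale invariant.
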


\begin{proof}
(i) The monotonicity of $\scrC_{\alpha,m,p}(\B)$ in $\alpha$ follows immediately from the definition. In order to prove continuity, we first consider $\alpha_0 \in (0,1]$ and let $\eps>0$. Moreover, we let $u_0 \in H^1_0(\B) \setminus \{0\}$ be a function with $R_{\alpha_0,m,p}(u_0)<\scrC_{\alpha_0,m,p}(\B)+\eps$. For $\alpha \leq \alpha_0$, we then have
\begin{align*}
	\scrC_{\alpha_0,m,p}(\B) &\leq \scrC_{\alpha,m,p}(\B) \leq R_{\alpha,m,p}(u_0)\\
        &\le R_{\alpha_0,m,p}(u_0) + (\alpha_0^2-\alpha^2)  \frac{\int_\B 
        |\del_\theta u_0|^2  \, dx}{\left(\int_\B |u_0|^p \, dx 
        \right)^\frac{2}{p}} \\
        & \le \scrC_{\alpha_0,m,p}(\B) + (\alpha_0^2-\alpha^2)  \frac{\int_\B 
        |\del_\theta u_0|^2  \, dx}{\left(\int_\B |u_0|^p \, dx 
        \right)^\frac{2}{p}}
\end{align*}
which implies that $\limsup \limits_{\alpha \to \alpha_0^-}|\scrC_{\alpha,m,p}(\B)-\scrC_{\alpha_0,m,p}(\B)| \le \eps.$ This shows the continuity from the left in $\alpha_0$.

Next we let $\alpha_0 \in [0,1)$ and show continuity from the right in 
$\alpha_0$. For this we fix $\delta>0$ such that $(\alpha_0,\alpha_0+\delta) 
\subset (0,1)$. For $\alpha \in (\alpha_0,\alpha_0+\delta)$,  
Lemma~\ref{existence of minimiers - elliptic case} implies that the value 
$\scrC_{\alpha,m,p}(\B)$ is attained at a function $u_\alpha \in H^1_0(\B) 
\setminus \{0\}$ with $\int_\B |u_\alpha|^p \, dx=1$. Therefore
	\begin{align*}
	\scrC_{\alpha_0,m,p}(\B) &\geq \scrC_{\alpha,m,p}(\B) = R_{\alpha,m,p}(u_\alpha) = R_{\alpha_0,m,p}(u_\alpha) + (\alpha_0^2-\alpha^2)\int_\B |\del_\theta u_\alpha|^2  \, dx \\
	& \geq \scrC_{\alpha_0,m,p}(\B) - |\alpha_0^2-\alpha^2| \int_\B |\nabla u_\alpha|^2  \, dx ,
	\end{align*}
	whence, using the fact that
        $$
        (1-\alpha^2) \int_\B |\nabla u_\alpha|^2  \, dx \leq
        \int_\B \left( |\nabla u_\alpha|^2 - \alpha^2  |\del_\theta u_\alpha|^2\right)dx =\scrC_{\alpha,m,p}(\B) \leq \scrC_{0,m,p}(\B),
          $$
          we conclude
	$$
	\begin{aligned} 
	\scrC_{\alpha_0,m,p}(\B) & \geq \scrC_{\alpha,m,p}(\B) \geq 
	\scrC_{\alpha_0,m,p}(\B) - \frac{|\alpha_0^2-\alpha^2|}{1-\alpha^2} 
	\scrC_{0,m,p}(\B) \\
	&\geq \scrC_{\alpha_0,m,p}(\B) - 
	\frac{|\alpha_0^2-\alpha^2|}{1-(\alpha_0+\delta)^2} \scrC_{0,m,p}(\B).
	\end{aligned}
    $$
This shows the continuity from the right in $\alpha_0$.
	
        (ii) As noted above, $\scrC_{0,m,p}(\B)$ is attained by a radial 
        positive solution $u_{rad}$ of (\ref{alpha=0-problem}) and we have
        $R_{0,m,p}(u_{rad}) = R_{\alpha,m,p}(u_{rad})$. 
        Hence, if $\scrC_{0,m,p}(\B) = \scrC_{\alpha,m,p}(\B)$, then $u_{rad}$ is also a radial ground state solution of (\ref{Reduced equation}).
        Hence $(ii)_2$ and (i) imply that $\scrC_{\alpha,m,p}(\B)<\scrC_{0,m,p}(\B)$. If, conversely, there exists a radial ground state solution $u$ of (\ref{Reduced equation}), then we have 
$$
\scrC_{0,m,p}(\B) \le R_{0,m,p}(u) = R_{\alpha,m,p}(u)= \scrC_{\alpha,m,p}(\B)
$$
and therefore equality holds by (i). Consequently, the $\scrC_{\alpha,m,p}(\B)<\scrC_{0,m,p}(\B)$ implies that every ground state solution of (\ref{Reduced equation}) is $x_1$-$x_2$-nonradial.
      \end{proof}

      The second part of this section is devoted to the proof of Theorem~\ref{sec:introduction-second-theorem}, which yields
radiality of ground state solutions for $\alpha$ close to zero. For this, we fix $m  \ge 0$ and $2<p<2^*$. Moreover, we consider a sequence of numbers $\alpha_n \in (0,1)$, $\alpha_n \to 0$ and, for every $n \in \N$, a positive ground state solution $u_n \in H^1_0(\B)$ of (\ref{Reduced equation}) with $\alpha= \alpha_n$. Recall that the existence of $u_n$ is proved in Lemma~\ref{existence of minimiers - elliptic case}. 
To prove Theorem~\ref{sec:introduction-second-theorem}, it then suffices to show that
\begin{equation}
  \label{eq:sufficient-radial}
\text{$u_n= u_{rad}\;$ for $n$ sufficiently large,}  
\end{equation}
where $u_{rad}$ is the unique positive solution of (\ref{alpha=0-problem}). We first claim the following.

\begin{lemma} \label{convergence to radial}
	$u_n \to u_{rad}$ in $H_0^1(\B)$ as $n \to \infty$.
\end{lemma}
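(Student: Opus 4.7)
The plan is to extract compactness from the ground-state property: the sequence $(u_n)_n$ should be bounded in $H^1_0(\B)$, and its weak limit should be identified as a positive ground state of the limiting problem, hence must equal $u_{rad}$ by uniqueness. Strong convergence then follows from an energy comparison.

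First I would obtain boundedness. Since $u_n$ is a weak solution, testing against $u_n$ gives
$$
\int_\B \bigl(|\nabla u_n|^2 - \alpha_n^2 |\partial_\theta u_n|^2 + m u_n^2\bigr)\,dx \;=\; \int_\B |u_n|^p\,dx,
$$
so $R_{\alpha_n,m,p}(u_n) = \bigl(\int_\B |u_n|^p\,dx\bigr)^{1-2/p} = \scrC_{\alpha_n,m,p}(\B)$. By Lemma~\ref{continuity of inf}(i), the right-hand side stays in a bounded interval, and it is bounded away from zero by Corollary~\ref{first-cor-section-2}. Hence $\|u_n\|_{L^p(\B)}$ is bounded and bounded away from $0$. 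Using $\alpha_n \le 1/2$ for large $n$ together with $m \ge 0$, the energy identity forces $\|\nabla u_n\|_{L^2(\B)}$ to be bounded as well. After passing to a subsequence, I obtain $u_n \weakto u$ in $H^1_0(\B)$ and, by the Rellich--Kondrachov theorem, $u_n \to u$ in $L^p(\B)$ and in $L^2(\B)$. The $L^p$ convergence and the lower bound on $\|u_n\|_{L^p}$ give $u \not\equiv 0$.

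Next I would identify $u$. Passing to the limit in the weak formulation, the gradient term converges by weak convergence, the mass term by $L^2$ convergence, the nonlinearity by $L^p$ convergence, and the angular term $\alpha_n^2 \int_\B \partial_\theta u_n\, \partial_\theta \phi\,dx$ vanishes since $\alpha_n \to 0$ and $\|\partial_\theta u_n\|_{L^2}$ is bounded. Thus $u$ weakly solves \eqref{alpha=0-problem}. Since $u_n > 0$ and $u_n \to u$ a.e.\ along a further subsequence, $u \ge 0$, and the strong maximum principle upgrades this to $u > 0$ in $\B$. To see that $u$ is a ground state, note that
$$
R_{0,m,p}(u) \;\le\; \liminf_{n \to \infty} R_{0,m,p}(u_n) \;=\; \liminf_{n \to \infty} \Bigl(R_{\alpha_n,m,p}(u_n) + \alpha_n^2\frac{\int_\B |\partial_\theta u_n|^2\,dx}{(\int_\B |u_n|^p\,dx)^{2/p}}\Bigr)
$$
by weak lower semicontinuity of $u \mapsto \int_\B |\nabla u|^2\,dx$ together with strong $L^2$ and $L^p$ convergence. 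The last expression equals $\lim_{n\to\infty}\scrC_{\alpha_n,m,p}(\B) = \scrC_{0,m,p}(\B)$ by Lemma~\ref{continuity of inf}(i) and the boundedness of $\|\partial_\theta u_n\|_{L^2}$. Hence $u$ is a positive minimizer of $R_{0,m,p}$, and by the McLeod--Serrin--Kwong uniqueness result cited above, $u = u_{rad}$. Since the limit is independent of the chosen subsequence, the full sequence converges weakly to $u_{rad}$.

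Finally I would upgrade weak to strong convergence. Using the energy identity and passing to the limit,
$$
\lim_{n\to\infty} \int_\B |\nabla u_n|^2\,dx \;=\; \int_\B \bigl(|u_{rad}|^p - m\, u_{rad}^2\bigr)\,dx \;=\; \int_\B |\nabla u_{rad}|^2\,dx,
$$
where the last equality uses that $u_{rad}$ satisfies \eqref{alpha=0-problem} tested against itself. Combined with $u_n \weakto u_{rad}$ in $H^1_0(\B)$, convergence of the Dirichlet norms yields $u_n \to u_{rad}$ strongly in $H^1_0(\B)$. The main obstacle in this argument is ensuring that the weak limit is nontrivial and positive rather than merely a nonnegative weak solution with possibly smaller mass; this is secured by the $L^p$ lower bound on $\|u_n\|_{L^p(\B)}$ coming from the ground state normalization and by the strong maximum principle.
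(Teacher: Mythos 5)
Your proof is correct and takes essentially the same approach as the paper: both extract boundedness and weak convergence, identify the limit as a ground state via Lemma~\ref{continuity of inf}(i), invoke uniqueness of the positive radial solution, and upgrade to strong convergence from convergence of norms. The only cosmetic difference is that the paper first passes to the $L^p$-normalized functions $v_n = u_n/\|u_n\|_{L^p}$ and recovers $u_n = [\scrC_{\alpha_n,m,p}(\B)]^{1/(p-2)}v_n$ at the end, whereas you work directly with $u_n$ and use the energy identity $\int_\B \bigl(|\nabla u_n|^2 - \alpha_n^2|\partial_\theta u_n|^2 + m u_n^2\bigr)\,dx = \int_\B |u_n|^p\,dx$ to obtain the a priori bounds.
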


\begin{proof}
We put $v_n:= \frac{u_n}{\|u_n\|_{L^p(\B)}}$, so $v_n$ is an $L^p$-normalized minimizer for $\scrC_{\alpha_n,m,p}(\B)$. Then $(v_n)_n$ is bounded in $H_0^1(\B)$ by definition of $\scrC_{\alpha_n,m,p}(\B)$.
Consequently, we have $v_n \weakto v_0$ in $H_0^1(\B)$ after passing to a 
subsequence, which implies that $v_n \to v_0$ in $L^p(\B)$ and therefore 
$\int_{\B} |v_0|^p\,dx=1$. We show that $v_0$ is minimizer for 
$\scrC_{0,m,p}(\B)$. Indeed, by weak lower semicontinuity, we have
\begin{align*}
\scrC_{0,m,p}(\B) &\le R_{0,m,p}(v_0) \le \liminf_{n \to \infty} R_{0,m,p}(v_n) \le \lim_{n \to \infty} \Bigl(R_{\alpha_n,m,p}(v_n) + \alpha_n^2 \|\del_\theta u_n\|_{L^2(\B)}^2 \Bigr) \\
              & \leq \lim_{n \to \infty}\scrC_{\alpha_n,m,p}(\B) + \alpha_n \|u_n\|_{H^1(\B)}^2 = \scrC_{0,m,p}(\B),
\end{align*}
where we used Lemma~\ref{continuity of inf} in the last step. Hence $v_0$ is a minimizer of $\scrC_{0,m,p}(\B)$, and a posteriori we find that
\begin{align*}
  \|\nabla v_n\|_{L^2(\B)}^2 + m \|v_n\|_{L^2(\B)}^2 &= R_{\alpha_n,m,p}(v_n) + 
  \alpha_n^2 \|\del_\theta v_n\|_{L^2(\B)}^2\\
  &\to R_{0,m,p}(v_0) = \|\nabla v_0\|_{L^2(\B)}^2+ m \|v_0\|_{L^2(\B)}^2\qquad \text{as $n \to \infty$.}
\end{align*}
By uniform convexity of $H^1(\B)$, we thus conclude that $v_n \to v_0$ in $H^1_0(\B)$. Next we recall that, as noted in the proof of Lemma~\ref{existence of minimiers - elliptic case}, we have
$$
u_n := \bigl[\scrC_{\alpha_n,m,p}(\B)\bigr]^\frac{1}{p-2} v_n \qquad \text{and, by uniqueness,}\qquad \quad u_{rad} := \bigl[\scrC_{\alpha_n,m,p}(\B)\bigr]^\frac{1}{p-2} v_{0}. 
$$
Hence Lemma~\ref{continuity of inf} implies that $u_n \to u_{rad}$ in $H^1_0(\B)$. Although we have proved this only after passing to a subsequence, the convergence of the full sequence now follows from the uniqueness of $u_{rad}$. The proof is thus finished.
\end{proof}

Next, we improve Lemma~\ref{convergence to radial} by noting that
\begin{equation}
\label{second order convergence to radial}
u_n \to u_{rad} \qquad \text{in $H^2(\B)$.}
\end{equation}
This follows in a standard way from Lemma~\ref{convergence to radial} and standard elliptic regularity theory (see e.g. \cite[Theorem 8.12]{Gilbarg-Trudinger}), since $u_n = u_{rad}-u_n \in H_0^1(\B)$ is a weak solution of 
	$$
	\left\{ 
	\begin{aligned}
	-\Delta w_n + \alpha_n^2 \del_\theta w_n + m w_n & = |v_{rad}|^{p-2} v_{rad} - |v_n|^{p-2} v_n \quad && \text{in $\B$} \\
	w_n & = 0 && \text{on $\del \B$,}
	\end{aligned}
	\right.
	$$
	and the coefficients of the differential operator $-\Delta  + \alpha_n^2 \del_\theta$ are uniformly bounded and elliptic in $n \in \N$.

        We may now complete the proof of our main result as follows.

        \begin{proof}[Proof of Theorem~\ref{sec:introduction-second-theorem}]
To complete the proof of (\ref{eq:sufficient-radial}), we consider the map
	$$
	F: (-1,1) \times H^2(\B) \cap H_0^1(\B) \to L^2(\B), \quad F(\alpha,u) := 
	-\Delta u + \alpha^2 \del_\theta^2 u + m u  -|u|^{p-2} u  ,
	$$
and we note that weak solutions of \eqref{Reduced equation} correspond to zeroes of $F$. We also note that $F(\alpha, u_{rad}) =0$ for all $\alpha$. We wish to apply the implicit function theorem at $(0,u_{rad})$, so we need to check that
	$$
	[\del_u F](0,u_{rad}) = -\Delta + m  - (p-1) |u_{rad}|^{p-2}
	$$
    is invertible as a map $ H^2(\B) \cap H_0^1(\B) \to L^2(\B)$. This is 
    equivalent to the nondegeneracy of $u_{rad}$ as a solution of 
    (\ref{alpha=0-problem}) which is noted e.g. in \cite[Theorem 
    4.2]{Damascelli-Grossi-Pacella} for $m=0$ and in  \cite[Theorem 
    1.1]{Aftalion-Pacella} in the case $m>0$. Now the implicit function theorem 
    yields $\eps>0$ with the following property: If $u \in H^2(\B) \cap 
    H^1_0(\B)$ satisfies $\|u-u_{rad}\|_{H^2(\B)}< \eps$ and $F(\alpha,u) = 0$ 
    for some $\alpha \in (-\eps,\eps)$, then $u=u_{rad}$.

        Hence Lemma~\ref{second order convergence to radial} implies that
        $u_n=u_{rad}$ for $n$ sufficiently large, which shows (\ref{eq:sufficient-radial}), as claimed.          
        \end{proof}

In the remainder of this section, we show $x_1$-$x_2$-nonradial ground states for large $m$, as claimed in Theorem~\ref{Thm: m large}. We restate this theorem here in an equivalent form.
\begin{theorem} \label{Thm: m large extended}
	Let $\alpha \in (0,1)$ and $2<p<2^*$. Then there exists $\eps_0>0$, such that the ground states of
	\begin{equation} \label{eps equation}
	\left\{
	\begin{aligned}
	-\Delta u + \alpha^2 \del_\theta^2  u + \frac{1}{\eps^2} u &= |u|^{p-2} u \quad &&\text{in $\B$,} \\
	u & = 0 && \text{on $\del \B$,}
	\end{aligned}
	\right. 
	\end{equation}
	are $x_1$-$x_2$-nonradial for $\eps \in (0,\eps_0)$. Moreover, if $p<{2_{\text{\tiny $1$}}^*}$, the same result holds for $\alpha=1$.
\end{theorem}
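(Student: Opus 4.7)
By Lemma~\ref{continuity of inf}(ii), which applies both for $\alpha\in(0,1)$ with $p<2^*$ and for $\alpha=1$ with $p<2_{\text{\tiny $1$}}^*$, it suffices to prove that
$$
\scrC_{\alpha,1/\eps^2,p}(\B)<\scrC_{0,1/\eps^2,p}(\B)\qquad\text{for all sufficiently small }\eps>0.
$$
Writing $m:=1/\eps^2$ and $\beta:=1-N/2+N/p$, note $\beta>0$ since $p<2^*$. For any $u\in H^1_0(\B)$ the rescaling $\tilde u(y):=u(y/\sqrt m)$ satisfies the exact identity $R_{0,m,p}(u)=m^\beta R_{0,1,p}(\tilde u)$, and since $\tilde u$ extended by $0$ lies in $H^1(\R^N)$, taking the infimum yields the clean lower bound
$$
\scrC_{0,m,p}(\B)\ge m^\beta\,\scrC_\infty,\qquad \scrC_\infty:=\inf_{w\in H^1(\R^N)\setminus\{0\}}\frac{\int_{\R^N}(|\nabla w|^2+w^2)\,dy}{\bigl(\int_{\R^N}|w|^p\,dy\bigr)^{2/p}}>0.
$$
The plan is to beat this with a nonradial test function concentrated at an interior point of $\B$ with nonzero $x_1$-coordinate.

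\textbf{Concentrating test function.} Fix $x_0:=(r_0,0,\dots,0)\in\B$ with $r_0\in(0,1)$ and let $V\in H^1(\R^N)$ denote the positive radial minimiser realising $\scrC_\infty$. For a fixed radial cutoff $v:=\chi(\cdot/k)V$ with $\chi\in C_c^\infty(B_2)$, $\chi\equiv1$ on $B_1$ and $k$ large, the exponential decay of $V$ guarantees $v\in C_c^1(\R^N)$ with $R_{0,1,p}(v)\le\scrC_\infty+\delta$, and (using $\int(\del_2 V)^2=\tfrac{1}{N}\int|\nabla V|^2>0$)
$$
\kappa\;:=\;\frac{\int_{\R^N}(\del_2 v)^2\,dy}{\bigl(\int_{\R^N}|v|^p\,dy\bigr)^{2/p}}\;\ge\;\tfrac{1}{2}\cdot\frac{\int_{\R^N}(\del_2 V)^2\,dy}{\bigl(\int_{\R^N}|V|^p\,dy\bigr)^{2/p}}\;>\;0,
$$
where $\delta>0$ will be chosen presently. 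For $\eps=1/\sqrt m$ small set $v_\eps(x):=v(\sqrt m(x-x_0))$, which lies in $C_c^1(\B)\subset\cH$. Change of variables $y=\sqrt m(x-x_0)$ in the three scaling-homogeneous terms gives $R_{0,m,p}(v_\eps)=m^\beta R_{0,1,p}(v)$. For the angular term use $x_1=r_0+y_1/\sqrt m$, $x_2=y_2/\sqrt m$ to expand
$$
\del_\theta v_\eps(x)=\sqrt m\,r_0\,(\del_2 v)(y)+\bigl(y_1\del_2-y_2\del_1\bigr)v(y),
$$
square, integrate and rescale to obtain
$$
\frac{\int_\B|\del_\theta v_\eps|^2\,dx}{\bigl(\int_\B|v_\eps|^p\,dx\bigr)^{2/p}}=m^\beta\bigl[r_0^2\,\kappa+O(m^{-1/2})\bigr].
$$

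\textbf{Comparison and conclusion.} Subtracting $\alpha^2$ times the last display from $R_{0,m,p}(v_\eps)=m^\beta R_{0,1,p}(v)$ yields
$$
R_{\alpha,m,p}(v_\eps)\;=\;m^\beta\bigl[R_{0,1,p}(v)-\alpha^2 r_0^2\kappa+O(m^{-1/2})\bigr]\;\le\;m^\beta\bigl[\scrC_\infty+\delta-\alpha^2 r_0^2\kappa+O(m^{-1/2})\bigr].
$$
Choosing $\delta<\tfrac12\alpha^2 r_0^2\kappa$ at the start and then $m$ large enough, the bracket is strictly less than $\scrC_\infty$, so
$$
\scrC_{\alpha,m,p}(\B)\;\le\;R_{\alpha,m,p}(v_\eps)\;<\;m^\beta\scrC_\infty\;\le\;\scrC_{0,m,p}(\B),
$$
and Lemma~\ref{continuity of inf}(ii) yields $x_1$-$x_2$-nonradiality of every ground state. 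The identical argument covers the degenerate case $\alpha=1$, $p<2_{\text{\tiny $1$}}^*$, because $v_\eps$ lies in $\cH$ and the version of Lemma~\ref{continuity of inf}(ii) formulated there applies. The only delicate point is the expansion of $\int_\B|\del_\theta v_\eps|^2\,dx$: since the variable coefficients of $\del_\theta=x_1\del_2-x_2\del_1$ do not commute with the $\sqrt m$-rescaling, one must carefully isolate the dominant term $\sqrt m\,r_0(\del_2 v)(y)$ from the remainder, using the compactness of $\supp v$ to ensure that the cross and subleading contributions are of uniformly lower order in $m$. The selection of an interior base point $x_0$ away from the great circle $\gamma$ keeps the operator uniformly elliptic on the support of $v_\eps$, so the case $\alpha=1$ requires no additional work.
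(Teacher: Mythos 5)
Your proof is correct, and it takes a noticeably different route from the paper's. The paper rescales the domain to $B_{1/\eps}$ and argues by contradiction: assuming a radial minimizer of the rescaled quotient exists, its extension by zero to $H^1(\R^N)$ forces $\scrC_{\alpha\eps,1,p}(B_{1/\eps})\ge\scrC_{0,1,p}(\R^N)$, while a cutoff NLS ground state translated by $e_1$ gives a competitor with quotient $\scrC_{0,1,p}(\R^N)-C_2\eps^2+(\text{exp. small})$, a contradiction. Because the translation by $e_1$ in $B_{1/\eps}$ corresponds to an $O(\eps)$ translation in $\B$, the resulting angular gain is only $O(\eps^2)=O(1/m)$ relative to $\scrC_\infty$, which suffices only because the contradiction hypothesis supplies the exact lower bound $\scrC_{0,1,p}(\R^N)$. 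You instead stay in $\B$, derive the clean unconditional lower bound $\scrC_{0,m,p}(\B)\ge m^\beta\scrC_\infty$ via the rescaling-and-extension trick, and concentrate the bump at a fixed interior point $(r_0,0,\dots,0)$ whose distance from the axis stays of order one. The dominant term of $\del_\theta v_\eps$ is then $\sqrt m\,r_0\,\del_2 v$, producing an $O(1)$ gain $\alpha^2 r_0^2\kappa$ relative to the leading scale $m^\beta$, which beats $\scrC_\infty$ directly without any contradiction argument and also yields a quantitatively stronger gap $\scrC_{0,m,p}(\B)-\scrC_{\alpha,m,p}(\B)\gtrsim m^\beta$. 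Two small bookkeeping remarks: the dependencies of $\delta$ and $\kappa$ on the cutoff scale $k$ should be ordered more carefully (e.g.\ set $\delta:=\tfrac14\alpha^2 r_0^2\kappa_\infty$ with $\kappa_\infty$ the limit quantity for $V$, then choose $k$ large so that both $R_{0,1,p}(v)\le\scrC_\infty+\delta$ and $\kappa\ge\tfrac12\kappa_\infty$ hold); and the cross term in your expansion of $\int_\B|\del_\theta v_\eps|^2$ in fact vanishes identically by the parity of the radial profile $v$ in $y_1$, so the $O(m^{-1/2})$ bound is generous. Neither affects the validity of the argument.
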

\begin{proof}
  We first treat the case $\alpha \in (0,1)$. In the following, for $u \in H^1_0(\B)$ and $\eps>0$, we consider $B_{1/\eps}:= B_{1/\eps}(0)$ and the rescaled function $u_\eps \in H^1_0(B_{1/\eps})$, $u_\eps(x)= u(\eps x)$. A direct computation then shows that
  \begin{equation}
    \label{eq:rescaling-rule}
\frac{\int_{B_{1/\eps}} \left( |\nabla u_\eps|^2 - \alpha^2 \eps^2 |\del_\theta 
u_\eps|^2 + u_\eps^2 \, \right)dx}{\left(\int_{B_{1/\eps} } |u_\eps|^p \, dx 
\right)^\frac{2}{p}}  = \eps^{2-N+\frac{2N}{p}} 
R_{\alpha,\frac{1}{\eps^{2}},p}(u).
  \end{equation}
  As a consequence, we have 
  \begin{equation*}
  \scrC_{\alpha \eps ,1,p}(B_{1/\eps}):= \inf_{v \in H^1_0(B_{1/\eps}) \setminus \{0\}} \frac{\int_{B_{1/\eps}} \left( |\nabla v|^2 - \alpha^2 \eps^2 |\del_\theta v|^2 + v^2 \, \right)dx}{\left(\int_{B_{1/\eps} } |v|^p \, dx \right)^\frac{2}{p}}  = \eps^{2-N+\frac{2N}{p}}   \scrC_{\alpha,\frac{1}{\eps^2},p}(\B).
  \end{equation*}
It therefore suffices to show that there exists $\eps_0>0$ such that all 
minimizers for $\scrC_{\alpha \eps ,1,p}(B_{1/\eps})$ in $H^1_0(B_{1/\eps}) 
\setminus \{0\}$ are $x_1$-$x_2$-nonradial if $\eps \in (0,\eps_0)$. We argue by 
contradiction and suppose that there exists a sequence $\eps_n \to 0$ and, for 
every $n \in \N$, a minimizer $v_{\eps_n} \in H^1_0(B_{1/\eps_n}) \setminus 
\{0\}$ for $\scrC_{\alpha \eps_n ,1,p}(B_{1/\eps_n})$ which satisfies
  \begin{equation}
    \label{eq:zero-theta-deriv}
    \partial_\theta v_{\eps_n} \equiv 0 \qquad \text{in $B_{1/\eps_n}$.}
  \end{equation}
To simplify the notation, we continue writing $\eps$ in place of $\eps_n$ in the following. From (\ref{eq:zero-theta-deriv}) and the inclusion $H_0^1(B_{1/\eps}) \subset H^1(\R^N)$, we then deduce that 
	\begin{align} \label{eps-lower bound}
  \scrC_{\alpha \eps ,1,p}(B_{1/\eps}) &= \frac{\int_{B_{1/\eps}} \left( 
  |\nabla v_\eps|^2  + v^2 \, \right)dx}{\left(\int_{B_{1/\eps} } |v|^p \, dx 
  \right)^\frac{2}{p}} \nonumber\\
  &\ge  \inf_{v \in H^1({\R^N}) \setminus \{0\}} \frac{\int_{\R^N} \left( 
  |\nabla v|^2  + v^2 \, \right)dx}{\left(\int_{\R^N } |v|^p \, dx 
  \right)^\frac{2}{p}} =: \scrC_{0,1,p}(\R^N).
	\end{align}
We will now derive a contradiction to this inequality by constructing suitable 
functions in $H^1_0(B_{1/\eps} \setminus \{0\})$ to estimate $\scrC_{\alpha 
\eps ,1,p}(B_{1/\eps})$.  To this end, we first note that the value 
$\scrC_{0,1,p}(\R^N)$ is attained by any translation of the unique positive 
radial solution $\tilde u_0 \in H^1(\R^N)$ of the nonlinear Schrödinger equation
	 	\begin{equation*} 
	 	-\Delta u  + u   =|u|^{p-2} u \qquad \text{in $\R^N$.}
	 	\end{equation*}
   Now take a radial function $\eta \in C_c^1(\B)$ such that $0 \leq \eta \leq 1$ and $\eta \equiv 1$ in $B_{1/2}$, and let $u_0(x):=\tilde u_0(x-e_1)$ where $e_1=(1,0,\ldots,0)$. We then define 
   $$
   \eta_\eps, \; w_\eps \in C_c^1(B_{1/\eps})\qquad \text{by}\qquad \eta_\eps(x)=\eta(\eps x),\quad w_\eps(x)=\eta_\eps(x)u_0(x).
   $$
   Then we have $w_\eps \equiv u_0$ in $B_{1/(2\eps)}$, and 
   \begin{align} 
    &\scrC_{\alpha \eps ,1,p}(B_{1/\eps}) \le \frac{\int_{B_{1/\eps}} \left( |\nabla w_\eps|^2 - \alpha^2 \eps^2 |\del_\theta w_\eps|^2 + w_\eps^2 \, \right)dx}{\left(\int_{B_{1/\eps} } |w_\eps|^p \, dx \right)^\frac{2}{p}} \label{split quotient}\\
   =& \frac{\int_{B_{1/\eps}} \eta_\eps^2 \left( |\nabla u_0|^2  + u_0^2 \, \right)dx}{\left(\int_{B_{1/\eps} } \eta_\eps^p |u_0|^p \, dx \right)^\frac{2}{p}} 
  + \frac{\int_{B_{1/\eps}} \left( u_0^2|\nabla_\eps \eta|^2 +2 \eta_\eps u_0 \nabla \eta_\eps \cdot \nabla u_0 - \alpha^2 \eps^2 \eta_\eps^2 |\del_\theta u_0|^2  \, \right)dx}{\left(\int_{B_{1/\eps} } \eta_\eps^p |u_0|^p \, dx \right)^\frac{2}{p}}. \nonumber
   \end{align}
   We first estimate the second term and note that classical results (see \cite{Berestycki-Lions}) imply that there exist $C_0,\delta_0>0$, such that
   \begin{equation} \label{Exponential decay}
   |u_0(x)|, |\nabla u_0(x)| \leq C_0 e^{-\delta_0 |x|} \quad \text{for $x \in \R^N$.}
   \end{equation}
   Noting that $\nabla \eta_\eps \equiv 0$ on $B_{1/(2\eps)}$, this readily 
   implies
   $$
   \int_{B_{1/\eps}} \left( u_0^2|\nabla \eta_\eps|^2 +2 \eta_\eps u_0 \nabla \eta_\eps \cdot \nabla u_0   \, \right)dx \leq C_1 e^{-\frac{\delta_1}{\eps}}
   $$
   for some constants $C_1,\delta_1>0$. Moreover, for $\eps \in (0,\frac{1}{2})$ we have 
   $$
   \alpha^2 \eps^2 \int_{B_{1/\eps}}   \eta_\eps^2 |\del_\theta u_0|^2  \, dx \geq C_2 \eps^2 \qquad \text{with}\quad
   C_2:= \alpha^2 \int_{\B}   |\del_\theta u_0|^2  \, dx >0,
   $$
since $u_0$ is an $x_1$-$x_2$-nonradial function. After possibly modifying $C_1, C_2>0$, this gives 
   \begin{align*}
   \frac{\int_{B_{1/\eps}} \left( u_0^2|\nabla \eta_\eps|^2 +2 \eta_\eps u_0 
   \nabla \eta_\eps \cdot \nabla u_0 - \alpha^2 \eps^2 \eta_\eps^2 |\del_\theta 
   u_0|^2  \, \right)dx}{\left(\int_{B_{1/\eps} } \eta_\eps^p|u_0|^p \, dx 
   \right)^\frac{2}{p}} \leq C_1 e^{-\frac{\delta_1}{\eps}} - C_2 \eps^2.
   \end{align*}   
   Next we consider the first term in \eqref{split quotient} and note that
   $$
   \frac{\int_{B_{1/\eps}} \eta_\eps^2 \left( |\nabla u_0|^2  + u_0^2 \, \right)dx}{\left(\int_{B_{1/\eps} } \eta_\eps^p |u_0|^p \, dx \right)^\frac{2}{p}}
    \leq \frac{\int_{\R^N} \left( |\nabla u_0|^2  + u_0^2 \, \right)dx}{\left(\int_{B_{1/(2\eps)} }  |u_0|^p \, dx \right)^\frac{2}{p}},
   $$
   while \eqref{Exponential decay} implies
   $$
   \int_{\R^N \setminus B_{1/(2\eps)} } |u_0|^p \, dx \leq C_3 e^{-\frac{\delta_2}{\eps}}
   $$
   for some $C_3,\delta_2>0$. It thus follows that
   \begin{align*} 
   \frac{\int_{B_{1/\eps}} \eta_\eps^2 \left( |\nabla u_0|^2  + u_0^2 \, \right)dx}{\left(\int_{B_{1/\eps} } \eta_\eps^p |u_0|^p \, dx \right)^\frac{2}{p}} 
   &\leq \frac{\int_{\R^N} \left( |\nabla u_0|^2  + u_0^2 \, \right)dx}{\left(\int_{B_{1/(2\eps)} } |u_0|^p \, dx \right)^\frac{2}{p}} 
   \leq \frac{\int_{\R^N}  \left( |\nabla u_0|^2  + u_0^2 \, \right)dx}{\left(\int_{\R^N }  |u_0|^p \, dx- C_3 e^{-\frac{\delta_2}{\eps}} \right)^\frac{2}{p}} \\
   &\le  \frac{\int_{\R^N}  \left( |\nabla u_0|^2  + u_0^2 \, \right)dx}{\left(\int_{\R^N }  |u_0|^p \, dx\right)^\frac{2}{p}} +C_4 e^{-\frac{\delta_2}{\eps}} = \scrC_{0,1,p}(\R^N) + C_4 e^{-\frac{2\delta_2}{p\eps}}
   \end{align*}
   for $\eps>0$ sufficiently small with some constant $C_4>0$, since $u_0$ attains $\scrC_{0,1,p}(\R^N)$. In view of \eqref{eps-lower bound} and \eqref{split quotient}, this yields that
   $$
  \scrC_{0,1,p}(\R^N) \leq   \scrC_{\alpha \eps ,1,p}(B_{1/\eps}) \le \scrC_{0,1,p}(\R^N) - C_2\eps^2 +C_1 e^{-\frac{\delta_1}{\eps}} + C_4 e^{-\frac{2\delta_2}{p\eps}}, 
   $$
and the right hand side of this inequality is smaller than 
$\scrC_{0,1,p}(\R^N)$ if $\eps>0$ is sufficiently small. This is a 
contradiction, and thus the claim follows in this case. 

In the case $\alpha=1$, the argument is the same up to replacing $H^1_0(\B)$ by 
$\cH$ and by considering the corresponding rescaled function space $\cH_\eps$ 
on $B_{1/\eps}$. Then the contradiction argument can be carried out in the same 
way, since radial functions in $\cH_\eps$ belong to $H^1_0(B_{1/\eps}) 
\subset H^1(\R^N)$.
   \end{proof}
%
%

%

   \section{The case of an annulus}
   \label{sec:case-an-annulus}
   In this section, we consider rotating solutions of (\ref{NLKG}) in the case where $\B$ is replaced by an annulus 
$$
\A_r:= \{ x \in \R^N: r < |x|<1\}
$$
for some $r \in (0,1)$. The ansatz \eqref{Spiraling wave ansatz} then leads to the reduced problem
\begin{equation} \label{Reduced equation annulus}
	\left\{ 
	\begin{aligned} 
		-\Delta u + \alpha^2 \del_\theta^2 u + m u & = |u|^{p-2} u \quad && \text{in $\A_r$,} \\
		u & = 0 && \text{on $\del \A_r$}
	\end{aligned}
	\right.
\end{equation}
where $m >-\lambda_1(\A_r)$, $p \in (2,\frac{2N}{N-2})$ and $\del_\theta = 
x_{N-1} \del_{x_N} - x_{N} \del_{x_{N-1}}$ as before. Here, $\lambda_1(\A_r)$ 
denotes the first Dirichlet eigenvalue of $-\Delta$ on $\A_r$. As in 
(\ref{eq:intro-minimization-problem}), we may then define
\begin{equation}
  \label{eq:intro-minimization-annulus}
\scrC_{\alpha,m,p}(\A_r) :=\inf_{u \in H^1_0(\A_r) \setminus \{0\}} R_{\alpha,m,p}(u)
\end{equation}
with the Rayleigh quotient $R_{\alpha,m,p}(u)$ given by (\ref{eq:def-Raleigh-quotient}) for functions $u \in H^1_0(\A_r)$.
In the following, a weak solution of (\ref{Reduced equation annulus}) will be 
called a ground state solution if it is a minimizer for 
(\ref{eq:intro-minimization-annulus}). We then have the following analogue of Theorem~\ref{main theorem}.
\begin{theorem}
\label{main-theorem-annulus}  
  Let $r \in (0,1)$, $m > -\lambda_1(\A_r)$ and $p \in (2,2^*)$.
  \begin{itemize}
  \item[(i)] If $\alpha \in (0,1)$, then there exists a ground state solution of (\ref{Reduced equation annulus}).
  \item[(ii)] We have 
        \begin{equation*}
        \scrC_{1,m,p}(\A_r) =0 \quad \text{for} \   p>2_{\text{\tiny $1$}}^*, \qquad \text{and} \quad \scrC_{1,m,p}(\A_r)>0 \quad \text{for} \  p \leq 2_{\text{\tiny $1$}}^*.	
        \end{equation*}
	Moreover, for any $p \in (2_{\text{\tiny $1$}}^*,2^*)$, there exists $\alpha_p \in (0,1)$ with the property that
        $$
        \scrC_{\alpha,m,p}(\A_r)< \scrC_{0,m,p}(\A_r) \qquad \text{for $\alpha \in (\alpha_p,1]$}
        $$
        and therefore every ground state solution of (\ref{Reduced equation annulus})
         is $x_1$-$x_2$-nonradial for $\alpha \in (\alpha_p,1]$.
      \end{itemize}
    \end{theorem}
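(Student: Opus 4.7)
The plan is to adapt each component of the proof of Theorem~\ref{main theorem} from $\B$ to the annulus $\A_r$, exploiting the fact that the degeneracy of $-\Delta + \del_\theta^2$ at $\alpha = 1$ is localized near the outer great circle $\gamma = \{x \in \del \B : x_3 = \cdots = x_N = 0\} \subset \del \A_r$, while the inner sphere $\{|x| = r\}$ sits in the uniformly elliptic region and therefore introduces no new difficulties.

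For part~(i), the pointwise bound $|\del_\theta u| \le |\nabla u|$ gives $\int_{\A_r}(|\nabla u|^2 - \alpha^2|\del_\theta u|^2)\,dx \ge (1-\alpha^2)\|\nabla u\|_{L^2(\A_r)}^2$ for every $\alpha \in [0,1)$ and $u \in H^1_0(\A_r)$, so the functional $R_{\alpha,m,p}$ is coercive. Combined with the compact Sobolev embedding $H^1_0(\A_r) \hookrightarrow L^p(\A_r)$ for $p \in (2,2^*)$ and weak lower semicontinuity of the quadratic form in the numerator, the standard argument of Lemma~\ref{existence of minimiers - elliptic case} produces a ground-state minimizer.

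Part~(ii) splits into three claims. First, $\scrC_{1,m,p}(\A_r) = 0$ for $p > 2_{\text{\tiny $1$}}^*$ follows by reusing the rescaled test functions $u_\lambda = v \circ \tau_\lambda$ of Proposition~\ref{Prop: p>pstar} with $v \in C_c^1(\R^N_+)$: for $\lambda$ sufficiently small, $u_\lambda$ is supported in an arbitrarily small neighborhood of the point $-e_1 \in \gamma$ and hence lies in $C_c^1(\A_r)$, while the same calculation gives $R_{1,m,p}(u_\lambda) = O(\lambda^{(p(2N-3)-(4N+2))/p}) \to 0$ as $\lambda \to 0^+$. Second, $\scrC_{1,m,p}(\A_r) > 0$ for $p \le 2_{\text{\tiny $1$}}^*$ follows from the annular version of Proposition~\ref{lambda-1-alpha-pos}, which yields $\scrC_{\alpha,0,2}(\A_r) = \lambda_1(\A_r)$ by the same spherical-harmonic decomposition (since the key inequality $\ell(\ell + N - 2)/\rho^2 - \ell_k^2 \ge 0$ still holds for the radial variable $\rho \in [r,1]$), combined with Theorem~\ref{degenerate Sobolev inequality - ball} applied to zero-extensions of elements of $C_c^1(\A_r) \subset C_c^1(\B)$, together with the Hölder bound $\|u\|_p \le |\A_r|^{1/p-1/2_{\text{\tiny $1$}}^*}\|u\|_{2_{\text{\tiny $1$}}^*}$. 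Third, for the symmetry-breaking claim I would transfer the proof of Lemma~\ref{continuity of inf}(i) verbatim to obtain continuity and monotonicity of $\alpha \mapsto \scrC_{\alpha,m,p}(\A_r)$ on $[0,1]$; combined with the strict inequality $\scrC_{0,m,p}(\A_r) > 0 = \scrC_{1,m,p}(\A_r)$ for $p > 2_{\text{\tiny $1$}}^*$, continuity from the left at $\alpha = 1$ yields $\alpha_p \in (0,1)$ with $\scrC_{\alpha,m,p}(\A_r) < \scrC_{0,m,p}(\A_r)$ for all $\alpha \in (\alpha_p,1]$. Nonradiality is then immediate as in Lemma~\ref{continuity of inf}(ii): a radial $u$ satisfies $\del_\theta u \equiv 0$, so $R_{\alpha,m,p}(u) = R_{0,m,p}(u) \ge \scrC_{0,m,p}(\A_r) > \scrC_{\alpha,m,p}(\A_r)$, precluding $u$ from being a minimizer.

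The only genuinely delicate point is the inclusion of $\alpha = 1$ in the nonradiality conclusion, since there the variational problem formally lives in the completion $\cH(\A_r)$ of $C_c^1(\A_r)$ rather than in $H^1_0(\A_r)$. However, for the range $p \in (2_{\text{\tiny $1$}}^*, 2^*)$ appearing in the symmetry-breaking statement, the vanishing $\scrC_{1,m,p}(\A_r) = 0$ together with strict positivity of the numerator of $R_{1,m,p}$ on nonzero functions (inherited from the annular analogue of Corollary~\ref{first-cor-section-2}(ii), which uses $m > -\lambda_1(\A_r)$) implies that the infimum is not attained in any such space, so the nonradiality assertion at $\alpha = 1$ is vacuously true. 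This observation is the principal bookkeeping step that makes the range $(\alpha_p, 1]$ in the annular statement stronger than $(\alpha_p, 1)$ in the ball version.
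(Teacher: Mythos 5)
Your proposal is correct and follows essentially the same route as the paper, which simply states that Theorem~\ref{main-theorem-annulus} ``is proved by precisely the same arguments as Theorem~\ref{main theorem}.'' Each transfer step you identify is sound: the coercivity and compactness argument for (i); the rescaled test functions near $-e_1\in\gamma\subset\partial\A_r$ for $\scrC_{1,m,p}(\A_r)=0$ when $p>2_{\text{\tiny $1$}}^*$; the zero-extension from $C^1_c(\A_r)$ to $C^1_c(\B)$ combined with the annular spherical-harmonics Poincar\'e estimate for positivity when $p\le 2_{\text{\tiny $1$}}^*$; and continuity-plus-monotonicity of $\alpha\mapsto\scrC_{\alpha,m,p}(\A_r)$ for the symmetry-breaking interval. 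One small quibble with the closing paragraph: the observation that the nonradiality assertion is vacuous at $\alpha=1$ for $p>2_{\text{\tiny $1$}}^*$ (because the numerator of $R_{1,m,p}$ is strictly positive on nonzero functions while the infimum is $0$, so no minimizer exists) is correct, but it applies equally to the ball case. It therefore does not explain the discrepancy between the range $(\alpha_p,1]$ in Theorem~\ref{main-theorem-annulus} and $(\alpha_p,1)$ in Theorem~\ref{main theorem}; that difference appears to be merely a notational inconsistency in the paper rather than a substantive distinction between the two geometries. This has no bearing on the correctness of your argument.
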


This theorem does not come as a surprise and is proved by precisely the same 
arguments as Theorem~\ref{main theorem}, so we omit the proof. Instead, we now 
discuss an interesting additional feature of the annulus $\A_r$. Unlike in the 
case of the ball, we can formulate {\em explicit} sufficient conditions for the 
parameters $p,\alpha, m$ and $r$ which guarantee that every ground state 
solution of (\ref{Reduced equation annulus}) is $x_1$-$x_2$-nonradial. This is the 
content of the following theorem.

\begin{theorem} \label{Theorem: Symmetry breaking on annulus}
	Let $N \geq 2$, $m \geq 0$, $r,\alpha \in (0,1)$ and assume $p> \frac{N-1-r^2 \alpha^2}{\kappa(r,m)} + 2$ with 
        \begin{equation}
          \label{eq:def-tilde-mu}
        \kappa(r,m)= \left \{
          \begin{aligned}
            &m r^2 + \max \Bigl\{\Bigl( \frac{N-2}{2} \Bigr)^2, \Bigl(\frac{\pi}{1-r}\Bigr)^2 r^{N-1}\Bigr\},&&\quad N \ge 3;\\
            &mr^2 + \Bigl(\frac{\pi}{1-r}\Bigr)^2 r^{N}, &&\quad N = 2.
          \end{aligned}
        \right. 
	\end{equation}
        Then every ground state solution of \eqref{Reduced equation annulus} 
        is $x_1$-$x_2$-nonradial.
      \end{theorem}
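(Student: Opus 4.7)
The plan is to derive the strict comparison $\scrC_{\alpha,m,p}(\A_r) < \scrC_{0,m,p}(\A_r)$ by a second-variation argument; this is sufficient for the nonradiality conclusion, since any radial $w \in H^1_0(\A_r)\setminus\{0\}$ satisfies $R_{\alpha,m,p}(w) = R_{0,m,p}(w) \geq \scrC_{0,m,p}(\A_r)$, so a strict gap excludes any radial ground state. To implement this, I would fix a positive radial minimizer $u_0 \in H^1_0(\A_r)$ of $\scrC_{0,m,p}(\A_r)$, normalized so that $-\Delta u_0 + m u_0 = u_0^{p-1}$ in $\A_r$, and perturb it along the $x_1$-$x_2$-nonradial direction $v(x) := u_0(|x|)\,\sigma_1$ with $\sigma_1 := x_1/|x|$. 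Since $\int_{\mathbb{S}^{N-1}}\sigma_1\,d\sigma = 0$, the orthogonality $\int_{\A_r} u_0^{p-1} v \, dx = 0$ holds, so the first-order term in the Taylor expansion of $R_{\alpha,m,p}(u_0 + tv)$ vanishes, yielding
\[
R_{\alpha,m,p}(u_0 + tv) = R_{\alpha,m,p}(u_0) + t^2 K^{-2/p}\,Q(v) + O(t^4),
\]
with $K := \int_{\A_r}u_0^p\,dx$ and constrained second variation
\[
Q(v) := \int_{\A_r}\bigl(|\nabla v|^2 - \alpha^2|\del_\theta v|^2 + m v^2 - (p-1)u_0^{p-2}v^2\bigr)\,dx.
\]

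The first substantive step is to express $Q(v)$ as a weighted radial integral. Combining the identities $|\nabla\sigma_1|^2 = (1-\sigma_1^2)/|x|^2$ and $\del_\theta\sigma_1 = -\sigma_2$, the spherical averages $\int_{\mathbb{S}^{N-1}}\sigma_i^2\,d\sigma = |\mathbb{S}^{N-1}|/N$, and the radial Dirichlet energy identity $\int_r^1(u_0')^2\rho^{N-1}d\rho = \int_r^1 u_0^p\rho^{N-1}d\rho - m\int_r^1 u_0^2\rho^{N-1}d\rho$, a direct computation yields
\[
Q(v) \;=\; \tfrac{|\mathbb{S}^{N-1}|}{N}\bigl[-(p-2)A + (N-1)C - \alpha^2 B\bigr],
\]
where $A := \int_r^1 u_0^p\rho^{N-1}d\rho$, $B := \int_r^1 u_0^2\rho^{N-1}d\rho$ and $C := \int_r^1 u_0^2\rho^{N-3}d\rho$. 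Hence establishing $Q(v) < 0$ reduces to proving $(p-2)A + \alpha^2 B > (N-1) C$ under the given hypothesis on $p$.

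For this last inequality I plan to combine three elementary estimates: (a) the trivial bound $C \le B/r^2$, which follows from $\rho\ge r$; (b) for $N\ge 3$, the classical Hardy inequality in radial form, $A - mB \ge \tfrac{(N-2)^2}{4}\,C$; and (c) the chain $\int_r^1(u_0')^2\rho^{N-1}d\rho \ge r^{N-1}\int_r^1(u_0')^2 d\rho \ge r^{N-1}(\pi/(1-r))^2\int_r^1 u_0^2 d\rho$ coupled with $\int_r^1 u_0^2 d\rho \ge C$ (for $N\ge 3$) or $\int_r^1 u_0^2 d\rho \ge rC$ (for $N=2$), giving $A - mB \ge (\pi/(1-r))^2 r^{N-1} C$ or $(\pi/(1-r))^2 r^N C$ respectively. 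Taking the maximum of (b) and (c) produces the key bound $A - mB \ge (\kappa(r,m) - mr^2)\,C$ with $\kappa(r,m)$ exactly as in \eqref{eq:def-tilde-mu}. Substituting into the identity $(p-2)A + \alpha^2 B - (N-1)C = (p-2)(A-mB) + [(p-2)m+\alpha^2]B - (N-1)C$ and applying (a) to the remaining term then gives
\[
(p-2)A + \alpha^2 B - (N-1)C \;\ge\; \frac{1}{r^2}\bigl[(p-2)\kappa(r,m) - (N-1-r^2\alpha^2)\bigr]B,
\]
which is strictly positive under the stated hypothesis.

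The main obstacle is the careful bookkeeping needed to reproduce $\kappa(r,m)$ exactly: one must couple the two lower bounds on $A-mB$ (the Hardy $(N-2)^2/4$ and the weighted Poincar\'e $\pi^2 r^{N-1}/(1-r)^2$) with the upper bound $C\le B/r^2$ so that the combined constant matches the maximum in \eqref{eq:def-tilde-mu} in both the case $N\ge 3$ and the borderline case $N=2$, where the Hardy contribution disappears and an additional factor of $r$ enters the Poincar\'e bound through the pointwise comparison between $\rho^{N-3}$ and $1/r$.
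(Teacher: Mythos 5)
Your choice of test function $v = u_0(|x|)\,\sigma_1$ is a nice streamlining of the paper's argument: the paper introduces the first eigenvalue $\mu_1$ of a weighted radial problem, takes $v = w(r)Y(\omega)$ with $w$ the eigenfunction, and then estimates $\mu_1$ by plugging $\phi = u_0$ into its Rayleigh quotient. Your approach fuses these two steps by using $u_0$ directly as the radial factor, and your expansion $Q(v) = \frac{|\mathbb{S}^{N-1}|}{N}\bigl[-(p-2)A + (N-1)C - \alpha^2 B\bigr]$ is correct (it uses $D:=\int_r^1(u_0')^2\rho^{N-1}\,d\rho = A-mB$, which follows from testing the equation with $u_0$). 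The Hardy/Poincar\'e bounds on $D/C$ and the comparison $B\ge r^2C$ then give, cleanly,
$$
(p-2)A+\alpha^2B-(N-1)C \;\ge\; (p-2)(\kappa-mr^2)C + \bigl[(p-2)m+\alpha^2\bigr]r^2C - (N-1)C \;=\; \bigl[(p-2)\kappa - (N-1-r^2\alpha^2)\bigr]C,
$$
which is strictly positive under the hypothesis. (Your displayed bound with the prefactor $\frac{1}{r^2}$ and $B$ on the right does not actually follow from the listed estimates --- e.g.\ when $(p-2)(\kappa-mr^2)\ge N-1$ the manipulation would go the wrong way --- but since $Q(v)<0$ is all that is needed, this is cosmetic.)

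However, there is a genuine gap in your reduction. The second-variation computation around $u_0$ only yields $\scrC_{\alpha,m,p}(\A_r) < R_{\alpha,m,p}(u_0) = R_{0,m,p}(u_0)$. You announce the stronger gap $\scrC_{\alpha,m,p}(\A_r) < \scrC_{0,m,p}(\A_r)$, which would indeed imply $x_1$-$x_2$-nonradiality directly, because any $w$ with $\del_\theta w\equiv 0$ satisfies $R_{\alpha,m,p}(w) = R_{0,m,p}(w)\ge\scrC_{0,m,p}(\A_r)$. But $R_{0,m,p}(u_0)$ and $\scrC_{0,m,p}(\A_r)$ need not coincide on an annulus: unlike for the ball, where Gidas--Ni--Nirenberg forces the $\alpha=0$ minimizer to be radial, the global minimizer of $R_{0,m,p}$ on $\A_r$ can itself be nonradial, so $\scrC_{0,m,p}(\A_r)$ may be strictly smaller than $R_{0,m,p}(u_0)$. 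What the gap $\scrC_{\alpha,m,p}(\A_r) < R_{0,m,p}(u_0)$ does rule out is a radially symmetric ground state (such a minimizer would, after scaling and taking absolute value, reduce to a positive radial solution, hence equal $u_0$ by the Ni--Nussbaum/Tang/Felmer--Mart\'inez--Tanaka uniqueness, contradicting the strict inequality). It does not by itself rule out a ground state that is nonradial yet invariant under all $R_\theta$, i.e.\ $x_1$-$x_2$-radial but not radially symmetric. To close this, you need the paper's rotation argument: if a ground state $u$ is not radially symmetric but satisfies $\del_\theta u\equiv 0$, one can choose $A\in O(N)$ with $\del_\theta(u\circ A)\not\equiv 0$; since $A$ preserves the $L^p$-norm and the Dirichlet energy, $R_{\alpha,m,p}(u\circ A) = R_{\alpha,m,p}(u) - \alpha^2\|\del_\theta(u\circ A)\|_{L^2}^2/\|u\|_{L^p}^2 < \scrC_{\alpha,m,p}(\A_r)$, a contradiction. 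This last step is essential for the annulus case and is missing from your outline.
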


      We point out that $\kappa (m,r) \to \infty$ if $m \to \infty$ or $r \to 1^-$. Consequently, for given $p>2$, ground states of
      \eqref{Reduced equation annulus} are nonradial if either $m$ is large or the annulus is thin, i.e. $r$ is close to $1$. The proof is based on the following lemma.
\begin{lemma} \label{Lemma: Nonradiality condition on annulus}
  Suppose that $m \ge 0$, and that there exists a function $v\in H^1_0(\A_r)$ satisfying 
        \begin{equation}
          \label{eq:first-con-annulus}
        	\int_{\mathbb{S}^{N-1}} v(s,\cdot) \, d\sigma  = 0 \qquad \text{for 
        	every $s \in (r,1)$}  
         \end{equation}
         and 
	\begin{equation}
          \label{eq:second-con-annulus}
        	\int_{\A_r} \left( |\nabla v |^2 -\alpha^2 |\del_\theta v|^2 + m v^2  \right) \, dx  - (p-1)    \int_{\A_r} |u_0|^{p-2} v^2 \,dx  <0.
	\end{equation}
	Then we have 
        \begin{equation}
          \label{eq:conclusion-lemma-annulus}
        \scrC_{\alpha,m,p}(\A_r)< R_{\alpha,m,p}(u_0),
	\end{equation}
	where $u_0 \in H^1_0(\A_r)$ is the unique positive radial solution of \eqref{Reduced equation annulus}.
\end{lemma}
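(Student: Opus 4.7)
My plan is to construct an explicit competitor for the Rayleigh quotient of the form $u_t := u_0 + tv$ with $t \in \R$ small, and then show that $R_{\alpha,m,p}(u_t) < R_{\alpha,m,p}(u_0)$ for suitable $t$. Since $u_t \in H^1_0(\A_r)$ and is not identically zero for small $t$ (because $u_0 > 0$ in $\A_r$), this will immediately imply $\scrC_{\alpha,m,p}(\A_r) \le R_{\alpha,m,p}(u_t) < R_{\alpha,m,p}(u_0)$.

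I would first record the two normalizations that come from testing the weak formulation of \eqref{Reduced equation annulus} against $u_0$ itself: since $u_0$ is radial, $\partial_\theta u_0 \equiv 0$, so
$$
J(u_0) := \int_{\A_r}\bigl(|\nabla u_0|^2 + m u_0^2\bigr)\,dx = \int_{\A_r} u_0^{p}\,dx =: K(u_0).
$$
As a consequence $R_{\alpha,m,p}(u_0) = J(u_0)/K(u_0)^{2/p} = K(u_0)^{1-2/p}$. Next, testing the weak formulation against $v$ and using again $\partial_\theta u_0 \equiv 0$ yields
$$
\int_{\A_r}\bigl(\nabla u_0 \cdot \nabla v -\alpha^2 \partial_\theta u_0 \,\partial_\theta v + m u_0 v\bigr)\,dx = \int_{\A_r} u_0^{p-1} v\,dx.
$$
By the zero spherical average condition \eqref{eq:first-con-annulus} and the radiality of $u_0$, passing to polar coordinates $x = s\omega$ with $s \in (r,1)$, $\omega \in \mathbb{S}^{N-1}$ gives $\int_{\A_r} u_0^{p-1} v\,dx = \int_r^1 u_0(s)^{p-1} s^{N-1}\bigl(\int_{\mathbb{S}^{N-1}} v(s,\cdot)\,d\sigma\bigr)\,ds = 0$. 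Hence both linear terms in the expansions of $J(u_t)$ and $K(u_t)$ at $t=0$ vanish.

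The main computation is then the second-order Taylor expansion. Since $p>2$, the map $s \mapsto |s|^p$ is $C^2$ and one checks (using dominated convergence with the $L^p$-bound on $v$) that
$$
J(u_t) = J(u_0) + t^2 J(v) + o(t^2), \qquad
K(u_t) = K(u_0) + \tfrac{p(p-1)}{2} t^2 \int_{\A_r} u_0^{p-2} v^2\,dx + o(t^2),
$$
where $J(v) := \int_{\A_r}\bigl(|\nabla v|^2 -\alpha^2 |\partial_\theta v|^2 + m v^2\bigr)\,dx$. Dividing and expanding $K(u_t)^{-2/p}$ to first order, and invoking $J(u_0) = K(u_0)$, I obtain
$$
R_{\alpha,m,p}(u_t) = R_{\alpha,m,p}(u_0) + \frac{R_{\alpha,m,p}(u_0)}{K(u_0)}\,t^2 \Bigl[J(v) - (p-1)\int_{\A_r}\! u_0^{p-2} v^2\,dx\Bigr] + o(t^2).
$$
The bracket is exactly the left-hand side of \eqref{eq:second-con-annulus}, hence strictly negative by hypothesis, and $R_{\alpha,m,p}(u_0) > 0$ by Corollary~\ref{first-cor-section-2}(ii). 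Therefore $R_{\alpha,m,p}(u_t) < R_{\alpha,m,p}(u_0)$ for all sufficiently small nonzero $t$, which yields \eqref{eq:conclusion-lemma-annulus}.

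There is no real obstacle here beyond bookkeeping; the only point requiring some care is the validity of the second-order Taylor expansion of $K$ in $H^1_0(\A_r)$, which is standard for $p \in (2, 2^*)$ since Sobolev embedding gives $u_0, v \in L^p(\A_r)$ and the integrand $|u_0 + tv|^p$ is dominated locally by integrable functions, allowing differentiation under the integral sign twice. The genuinely useful feature exploited is that condition \eqref{eq:first-con-annulus} kills both linear terms simultaneously, reducing the analysis to the second-variation inequality \eqref{eq:second-con-annulus}.
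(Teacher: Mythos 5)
Your proof is correct and follows essentially the same route as the paper: both hinge on the second-order expansion of the Rayleigh quotient along $u_0+tv$, with \eqref{eq:first-con-annulus} eliminating the first-order and cross terms and \eqref{eq:second-con-annulus} supplying a strictly negative second-order coefficient. The only difference is presentational: the paper argues by contradiction against the necessary condition $R''_{\alpha,m,p}(u_0)(v,v)\ge 0$ at a minimizer, whereas you expand directly and exhibit $u_0+tv$ as an explicit competitor lowering the quotient.
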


Here we note that in the case $m = 0$, the uniqueness of the positive radial 
solution $u_0$ of \eqref{Reduced equation annulus} has been first proved by Ni 
and Nussbaum~\cite{Ni-Nussbaum}. In the case $m>0$, the uniqueness is due to 
Tang~\cite{tang} and Felmer, Mart\'inez and Tanaka~\cite{Felmer et al} for $N 
\geq 3$ and $N=2$, respectively.

\begin{proof}
  We argue by contradiction and assume that equality holds in (\ref{eq:conclusion-lemma-annulus}). Then $u_0$ is a minimizer for $\scrC_{\alpha,m,p}(\A_r)$, which implies, in particular, that
  \begin{equation}
    \label{eq:contra-consequence}
   R_{\alpha,m,p}'(u_0)v=0 \quad \text{and}\quad R_{\alpha,m,p}''(u_0)(v,v) \ge 0.
   \end{equation}
In the following, we write $R_{\alpha,m,p} = \frac{Z(u)}{N(u)}$ for $u \in 
H^1(\A_r) \setminus \{0\}$ with 
	$$
		Z(u)  := \int_{\A_r} \left( |\nabla u|^2 -\alpha^2 |\del_\theta u|^2 + m u^2 \right) \, dx  \quad \text{and}\quad 
		N(u) := {\left( \int_{\A_r} |u|^p \,dx \right)^\frac{2}{p}} .
	$$
	The first property in (\ref{eq:contra-consequence}) then implies  
	$N(u_0)Z'(u_0)v= Z(u_0)N'(u_0)v$ and consequently
	$$
	N(u_0)^3 [R_{\alpha,m,p}]''(u_0)(v,v)  = N(u_0)^2 Z''(u_0)(v,v) - Z(u_0) 
	N(u_0) N''(u_0)(v,v)
	$$
	for $v \in H^1_0(\A_r)$.
Therefore, the second property in (\ref{eq:contra-consequence}) yields
	$$
  Z''(u_0)(v,v) - \frac{Z(u_0)}{N(u_0)} N''(u_0)(v,v) \ge 0 .
	$$
	Moreover, noting that $u_0$ is a weak solution of \eqref{Reduced equation 
	annulus} and therefore $Z(u_0)=N(u_0)^\frac{p}{2}$, we conclude that 
	\begin{align*} 
		0 \le & \frac{1}{2} \left(  Z''(u_0)(v,v) - \frac{Z(u_0)}{N(u_0)} N''(u_0)(v,v) \right) \\
		= &  \int_{\A_r} \left( |\nabla v |^2 -\alpha^2 |\del_\theta v|^2 + m v^2  \right) \, dx  - (p-1)    \int_{\A_r} |u_0|^{p-2} v^2 \,dx \\
		&  +  (p-2) N(u_0)^{-\frac{p}{2}}  \left( \int_{\A_r} |u_0|^{p-2} u_0 \,v \, dx \right)^2 .
	\end{align*}
	This, however, contradicts (\ref{eq:second-con-annulus}) since $ \int_{\A} 
	|u_0|^{p-2} u_0\, v \, dx=0$ by (\ref{eq:first-con-annulus}). The proof is 
	thus finished.
\end{proof}
\begin{proof}[Proof of Theorem~\ref{Theorem: Symmetry breaking on annulus}] Our goal is to construct a function that satisfies the conditions of Lemma~\ref{Lemma: Nonradiality condition on annulus}.
	To this end, let $\mu_1$ be the first eigenvalue of the weighted eigenvalue problem 
	$$
	\left\{ 
	\begin{aligned}
		-w_{rr} - \frac{N-1}{r} + m w - (p-1) |u_0(r)|^{p-2} w & = 
		\frac{\mu}{r^2} w \qquad \text{in $(r,1)$,} \\
		w(r)=w(1) & = 0 ,
	\end{aligned}
	\right.
	$$
	and let $w$ the up to normalization unique positive eigenfunction.
	Moreover, let $Y \in C^\infty(\mathbb{S}^{N-1})$ be a spherical harmonic 
	of degree 
	1 such that $\del_\theta^2 Y=-Y$
	and set $v(r,\omega):=w(r) Y(\omega)$. Then condition 
	(\ref{eq:first-con-annulus}) of Lemma~\ref{Lemma: Nonradiality condition on 
	annulus} is satisfied. By construction, $v$ also satisfies
	$$
	-\Delta v +\alpha^2 \del_\theta^2 v + (m-\alpha^2) v - (p-1)|u_0|^{p-2} v = \frac{\mu_1+N-1}{|x|^2} v -\alpha^2 v
	$$
	and testing this equation with $v$ itself then yields
	\begin{align} 
		&\int_{\A_r} \left( |\nabla v|^2 -\alpha^2 |\del_\theta v|^2 + m v^2 - (p-1)|u_0|^{p-2} v^2 \right) \, dx \label{first-mu-ineq}\\
		=& (\mu_1+(N-1)) \int_{\A_r} \frac{v^2}{|x|^2} \, dx - \alpha^2 \int_{\A_r} v^2 \, dx 
		\leq (\mu_1+(N-1) - r^2 \alpha^2)  \int_{\A_r} \frac{v^2}{|x|^2} \, dx .
		\nonumber
	\end{align}
	We recall that $\mu_1$ can be characterized by
	\begin{align*}
		\mu_1 =	\min_{ \phi \in H^1_{0,rad}(\A_r) \setminus \{0\}}\frac{\int_{\A_r} \left(|\nabla \phi|^2 + m \phi^2 \right) \, dx - (p-1) \int_{\A_r} |u_0|^{p-2} \phi^2 \, dx }{\int_{\A_r} \frac{\phi^2}{|x|^2} \, dx  } .
	\end{align*}
	Taking $\phi=u_0$ in this quotient, we obtain the estimate
	\begin{align}
		\mu_1 & \leq	\frac{\int_{\A_r} \left(|\nabla u_0|^2 + m u_0^2 \right) \, dx - (p-1) \int_{\A_r} |u_0|^{p}  \, dx }{\int_{\A_r} \frac{u_0^2}{|x|^2} \, dx  } \label{mu-1-first-est}\\
                      & = - (p-2)	\frac{\int_{\A_r} \left(|\nabla u_0|^2 + m u_0^2 \right) \, dx  }{\int_{\A_r} \frac{u_0^2}{|x|^2} \, dx  }  \le -(p-2) \Bigl( \frac{\int_{\A_r}|\nabla u_0|^2 dx  }{\int_{\A_r} \frac{u_0^2}{|x|^2} \, dx}+mr^2\Bigr)
                        \nonumber
        \end{align}
	We now distinguish the cases $N \ge 3$ and $N=2$. 
        If $N \geq 3$, Hardy's inequality gives
        \begin{equation}
          \label{eq:hardy}
        \int_{\A_r} |\nabla u_0|^2  \, dx  \geq \left( \frac{N-2}{2}\right)^2 \int_{\A_r} \frac{u_0^2}{|x|^2} \, dx.
	\end{equation}
	Alternatively, we may also estimate, since $u_0$ is radial,
        \begin{align}
        \int_{\A_r} |\nabla u_0|^2  \, dx &= \int_{r}^1 \rho^{N-1}|\partial_r u_0(\rho)|^2 d\rho \ge
                                               r^{N-1} \int_{r}^1|\partial_r 
                                               u_0(\rho)|^2 d\rho  \nonumber
                                               \\ 
                                              &\ge 
                                              \Bigl(\frac{\pi}{1-r}\Bigr)^2r^{N-1}
                                               \int_{r}^1 u_0^2(\rho) d\rho 
                                              \nonumber
          \ge
\Bigl(\frac{\pi}{1-r}\Bigr)^2 r^{N-1} \int_{r}^1 \rho^{N-3} u_0^2(\rho) d\rho \\
& = \Bigl(\frac{\pi}{1-r}\Bigr)^2 r^{N-1}         
\int_{\A_r} \frac{u_0^2}{|x|^2} \, dx.   \label{eq:hardy-alternative}
        \end{align}
        Thus (\ref{mu-1-first-est}) gives $\mu_1 < -(p-2) \kappa(r,m)$ with 
        $\kappa(r,m)$ given in~(\ref{eq:def-tilde-mu}) for $N \ge 3$. Inserting 
        this into (\ref{first-mu-ineq}) yields
	\begin{align*} 
		& \int_{\A_r} \left( |\nabla v|^2 -\alpha^2 |\del_\theta v|^2 + m v^2 - (p-1)|u_0|^{p-2} v^2 \right) \, dx \\
		<& - (p-2)\kappa + N-1 - r^2 \alpha^2  ,
	\end{align*}
	i.e., condition (\ref{eq:second-con-annulus}) of Lemma~\ref{Lemma: Nonradiality condition on annulus} is satisfied if $p> \frac{N-1 - r^2 \alpha^2}{\kappa} +2$, which holds by assumption.

        Hence $v$ satisfies the assumptions of Lemma~\ref{Lemma: Nonradiality condition on annulus}, which implies that (\ref{eq:conclusion-lemma-annulus}) holds and therefore every minimizer for (\ref{eq:intro-minimization-annulus}) is nonradial. Let $u$ denote such a nonradial ground state solution, and suppose by contradiction that $\del_\theta u_0\equiv 0$ The nonradiality of $u$ implies that there exists an isometry $A \in O(N)$ such that $\tilde u := u \circ A \in H^1_0(\A_r) $ satisfies $ \del_\theta \tilde u \not \equiv 0$. Since $A$ is an isometry, this implies
	$$
	R_{\alpha,m,p}(\tilde u)=  R_{\alpha,m,p}(u) - \alpha^2  \frac{\int_{\A_r}  |\del_\theta \tilde u|^2  \, dx}{\left(\int_{\A_r} |u|^p \, dx \right)^\frac{2}{p}} < R_{\alpha,m,p}(u)=	\scrC_{1,m,p}(\A_r),
	$$
	which contradicts (\ref{eq:intro-minimization-annulus}). Consequently, we have $\del_\theta u_0 \not \equiv 0$, which yields that $u_0$ is $x_1$-$x_2$-nonradial. This finishes the proof in the case $N \ge 3$.
        
        It remains to consider the case $N=2$. In this case, we replace the estimates (\ref{eq:hardy}) and (\ref{eq:hardy-alternative}) by
        $$
        \int_{\A_r} |\nabla u_0|^2  \, dx  \ge \Bigl(\frac{\pi}{1-r}\Bigr)^2 r^{N-1} \int_{r}^1 u_0^2(\rho) d\rho \ge
         \Bigl(\frac{\pi}{1-r}\Bigr)^2 r^{N}         
\int_{\A_r} \frac{u_0^2}{|x|^2} \, dx.
$$
Combining this with (\ref{mu-1-first-est}) we again get $\mu_1 < -(p-2) \kappa(r,m)$ with $\kappa(r,m)$ given in~(\ref{eq:def-tilde-mu}) for $N=2$. We may thus complete the proof as above.
\end{proof}

\section{Riemannian models}
\label{sec:riemannian-models}

So far we only used the inequality stated in Theorem~\ref{general-degenerate-Sobolev-inequality-whole space} in the case $s=1$.
We shall now consider an application for general $s \in (0,2]$ by considering 
\eqref{NLKG} on a special class of Riemannian manifolds with boundary.

Indeed, consider a Riemannian model, i.e., a Riemannian manifold $(M,g)$ of dimension $N \geq 2$ admitting a pole $o \in M$ and whose metric is (locally) given by
\begin{equation} \label{Riemannian model metric}
ds^2 = dr^2 + (\psi(r))^2 d\Theta^2 
\end{equation}
for $r>0$, $\Theta \in \mathbb{S}^{N-1}$, where $d\Theta^2$ denotes the 
canonical metric 
on $\mathbb{S}^{N-1}$ and $\psi$ is a smooth function that is positive on 
$(0,\infty)$. 
Moreover, we assume
\begin{equation} \label{psi conditions Riemannian models}
\psi'(0)=1 \quad \text{and} \quad \psi^{(2k)}(0)=0 \qquad \text{for $k \in \N_0$.} 
\end{equation}
For such a Riemannian model, the associated Laplace-Beltrami operator becomes
$$
\Delta_g f= \frac{1}{\psi^{N-1}} \del_r \left( \psi^{N-1} \del_r f \right) + 
\frac{1}{\psi^2} \Delta_{\mathbb{S}^{N-1}} f
$$
where $\Delta_{\mathbb{S}^{N-1}}$ denotes the Laplace-Beltrami operator on 
$\mathbb{S}^{N-1}$.  Riemannian models are of independent geometric interest, 
we refer to \cite{Berchio-Ferrero-Grillo} and the references therein for a 
broader overview.

In the following, we focus on the case $M=\B$, $o=0 \in \R^N$ and again study the problem
\begin{equation}  \label{NLKG Riemannian model}
\left\{ 
\begin{aligned}
\del_t^2 v - \Delta_g v +m v & = |v|^{p-2} v \quad && \text{in $M$} \\
v & = 0 && \text{on $\del M$}
\end{aligned}
\right.
\end{equation}
where $2<p<\frac{2N}{N-2}$ and $m>-\lambda_1(M))$ with $\lambda_1(M)$ denoting 
the first Dirichlet eigenvalue of $-\Delta_g$ on $M$. We stress that the case 
$\psi(r)=r$ corresponds to the classical flat metric on $\B$ considered in 
detail in the previous sections. As a further example, the hemisphere of radius 
$\frac{2}{\pi}$ given by $\mathbb{S}^{N}_{2/\pi,+} := \{ x \in 
\mathbb{S}^N_{2/\pi}: x_N>0\}$ can be interpreted as a 
Riemannian model. Indeed, using polar coordinates $(r,\omega) \in (0,1) 
\times \mathbb{S}^{N-1}$, a parametrization $\B \to \mathbb{S}^N_{2/\pi,+}$ is 
given by
$(r,\omega) \mapsto \frac{2}{\pi}(\sin (\frac{\pi}{2} r) \, \omega, \cos 
(\frac{\pi}{2} r) )$. This 
yields a Riemannian model with $\psi(r) = \sin (\frac{\pi}{2} r)$.
Similarly, Riemannian models can also be used for spherical caps.

As in the flat case, we restrict our attention to solutions of (\ref{NLKG 
Riemannian model}) of the form $v(t,x)=u(R_{\alpha t} (x))$, where $R_{\theta} 
\in O(N+1)$ denote a planar rotation in $\R^N$ with angle $\theta$. We may 
again assume, without loss of generality, that 
$$
R_{\theta}(x)= (x_{1} \cos \theta  + x_2 \sin \theta , -x_{1} \sin \theta  + x_2 \cos \theta ,x_3,\dots,x_N) \qquad \text{for $x \in \R^N$},
$$
so $R_{\theta}$ is the rotation in the $x_{1}-x_2$-plane. This leads to the 
reduced equation
\begin{equation} \label{Reduced equation Riemannian model}
\left\{ 
\begin{aligned} 
-\Delta_g u + \alpha^2 \del_\theta^2 u + m u & = |u|^{p-2} u \quad && \text{in $M$} \\
u & = 0 && \text{on $\del M$}
\end{aligned}
\right.
\end{equation}
with the differential operator $\partial_\theta  = x_1 \partial_{x_2} - x_2 \partial_{x_1}$ associated to the Killing vector field $x \mapsto (-x_2,x_1,0,\dots,0)$ on $M$. We may then again study the quotient
$$
\qquad R^M_{\alpha,m,p}: H_0^1(M) \setminus \{0\} \to \R, \qquad \qquad R^M_{\alpha,m,p}(u):= \frac{\int_M \left(|\nabla_g u|^2 - \alpha^2 |\del_\theta u|^2 + m u^2 \right) \, dg}{\|u\|_{L^p(M)}^2}
$$ 
and its minimizers, i.e.
$$
\scrC_{\alpha,m,p}(M) :=\inf_{u \in C_c^1(\B) \setminus \{0\}} 
R_{\alpha,m,p}^M(u).
$$
Analogously to Theorem~\ref{main theorem}, we can use the general inequality 
stated in Theorem~\ref{general-degenerate-Sobolev-inequality-whole space} to 
give the following result.

\begin{theorem}
  \label{main-theorem-rm}
  Let $s \in (0,2]$, and let $(M,g)$ be a Riemannian model with $M=\B$ and 
  associated function $\psi \in C^\infty[0,1)$ satisfying
  (\ref{psi conditions Riemannian models}) and 
	\begin{equation} \label{psi condition main theorem}
	c_1(1-r)^s \le 1- \psi(r) \le c_2 (1-r)^s \qquad \text{for $r \in (0,1)$ with constants $c_1,c_2>0$.}
	\end{equation}
Moreover, let $m > -\lambda_1(M)$, and let $2<p<2^*$.
  \begin{itemize}
  \item[(i)] If $\alpha \in (0,1)$, then there exists a ground state solution of (\ref{Reduced equation}).
  \item[(ii)] We have 
    \begin{equation}
      \label{eq:C-identities-rm}
	\scrC_{1,m,p}(M) =0 \quad \text{for} \   p>2_s^*, \qquad \text{and} \quad 
	\scrC_{1,m,p}(M)>0 \quad \text{for} \  p \leq 2_s^*.
    \end{equation}
	Moreover, for any $p \in (2_s^*,2^*)$, there exists $\alpha_p \in (0,1)$ with the property that
        $$
        \scrC_{\alpha,m,p}(M)< \scrC_{0,m,p}(M) \qquad \text{for $\alpha \in (\alpha_p,1]$}
        $$
        and therefore every ground state solution of (\ref{Reduced equation Riemannian model}) is $x_1$-$x_2$-nonradial for $\alpha \in (\alpha_p,1)$.
      \end{itemize}
    \end{theorem}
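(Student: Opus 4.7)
The plan is to adapt the three--step architecture of Theorem~\ref{main theorem}, replacing the exponent $2_{\text{\tiny $1$}}^*$ by $2^*_s$ throughout and invoking Theorem~\ref{existence of minimizers - whole space - intro} with the parameter $s$ prescribed by \eqref{psi condition main theorem}.

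For part~(i), I first prove the analogue of Proposition~\ref{lambda-1-alpha-pos}, namely $\scrC_{\alpha,0,2}(M)=\lambda_1(M)$ for $\alpha\in[0,1]$. Expanding $u\in C^1_c(M)$ into spherical harmonics $Y_{\ell,k}$ in the angular variables of the polar decomposition \eqref{Riemannian model metric} and applying Parseval, the angular contribution equals
$$
\sum_{\ell,k}\|Y_{\ell,k}\|_{L^2}^2\int_0^1 \psi^{N-1}\Bigl(\tfrac{\ell(\ell+N-2)}{\psi(r)^2}-\alpha^2\ell_k^2\Bigr)|\phi_{\ell,k}(r)|^2\,dr,
$$
which is nonnegative because $\psi\le 1$ on $[0,1]$ by \eqref{psi condition main theorem} and $|\ell_k|\le \ell$ implies $\ell_k^2\le\ell(\ell+N-2)$. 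Coercivity of the resulting quadratic form, together with the standard compact embedding $H^1_0(M)\hookrightarrow L^p(M)$ for $p<2^*$, yields a minimizer by the direct method; since $\alpha^2<1\le \psi(r)^{-2}$ the operator $-\Delta_g+\alpha^2\partial_\theta^2$ is uniformly elliptic on $M$ for $\alpha<1$, so classical regularity produces a ground state solution.

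For the vanishing part of~(ii), fix $x_0$ on the great circle $\gamma_M=\{x\in\partial M:x_3=\cdots=x_N=0\}$ where $\partial_\theta$ has no fixed points, introduce adapted coordinates $(t,\theta,\vartheta_1,\ldots,\vartheta_{N-2})$ with $t=1-r$, and test the Rayleigh quotient against the anisotropic rescalings of Remark~\ref{Remark: Optimality of exponents}(i), in which the $\partial_\theta$ direction scales by $\lambda^{1+s/2}$ and the remaining $N-1$ directions by $\lambda$. From \eqref{psi condition main theorem} one has $\psi(r)^{-2}-1=(2c_2+o(1))t^s$ as $t\to 0^+$, so after rescaling the numerator converges up to lower--order terms to
$$
\int_{\R^N_+}\Bigl(\sum_{i=1}^{N-1}|\partial_i v|^2+2c_2\,x_1^s|\partial_N v|^2\Bigr)dx,
$$
while the Riemannian volume factor $\psi^{N-1}\prod_k\sin^{N-1-k}\vartheta_k$ tends to $1$ on the shrinking support and the $L^p$ norm scales with a different power. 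The dimensional count of Proposition~\ref{Prop: p>pstar} combined with \eqref{Remark: Optimality of exponents-eq} then yields $\scrC_{1,m,p}(M)=0$ for $p>2_s^*$.

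The positivity $\scrC_{1,m,p}(M)>0$ for $p\le 2_s^*$ is the crux and mirrors Lemma~\ref{lemma: inequality close to boundary} and Theorem~\ref{degenerate Sobolev inequality - ball}. For each $x_0\in\gamma_M$ and each $\eps>0$, in the same adapted chart one proves
$$
\int_{\Omega_{x_0,\delta}}\bigl(|\nabla_g u|^2-|\partial_\theta u|^2\bigr)dg\ \ge\ (1-\eps)\,C_s\,\|u\|_{L^{2_s^*}(\Omega_{x_0,\delta})}^{2}\quad\text{for } u\in C^1_c(\Omega_{x_0,\delta}),
$$
with $C_s>0$ a fixed multiple of $\cS_s(\R^N_+)$ coming from the scaling identity \eqref{eq:Degenerate Inequality Scaling}. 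The key point is that $\psi\le 1$ keeps the Riemannian volume form below the flat one, while the lower bound in \eqref{psi condition main theorem} yields $\psi(r)^{-2}-1\ge 2c_1 t^s(1-o(1))$ and hence exactly the half--space weight $x_1^s$ on the $\partial_\theta$ term; so the push--forward $\tilde v\in C^1_c(\R^N_+)$ of $u$ is estimated via Theorem~\ref{existence of minimizers - whole space - intro}. A partition of unity subordinate to a finite cover of $\gamma_M$ by such neighborhoods, supplemented by an interior set where $-\Delta_g+\partial_\theta^2$ is uniformly elliptic, combined with absorption of the resulting $L^2$ lower--order term using the Poincar\'e estimate from~(i), gives the global degenerate Sobolev inequality on $M$, whence \eqref{eq:C-identities-rm}.

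Part~(iii) is then immediate: Lemma~\ref{continuity of inf}(i)(ii) transfers verbatim to $(M,g)$, so $\alpha\mapsto\scrC_{\alpha,m,p}(M)$ is continuous and nonincreasing on $[0,1]$. Since $\scrC_{0,m,p}(M)>0$ by the standard Sobolev embedding and $\scrC_{1,m,p}(M)=0$ when $p>2^*_s$, there exists $\alpha_p\in(0,1)$ with $\scrC_{\alpha,m,p}(M)<\scrC_{0,m,p}(M)$ for $\alpha\in(\alpha_p,1]$; any radial ground state would give $R_{\alpha,m,p}(u)=R_{0,m,p}(u)\ge\scrC_{0,m,p}(M)$, contradicting $R_{\alpha,m,p}(u)=\scrC_{\alpha,m,p}(M)$, so every ground state is $x_1$--$x_2$--nonradial. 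The principal technical obstacle throughout is the uniform localization around $\gamma_M$: one must control both the volume distortion and the anisotropic weight to leading order uniformly in $x_0\in\gamma_M$, which is precisely the role of the two--sided bound $c_1(1-r)^s\le 1-\psi(r)\le c_2(1-r)^s$ in \eqref{psi condition main theorem}.
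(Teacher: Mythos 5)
Your argument follows the same route as the paper: pass to polar coordinates adapted to the Riemannian model, use the two-sided bound coming from the hypothesis \eqref{psi condition main theorem} to control the degeneracy of the quadratic form near the great circle $\gamma_M$, test against anisotropic rescalings to obtain $\scrC_{1,m,p}(M)=0$ for $p>2_s^*$, localize near $\gamma_M$ and invoke Theorem~\ref{existence of minimizers - whole space - intro} together with a partition of unity for the positivity $\scrC_{1,m,p}(M)>0$ when $p\le 2_s^*$, and finally deduce symmetry breaking from continuity and monotonicity of $\alpha\mapsto\scrC_{\alpha,m,p}(M)$ exactly as in Lemma~\ref{continuity of inf}. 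A minor imprecision worth noting: the degenerate weight in the quadratic form is $\frac{h_{N-1}}{\psi^2}-1$, not $\psi^{-2}-1$; it carries an additional latitude contribution $\sum_k(\vartheta_k-\pi/2)^2$ near $\gamma_M$ that the paper records explicitly in \eqref{eq:key-two-sided-est} and that produces the $s<2$ versus $s=2$ dichotomy in the limit $d_u^2$ — but since $h_{N-1}\ge 1$ your lower bound on the weight transfers to the correct quantity for the positivity direction, and the rescaling estimate only needs boundedness of the numerator for the vanishing direction, so nothing breaks.
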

    \begin{proof}
Since the proof is completely parallel to the proof of Theorem~\ref{main 
theorem}, we omit some details and focus our attention on showing where 
condition (\ref{psi condition main theorem}) enters. It is again useful to 
introduce polar coordinates $(r,\theta,\vartheta_1,\dots,\vartheta_{N-2}) \in 
U:= (0,1) \times (-\pi,\pi) \times (0,\pi)^{N-2}$ given by 
  \begin{align*}
    (x_1,\dots,x_N) =& (r \cos \theta \sin\vartheta_1 \cdots \sin \vartheta_{N-2},\: r \sin \theta \sin\vartheta_1 \cdots \sin \vartheta_{N-2},\nonumber\\
    &r \cos \vartheta_{1}, r \sin \vartheta_1 \cos \vartheta_2, \dots, r \sin 
    \vartheta_1 \dots \sin \vartheta_{N-3} \cos \vartheta_{N-2},r \sin 
    \vartheta_1 \dots  \vartheta_{N-2} ). \label{polar-coordinates-rm}
  \end{align*}
  In the following, we will abbreviate the coordinates
  $(\theta,\vartheta_1,\ldots,\vartheta_{N-2})$ to $\Theta$ for simplicity.
In these coordinates, the metric \eqref{Riemannian model metric} becomes
$$
g=dr^2 + (\psi(r))^2 \left( \sum \limits_{i=1}^{N-2} \left( \prod_{k=1}^{i-1} \sin^2 \vartheta_k \right) d\vartheta_k^2 + \left( \prod_{k=1}^{N-1} \sin^2 \vartheta_k \right) d\theta^2 \right),
$$
and therefore the quadratic form associated to the operator $-\Delta_g + \alpha^2 \del_\theta^2$ is given by 
	\begin{align*}
	& \int_{M} \left( |\nabla_g u|^2 - |\del_\theta u|^2 \right) \, dg  \\
	= & \int_U  \left( |\del_r u|^2 + \frac{1}{\psi^2} \sum \limits_{i=1}^{N-2} 
	h_i |\del_{\vartheta_i} u|^2 + \left(\frac{h_{N-1}}{\psi^2}-1\right) 
	|\del_\theta u|^2 \right) |g| \, d(r,\Theta) 	\end{align*}
for $u \in C^1_c(M)$ with
        \begin{equation*}
        |g|(r,\Theta) = (\psi(r))^{N-1}  \prod_{k=1}^{N-2} 
        \sin^{N-1-k}\vartheta_k ,\qquad 
		h_i(r,\Theta) = \prod_{k=1}^{i-1} 
		\frac{1}{\sin^2 \vartheta_k}.
        \end{equation*}
Moreover,
$$
\int_{M} |u|^p\,dg = \int_{U} g |u|^p\,d(r,\Theta)\qquad \text{for $u \in 
C^1_c(M)$ and $p>1$.}
$$
Next we note that, as a consequence of  (\ref{psi condition main theorem}), we have
      \begin{equation}
        \label{eq:limit-Theta-0}
      |g|(\Theta_0)=1 \quad \text{and}\quad h^i(\Theta_0)=1\qquad \text{for $i 
      =1,\dots,N$ with $\Theta_0:= 
      \left(1,0,\frac{\pi}{2},\dots,\frac{\pi}{2}\right)$.}
      \end{equation}
Moreover, by assumption~(\ref{psi condition main theorem}), the function $\frac{h_{N-1}}{\psi^2}-1$ satisfies
\begin{equation}
  \label{eq:key-two-sided-est}
\tilde c_1 \Bigl((1-r)^s + \sum_{k=1}^{N-2} 
\bigl(\vartheta_k-\frac{\pi}{2}\bigr)^2\Bigr)\le   
\frac{h_{N-1}}{\psi^2}(r,\Theta)-1 \le \tilde c_2 \Bigl((1-r)^s + 
\sum_{k=1}^{N-2} \bigl(\vartheta_k-\frac{\pi}{2}\bigr)^2 \Bigr)
\end{equation}
for $(r,\theta,\vartheta_1,\dots,\vartheta_{N-2}) \in U_0$ with suitable constants $\tilde c_1, \tilde c_2>0$, where
$$
U_0 := \left(\frac{1}{2},1\right) \times \bigl(-\pi,\pi \bigr) \times 
\left(\frac{\pi}{4},\frac{3}{4}\pi\right)^{N-2} \subset U.
$$
We now consider a fixed function $u \in C^1_c(U_0) \setminus \{0\} \subset C^1_c(U) \setminus \{0\}$, which, regarded as a function of polar coordinates, gives rise to a function in $C^1_c(M)$. For $\lambda \in (0,1)$ we consider the map
      $$
      \Lambda_\lambda : U_0 \to U_0, \qquad  (r,\Theta) \mapsto
      \left(1 + \lambda(1-r), \lambda^{1+\frac{s}{2}} \theta, 
      \frac{\pi}{2}+\lambda\left(\vartheta_1 - 
      \frac{\pi}{2}\right),\dots,\frac{\pi}{2}+\lambda\left(\vartheta_2 - 
      \frac{\pi}{2}\right)\right),
      $$
      and we define $u_\lambda := u \circ \Lambda_\lambda^{-1} \in C_c^1(U_0) 
      \setminus \{0\}$ for $\lambda \in (0,1).$
 
Using (\ref{eq:limit-Theta-0}) and (\ref{eq:key-two-sided-est}), we find that 
      \begin{align*}
        \lambda^{-\frac{2N+s}{p}}\Bigl( \int_{U} g |u_\lambda|^p 
        \,d(r,\Theta)\Bigr)^{\frac{2}{p}}  
        &= \Bigl( \int_{U} g \circ \Lambda_\lambda |u|^p 
        \,d(r,\Theta)\Bigr)^{\frac{2}{p}} 
        \nonumber\\
        & \to \Bigl(\int_{U} |u|^p     
        \,d(r,\Theta)\Bigr)^{\frac{2}{p}} =:  
        c_u(p)
      \end{align*}
as $\lambda \to 0^+$ and
        \begin{align}
          &\limsup_{\lambda \to 0^+}\lambda^{2-\frac{s}{2}-N} \int_U  \Bigl( 
          |\del_r u_\lambda |^2 + \frac{1}{\psi^2} \sum \limits_{i=1}^{N-2} h_i 
          |\del_{\vartheta_i} u_\lambda|^2 + 
          \left(\frac{h_{N-1}}{\psi^2}-1\right) |\del_\theta u_\lambda|^2 
          \Bigr) |g| \, d(r,\Theta)
          \nonumber \\
          =& \limsup_{\lambda \to 0^+}\int_U  \Bigl( |\del_r u |^2 + 
          \frac{1}{\psi^2} \circ  \Lambda_\lambda \sum \limits_{i=1}^{N-2} h_i 
          \circ \Lambda_\lambda |\del_{\vartheta_i} u|^2 + 
          \lambda^{-s}\Bigl(\frac{h_{N-1}}{\psi^2}\circ \Lambda_\lambda 
          -1\Bigr) |\del_\theta u|^2 \Bigr) |g|\circ \Lambda \, 
          d(r,\Theta)\nonumber\\
          \le& d_u^1 + d_u^2     \label{second-lim-rm}
	\end{align}
        with
        $$
        d_u^1 := \int_U  \Bigl( |\del_r u |^2 + \sum \limits_{i=1}^{N-2} 
        |\del_{\vartheta_i} u|^2\Bigr) d(r,\Theta)
        $$
        and
        \begin{align*}
          d_u^2 &= \tilde c_2 \limsup_{\lambda \to 0^+} \int_U   \Bigl((1-r)^s + \lambda^{2-s}
                  \sum_{k=1}^{N-2} \bigl(\vartheta_k-\frac{\pi}{2}\bigr)^2 
                  \Bigr)|\del_\theta u|^2 d(r,\Theta)\\
                &= \left \{
                  \begin{aligned}
                    &\tilde c_2 \int_U (1-r)^s|\del_\theta u|^2\, 
                    d(r,\Theta),&&\qquad s \in (0,2),\\
                    &\tilde c_2 \int_U \Bigl((1-r)^2 + \sum \limits_{i=1}^{N-2} 
                    \bigl(\vartheta_k-\frac{\pi}{2}\bigr)^2\Bigr) |\del_\theta 
                    u|^2\, d(r,\Theta),&&\qquad s = 2.
                  \end{aligned}
                  \right.                  
        \end{align*}
      It thus follows that
      $$
      \scrC_{1,m,p}(M) \le \limsup_{\lambda \to 0^+} R_{1,m,p}^M(u_\lambda) = 
      \limsup_{\lambda \to 0^+}\frac{
        \lambda^{N+ \frac{s}{2}-2}(d_u^1+d_u^2) + \lambda^{\frac{2N+s}{2}}c_u(2)}{\lambda^{\frac{2N+s}{p}}c_u(p)}
      = 0 \quad \text{if $p > 2_s^*$.}
      $$
      This shows the first identity in (\ref{eq:C-identities-rm}). To see the second identity in (\ref{eq:C-identities-rm}), we argue as in Section~\ref{sec:degen-sobol-ineq}. More precisely, we first note that it is sufficient to consider the case $p=2_s^*$, and then we show the inequality
      \begin{align*}
        \Bigl(\int_{U} g |u|^{2^*_s}\,d(r,\Theta)\Bigr)^{\frac{2}{2^*_s}} \le 
        C \int_U  \left( |\del_r u|^2 + \frac{1}{\psi^2} \sum 
        \limits_{i=1}^{N-2} h_i |\del_{\vartheta_i} u|^2 + 
        \left(\frac{h_{N-1}}{\psi^2}-1\right) |\del_\theta u|^2 \right) |g| \, 
        d(r,\Theta)
      \end{align*}
      for functions $u \in C^1_c(U_0)$ with a suitable constant $C>0$. For this, we use Theorem~\ref{existence of minimizers - whole space - intro} and the first inequality in (\ref{eq:key-two-sided-est}). The argument is then completed by using the rotation invariance of the problem and a partition of unity argument to localize the problem.
    \end{proof}

    \begin{remark}
      \label{final-remark-rm}
      \begin{enumerate}
      	\item[(i)]
      	In the case of a hemisphere mentioned earlier, i.e. 
      	$\psi(r) = 
      	\frac{2}{\pi}\sin (\frac{\pi}{2} r)$, Theorem~\ref{main-theorem-rm}
        yields nonradial ground 
      	state solutions for 
      	$p>2^*_s=\frac{2(N+1)}{N-1}$. Notably, this corresponds to the critical 
      	exponent for generalized travelling waves on the sphere 
      	$\mathbb{S}^N$ found in \cite{Taylor,Mukherjee,Mukherjee2}. 
      	In fact, our approach based on Theorem~\ref{existence of minimizers - whole space - intro} can be used to give an alternative proof for the 
      	existence of nontrivial solutions and the embeddings stated in 
      	\cite[Proposition 3.2]{Taylor} and \cite[Proposition 1.2 + Lemma 
      	1.3]{Mukherjee2}.
    \item[(ii)]
	      Theorem~\ref{main-theorem-rm} leaves open the case $s>2$. 
	      Note that the two-sided estimate \eqref{eq:key-two-sided-est} 
	       needs to be analyzed more carefully if $s>2$ 
	      and $N \geq 3$, as the leading order term is then $2$ in place of 
	      $s$. In this case, if \eqref{psi condition main 
	      theorem} 
	      holds for some $s>2$, the conclusion will instead be
	      \begin{equation*}
	      \scrC_{1,m,p}(M) =0 \quad \text{for} \   p>2_{\text{\tiny $2$}}^*, 
	      \qquad 
	      \text{and} \quad \scrC_{1,m,p}(M)>0 \quad \text{for} \  p \leq 
	      2_{\text{\tiny $2$}}^*.
	      \end{equation*}
          For $N=2$, on the other hand, \eqref{eq:key-two-sided-est} 
          suggests that Theorem~\ref{main-theorem-rm} also holds for $s>2$.
    \end{enumerate}
    \end{remark}

%
%
%
%
%
%
%
%
%
%
\appendix
\section{Boundedness of solutions}
In the proof of the regularity properties of ground states in the case 
$\alpha=1$ stated in Proposition~\ref{Prop: ground states for critical case}, 
we used the following:
\begin{lemma} \label{Lemma: Moser iteration}
	Let $2<p<2_{\text{\tiny $1$}}^*$, $m>-\lambda_1$ and let $u \in \cH$ be a 
	weak solution of 
	\begin{equation} \label{app: equation}
		-\Delta u + \del_\theta^2 u + m u = |u|^{p-2} u \quad \text{in $\B$}.
	\end{equation}
	Then $u \in L^\infty(\B)$. Furthermore, there exist constants 
	$C=C(N,m),\sigma>0$ such that
	\begin{equation} \label{Linfty bound}
		|u|_\infty \leq C \|u\|_{\cH}^\sigma .
	\end{equation}
	For $m \geq 0$, the constant $C=C(N)>0$ can be chosen independent of $m$.
\end{lemma}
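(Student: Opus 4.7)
The approach is a classical Moser iteration, with the standard Sobolev embedding replaced by the degenerate inequality $\|v\|_{L^{2_{\text{\tiny $1$}}^*}(\B)}^2 \le C_0 \|v\|_\cH^2$ from Theorem~\ref{degenerate Sobolev inequality - ball}. First I would plug the truncated test function $\varphi_K := u \min(|u|,K)^{2(\beta-1)}$, for $\beta \ge 1$ and $K>0$, into the weak formulation of \eqref{app: equation}; boundedness of $\min(|u|,K)^{2(\beta-1)}$ makes $\varphi_K \in \cH$. Setting $w := |u|^{\beta-1} u$ (so that $|\nabla w|^2 = \beta^2 |u|^{2(\beta-1)}|\nabla u|^2$ and similarly for $\del_\theta w$), passing $K\to\infty$ by monotone convergence on the pieces where the integrands have definite sign yields
\begin{equation}
  \label{moser-identity-plan}
  \frac{2\beta-1}{\beta^2}\|w\|_\cH^2 + m\int_\B |u|^{2\beta}\,dx = \int_\B |u|^{p-2+2\beta}\,dx.
\end{equation}

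For $m \ge 0$ the $m$-term is nonnegative and may simply be dropped, giving a bound independent of $m$. For $-\lambda_1(\B) < m < 0$, Proposition~\ref{lambda-1-alpha-pos} provides the Poincar\'e inequality $\lambda_1(\B)\|w\|_{L^2}^2 \le \|w\|_\cH^2$, which absorbs the $m$-term at the cost of a factor $\gamma_m := 1 - |m|/\lambda_1(\B) > 0$. In either case,
$\|w\|_\cH^2 \le \frac{\beta^2}{(2\beta-1)\gamma_m}\int_\B |u|^{2\beta+p-2}\,dx$. Applying Theorem~\ref{degenerate Sobolev inequality - ball} on the left (noting $\|w\|_{L^{2_{\text{\tiny $1$}}^*}}^2 = \|u\|_{L^{\beta 2_{\text{\tiny $1$}}^*}}^{2\beta}$) and, on the right, H\"older with the conjugate pair $\bigl(\frac{2_{\text{\tiny $1$}}^*}{p-2},\,\frac{2_{\text{\tiny $1$}}^*}{2_{\text{\tiny $1$}}^*-p+2}\bigr)$ to the factorization $|u|^{p-2}\cdot|u|^{2\beta}$ (licit since $p<2_{\text{\tiny $1$}}^*$), one arrives at
\[
  \|u\|_{L^{\beta 2_{\text{\tiny $1$}}^*}(\B)} \le (A\beta)^{1/(2\beta)} \|u\|_{L^{2_{\text{\tiny $1$}}^*}(\B)}^{(p-2)/(2\beta)} \|u\|_{L^{2a'\beta}(\B)},
\]
where $a' := \frac{2_{\text{\tiny $1$}}^*}{2_{\text{\tiny $1$}}^*-p+2}$ and $A=A(m,N)$.

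The subcriticality $p<2_{\text{\tiny $1$}}^*$ is equivalent to the iteration ratio $\chi := \frac{2_{\text{\tiny $1$}}^*}{2a'} = \frac{2_{\text{\tiny $1$}}^*-p+2}{2}$ being strictly greater than $1$. Choosing $\beta_0 := \chi$ so that $2a'\beta_0 = 2_{\text{\tiny $1$}}^*$, and $\beta_{k+1} := \chi\beta_k$, a straightforward induction yields
\[
  \|u\|_{L^{\beta_k 2_{\text{\tiny $1$}}^*}(\B)} \le \Bigl(\prod_{j=0}^{k-1} (A\beta_j)^{1/(2\beta_j)}\Bigr) \|u\|_{L^{2_{\text{\tiny $1$}}^*}(\B)}^{1+\sum_{j=0}^{k-1}(p-2)/(2\beta_j)}.
\]
Since $\beta_j = \chi^j \beta_0$ grows geometrically, both the infinite product and the exponent sum converge as $k\to\infty$. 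Letting $k\to\infty$ thus gives $\|u\|_{L^\infty(\B)} \le C(m,N) \|u\|_{L^{2_{\text{\tiny $1$}}^*}(\B)}^\sigma$ for some $\sigma>0$, and combining once more with Theorem~\ref{degenerate Sobolev inequality - ball} yields \eqref{Linfty bound}. For $m\ge 0$, the constant $C$ inherits no $m$-dependence from any step.

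The main technical subtlety is the justification of the test function computation and the passage to the limit $K\to\infty$ in Step 1: the operator $-\Delta + \del_\theta^2$ is degenerate elliptic near the great circle $\gamma$ of \eqref{Def: gamma}, so neither maximum principles nor standard $H^1$-based chain-rule arguments are directly available. What makes the scheme work is precisely that Proposition~\ref{lambda-1-alpha-pos} guarantees that $\|\cdot\|_\cH$ is a genuine norm, so that \eqref{moser-identity-plan} produces a coercive estimate on $\|w\|_\cH^2$ even though the individual terms $-|\del_\theta w|^2$ are indefinite. Once \eqref{moser-identity-plan} is in hand, the iteration proceeds exactly as in the classical elliptic case.
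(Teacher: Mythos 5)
Your plan follows the same Moser iteration skeleton as the paper, and the use of Theorem~\ref{degenerate Sobolev inequality - ball} in place of the classical Sobolev embedding, combined with the H\"older split $|u|^{p-2+2\beta}=|u|^{p-2}\cdot|u|^{2\beta}$ with the conjugate pair $\bigl(\tfrac{2^*_{\scaleto{1}{3pt}}}{p-2},\tfrac{2^*_{\scaleto{1}{3pt}}}{2^*_{\scaleto{1}{3pt}}-p+2}\bigr)$ and the geometric exponent $\chi=\tfrac{2^*_{\scaleto{1}{3pt}}-p+2}{2}>1$, is exactly the iteration the paper runs. However, there are two concrete errors.

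First and most seriously, your absorption of the $m$-term for $-\lambda_1(\B)<m<0$ is wrong and invalidates the iteration. From $\frac{2\beta-1}{\beta^2}\|w\|_\cH^2 + m\|w\|_{L^2}^2 \le \int_\B|u|^{p-2+2\beta}\,dx$, Poincar\'e gives $m\|w\|_{L^2}^2 \ge -\frac{|m|}{\lambda_1(\B)}\|w\|_\cH^2$ and hence the coefficient $\frac{2\beta-1}{\beta^2}-\frac{|m|}{\lambda_1(\B)}$ in front of $\|w\|_\cH^2$, \emph{not} $\frac{2\beta-1}{\beta^2}\gamma_m$. Since $\frac{2\beta-1}{\beta^2}\to 0$ while $\frac{|m|}{\lambda_1(\B)}$ is fixed, this coefficient becomes nonpositive once $\beta$ exceeds the larger root of $|m|\beta^2-2\lambda_1(\B)\beta+\lambda_1(\B)=0$; the claimed bound $\|w\|_\cH^2\le\frac{\beta^2}{(2\beta-1)\gamma_m}\int_\B|u|^{2\beta+p-2}\,dx$ holds as written only at $\beta=1$. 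The Moser scheme needs $\beta\to\infty$, so this step fails. The standard repair is \emph{not} to absorb into the $\cH$-norm, but to move the term to the right, use $|u|^{2\beta}\le 1+|u|^{p-2+2\beta}$ (valid because $p>2$), and track the resulting additive constants through the iteration.

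Second, the passage $K\to\infty$ does not produce the identity \eqref{moser-identity-plan}. Two things go wrong: (a) for $u\in\cH$ the integrals $\int_\B|u|^{2\beta-2}|\nabla u|^2\,dx$ and $\int_\B|u|^{2\beta-2}|\del_\theta u|^2\,dx$ may each be infinite while only their difference defines $\|w\|_\cH^2$, so ``monotone convergence on the pieces'' may return $\infty-\infty$; and (b) with $\varphi_K=u\min(|u|,K)^{2(\beta-1)}$ the chain rule produces $\nabla u\cdot\nabla\varphi_K=\frac{2\beta-1}{\beta^2}|\nabla w_{K,\beta}|^2+\frac{(\beta-1)^2}{\beta^2}K^{2\beta-2}\1_{\{|u|\ge K\}}|\nabla u|^2$ (where $w_{K,\beta}:=u\min(|u|,K)^{\beta-1}$), and analogously for $\del_\theta$, so a spurious extra term of indefinite pointwise sign appears. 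Integrating it equals $\frac{(\beta-1)^2}{\beta^2}K^{2\beta-2}\|\psi_K\|_\cH^2\ge 0$ with $\psi_K:=\operatorname{sign}(u)(|u|-K)^+\in\cH$, so one still gets a one-sided estimate, but not \eqref{moser-identity-plan} as an equality, and the product $K^{2\beta-2}\|\psi_K\|_\cH^2$ has no reason to vanish as $K\to\infty$. The paper circumvents both problems by a more careful design: the auxiliary functions satisfy $g'=(h')^2$, so that $\nabla u\cdot\nabla g(u^+)=|\nabla h(u^+)|^2$ \emph{exactly} with no spurious term, the test function $g(u^+)$ lies in $\cH$ for every finite truncation level $L$, and the degenerate Sobolev inequality is applied \emph{before} letting $L\to\infty$, so Fatou/monotone convergence is only ever invoked on sign-definite, already-estimated quantities. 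Reordering your argument accordingly (Sobolev first, then $K\to\infty$) and using the inequality rather than an identity would repair this half.
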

\begin{proof}
	The proof is based on Moser iteration, cf. \cite[Appendix B]{Struwe} and 
	the references therein. 
	
	We fix $L,s \geq 2$ and consider auxiliary functions $h, g \in 
	C^1([0,\infty))$ defined by 
	$$
	h(t) := s \int_0^t \min \{\tau^{s-1},L^{s-1}\}\,d\tau \qquad 
	\text{and}\qquad g(t) := \int_0^t [h'(\tau)]^2 \, d\tau 
	$$
	We note that 
	\begin{equation}
		\label{eq:additional-appendix}
		h(t)= t^s\quad \text{for $t \le L$}\qquad \text{and}\qquad g(t) \leq t 
		g'(t)= t(h'(t))^2 \quad \text{for $t \geq 0$,}  
	\end{equation}
	since the function $t \mapsto h'(t) = s \min \{t^{s-1},L^{s-1}\}$ is 
	nondecreasing.  We shall now show that $w := u^+ \in L^\infty(\B)$, and 
	that $\|w\|_\infty$ is bounded by the right hand side of \eqref{Linfty 
		bound}. Since we may replace $u$ with $-u$, the claim will then follow.
	
	We note that $w \in \cH$ and $\phi:= g(w) \in \cH$ with 
	$$
	\nabla w = \1_{\{u>0\}} \nabla u, \qquad \nabla \phi= g'(w) \nabla w , 
	\qquad \del_\theta w  = \1_{\{u>0\}} \del_\theta u, \qquad \del_\theta 
	\phi= g'(w) \del_\theta w .
	$$
	This follows from the boundedness of $g'$ and the estimate $g(t) \leq s^2 
	t^{2s-1}$ for $t \ge 0$. Testing \eqref{app: equation} with $\phi$ gives
	$$
	\int_{\B} \Bigl(\nabla u \cdot \nabla \phi - (\del_\theta u \ \del_\theta 
	\phi) + m u \phi\Bigr)dx = \int_{\B} |u|^{p-2} u \phi  \, dx ,
	$$
	from where we estimate
	\begin{align}
		\int_{\B} \Bigl(|\nabla h(w) |^2 - (\del_\theta h(w))^2 + mw g(w)\Bigr) 
		dx &=\int_{\B} \Bigl(g'(w) \Bigl(|\nabla w |^2 -(\del_\theta w)^2 
		\Bigr) + mu g(w)\Bigr) dx \nonumber\\
		&=  \int_{\B} |u|^{p-2} u g(w) \, dx \nonumber\\
		& \leq \int_{\B} w^p  (h'(w))^2  \, dx.\label{appendix-first-inequality}
	\end{align}
	Here we used (\ref{eq:additional-appendix}) in the last step. We now fix $r 
	>1$ with $\frac{(p-2)r}{r-1} \geq 2$ and $q>4r$. Combining 
	(\ref{appendix-first-inequality}) with Proposition~\ref{lambda-1-alpha-pos} 
	and Theorem~\ref{degenerate Sobolev inequality - ball}, we obtain the 
	inequality 
	\begin{equation}
		\label{eq:appendix-second-inequality}
		|h(w)|_{p^*}^2    \leq c_0  \int_{\B} w^p  (h'(w))^2  \, dx 
	\end{equation}
	with a constant $c_0=c_0(N,m)>0$. Note that for $m \geq 0$, $c_0$ only 
	depends on $N$. Since 
	$$
	h(t)= t^s,\quad h'(t) = st^{s-1}\quad \text{and}\quad g(t)= s^2 \int_0^t 
	\tau^{2s-2}\,d\tau = \frac{s^2}{2s-1} t^{2s-1}\qquad \text{for $t \le L$,}
	$$
	we may let $L \to \infty$ in (\ref{eq:appendix-second-inequality}) and 
	apply Lebesgue's theorem to obtain 
	$$
	|w^s|_{p^*}^2 \leq c_0 s^2 \int_{\B} w^{p+2s-2} \, dx \le c_0 s^2 
	|w|_{p^*}^{p-2} |w|_{2sq}^{2s} ,
	$$
	where $q=\frac{p^*}{p^*-p+2}$ is the the conjugated exponent to 
	$\frac{p^*}{p-2}$. 
	This yields 
	\begin{equation}
		\label{eq:appendix-third-inequality}
		|w|_{sp^*} \le (c_1s)^{\frac{1}{s}}  |w|_{2sq}\qquad \text{with $c_1:= 
		\Bigl(c_0 
			|w|_{p^*}^{p-2}\Bigr)^{\frac{1}{2}}$,} 
	\end{equation}
	whenever $w \in L^{2sq}(\B)$. 
	We now consider $s=s_n= \rho^n$ for $n \in \N$ with $\rho := 
	\frac{p^*}{2q}= \frac{2+p^*-p}{2} >1$, so that 
	$$
	2s_1 q = p^* \qquad \text{and}\qquad 2s_{n+1} q = s_n p^*  \quad \text{for 
	$n \in \N$.}
	$$
	Iteration of (\ref{eq:appendix-third-inequality}) then gives 
	$$
	|w|_{\rho^n p^*} = |w|_{s_n p^*} \leq |w|_{p^*} \prod_{j=1}^n (c_1 
	\rho^j)^{\rho^{-j}}  \leq c_1^{\frac{\rho}{\rho-1}} c_2 |w|_{p^*}
	$$
	for all $n$ with
	$$
	c_2 : =  \rho^{\sum \limits_{j=1}^\infty j \rho^{-j}} < \infty.
	$$
	It follows that
	\begin{equation} \label{p-norm limit}
		|w|_\infty = \lim_{n \to \infty} |w|_{ \rho^n p^*} \leq 
		c_1^{\frac{\rho}{\rho-1}} c_2 |w|_{p^*} . 
	\end{equation}
	Moreover, by Theorem~\ref{degenerate Sobolev inequality - ball}, we have 
	$$
	c_1 \leq c_1' \|w\|_{\cH}^{\frac{p-2}{2}} \le c_1' 
	\|u\|_{\cH}^{\frac{p-2}{2}}\qquad \text{and}\qquad  |w|_q\le \tilde c  
	\|w\|_{\cH} \le \tilde c  \|u\|_{\cH}  
	$$
	with constants $c_1', \tilde c>0$ depending only on $N$. It thus follows 
	from \eqref{p-norm limit} that
	$$
	|w|_\infty \leq C \|u\|_{\cH}^{\frac{(p-2)\rho}{2(\rho-1)} +1}\qquad 
	\text{with}\qquad C:= c_2 (c_1')^{\frac{\rho}{\rho-1}}\tilde c. 
	$$
	The proof is thus finished.	
\end{proof}
%
%
%
%
%


\begin{thebibliography}{50}
	
	\bibitem{Aftalion-Pacella}
	\newblock A. Aftalion and F. Pacella,
	\newblock Uniqueness and nondegeneracy for some nonlinear elliptic problems 
	in a ball, 
	\newblock \emph{J. Differential Equations} \textbf{195} (2003), 380--397.
	
	\bibitem{Agudelo-Kuebler-Weth}
	\newblock O. Agudelo, J. K\"ubler and T. Weth,
	\newblock Spiraling solutions of nonlinear Schr\"odinger equations, 
	\newblock to appear in \emph{Proc. Roy. Soc. Edinburgh Sect. A}.
	
	\bibitem{Ambrosetti-malchiodi}
	\newblock A. Ambrosetti and A. Malchiodi,
	\newblock Perturbation methods and semilinear elliptic problems on ${\bf 
	R}^n$.
	\newblock Progress in Mathematics, 240. Birkhäuser Verlag, Basel, 2006. 
	xii+183 pp. 
	
	
	\bibitem{Beckner}
	\newblock W. Beckner,
	\newblock On the Grushin operator and hyperbolic symmetry,
	\newblock \emph{Proc. Amer. Math. Soc.} \textbf{129} (2001), 1233--1246. 
	
	\bibitem{Ben-Naoum-Mahwin}
	\newblock A.K. Ben-Naoum and J. Mawhin, 
	\newblock Periodic solutions of some semilinear wave equation on balls and 
	on 
	spheres,
	\newblock \emph{Topol. Methods Nonlinear Anal.} \textbf{1} (1993), 113--137.
	
	\bibitem{Berchio-Ferrero-Grillo}
	\newblock E. Berchio, A. Ferrero and G. Grillo, 
	\newblock Stability and qualitative properties of radial solutions of the 
	Lane-Emden-Fowler equation on Riemannian models,
	\newblock \emph{J. Math. Pures Appl.} \textbf{102} (2014), 1--35. 
	
	\bibitem{Berestycki-Lions}
	\newblock H. Berestycki and P.-L. Lions, 
	\newblock Nonlinear scalar field equations. I. Existence of a ground state,
	\newblock \emph{Arch. Rational Mech. Anal.} \textbf{82} (1983), no. 4, 
	313--345.
	
	
	\bibitem{chen-evequoz-weth}
	\newblock H. Chen, G. Ev\'equoz and T. Weth,  
	\newblock Complex solutions and stationary scattering for the nonlinear 
	Helmholtz equation,
	\newblock \emph{SIAM J. Math. Anal.} \textbf{53} (2021), 2349--2372.
	
	
	\bibitem{Chen-Zhang}
	\newblock J. Chen and Z. Zhang, 
	\newblock Infinitely many periodic solutions for a semilinear wave equation 
	in 
	a ball in $\R^n$,
	\newblock \emph{J. Differential Equations} \textbf{256} (2014), 1718--1734. 
	
	\bibitem{Chen-Zhang-2}
	\newblock J. Chen and Z. Zhang, 
	\newblock Existence of infinitely many periodic solutions for the radially 
	symmetric wave equation with resonance,
	\newblock \emph{J. Differential Equations} \textbf{260} (2016), 6017--6037.
	
	\bibitem{Chen-Zhang-3}
	\newblock J. Chen and Z. Zhang, 
	\newblock Existence of multiple periodic solutions to asymptotically linear 
	wave equations in a ball,
	\newblock \emph{Calc. Var. Partial Differential Equations} \textbf{56} 
	(2017),  
	Paper No. 58, 25 pp. 
	
	\bibitem{Damascelli-Grossi-Pacella}
	\newblock L. Damascelli, Massimo Grossi, and F. Pacella,
	\newblock Qualitative properties of positive solutions of semilinear 
	elliptic equations in symmetric domains via the maximum  principle,
	\newblock \emph{Annales de l'I.H.P. Analyse non linéaire} \textbf{16} 
	(1999), 631--652.
	
	\bibitem{del Pino-Musso-Pacard}
	\newblock M. del Pino, M. Musso and F. Pacard,
	\newblock Solutions of the Allen–Cahn equation which are invariant under 
	screw-motion,
	\emph{ Manuscripta Math.} \textbf{138} (2012), 273--286.
	
	\bibitem{evequoz-weth}
	\newblock G. Ev\'equoz and T. Weth, 
	\newblock Dual variational methods and nonvanishing for the nonlinear 
	Helmholtz 
	equation,
	\newblock \emph{Adv. Math.} \textbf{280} (2015), 690--728. 
	
	\bibitem{Felmer et al}
	\newblock P. Felmer, S. Mart\'inez and K. Tanaka, 
	\newblock Uniqueness of radially symmetric positive solutions for $-\Delta 
	u + 
	u = u^p$ in an annulus,
	\newblock \emph{J. Differential Equations} \textbf{245} (2008), 1198--1209.
	
	\bibitem{Filippas et al}
	S. Filippas, L. Moschini and A. Tertikas,
	\newblock On a class of weighted anisotropic Sobolev inequalities,
	\newblock \emph{J. Funct. Anal.} \textbf{255} (2008), 90--119.
	
	\bibitem{Frank-Jin-Xiong}
	\newblock R. L. Frank, T. Jin and J. Xiong,
	\newblock Minimizers for the fractional Sobolev inequality on domains,
	\newblock \emph{Calc. Var. Partial Differential Equations} \textbf{57} 
	(2018), 43.
	
	\bibitem{Gagliardo}
	\newblock E. Gagliardo, 
	\newblock Propriet\`a di alcune classi di funzioni in pi\`uvariabili, 
	\emph{Ricerche Mat.} \textbf{7} (1958), 102--137.
	
	\bibitem{Gidas-Ni-Nirenberg}
	\newblock B. Gidas, W.-M. Ni, and L. Nirenberg, 
	\newblock \emph{Symmetry and related properties via the maximum principle},
	\newblock Comm. Math. Phys. \textbf{68} (1979), 209--243.
	
	\bibitem{Gilbarg-Trudinger}
	\newblock D. Gilbarg and N.S. Trudinger,
	\newblock Elliptic partial differential equations of second order,
	\newblock Springer, 2015.
	
	\bibitem{gutierrez}
	\newblock S. Guti{\'e}rrez,
	\newblock Non trivial $L^q$-solutions to the Ginzburg-Landau equation,
	\newblock {Math. Ann.} \textbf{328} (2004), no. 1-2, 1–25. 
	
	\bibitem{Hajlasz-Koskela}
	\newblock P. Haj\l asz and P. Koskela, 
	\newblock Sobolev met Poincar\'e,
	\newblock \emph{Mem. Amer. Math. Soc.} \textbf{145} (2000), x+101 pp.
	
	\bibitem{Higuchi}
	\newblock A. Higuchi,
	\newblock Symmetric tensor spherical harmonics on the N-sphere and their 
	application to the de Sitter group $SO(N,1)$, 
	\newblock \emph{J. Math. Phys.} \textbf{28} (1987), 1553--1566. 
	
	\bibitem{hirsch-reichel}
	\newblock A. Hirsch and W. Reichel, 
	\newblock Real-valued, time-periodic localized weak solutions for a 
	semilinear 
	wave equation with periodic potentials,
	\newblock \emph{Nonlinearity} \textbf{32} (2019), 1408--1439.
	
	\bibitem{kuebler}
	\newblock J. K\"ubler,
	\newblock \emph{in preparation.}
	
	\bibitem{Kwong}
	\newblock M. K. Kwong, 
	\newblock Uniqueness of positive solutions of $\Delta u - u + u^p = 0$ in 
	$\R^n$,
	\newblock \emph{Arch. Rational Mech. Anal.} \textbf{105} (1989), 243--266.
	
	\bibitem{Kwong-Li}
	\newblock M. K. Kwong and L.Q. Zhang, 
	\newblock Uniqueness of the positive solution of $\Delta u + f(u) = 0$ in 
	an annulus,
	\newblock \emph{Differential Integral Equations} \textbf{4} (1991), 
	583--599.
	
	\bibitem{Lieb-Seiringer}
	\newblock E. Lieb and R. Seiringer, 
	\newblock Derivation of the Gross-Pitaevskii equation for rotating Bose 
	gases, 
	\newblock \emph{Comm. Math. Phys.} \textbf{264} (2006), 505--537. 
	
	\bibitem{Lions}
	\newblock P.-L. Lions,
	\newblock The concentration-compactness principle in the calculus of 
	variations. The locally compact case, part 1,
	\newblock \emph{Annales de l'I.H.P. Analyse non linéaire} \textbf{1} 
	(1984), 109--145.
	
	\bibitem{mandel-scheider-yesil}
	\newblock R. Mandel, D. Scheider and T. Ye\c{s}il, 
	\newblock Dual variational methods for a nonlinear Helmholtz equation with 
	sign-changing nonlinearity,
	\newblock \emph{Calc. Var. Partial Differential Equations} \textbf{60} 
	(2021),  
	13 pp. 
	
	\bibitem{mandel-montefusco-pellaci}
	\newblock R. Mandel, E. Montefusco and B. Pellacci, 
	\newblock Oscillating solutions for nonlinear Helmholtz equations,
	\newblock \emph{Z. Angew. Math. Phys.} \textbf{68} (2017), 19 pp.
	
	\bibitem{mandel-scheider}
	\newblock R. Mandel, and D. Scheider, 
	\newblock Variational methods for breather solutions of nonlinear wave 
	equations,
	\newblock \emph{Nonlinearity} \textbf{34} (2021), 3618--3640.
	
	
	\bibitem{McLead-Serrin}
	\newblock K. McLeod and J. Serrin, 
	\newblock Uniqueness of positive radial solutions of $\Delta u + f(u) = 0$ 
	in $\R^n$,
	\newblock \emph{Arch. Rational Mech. Anal.} \textbf{99} (1987), 115--145.
	
	\bibitem{Monti}
	\newblock R. Monti, 
	\newblock Sobolev inequalities for weighted gradients,
	\newblock \emph{Comm. Partial Differential Equations} \textbf{31} (2006), 
	1479--1504.
	
	\bibitem{Monti-Morbidelli}
	\newblock R. Monti and D. Morbidelli, 
	\newblock Kelvin transform for Grushin operators and critical semilinear 
	equations,
	\newblock \emph{Duke Math. J.} \textbf{131} (2006), 167--202.
	
	\bibitem{Mukherjee}
	\newblock M. Mukherjee,
	\newblock Nonlinear travelling waves on complete Riemannian manifolds,
	\newblock \emph{Adv. Differential Equations} \textbf{23} (2018), 65--88.
	
	\bibitem{Mukherjee2}
	\newblock M. Mukherjee,
	\newblock A special class of nonlinear hypoelliptic equations on spheres,
	\newblock \emph{NoDEA Nonlinear Differential Equations Appl.} \textbf{24} 
	(2017), 25 pp.
	
	\bibitem{Ni-Nussbaum}
	\newblock W.-M. Ni and R. D. Nussbaum, 
	\newblock Uniqueness and nonuniqueness for positive radial solutions of 
	$\Delta 
	u + f (u, r) = 0$,
	\newblock \emph{Comm. Pure Appl. Math.} \textbf{38} (1985), 67--108.
	
	
	\bibitem{Seiringer}
	\newblock R. Seiringer, 
	\newblock Gross-Pitaevskii theory of the rotating Bose gas,
	\newblock \emph{Comm. Math. Phys.} \textbf{229} (2002), 491--509. 
	
	
	\bibitem{Struwe}
	\newblock M. Struwe,
	\newblock Variational Methods,
	\newblock Springer, Berlin, 2008.
	
	\bibitem{tang}
	\newblock M. Tang, 
	\newblock Uniqueness of positive radial solutions for 
	$\Delta u - u+u^p=0$ on an annulus, 
	\newblock \emph{J. Differential Equations} \textbf{189} (2003), 148--160.
	
	\bibitem{Taylor}
	\newblock M. Taylor,
	\newblock Traveling wave solutions to NLS and NLKG equations in 
	non-Euclidean settings,
	\newblock \emph{Houston J. Math.} \textbf{42} (2016), 143--165.
	
\end{thebibliography}
\end{document}